\documentclass[11pt]{amsart}

\title{Higher Spherical Scissors Congruence II: Comodules}

\author[Klang]{Inbar Klang}
\address[Klang]{
Department of Mathematics,
Vrije Universiteit Amsterdam - Faculty of Science,
De Boelelaan 1111,
1081 HV Amsterdam,
The Netherlands}
\email{i.klang@vu.nl}

\author[Kuijper]{Josefien Kuijper}
\address[Kuijper]{
Department of Mathematics, University of Toronto
Bahen Centre, Room 6290
40 St. George St.,
Toronto, Ontario M5S 2E4,
Canada}
\email{josefien.kuijper@utoronto.ca}

\author[Malkiewich]{Cary Malkiewich}
\address[Malkiewich]{
Department of Mathematics, Binghamton University,
PO Box 6000, 
Binghamton, New York 13902,
USA}
\email{cmalkiew@binghamton.edu}

\author[Mehrle]{David Mehrle}
\address[Mehrle]{
Department of Mathematics, 
Colorado State University, 
Louis R. Weber Building,
841 Oval Drive, 
Fort Collins, Colorado 80526,
USA}
\email{david.mehrle@colostate.edu}

\author[Wittich]{Thor Wittich}
\address[Wittich]{
Department of Mathematics, Universit\"{a}t Osnabr\"{u}ck, 
Albrechtstrasse 28a, 49076 Osnabr\"{u}ck,
Germany}
\email{thor.wittich@uni-osnabrueck.de}

\usepackage{adjustbox}
\usepackage{amssymb,amsthm,amsmath} 
\usepackage{bbm} 
\usepackage{enumerate} 
\usepackage[margin=1.25in]{geometry} 
\usepackage[pagebackref,
			colorlinks,
			citecolor=OI3,
			linkcolor=OI3,
			urlcolor=OI5]{hyperref} 
\usepackage{mathrsfs} 
\usepackage{thmtools} 
\usepackage{tikz} 
\usepackage{tikz-cd} 
\usepackage{todonotes} 
\usepackage{xcolor} 
\usepackage[all]{xy}
\usepackage{relsize}
\usepackage{float}
			
\usepackage[nameinlink,capitalize]{cleveref} 

\definecolor{OI1}{RGB}{230,159,0}	
\definecolor{OI2}{RGB}{86,180,233}	
\definecolor{OI3}{RGB}{0,158,115}	
\definecolor{OI4}{RGB}{240,228,66}	
\definecolor{OI5}{RGB}{0,114,178}	
\definecolor{OI6}{RGB}{213,94,0}	
\definecolor{OI7}{RGB}{204,121,167}	

\newcommand{\Q}{\mathbb{Q}}
\newcommand{\R}{\mathbb{R}}
\newcommand{\Sph}{\mathbb{S}}
\newcommand{\Z}{\mathbb{Z}}

\DeclareMathAlphabet\mathbfcal{OMS}{cmsy}{b}{n} 

\newcommand{\sma}{\wedge} 

\makeatletter
\newcommand*{\boxwedge}{\mathbin{\mathpalette\@boxwedge{}}}
\newcommand*{\@boxwedge}[2]{%
  \sbox0{$#1\boxtimes\m@th$}%
  \dimen2=.5\dimexpr\wd0-\ht0-\dp0\relax 
  \dimen@=\dimexpr\ht0+\dp0\relax
  \def\lw{.07} 
  \kern\dimen2 
  \tikz[
    line width=\lw\dimen@,
    line join=round,
    x=\dimen@,
    y=\dimen@,
    baseline=0
  ]
  \draw
    (\lw/2,0) rectangle (1-\lw,1-\lw)
    (\lw,0) -- (.5,1-\lw-\lw/2) -- (1-\lw-\lw/2,0)
  ;%
  \kern\dimen2 
}
\makeatother

\newcommand{\apt}{\mathrm{apt}} 
\DeclareMathOperator{\Ls}{LS} 
\DeclareMathOperator{\PT}{PT} 
\DeclareMathOperator{\Pt}{Pt} 
\newcommand{\Sah}{\mathcal{S}} 
\newcommand{\funSah}{\mathfrak{S}} 
\DeclareMathOperator{\ST}{ST} 
\DeclareMathOperator{\tPt}{\widetilde{Pt}} 
\DeclareMathOperator{\T}{T} 
\DeclareMathOperator{\GL}{GL} 
\DeclareMathOperator{\Spc}{\mathbf{Spc}}

\DeclareMathOperator{\bhocolim}{hocolim^{(b)}}
\DeclareMathOperator{\hocolim}{hocolim} 
\newcommand{\id}{\mathrm{id}} 
\DeclareMathOperator{\pr}{pr} 
\DeclareMathOperator{\ob}{ob} 
\newcommand{\sd}{\operatorname{sd}} 
\newcommand{\Step}{\textup{Step}} 
\newcommand{\Sub}{\textup{Sub}} 

\DeclareMathOperator{\uhocolim}{hocolim^{(u)}}
\newcommand{\oplustimes}{
  \mathbin{\mathpalette\oplustimesaux\relax}%
}
\newcommand{\oplustimesaux}[2]{%
  \ooalign{\hfil$#1\oplus$\hfil\cr$#1\otimes$}%
}

\DeclareMathOperator{\op}{\textnormal{op}}

\DeclareMathOperator{\Ab}{Ab} 
\DeclareMathOperator{\Ch}{Ch} 
\DeclareMathOperator{\Alg}{Alg} 
\DeclareMathOperator{\Assoc}{Assoc} 
\DeclareMathOperator{\CAlg}{CAlg} 

\DeclareMathOperator{\CMon}{CMon} 
\DeclareMathOperator{\Comm}{Comm} 
\DeclareMathOperator{\RM}{RM}
\DeclareMathOperator{\Dip}{Dip} 
\DeclareMathOperator{\Luna}{Luna}

\newcommand{\Fun}{\mathrm{Fun}} 
\newcommand{\Pol}[2]{\mathcal{P}^{#1}_{#2}} 
\DeclareMathOperator{\Sp}{Sp} 
\DeclareMathOperator{\Spp}{\mathbf{Sp}} 
\newcommand{\Top}{\mathrm{Top}} 
\newcommand{\topp}{\mathrm{Top}_*} 
\DeclareMathOperator{\cof}{cof}

\newcommand{\C}{\mathcal{C}}

\numberwithin{equation}{section}
\numberwithin{figure}{section}

\newtheorem{corollary}[equation]{Corollary}
\newtheorem{lemma}[equation]{Lemma}
\newtheorem{proposition}[equation]{Proposition}
\newtheorem{theorem}[equation]{Theorem}

\newtheorem{lettertheorem}{Theorem} 
\newtheorem{letterconjecture}[lettertheorem]{Conjecture}

\theoremstyle{definition}
\newtheorem{conjecture}[equation]{Conjecture}
\newtheorem{construction}[equation]{Construction}
\newtheorem{definition}[equation]{Definition}
\newtheorem{example}[equation]{Example}

\newtheorem{remark}[equation]{Remark}

\crefname{equation}{}{} 
\crefname{figure}{figure}{figures} 

\AddToHook{env/conjecture/begin}{\crefalias{equation}{conjecture}}
\AddToHook{env/construction/begin}{\crefalias{equation}{construction}}
\AddToHook{env/corollary/begin}{\crefalias{equation}{corollary}}
\AddToHook{env/definition/begin}{\crefalias{equation}{definition}}
\AddToHook{env/example/begin}{\crefalias{equation}{example}}
\AddToHook{env/lemma/begin}{\crefalias{equation}{lemma}}
\AddToHook{env/lettercorollary/begin}{\crefalias{lettertheorem}{corollary}}
\AddToHook{env/lettertheorem/begin}{\crefalias{lettertheorem}{theorem}}
\AddToHook{env/notation/begin}{\crefalias{equation}{notation}}
\AddToHook{env/proposition/begin}{\crefalias{equation}{proposition}}
\AddToHook{env/question/begin}{\crefalias{equation}{question}}
\AddToHook{env/remark/begin}{\crefalias{equation}{remark}}
\AddToHook{env/theorem/begin}{\crefalias{equation}{theorem}}

\begin{document}
\begin{abstract}
	We define the higher Dehn invariant on the Euclidean, hyperbolic, and spherical variants of scissors congruence $K$-theory, and prove that it makes these spectra into comodules over the spectral Sah algebra. This allows us to make the first formulation of the conjecture that the higher volume and Dehn invariants are jointly injective.
\end{abstract}

\maketitle

\begingroup%
\setcounter{secnumdepth}{1}
\setcounter{tocdepth}{1}

\parskip 0em 
\tableofcontents
\endgroup%

\parskip 1em 

\section{Introduction}

Scissors congruence is the study of polytopes in Euclidean geometry $E^n$, hyperbolic geometry $H^n$, or spherical geometry $S^n$, up to the operation of cutting into finitely many smaller polytopes and rearranging. Two polytopes that are equivalent in this way are called \emph{equidecomposable}, or \emph{scissors congruent}. The modern side of this subject is \emph{higher scissors congruence}, which is concerned with the entire group of scissors congruences from a polytope to itself. The primary algebraic invariant in higher scissors congruence is the \emph{scissors congruence $K$-theory spectrum} $K(\Pol{X^n}{I(X^n)})$, or just $K(\mathcal X^n)$ for short, which classifies in a universal way all higher additive invariants of polytopes in the geometry $X^n$ under the action of the isometry group $I(X^n)$ \cite{inna-assem,scissors_thom}.

The lowest homotopy group $K_0(\mathcal X^n)$ is the classical scissors congruence group. The calculation of this group is equivalent to finding a complete set of invariants for polytopes up to scissors congruence. For instance, Hilbert's Third Problem is equivalent to computing $K_0(\mathcal E^3)$, the lowest $K$-group for three-dimensional Euclidean geometry. This problem is partly solved by Dehn \cite{dehn}, who shows in our language that the volume homomorphism
\[ K_0(\Pol{E^3}{I(E^3)}) \to \R \]
is not an isomorphism. This is because there is another homomorphism called the \emph{Dehn invariant}
\[ K_0(\Pol{E^3}{I(E^3)}) \to \R \otimes_\Z \R/\pi\Z. \]
The solution is then finished in celebrated work of Sydler and Jessen \cite{Sydler,jessen68}, who show that the volume and Dehn invariant together define an injective homomorphism, and furthermore the cokernel may be computed. Jessen also obtains a similar result in dimension 4 \cite{jessen72}, but since then the calculation of $K_0(\Pol{E^5}{I(E^5)})$ has been a tantalizing open question.

The calculation of the higher homotopy groups of $K(\Pol{X^n}{I(X^n)})$ is therefore the higher version of Hilbert's Third Problem. This higher version has seen a great deal of recent progress: a formula in terms of group homology that is correct rationally in general and integrally for Euclidean geometry \cite{scissors_thom}, a description in terms of groups of scissors automorphisms \cite{KLMMS-1}, a complete calculation in the one-dimensional case \cite{scissors_thom}, and a simplification in the two-dimensional Euclidean case \cite{KLMMS-3} that allows us to prove that $K_2$ and above are infinite \cite{holley_thesis} and to compute the $K$-groups explicitly for rational vertices \cite{KLMMS-3}. However, as the dimension of the geometry increases, computational descriptions get more complicated, and it is natural to search for more structural results.

One natural conjecture is that the volume and Dehn invariant should be jointly injective, just like they are for $K_0$ of $E^3$ and $E^4$. (And also for $K_0$ of $E^1$ and $E^2$, where there is no Dehn invariant, but the volume homomorphism is injective.) However, to even formulate this conjecture one needs the higher version of the Dehn invariant, defined on the scissors congruence $K$-theory spectrum $K(\mathcal X^n)$ and not just on the lowest homotopy group $K_0(\mathcal X^n)$.

In this paper we give the first precise definition of this higher Dehn invariant. The case that this is the correct definition rests on several points. First, it agrees with the classical Dehn invariant on $K_0$. Second, in the special case of the full isometry group and after rationalization, it agrees with the comultiplication on homology defined by Cathelineau \cite{cathelineau_sc}. Third, on homology our Dehn invariant agrees with the rigid Dehn invariants of Campbell and Zakharevich \cite{cz-hilbert}. And fourth, this Dehn invariant has the coassociativity and compatibility we expect with various products at the level of the $\infty$-category of spectra, not just at the chain level.

At the level of homology, our definition of the Dehn invariant admits a relatively short description, which suffices for many computations and applications (\cref{pt_dehn}). The Dehn invariant as a map of spectra is much more involved, requiring a combination of geometric configuration-space arguments, formal arguments with operads, and $\infty$-categorical machinery. Although it is a great deal of work, there are several payoffs: first, the fact that the construction works is further evidence that this is the correct higher version of the Dehn invariant. Second, many scissors congruence spectra are not rational or not known to be rational, and in those cases the map is not determined by the map induced on homology. And finally, the configuration-space description comes with beautiful geometric pictures that give further insight into the structure of the higher Dehn invariant.

To state the theorem, let us take the following wedge sums of scissors congruence $K$-theory spectra, one for each dimension of a given classical geometry:
\[ \mathcal{E} \simeq \bigvee_{n \geq 0} K(\Pol{E^n}{I(E^n)}), \quad \mathcal{H} \simeq \bigvee_{n \geq 0} K(\Pol{H^n}{I(H^n)}), \quad \mathcal{S} \simeq \bigvee_{n \geq 0} K(\Pol{S^{n-1}}{O(n)}), \quad \widetilde{\mathcal{S}} \simeq \bigvee_{n \geq 0} \widetilde{K}(\Pol{S^{n-1}}{O(n)}). \]
See \cref{spectral_sah_module} for a more precise definition. At the level of $\pi_0$, these give the classical scissors congruence groups of each geometry, summed over all dimensions $n \geq 0$.

\begin{lettertheorem}\label{mainthm}
    In the $\infty$-category of spectra, 
    \begin{enumerate}
    \vspace{-\topsep}
    \vspace{-\partopsep}
    \vspace{-\parskip}
    \item[(i)] the spectrum $\widetilde{\mathcal{S}}$ is a commutative Hopf algebra;
    \item[(ii)] the spectra $\mathcal{E}$ and $\mathcal{S}$ are comodule algebras over $\widetilde{\mathcal{S}}$; and
    \item[(iii)] the spectrum $\mathcal{H}$ is a comodule over $\widetilde{\mathcal{S}}$.
\end{enumerate}
\end{lettertheorem}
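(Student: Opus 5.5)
The plan is to build everything from the polytope complexes of the first paper in this series (Higher Spherical Scissors Congruence I). There the spectral Sah algebra was constructed, and $\mathcal{S}$ was shown to be an $E_\infty$-ring via the join of spherical polytopes, $S^{m-1} * S^{n-1} = S^{m+n-1}$. I take that as given. I also take the description of $K(\Pol{X^n}{G})$ as the homotopy orbits of a Tits-type complex. Concretely, this is the suspension spectrum of the quotient of polytopes by lower-dimensional ones, formed into a $\Sigma^\infty$ of a configuration-type space, with $G$-homotopy orbits taken afterwards.

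For (i), I will first identify $\widetilde{\mathcal{S}}$ as the $E_\infty$-quotient of $\mathcal{S}$ by the unit summand. This is the reduced version, in which the point and the empty polytope are killed. It inherits a commutative algebra structure, or rather a nonunital one made unital by adjoining back the $n=0$ piece. The comultiplication is the higher Dehn invariant
\[ \Delta\colon \widetilde{\mathcal{S}}\to \widetilde{\mathcal{S}}\wedge\widetilde{\mathcal{S}}, \]
defined summandwise by
\[ \widetilde{K}(\Pol{S^{n-1}}{O(n)}) \to \bigvee_{p+q=n} \widetilde{K}(\Pol{S^{p-1}}{O(p)})\wedge \widetilde{K}(\Pol{S^{q-1}}{O(q)}). \]
Geometrically, it sends a polytope to the sum over its faces $F$ of the pair (face, link/normal polytope at $F$).

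To realize $\Delta$ as a map of spectra, I will use a configuration space of flags. A point consists of a polytope together with a choice of a $(p-1)$-dimensional subsphere spanned by a face. Restricting to the subsphere and taking the normal sphere gives $O(p)\times O(q)$-equivariant data. The ``sum over faces'' is then a transfer or Pontryagin–Thom collapse along the finite cover of faces; this is the step with the configuration-space pictures. After that I induce from $O(p)\times O(q)$ to $O(n)$ on homotopy orbits.

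Coassociativity and counitality will come from considering flags of two subspheres and showing that both composites factor through the same three-fold configuration space. Compatibility with multiplication (the bialgebra axiom) comes from the fact that the faces of a join $P*Q$ are exactly the joins of faces of $P$ and of $Q$, with links joining correspondingly. To get these identities in the $\infty$-category rather than merely up to homotopy, I will organize the whole construction as a functor out of an operad (or a category of decomposed flags) and rectify using the $\infty$-operadic machinery; coherence is then automatic from contractibility of the relevant choice spaces. The antipode comes for free: $\widetilde{\mathcal{S}}$ is connected-graded by dimension with unit in degree $0$, so a bialgebra of this kind is Hopf.

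For (ii) and (iii), I use the same flag construction for $X=E,H$: a polytope in $X^n$ and a face $F$ of dimension $k$ give (the face $F$ in $X^k$, the spherical link in $S^{n-k-1}$). This is the classical Dehn invariant. It defines a coaction $\mathcal{X}\to\mathcal{X}\wedge\widetilde{\mathcal{S}}$, and coassociativity with $\Delta$ follows from iterated flags exactly as above. For $\mathcal{E}$ the product is the Cartesian product $E^m\times E^n=E^{m+n}$, and for $\mathcal{S}$ it is the join. Faces of a product are products of faces, and links of products are joins of links, which gives the comodule-algebra compatibility. $\mathcal{H}$ has no such product, which is why (iii) claims only a comodule structure.

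I expect the main obstacle to be the spectrum-level definition of the face-sum map itself, together with its coherence. The issue is that faces do not vary continuously as polytopes are cut. I would try to resolve this by passing to the homotopy-orbit model built from the simplicial ``cutting'' structure, where the transfer over the space of subspaces becomes a genuine equivariant Pontryagin–Thom map. Checking that this map is compatible with the scissors relations, so that it descends to $K$-theory, is the crux.
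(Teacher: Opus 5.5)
Your proposal has two genuine gaps.

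\textbf{The identification of $\widetilde{\mathcal{S}}$ is wrong.} It is not a quotient of $\mathcal{S}$ by its unit summand. ``Reduced'' means dividing out the image of the suspension maps $\Pt(U)\to\Pt(V)$, $P\mapsto P*\{v,-v\}$, for codimension-one $U\subseteq V$. Spectrally, $\widetilde{\mathcal{S}}$ is built from $\Sigma\!\ST(V)$ where $\mathcal{S}$ uses $\Sigma\!\PT(V)$, and it keeps the $n=0$ summand $\Sph$ as its unit.

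This is not cosmetic. In ambient dimension $2$ the reduced group is $\R/\pi\Z$, not the group of angles $\R$. That reduction is exactly what makes a face-sum formula compatible with cutting: cutting creates new faces inside old faces or inside the polytope, and their links add up to suspensions (in $E^3$, dihedral angles summing to $\pi$ or $2\pi$). With your $\widetilde{\mathcal{S}}$, the coaction you describe is not well defined even on $\pi_0$.

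For (i) you should simply invoke the Hopf algebra structure from the first paper, as the paper does. Note also that the commutative structure on the \emph{unreduced} $\mathcal{S}$ is not in the first paper and must be built. Here it comes from the pointwise direct sum of step functions together with the collapse $S^U\wedge S^V\to S^{U\oplus V}$.

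\textbf{The central step is left unresolved.} You flag the spectrum-level coaction as the ``crux'', and the mechanism you propose has nothing to act on. The spectra are homotopy orbits of $\Sph^{-V}\wedge\Sigma\!\ST(V)$ or $\Sph^{-V}\wedge\Sigma\!\PT(V)$. These are built from the poset of subspaces of $V$, not from a space of polytopes. So there is no finite cover of faces to transfer along, and no scissors relations to check at this level.

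The missing idea is that in this Tits-complex model the coaction is a strict, continuous, equivariant map of configuration spaces. Represent a point of $\ST(V)$ (for $X=E,H$), or of $\Sigma\!\PT(V)$ (for $X=S$, with a label $v\in S^{f(0)}$), by a step function $f\colon[0,1]\to\Sub(V)$. For little intervals $e\in D_1(J)^k$, proceed as follows:
\begin{itemize}
\item Send $f$ to the basepoint if it has a cut point outside $[0,e_k(0)]\cup\bigcup_j e_j((0,1))$.
\item Otherwise send it to $f\circ e_k$ together with the pieces $(f\circ e_{\alpha(i)})\ominus V_{i-1}$, where $V_i=f(e_{\alpha(i)}(1))$.
\end{itemize}
These maps assemble on the nose into a functor $\mathcal{RM}_{D_1}\to\Fun(\Luna^X,\topp)^{\op}$; the key identity is $(C\ominus A)\ominus(B\ominus A)=C\ominus B$. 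The desuspension $D$, resp.\ $(\Sigma D,D)_E$, is strong symmetric monoidal for Day convolution. The point-set-to-$\infty$-categorical comparison of \cite{kkmmw0}, run for multicategories, then finishes the argument.

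In this route, the identification with $K$-theory is the theorem of \cite{scissors_thom}. The ``sum over faces'' only appears afterwards on $\pi_0$. There, evaluating the coaction on an apartment class cuts the cube into subcubes indexed by faces. Each subcube maps by the product of the apartment classes of the face and its link.

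Your plan also misses a point for $\mathcal{H}$. The functor $(\Sigma D,D)_H$ is only lax monoidal at the point-set level, so one must pass to $\infty$-categories before desuspending.
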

Furthermore, we show that the induced structure at the level of $\pi_0$ is the correct one:
\begin{lettertheorem}\label{mainthm_pi0}
    The induced comultiplications on $\pi_0(\mathcal{E})$, $\pi_0(\mathcal{H})$, and $\pi_0(\mathcal{S})$ are identified with the generalized Dehn invariants in each of these geometries. The induced product on $\pi_0(\mathcal{E})$ is the Cartesian product of polytopes $P \times Q$, and the product on $\pi_0(\mathcal{S})$ is the (orthogonal) join product of spherical polytopes $P * Q$.
\end{lettertheorem}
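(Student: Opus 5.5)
The plan is to reduce everything to an explicit computation on $\pi_0$ of the constructed maps. Since $\pi_0$ of each spectrum is the classical scissors congruence group, summed over dimensions, it is generated by classes $[P]$ of single polytopes. The comultiplication, the product, and the comodule maps are all maps of spectra whose $\pi_0$ is additive. It therefore suffices to evaluate each induced map on a generator $[P]$ and compare with the classical formula.

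For the comultiplication, the first step is to unwind the construction of the coaction on $\pi_0$. In the configuration-space model, the higher Dehn invariant is built from a point-to-flag or face-decomposition map. Concretely, a polytope $P$ in $X^n$ is sent to a sum over faces $F$ of $P$ of the pair consisting of:
\begin{itemize}
\item the class of the face $F$ in the lower-dimensional $K$-theory of the same geometry;
\item the class of the spherical polytope $\nu_F(P)$, the outer-normal or dihedral "angle" cone of $P$ at $F$, intersected with the unit sphere in the normal space $F^\perp$, viewed in $\widetilde{K}(\Pol{S^{k-1}}{O(k)})$.
\end{itemize}
I would verify this by restricting the construction to the $0$-simplices (or the basepoint component) of the relevant simplicial or configuration model. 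That is where $\pi_0$ is computed: the image of $[P]$ is read off from the vertices of the configuration space, which are exactly flags or faces of $P$ together with their normal data.

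The second step is to check that this agrees with the generalized Dehn invariant
\[ D(P) = \sum_{F} [F] \otimes [\nu_F(P)], \]
including the two extreme terms. The term $F = P$ should give $[P] \otimes 1$, and the vertex terms should give $1 \otimes$ the solid angle, or $0$ in the reduced theory. The sign and orientation conventions need care here. Passing to the reduced spherical theory $\widetilde{\mathcal S}$ kills exactly the full-sphere classes, which matches Dehn's quotient $\R/\pi\Z$. In dimension $3$, for instance, edge terms give length $\otimes$ dihedral angle modulo $\pi$, recovering the classical invariant.

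The product statements are easier. The $E_\infty$ multiplication on $\mathcal E$ is induced by the symmetric monoidal functor $(P,Q)\mapsto P\times Q$ on polytope categories, and on $\mathcal S$ by the orthogonal join $S^{n-1}*S^{m-1}=S^{n+m-1}$. On $\pi_0$, any such functor-induced pairing sends $[P]\otimes[Q]$ to the class of the image object, so this part is immediate once the multiplication is traced to that functor.

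The main obstacle I expect is the first step: identifying the abstract $\infty$-categorical comodule map on $\pi_0$ with the concrete face sum. The map is built through zigzags of equivalences, including operadic rectification and the Thom-spectrum or configuration model, so one has to confirm that each equivalence is the expected one on $\pi_0$. I would handle this by choosing, for each zigzag, a point-set model in which $\pi_0$ is the free abelian group on polytopes modulo cutting, and checking that each equivalence is the identity on these generators. The only nontrivial geometric input is then the local computation of the normal cone at each face.
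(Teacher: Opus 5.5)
Your proposal has a genuine gap, and it sits in the step you flag yourself as the main obstacle. The method you propose there would fail.

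\textbf{How $\pi_0$ is computed.} In the model used here, $\pi_0$ of a summand $\Sph^{-V}\wedge\Sigma\!\ST(V)$ (or $\Sph^{-V}\wedge\Sigma\!\PT(V)$) cannot be read off from $0$-simplices. The vertices of $|0\subseteq U\subseteq V|$ are subspaces of $V$, not polytopes. Since $\Sigma\!\ST(V)$ is a wedge of $(\dim V)$-spheres, $\pi_0$ of the desuspension is the \emph{top} homology $H_{\dim V}(\Sigma\!\ST(V),*)$, which the Solomon--Tits theorem identifies with $\Pt(V)$.

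So $[v_0,\ldots,v_n]$ must be represented by a top-dimensional sphere. This is the doubly suspended apartment $I^{n+1}/\partial I^{n+1}\to\Sigma\!\ST(V)$, which sends the vertex indexed by $S\subseteq\{v_0,\ldots,v_n\}$ to $\langle S\rangle$. A reflection fixing $P$ reverses the sign of this class. To fix this, you must precompose with the collapse $S^V\to I^{n+1}/\partial I^{n+1}$ along $c\mapsto\sum_i(1-c_i)v_i$. That gives an isomorphism $\Pt(V)\cong H_0(\Sph^{-V}\wedge\Sigma\!\ST(V))$ of functors on $\Dip^X$, which is what makes $\pi_0$ of the homotopy colimit the coinvariants.

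\textbf{The computation that is actually needed.} It is geometric, in three steps:
\begin{enumerate}
\item Compose the oriented apartment with the coaction at the point of $D_1(\{1,2\})^1$ that cuts $[0,1]$ at $1/2$. One point suffices because this space is contractible, and the $\infty$-categorical coaction induces the point-set one in the homotopy category. No zigzag-by-zigzag check is needed.
\item Show the composite vanishes off the interiors of the sub-cubes $[0,1/2]^S\times[1/2,1]^{S^c}$. On each sub-cube it is the smash of the apartment of $[S]$ with the apartment of $[\pr_{V\ominus\langle S\rangle}S^c]$. In the Euclidean and hyperbolic cases the sub-cube with $S=\varnothing$ is killed.
\item Show the collapse of $S^V$ onto $I^S/\partial I^S\wedge I^{S^c}/\partial I^{S^c}$ has the same degree as the smash of the collapses from $S^{\langle S\rangle}\wedge S^{V\ominus\langle S\rangle}$. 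Do this by moving each $v_i\in S^c$ in a straight line to its projection.
\end{enumerate}
The spherical case also needs an explicit apartment-like map into $\Sigma\!\PT(V)$. Your plan contains none of this. Looking for point-set models in which $\pi_0$ is literally free on polytopes modulo cutting does not help. The coaction is only constructed on the Tits-complex model, where polytope classes are apartment spheres, so that is where the computation must happen.

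\textbf{The product argument.} It has the same flaw. The multiplication on $\mathcal E$ is not induced by a functor on polytope categories. It comes from $\ST(U)\wedge\ST(V)\cong\ST^\times(U\times_\R V)\to\ST(U\times_\R V)$, so it is not immediate on $\pi_0$. You must check that the product of apartment-like maps for $P$ and $Q$ is apartment-like for $P\times Q$, using $\langle F\times F'\rangle=\langle F\rangle\times_\R\langle F'\rangle$. You must also check that the collapse onto $P\times Q$ is the smash of the collapses onto $P$ and $Q$. For $\mathcal S$, you must check that the sum of the explicit apartment maps for $[u_1,\ldots,u_m]$ and $[v_1,\ldots,v_n]$ is the apartment map of $[u_1,\ldots,u_m,v_1,\ldots,v_n]$.

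\textbf{Errors in your target formula.}
\begin{itemize}
\item The second factor is the interior link $[\pr_{V\ominus\langle S\rangle}S^c]$, not the outer-normal cone. The outer-normal cone is the polar dual, and already in $E^3$ it replaces $\theta$ by $\pi-\theta\equiv-\theta \pmod{\pi}$.
\item The vertex terms are generally nonzero in $\tPt$. The reduced theory only kills suspensions, and it is the suspensions (half-circles in $S^1$), not full spheres, that produce $\R/\pi\Z$.
\item In the spherical case there is also the empty-face term $[\varnothing]\otimes[P]$.
\end{itemize}
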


We also show that in the case of translational scissors congruence on Euclidean space, our higher Dehn invariant gives a higher version of the Hadwiger invariant of \cite{hadwiger_invts,sah_79}. This is striking, because it implies that these apparently different classical scissors congruence invariants were really special cases of the same general construction.

With this construction in place, we can now give a precise formulation of the conjecture that the volume and higher Dehn invariant are jointly injective:

\begin{letterconjecture}[Higher Dehn--Sydler--Jessen Conjecture]\label{higher_dsj}
    The composite of the higher Dehn invariant of this paper and the volume trace of \cite{bgmmz} gives an injective homomorphism on the homotopy groups
    \begin{eqnarray*}
        K_m(\Pol{X^n}{I(X^n)}) &\to& \bigoplus_{0 \leq c \leq n} \bigoplus_{i+j=m} K_i(\Pol{X^{n-c}}{I(X^{n-c})}) \otimes \widetilde{K_j}(\Pol{S^{c-1}}{O(c)}) \\
        &\to& \bigoplus_{0 \leq c \leq n} \bigoplus_{i+j=m} H_i(I(X^{n-c});\R) \otimes \widetilde{K_j}(\Pol{S^{c-1}}{O(c)}).
    \end{eqnarray*}
\end{letterconjecture}

We can therefore begin to investigate this conjecture.

Surprisingly, some work has already been done on this question, because in $E^1$ and $E^2$ there is no nonzero Dehn invariant. So in these cases the conjecture is equivalent to the simpler claim that the volume gives injective homomorphisms
\[ K_m(\Pol{E^1}{I(E^1)}) \to H_m(I(E^1);\R), \qquad K_m(\Pol{E^2}{I(E^2)}) \to H_m(I(E^2);\R). \]
In fact, recent work of Lemann \cite{lemann} shows that this is correct for one-dimensional Euclidean geometry, and therefore also for one-dimensional hyperbolic geometry. Forthcoming work of Fangji Liu and the third author also proves this for the weight-two summands in two-dimensional Euclidean geometry. And of course at $K_0$ this becomes the classical open question:
\begin{letterconjecture}\label{dsj}
    The volume and Dehn invariants are jointly injective on $K_0(\mathcal E^n)$, $K_0(\mathcal H^n)$, and $K_0(\mathcal S^n)$.
\end{letterconjecture}
By a theorem of Zylev \cite{zylev}, this is equivalent to saying that the volume and the classical Dehn invariants give a complete set of obstructions for two polytopes to be scissors congruent. At the time of writing this is open already in the case of $E^5$, $H^3$, or $S^4$.

In future work we intend to show that \cref{higher_dsj} is true rationally for spherical geometry $S^n$ in all dimensions $n \geq 0$. Therefore the difficult part of the calculation is the computation of \emph{reduced} spherical scissors congruence $K$-theory, which already for $S^3$ is quite mysterious. (At the level of $K_0$ this is also a theorem of Sah \cite{sah_79}; in other words, \cref{dsj} is true rationally for spherical geometry of any dimension.)

\subsection*{AI Declaration}

The mathematics of this paper was carried out, and the paper itself was written and edited, without the use of AI.

\subsection*{Acknowledgments.}
    JK was supported by the Knut and Alice Wallenberg grant KAW 2023.0416. 
	CM was partially supported by the National Science Foundation (NSF) grant DMS-2506430, the Simons Fellowship [SFI-MPS-SFM-00011827, CM] and a travel support grant from Simons Foundation International [SFI-MPS-TSM-00013425, CM].
	DM was partially supported by the NSF grant DMS-2135884.
    The authors thank the University of Toronto for their hospitality during a week long collaborative visit in May 2026.

\section{Geometries and vector spaces}\label{sec:geometries}

We begin by explicitly defining the geometries we will study, and the structure of their geometric subspaces and orthogonal complements. We organize this data in a coordinate-free way into categories with various multiplicative structures, which we will use to index our multiplications and comultiplications in scissors congruence.

\subsection{Three geometries}
We are interested in the three classical geometries with constant curvature.

\begin{definition}
Let $n \geq 0$. A \textit{geometry} $X$ is one of the following:
\begin{enumerate}[(i)]
    \item spherical geometry 
    \[
        S^{n-1} = \left\{(x_1,\ldots,x_n) \in \R^n\  \middle\vert\ \sum_{i=1}^{n} x_i^2 = 1\right\},
    \]
    \item Euclidean geometry
    \[
        E^n = \bigg\{(x_0,\ldots,x_{n-1},x_n) \in \R \oplus \R^n\  \bigg\vert\  x_0 = 1 \bigg\}, \text{ or} 
    \]
    \item hyperbolic geometry 
    \[
        H^n = \left\{(x_0,\ldots,x_{n-1},x_n) \in \R \oplus \R^n \  \middle\vert\  -x_0^2+\sum_{i=1}^{n} x_i^2 = -1 \text{ and }x_0>0\right\}.
    \]
\end{enumerate}
\end{definition}
In each of these cases, the larger vector space $\R^n$ or $\R \oplus \R^n$ is called the \emph{ambient vector space}. Note that when $n = 0$, the spherical geometry $S^{-1}$ is empty, while the other two geometries $E^0$ and $H^0$ have a single point.

We choose to define Euclidean spaces as affine subspaces of $\R \oplus \R^n$ in order to obtain the expected isometry group, as a subgroup of $\GL(\R \oplus \R^n)$.
    
The convention that Euclidean and hyperbolic geometry are $n$-dimensional, while spherical geometry is $(n-1)$-dimensional, is convenient for the current work because it captures all of the dimensions of faces that can possibly occur when we take the Dehn invariant: an empty spherical polytope can appear, but an empty Euclidean or hyperbolic polytope cannot.

\begin{definition}\label{df:geometric_subspace}
   Let $X$ be a geometry. A \emph{geometric subspace of $X$} is any subset of $X$ of the form $Y = X \cap U$, where $U$ is a vector subspace of the ambient vector space $\R \oplus \R^n$ or $\R^n$ associated with $X$. 
\end{definition}

Note that if $U$ and $U'$ are two vector subspaces of $\R^n$ such that $U \cap S^{n-1} = U' \cap S^{n-1}$, then $U = U'$. In Euclidean and hyperbolic geometry $X$, the same fact is true only for nonempty geometric subspaces, i.e., if $U \cap X = U' \cap X$ is nonempty, then $U = U'$.  

\begin{definition}
    Let $X$ be a geometry. 
    The \emph{isometry group of $X$}  is the subgroup $I(X)$ of $\GL(\R \oplus \R^n)$ or $\GL(\R^n)$ defined as follows:
	\begin{enumerate}[(i)]
		\item The group $I(S^{n-1}) = O(n)$ consists of linear transformations preserving the quadratic form $\sum_{i=1}^{n} x_i^2$.
		\item The group $I(E^n)$ consists of linear transformations preserving the form $\sum_{i=1}^n x_i^2$ on $\R^n$ and the coordinate $x_0$. It is a semidirect product of the group of translations with the orthogonal group:
		\[ 
            I(E^n) \cong \R^{n} \rtimes \mathrm{O}(n). 
        \]
		\item The group $I(H^n)$ consists of linear transformations preserving the quadratic form $-x_0^2+\sum_{i=1}^n x_i^2$ and the sign of $x_0$. It is an index-two subgroup of the indefinite orthogonal group $O(1,n)$ given by those matrices $A \in O(1,n)$ with $A_{11} > 0$. Some people call $I(H^n)$ the \textit{orthochronous Lorentz group}. 
	\end{enumerate}
    These are all considered as discrete groups. In what follows we will focus on using the full isometry group $I(X)$, but in \cref{sec:restrict_isometry_group} we will restrict attention to various subgroups, such as the translation subgroup $\R^n \leq I(E^n)$.
\end{definition}

\begin{remark}
    If we were to equip the three geometries with their natural metrics, then the above isometry groups would literally be the isometry groups, i.e., the groups of bijective distance-preserving maps. However, we prefer to use a broader notion of ``geometry'' which is not necessarily equipped with a metric, only with a symmetry group, as above.
\end{remark}

\subsection{Coordinate-free geometries}

Since geometric subspaces of a geometry $X$ correspond to certain vector subspaces of the corresponding ambient vector space,
we may reinterpret the geometries above in a coordinate-free manner, referencing only intrinsic data of the ambient vector space. We use this idea to define PROPs $\Dip^S$, $\Dip^E$, and $\Dip^H$, which we will use to define the comodule spectra of scissors congruence. The category $\Dip^S$ was already used in \cite{KKMMW1} to define the commutative Hopf algebra structure of the spherical Sah algebra. 

\begin{definition} 
\label{coordinate-free-dip}
    We define three categories.
    \begin{enumerate}[(a)]
        \item Let $\Dip^S$ be the category with objects $(V,q)$ where $V$ is a finite-dimensional real vector space and $q$ is a positive definite quadratic form on $V$. A morphism in $\Dip^S$ is an isomorphism preserving the form $q$. Note that the automorphism group of $(V,q) \in \Dip^S$ is $O(V) \cong I(S^{\dim V - 1})$. 
        \item Let $\Dip^E$ be the category with objects $(V,x_0,q)$ where $V$ is a finite-dimensional real vector space of \textit{positive} dimension, $x_0 \colon V \to \R$ is a rank one linear functional with kernel $V_0 \subsetneq V$, and $q$ is a positive definite quadratic form \textit{defined only on} $V_0$. Morphisms in $\Dip^E$ are isomorphisms sending one functional to the other and preserving the quadratic form on the level of the kernels. The automorphism group of $(V,x_0,q) \in \Dip^E$ is isomorphic to $I(E^{\dim(V) -1})$.
        \item Let $\Dip^H$ be the category with objects $(V,q,\varepsilon)$ where $V$ is a finite-dimensional real vector space of \textit{positive} dimension and $q$ is a nondegenerate quadratic form on $V$ with exactly one negative eigenvalue and the rest of the eigenvalues positive\footnote{
Recall that every quadratic form on $\R^n$ is isomorphic to one of the form $\sum_{i=1}^a -x_i^2 + \sum_{i=a+1}^{a+b} x_i^2$, with $a$ negative eigenvalues and $b$ positive eigenvalues. So a quadratic form with one negative eigenvalue is always of the form $-x_0^2 + \sum_{i=1}^n x_i^2$ in some basis of $\R^{1+n}$.}, and $\varepsilon$ is a choice of orientation\footnote{The negative eigenspace is not well-defined as a subspace of $V$, so by ``orientation'' we mean that the set $\{v \in V \mid q(v) < 0\}$ has two connected components, and we select one as the ``positive" component. Along any line where $q(v) < 0$ for all nonzero vectors $v$, this induces an orientation of that line, which selects a ``positive'' direction.}
for the negative eigenspace of $q$. A morphism in $\Dip^H$ is an isomorphism of vector spaces preserving the form $q$ and the orientation of its negative eigenspace. The automorphism group of $(V,q,\varepsilon) \in \Dip^H$ is isomorphic to $I(H^{\dim(V) - 1})$. 
    \end{enumerate}
\end{definition}

We use $\Dip^X$ to refer generically to all three of these examples. Though each of the morphism sets in $\Dip^X$ has a natural topology, we ignore it and consider the morphisms to be discrete spaces. This is consistent with our earlier convention that the automorphism groups $I(X^n)$ should be discrete groups.

Note that the objects of $\Dip^X$ are always vector spaces with extra structure. We will often refer to objects of $\Dip^X$ simply by the vector space, leaving the rest of the structure implicit. We can always recover the geometry as follows:
\begin{itemize}
    \item If $V \in \Dip^S$, the corresponding geometry is the unit sphere where $q = 1$.
    \item If $V \in \Dip^E$, the corresponding geometry is the affine subspace where $x_0 = 1$.
    \item If $V \in \Dip^H$, the corresponding geometry is the ``positive'' component of the hypersurface where $q = -1$.
\end{itemize}

We also have a notion of a subobject $U$ for each $V \in \Dip^X$, which corresponds to a geometric subspace of the appropriate geometry.
\begin{definition}\label{dip-subobject}
A \emph{subobject} of $V \in \Dip^X$ is any vector subspace $U \subseteq V$ that defines another object of $\Dip^X$ in a natural way. To be more specific:
\begin{enumerate}[(a)]
    \item If $V \in \Dip^S$, a subobject of $V$ is any vector subspace $U \subseteq V$, with inner product given by restricting $q$ from $V$ to $U$.
    \item If $V \in \Dip^E$, a subobject of $V$ is any vector subspace $U \subseteq V$ such that the restriction $x_0|_U:U\to \R$ is still rank one. We restrict $q$ to $U_0 = V_0\cap U$ to define the quadratic form on $U_0$.
    \item If $V \in \Dip^H$, a subobject of $V$ is any vector subspace $U \subseteq V$ that contains at least one vector $u \in U$ such that $q(u) < 0$, with quadratic form given by restricting $q$ to $U$. The vector $u$ guarantees that $q$ has a nontrivial negative eigenspace. 
\end{enumerate}
\end{definition}
Note that these are not subobjects in the usual category-theoretic sense, because the category $\Dip^X$ is a groupoid, so any subobject of $V$ in the category-theoretic sense must be isomorphic to $V$.

\begin{lemma}
    In $E^n$ and $H^n$, these subobjects correspond bijectively to nonempty geometric subspaces of the corresponding geometry. In $S^{n-1}$ they correspond to all geometric subspaces, including the empty subspace.
\end{lemma}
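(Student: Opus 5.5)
The plan is to use the map $U \mapsto U \cap X$ from subobjects of $V$ to geometric subspaces of the associated geometry $X$. This map is surjective onto geometric subspaces by \cref{df:geometric_subspace}. So the work is to check two things: that subobjects land exactly in the nonempty subspaces (for $E$, $H$) or in all subspaces (for $S$), and that the map is injective on that range. I will treat the three cases of \cref{dip-subobject} separately.

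\emph{Spherical case.} Every vector subspace $U \subseteq V$ is a subobject, and every geometric subspace is $U \cap S$ for some $U$. For injectivity, note that $U$ is recovered from $U \cap S$ as $\{0\} \cup \R_{>0}\cdot(U\cap S)$, since every nonzero $u \in U$ rescales to $u/\sqrt{q(u)} \in S$. This is the remark already made after \cref{df:geometric_subspace}. In particular $U=0$ corresponds to the empty subspace.

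\emph{Euclidean case.} Let $X = \{x_0 = 1\}$.
\begin{itemize}
\item If $x_0|_U$ has rank one, then it is surjective onto $\R$, so some $u \in U$ has $x_0(u)=1$, and $U\cap X \neq \emptyset$.
\item Conversely, if $U \cap X$ is nonempty, then $x_0|_U \neq 0$, so it has rank one.
\item For injectivity, I show $U$ is the linear span of $U\cap X$. Fix $u_1 \in U \cap X$. Then $U = \R u_1 \oplus (U\cap V_0)$, and each $w \in U\cap V_0$ equals $(u_1+w)-u_1$, a difference of two points of $U \cap X$. Hence $U$ is determined by $U \cap X$.
\item Every nonempty geometric subspace $U\cap X$ arises from this $U$, so the correspondence is bijective onto nonempty subspaces.
\end{itemize}

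\emph{Hyperbolic case.} Let $X$ be the positive component of $\{q=-1\}$.
\begin{itemize}
\item If $U\cap X\neq\emptyset$, any point $u$ in it satisfies $q(u)=-1<0$, so $U$ is a subobject.
\item Conversely, suppose $u\in U$ has $q(u)<0$. Replacing $u$ by $-u$ if necessary puts $u$ in the positive component, and rescaling by $1/\sqrt{-q(u)}$ gives a point of $U\cap X$. So $U \cap X \neq \emptyset$.
\item For injectivity, fix $p \in U\cap X$. Then $U=\R p\oplus (p^\perp\cap U)$, where $p^\perp$ is taken with respect to the bilinear form of $q$. The form is positive definite on $p^\perp$, because $q$ has exactly one negative eigenvalue and $q(p)<0$.
\item Each $w\in p^\perp\cap U$ is a tangent direction at $p$: the curve $\cosh(t)p+\sinh(t)w/\sqrt{q(w)}$ lies in $U\cap X$. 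Hence $U$ is the span of $U\cap X$ and is determined by it.
\end{itemize}

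The only real obstacle is this spanning argument in the hyperbolic case. The key point is the sign/orientation check that the rescaled vector and the curve lie in the chosen positive component. This follows because the curve is connected and passes through $p$.
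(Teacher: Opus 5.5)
Your proof is correct and follows the same case-by-case route as the paper. In each geometry you show that subobjects are exactly the subspaces meeting $X$, and that $U$ is recovered from $U \cap X$: by positive rescaling in the spherical case, by a point with $x_0 = 1$ plus kernel directions in the Euclidean case, and by a point of $X$ plus its tangent directions in the hyperbolic case. Your explicit curve $\cosh(t)p + \sinh(t)w/\sqrt{q(w)}$ simply makes precise the paper's remark that $U$ is determined by the tangent space at a point of $U \cap X$.
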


\begin{proof}
    A geometric subspace is an equivalence class of vector subspaces $U \subseteq V$, where two are equivalent if they have the same intersection with the geometry $X \subseteq V$. In the spherical case, the intersection with the unit sphere completely determines the vector space, so we are left with all vector subspaces of $V$. In the Euclidean case, the geometric subspace $U \cap X$ is nonempty when it contains a vector $v$ with $x_0(v) = 1$, and then the remaining basis vectors may be taken in the kernel of $x_0$; this shows that the linear subspace $U$ is uniquely identified by its intersection with $X = \{v : x_0(v) = 1\}$. In the hyperbolic case, the geometric subspace $U \cap X$ is nonempty when it contains a vector $v$ with $q(v) = -1$. Up to coordinate change this vector is $(1,0,\ldots,0)$, and then the remainder of $U \cap \R^n$ is determined from $U \cap X$ as the tangent space at $(1,0,\ldots,0)$.
\end{proof}

\subsection{Direct sums and differences}
In order to define our comultiplication maps, we will need to take direct sums of objects $V \in \Dip^X$ for $X = S, E$, or $H$ with inner product spaces $W \in \Dip^S$.
\begin{definition}\label{dip-direct-plus}
The \emph{direct sum} functor
\[ \oplus\colon \Dip^X \times \Dip^S \to \Dip^X \]
is defined for $V \in \Dip^X$ and $W \in \Dip^S$ as follows.
\begin{itemize}
    \item If $V \in \Dip^S$, the direct sum $V \oplus W \in \Dip^S$ is the usual direct sum of inner product spaces, with $q((v,w)) = q_V(v) + q_W(w)$, so that $V$ and $W$ are orthogonal complements in the resulting space.
    \item If $V \in \Dip^E$, the direct sum $V \oplus W \in \Dip^E$ is the usual direct sum of vector spaces, mapping to $\R$ by projecting to $V$ and applying $x_0 \colon V \to \R$. The kernel of this map is the direct sum $V_0 \oplus W$, and we define the inner product on this by $q((v,w)) = q_{V_0}(v) + q_W(w)$ as above.
    \item If $V \in \Dip^H$, the direct sum $V \oplus W \in \Dip^H$ is the usual direct sum of vector spaces, with indefinite quadratic form $q((v,w)) = q_V(v) + q_W(w)$. Furthermore we declare a vector of the form $(v,0)$ with $q(v) < 0$ to have positive orientation if and only if $v$ has positive orientation as a vector in $V$.
\end{itemize}
\end{definition}
Of course, we can just as easily define the direct sum $W \oplus V$ when $W \in \Dip^S$ and $V \in \Dip^X$ in the same way.

We can also invert this operation by assigning each subobject $U \subseteq V$ in $\Dip^X$ to an orthogonal complement $V \ominus U$ in $\Dip^S$. The complement is always an inner product space, no matter which $X$ we started with.
\begin{definition}\label{dip-direct-minus}
The \emph{direct difference} is the partially defined functor
\[ \ominus\colon \Dip^X \times \Dip^X \to \Dip^S \]
defined for all pairs $(U,V)$ in which $U$ is a subobject of $V$, as follows.
\begin{itemize}
    \item If $X = S$ or $X = H$, the direct difference $V \ominus U$ is the orthogonal complement of $U$ in $V$. In other words
    \[ V \ominus U := \{ v \in V : b(v,u) = 0 \ \forall u \in U \}. \]
    Here $b(-,-)$ is the associated bilinear form to the quadratic form $q$.
    \item If $X = E$, the direct difference $V \ominus U$ is the orthogonal complement of $U_0 = \ker(x_0\colon U \to \R)$ in $V_0 = \ker(x_0\colon V \to \R)$:
    \[ V \ominus U := \{ v \in V_0 : b(v,u) = 0 \ \forall u \in U_0 \}. \]
    This is because the bilinear form $b(-,-)$ is only defined on $V_0$.
\end{itemize}
In all of these cases, $V \ominus U$ inherits a positive definite quadratic form from $V$ (or $V_0$ in the Euclidean case), so that $V \ominus U \in \Dip^S$.
\end{definition}

\begin{example}
    In hyperbolic geometry, the notion of orthogonality can be counterintuitive. Let $V$ be $\R^2$ equipped with the quadratic form $-x_0^2 + x_1^2$, and let $U = \langle (2,1) \rangle$ be the vector subspace of $V$ spanned by $(2,1)$. Then $V \ominus U$ is the span of $(1,2)$, which is not visually perpendicular to the line $U$ in the Euclidian sense, as illustrated below.  
    \[
         \begin{tikzpicture}   
            \draw[thick,->] (-2,0) -- (2,0) node[right]{$x_0$}; 
            \draw[thick,->] (0,-2) -- (0,2) node[above]{$x_1$};
            \draw[thick,OI3] (-2,-1) -- (2,1) node[right]{$U$};
            \draw[thick,OI6] (1,-2) -- (-1,2) node[above]{$V \ominus U$};
            \draw[thick,domain=90:360+90,variable=\u,smooth] plot ({cos(\u)},{sin(\u)})
            node[above right]{$S^1$};
        \end{tikzpicture}\,
        \begin{tikzpicture}   
            \draw[thick,->] (-2,0) -- (2,0) node[right]{$x_0$}; 
            \draw[thick,->] (0,-2) -- (0,2) node[above]{$x_1$};
            \draw[thick,OI3] (-2,-1) -- (2,1) node[right]{$U$};
            \draw[thick,OI6] (-1,-2) -- (1,2) node[above]{$V \ominus U$};
            \draw[thick,domain=-pi/2+0.2:pi/2-0.2,variable=\u,smooth] plot  ({cosh(-\u)},{sinh(-\u)}); 
            \node at (1.45,-1.7) {$H^1$};
        \end{tikzpicture}\,
                 \begin{tikzpicture}   
            \draw[thick,->] (-2,0) -- (2,0) node[right]{$x_0$}; 
            \draw[thick,->] (0,-2) -- (0,2) node[above]{$x_1$};
            \draw[thick,OI3] (-2,-1) -- (2,1) node[right]{$U$};
            \draw[thick,OI6] (0,-2) -- (0,2) node[ below left]{$V \ominus U$};
            \draw[thick] (1,-2) -- (1,2) node[right]{$E_1$};
        \end{tikzpicture}
    \]
\end{example}
\begin{remark}
    If we were to give each geometry $X$ its standard Riemannian metric, take any geometric subspace $U \cap X$, take the orthogonal complement to the tangent space $T_u (U \cap X) \subseteq T_u X$ for each point $u \in U \cap X$ and consider it as a subspace of the ambient vector space $V$, the result is \emph{always} $V \ominus U$ for every point $u \in U \cap X$. In spherical and Euclidean geometry, this is easy to see. In hyperbolic geometry, it follows from the fact that the tangent space to $H^n$ at $u \in U \cap H^n$ consists of those vectors $v$ such that $-2u_0v_0 + \sum_{i=1}^n 2u_iv_i = 0$, whereas $v \in V \ominus U$ iff $-u_0v_0 + \sum_{i=1}^n u_iv_i = 0$ for \emph{all} $u \in U$. These are the same equation, so $V \ominus U \subseteq T_u H^n$ for all $u \in U \cap H^n$. On the other hand, the Riemannian metric on $T_u H^n$ turns out to be given by the indefinite form $q$ restricted to $T_u H^n \subseteq V$, and so in $T_u H^n$ the subspace $V \ominus U$ is indeed the orthogonal complement of $T_u (U \cap H^n)$.
\end{remark}

\subsection{Symmetric monoidal structures}
The direct sum makes the category of vector spaces into a symmetric monoidal category, as well as $\Dip^S$. Forgetting the extra structure, the above direct sum operations therefore have isomorphisms with the same coherences between them. However, since the indexing category changes, it's convenient to build larger categories which are symmetric monoidal with this structure. The Euclidean version of this category also incorporates a symmetric monoidal structure on $\Dip^E$:

\begin{definition}\label{dip-cartesian}
The \emph{Cartesian product} functor
\[ \times\colon \Dip^E \times \Dip^E \to \Dip^E \]
is defined for vector spaces $V,W$ by by taking the fiber product $V \times_\R W$, and taking the direct sum of their inner products in the kernel space $V_0 \oplus W_0$.
\end{definition}

\begin{lemma}\label{dip-cartesian-coherent}
    This makes $\Dip^E$ into a symmetric monoidal category.
\end{lemma}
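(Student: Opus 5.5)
The plan is to transport the symmetric monoidal structure from a category where the coherences are already known: the category of finite-dimensional real vector spaces over $\R$, i.e. the slice category $\mathrm{Vect}_{/\R}$, with the fiber product $V \times_\R W$. This is the categorical product in the slice category $\mathrm{Vect}_{/\R}$, hence a (cartesian) symmetric monoidal structure whose unit is the terminal object $\id\colon \R \to \R$. So we get associator, unitors and symmetry satisfying the pentagon and hexagon axioms for free. The work is then to check three things: (1) $V \times_\R W$ really lands in $\Dip^E$; (2) the structure isomorphisms lie in $\Dip^E$, i.e. preserve the functional and the quadratic forms on kernels; (3) $\times$ is a functor on $\Dip^E$.

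For (1), the functional on $V \times_\R W$ is the common value $x_0(v) = x_0(w)$. It is rank one because both $x_0$'s are surjective: choose $v$ with $x_0(v)=1$ and $w$ with $x_0(w)=1$; then $(v,w)$ maps to $1$. Its kernel is exactly $V_0 \oplus W_0$, so the definition equips it with $q_{V_0} \oplus q_{W_0}$, which is positive definite. The dimension is $\dim V + \dim W - 1 \ge 1$, so positivity of dimension is preserved. The unit object is $\R$ with $x_0 = \id$, zero kernel and the zero quadratic form; this is a legitimate object of $\Dip^E$, corresponding to $E^0$.

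For (3), a pair of morphisms $f\colon V\to V'$, $g\colon W \to W'$ commuting with the functionals induces $f\times_\R g$. This is an isomorphism, commutes with the functionals, and restricts to $f|_{V_0}\oplus g|_{W_0}$ on kernels, which is an isometry.

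For (2), the associator, unitors and symmetry are the canonical maps of iterated fiber products: $((u,v),w)\mapsto (u,(v,w))$, $(r,v)\mapsto v$, and $(v,w)\mapsto(w,v)$. Each commutes with the projection to $\R$. On kernels, each restricts to the standard associator, unitor or symmetry of orthogonal direct sums $U_0\oplus V_0\oplus W_0$ (for the unitor, $0 \oplus V_0 \cong V_0$), and these are isometries. Naturality and the coherence diagrams hold in $\Dip^E$ because the forgetful functor $\Dip^E \to \mathrm{Vect}_{/\R}$ is faithful, and the diagrams already commute there.

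The only step needing any care is the unit. One must confirm that $\R$ with trivial kernel is allowed as an object (it is, since positive dimension is all that is required) and that the left unitor is an isomorphism in $\Dip^E$. Both reduce to the observation that $\R \times_\R V \to V$ is bijective and is the identity on $V_0$. I expect no genuine obstacle; the proof is a routine verification once the fiber product is recognized as the cartesian product in the slice category.
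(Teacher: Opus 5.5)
Your proof is correct and takes the same route as the paper. The paper's proof is a one-line appeal to the fact that the fiber product $\times_\R$ is unital, associative, and commutative up to coherent isomorphism. You fill in what it leaves implicit: that $\times_\R$ is the cartesian product in the slice category over $\R$, and that the resulting structure isomorphisms preserve the functional and the quadratic forms on the kernels $V_0 \oplus W_0$.
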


\begin{proof}
    This follows from the fact that the fiber product $\times_\R$ is unital, associative, and commutative up to coherent isomorphism.
\end{proof}

We can now define a larger category $\Luna^X$ for $X = S,E$, and $H$.

\begin{definition}\label{def-Luna}
    For a geometry $X$ we define the Linear sqUare-Null Alteration category, or $\Luna^X$ for short, as follows: 
    \begin{itemize}
        \item $\Luna^S = \Dip^S$,
        \item $\Luna^E = \Dip^E \vee \Dip^S$ where the wedge sum identifies one zero-dimensional Euclidean geometry given by $V = \R$, $x_0 = \id_\R$ in $\Dip^E$ with one zero-dimensional vector space in $\Dip^S$, and
        \item $\Luna^H = {*} \sqcup \Dip^H \sqcup \Dip^S$ where $*$ is the trivial one-point category with only the identity morphism.
    \end{itemize}
    
    We define these differently because the multiplicative structure that exists is different in each case: $\Dip^S$ has a direct sum functor, while $\Dip^E$ has a Cartesian product and a direct sum with $\Dip^S$. On the other hand, $\Dip^H$ does not have a Cartesian product functor, only a direct sum with $\Dip^S$. So we use the $*$ as a ``trash can'' to receive the product of two objects in $\Dip^H$.

    These products define the following bifunctors on $\Luna^X$:

    \begin{itemize}
        \item $\oplus: \Luna^S \times \Luna^S \to \Luna^S$ is $\oplus: \Dip^S \times \Dip^S \to \Dip^S$, which is direct sum of inner product spaces.
        \item $\oplustimes: \Luna^E \times \Luna^E \to \Luna^E$ is given by the functors
        \begin{itemize}
            \item $\oplus: \Dip^S \times \Dip^S \to \Dip^S$,
            \item $\oplus: \Dip^E \times \Dip^S \to \Dip^E$,
            \item $\oplus: \Dip^S \times \Dip^E \to \Dip^E$, and
            \item $\times: \Dip^E \times \Dip^E \to \Dip^E$.
        \end{itemize}
        \item $\oplus: \Luna^H \times \Luna^H \to \Luna^H$ is given by the functors
        \begin{itemize}
            \item $\oplus: \Dip^S \times \Dip^S \to \Dip^S$,
            \item $\oplus: \Dip^H \times \Dip^S \to \Dip^H$,
            \item $\oplus: \Dip^S \times \Dip^H \to \Dip^H$, and 
            \item ${*}: (\Dip^H \times \Dip^H) \sqcup (* \times \Luna^H) \sqcup (\Luna^H \times *) \to {*}$.
        \end{itemize}
    \end{itemize}
    
\end{definition}

\[
\begin{tikzpicture}
    \node at (-1,4) {$\Luna^E$};
    \node (top) at (0,4) {$\times$};
    \node at (1,4) {$\Luna^E$};
    \node (A2) at (-2.05,3) {$\Dip^E$};
    \node at (-1.4,2.97) {$\vee$};
    \node (A3) at (-0.8,3) {$\Dip^S$};
    \node (middle) at (0,3) {$\times$};
    \node (B2) at (0.9,3) {$\Dip^E$};
    \node at (1.55, 2.97) {$\vee$};
    \node (B3) at (2.15,3) {$\Dip^S$};
    \node (C2) at (-0.65,0) {$\Dip^E$};
    \node at (0,-0.03) {$\vee$};
    \node (C3) at (0.6,0) {$\Dip^S$};
    \draw[double equal sign distance] (top) -- (middle);
    \draw[->,thick,OI3] (A2.270) -- (C2.135);
    \draw[->,thick,OI3] (B2.-70) -- (C2.45);
    \draw[OI1,->,thick] (A3) -- (C3);
    \draw[OI1,->,thick] (B3) -- (C3);
    \draw[OI5,->,thick] (A2) -- (C2);
    \draw[OI5,->,thick] (B3) -- (C2.30);
    \draw[OI2,->,thick] (A3) -- (C2);
    \draw[OI2,->,thick] (B2.250) -- (C2.70);
\end{tikzpicture}
\hspace*{2cm}
\begin{tikzpicture}
    \node at (-1,4) {$\Luna^H$};
    \node (top) at (0,4) {$\times$};
    \node at (1,4) {$\Luna^H$};
    \node (A1) at (-3,3) {$\ast$};
    \node at (-2.7,2.97) {$\sqcup$};
    \node (A2) at (-2.05,3) {$\Dip^H$};
    \node at (-1.4,2.97) {$\sqcup$};
    \node (A3) at (-0.8,3) {$\Dip^S$};
    \node (middle) at (0,3) {$\times$};
    \node (B1) at (0.5,3) {$\ast$};
    \node at (0.8,2.97) {$\sqcup$};
    \node (B2) at (1.45,3) {$\Dip^H$};
    \node at (2.1, 2.97) {$\sqcup$};
    \node (B3) at (2.7,3) {$\Dip^S$};
    \node (C1) at (-1,0) {$\ast$};
    \node at (-0.7,-0.03) {$\sqcup$};
    \node (C2) at (0.05,0) {$\Dip^H$};
    \node at (0.7,-0.03) {$\sqcup$};
    \node (C3) at (1.3,0) {$\Dip^S$};
    \draw[double equal sign distance] (top) -- (middle);
    \draw[OI3,->,thick] (A2) -- (C1);
    \draw[OI3,->,thick] (B2) -- (C1);
    \draw[OI1,->,thick] (A3) -- (C3);
    \draw[OI1,->,thick] (B3) -- (C3);
    \draw[OI2,->,thick] (A2) -- (C2);
    \draw[OI2,->,thick] (B3) -- (C2);
    \draw[OI5,->,thick] (A3) -- (C2);
    \draw[OI5,->,thick] (B2) -- (C2);
\end{tikzpicture}
\]

\begin{lemma}
    The usual isomorphisms of vector spaces $0 \oplus V \cong V$, $(V_1 \oplus V_2) \oplus V_3 \cong V_1 \oplus (V_2 \oplus V_3)$, and $V_1 \oplus V_2 \cong V_2 \oplus V_1$, and the isomorphisms from \cref{dip-cartesian-coherent} in the Euclidean case, make each $\Luna^X$ into a symmetric monoidal category.
\end{lemma}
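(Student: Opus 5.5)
The plan is to reduce everything to the symmetric monoidal structure of finite-dimensional real vector spaces under $\oplus$. I will check that the structure maps named in the statement are morphisms in the relevant $\Luna^X$, and that they are natural. The coherence axioms (pentagon, triangle, hexagon, and symmetry squaring to the identity) are equations between composites of morphisms. Every morphism of $\Luna^X$ is an underlying linear isomorphism with extra properties, or the identity of $*$. So these equations hold automatically once the maps are known to be morphisms: they already hold for the underlying linear maps.

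\textbf{Spherical case.} For $\Luna^S=\Dip^S$ the unit is the zero inner product space. The associator and symmetry preserve $q((v_1,v_2,v_3))=\sum q_i(v_i)$, and the unitor $0\oplus V\cong V$ preserves $q$. Naturality is clear because $\oplus$ of isometries is again an isometry.

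\textbf{Hyperbolic case.} The unit is again $0\in\Dip^S$, and I will go through the components by type.
\begin{itemize}
\item If all inputs lie in $\Dip^S\sqcup\Dip^H$ with at most one factor in $\Dip^H$, the usual maps preserve $q$. They also preserve the orientation of the negative cone, because the positive vectors are those of the form $(0,\dots,v,\dots,0)$ with $v$ positive in the unique hyperbolic factor, up to connected components of the negative cone. The associator and symmetry carry such vectors to vectors of the same form.
\item If two or more factors lie in $\Dip^H$, or some factor is $*$, then both sides of every structure isomorphism equal $*$. There the only possible map is $\id_*$, and the diagrams commute trivially.
\end{itemize}
Since the bifunctor is defined piecewise, I must confirm that each coherence diagram lies entirely in one regime. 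It does: the number of hyperbolic factors and the presence of $*$ are invariant under reassociation and permutation.

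\textbf{Euclidean case.} Here I define the unit to be the wedge point $\R$ with $x_0=\id$, which is simultaneously $0\in\Dip^S$. This is the main obstacle: the unitor and the mixed associators must be compatible with the identifications in the wedge $\Dip^E\vee\Dip^S$. The relevant facts are these.
\begin{itemize}
\item $\R\times_\R V\cong V$ for $V\in\Dip^E$, and $0\oplus W\cong W$ for $W\in\Dip^S$.
\item On the wedge point, $0\oplus V\cong V$ and $\R\times_\R V\cong V$ agree as linear maps.
\item For the mixed associators, such as $(V\times V')\oplus W\cong V\times(V'\oplus W)$, I will use the canonical isomorphisms between fiber products over $\R$ and direct sums with $\ker$-type summands. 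Concretely, $W$ is regarded as $\R\oplus W$ with $x_0$ the projection, so $V\oplus W= V\times_\R(\R\oplus W)$.
\end{itemize}
This last identification is what I plan to use throughout. The functor $\Dip^S\to\Dip^E$, $W\mapsto \R\oplus W$, is injective on objects and faithful. Through it, every structure map of $\Luna^E$ becomes a structure map of the symmetric monoidal category $(\Dip^E,\times)$ of \cref{dip-cartesian-coherent}, up to the standard isomorphisms $\R\oplus(W\oplus W')\cong(\R\oplus W)\times_\R(\R\oplus W')$.

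Coherence then transfers from that lemma, which is already proved. The remaining routine points are that these maps preserve $x_0$ and the form on the kernels, which holds because kernels of fiber products are direct sums of kernels. The one delicate point is making the wedge identification consistent, so that the two descriptions of the zero object give literally equal structure maps; I expect to settle it by the explicit check above.
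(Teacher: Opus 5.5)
Your proposal is correct. For $\Luna^S$ and $\Luna^H$ it is the paper's argument: check that each structure map preserves $q$ (and the orientation in the hyperbolic case), with the cases of two hyperbolic factors or a $*$ factor trivial. Your remark that the regime is invariant under reassociation and permutation makes explicit a coherence step the paper leaves tacit.

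The Euclidean case is where you genuinely diverge. The paper checks the mixed isomorphisms directly: they commute with the projection to $\R$ and preserve the form on the kernels. It cites \cref{dip-cartesian-coherent} only when both factors lie in $\Dip^E$, and takes coherence of mixed diagrams for granted. You instead transport the structure along $\Phi\colon\Luna^E\to\Dip^E$, which is the identity on $\Dip^E$ and $W\mapsto\R\oplus W$ on $\Dip^S$, via $V\oplus W\cong V\times_\R(\R\oplus W)$. Every coherence diagram, mixed ones included, is then pulled back from \cref{dip-cartesian-coherent}. This gives a real justification exactly where ``it holds for the underlying linear maps'' is awkward: the wedge point has underlying space $\R$ in one copy and $0$ in the other, and the product mixes $\times_\R$ with $\oplus$.

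When you write it up, four points need care.
\begin{itemize}
\item The transfer needs faithfulness of $\Phi$ on all of $\Luna^E$, not injectivity on objects. Faithfulness holds, since the two components share only the wedge point, which has trivial automorphism group. Injectivity on objects actually fails for the combined $\Phi$, because $\R\oplus W$ is already an object of $\Dip^E$.
\item You must first verify that $\Phi$ carries each $\Luna^E$ structure map to the corresponding $\Dip^E$ one, conjugated by your comparison isomorphisms; that is, $\Phi$ is strong monoidal. Only then does faithfulness pull back the pentagon, triangle and hexagon axioms.
\item Since $\Phi$ is not full, you still need the direct check that the maps are morphisms of $\Luna^E$, as you say.
\item Your unitor silently fixes the convention that the wedge point times $W\in\Dip^S$ is computed in $\Dip^S$. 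Read literally, $\oplus\colon\Dip^E\times\Dip^S\to\Dip^E$ would produce $\R\oplus W$ instead. Your choice is the right one, but state it, since the paper's parenthetical only treats the wedge point times itself.
\end{itemize}
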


\begin{proof}
    All that we have to check is that each of these isomorphisms is a map in the appropriate flavor of $\Dip^X$. When all of the factors are in $\Dip^S$, this is the usual fact that the isomorphisms preserve the inner product. When one of the factors is in $\Dip^E$, we observe that the isomorphisms commute with the projection to $\R$ and preserve the inner product on the kernel of that projection. When both factors are in $\Dip^E$, this is the content of \cref{dip-cartesian-coherent}. (When both factors are the wedge point that is in both $\Dip^E$ and $\Dip^S$, we take their product to be the wedge point again.) When one of the factors is in $\Dip^H$, the isomorphisms preserve the quadratic form $q$ and the orientation of its negative eigenspace. When more than one factor is in $\Dip^H$, the sums are both trivial and so there is nothing to check.
\end{proof}

\subsection{More properties of direct sum and difference}
\begin{lemma}\label{sum_difference_inverses_surprise_surprise}
    For $U \subseteq V \in \Dip^X$ and $W \in \Dip^S$, we have canonical isomorphisms
    \[ (V \ominus U) \oplus U \cong V \textup{ in } \Dip^X, \]
    \[ (W \oplus U) \ominus U \cong W \textup{ in } \Dip^S. \]
\end{lemma}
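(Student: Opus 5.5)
The plan is to treat the two isomorphisms separately and, in each case, split into the three flavours $X = S, E, H$. In each case the candidate isomorphism is the obvious linear map. What has to be checked is that this map is bijective and that it respects the extra structure that makes it a morphism in $\Dip^X$ or $\Dip^S$. Here $\ominus$ is taken with respect to the subobject $U \subseteq V$ in the first line and with respect to the evident subobject $U \subseteq W \oplus U$ (embedded as $0 \oplus U$) in the second line.

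For the first isomorphism, define $(V \ominus U) \oplus U \to V$ by $(w,u) \mapsto w + u$, using the inclusions of both pieces into $V$.
\begin{itemize}
\item For $X = S$ this is the standard orthogonal decomposition $V = U^\perp \oplus U$ of a positive definite space, and the map is an isometry.
\item For $X = E$ we have $V \ominus U \subseteq V_0$ with $(V \ominus U) \oplus U_0 = V_0$ orthogonally. Since $x_0|_U$ is rank one, $U \not\subseteq V_0$, so the map is bijective. It commutes with $x_0$ because $x_0$ vanishes on $V \ominus U$, and the source functional is projection to $U$ followed by $x_0$. On kernels it is the isometry $(V \ominus U) \oplus U_0 \cong V_0$.
\item For $X = H$, the key point is that $q|_U$ is nondegenerate. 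Indeed, $U$ contains a vector with $q(u) < 0$, so $q|_U$ has one negative eigenvalue, and its complement $u^\perp \cap U$ lies in $u^\perp$, on which $q$ is positive definite. Hence $V = U \oplus U^\perp$, and $U^\perp$ is positive definite because $V$ has only one negative eigenvalue. The sum map then preserves $q$, since the cross terms vanish. It preserves orientation by the convention in \cref{dip-direct-plus}: a negative vector $(0,u)$ is declared positive exactly when $u$ is positive in $U$, and $U$ carries the orientation restricted from $V$.
\end{itemize}

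For the second isomorphism, compute $(W \oplus U) \ominus U$ directly.
\begin{itemize}
\item For $X = S$ and $X = H$, the form on $W \oplus U$ is $q_W + q_U$, so $W \oplus 0$ is orthogonal to $0 \oplus U$. A dimension count, using nondegeneracy of $q|_U$ as above, shows that $W \oplus 0$ is the whole orthogonal complement. The isomorphism is therefore $w \mapsto (w,0)$, and it is an isometry.
\item For $X = E$, the kernel of $x_0$ on $W \oplus U$ is $W \oplus U_0$, and the complement of $U_0$ in it is $W$, by the same argument.
\end{itemize}

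Canonicity amounts to the observation that each map is built from inclusions and sums, so it is natural in isomorphisms of the pair $(U \subseteq V)$ and of $W$.

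The only step with real content is the hyperbolic case. There one must verify nondegeneracy of $q|_U$, so that the complement is a genuine complement and is positive definite, and check the orientation compatibility. I would handle both with the argument above: the complement of a timelike vector is spacelike.
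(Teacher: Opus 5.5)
Your proposal is correct and follows essentially the same route as the paper: the sum map $(V\ominus U)\oplus U \to V$ and the inclusion $W \to (W\oplus U)\ominus U$, shown to be bijective via trivial intersection plus a dimension count (with the hyperbolic case the only one of substance), and then checked to preserve the structure. Your nondegeneracy argument for $q|_U$ does the job of the paper's step of putting $U$ in standard position $\R\oplus\R^k\oplus 0\subseteq\R\oplus\R^k\oplus\R^\ell$, and your explicit orientation check in the hyperbolic case covers a detail that the paper leaves implicit.
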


\begin{proof}
    In each case the map $(V \ominus U) \oplus U \to V$ is given by taking the sum as subspaces of $V$. It is injective since $U \cap (V \ominus U) = 0$ as subspaces of $V$ (which is most difficult to verify in the hyperbolic case, where we use the fact that $U$ contains a vector $u$ with $q(u) < 0$ and therefore up to coordinate change it is $\R \oplus \R^k \oplus 0 \subseteq \R \oplus \R^k \oplus \R^\ell$). It is surjective by counting dimensions. Since the orthogonality condition in $\ominus$ guarantees that this map preserves the quadratic form, it is an isomorphism in $\Dip^X$. (It also preserves the map $x_0$ in the Euclidean case.)
    
    The second map $W \to (W \oplus U) \ominus U$ is given by inclusion into the first factor. Again it is injective because $W$ is always in the complement of $U$ in $W \oplus U$, and it is surjective by counting dimensions.
\end{proof}

It will also be helpful for us to characterize direct sum decompositions in $\Dip^X$ as sequences of nested subobjects:

\begin{lemma}\label{lem:sequence of oplus vs ominus}
    For $V \in \Dip^X$, consider direct sum decompositions
    \[
        \varphi \colon W_1 \oplus W_2 \oplus \dotsc \oplus W_n \cong V,
    \]
    up to isomorphism of each summand, where $W_1 \in \Dip^X$ and $W_i \in \Dip^S$ for $i \geq 2$. There is a bijective map $f$ from the set of isomorphism classes of such decompositions to nested subobjects in $\Dip^X$
    $$V_1 \subseteq V_2 \subseteq \dotsc \subseteq V_n = V$$
    given by $V_i =\varphi( W_1 \oplus W_2 \oplus \dotsc \oplus W_i)$, $W_1 = V_1$ and $W_i = V_i \ominus V_{i-1}$ for $i \geq 2$. Furthermore,
    \begin{enumerate}[(i)]
        \item along this bijection, grouping together subspaces $W_i \oplus W_{i+1}$ in a direct sum decomposition corresponds to omitting the subspace $V_i$ from the nesting, and adding a zero space $W_i = 0$ to the direct sum decomposition corresponds to repeating the subspace $V_i$ in the nesting.
        \item The inverse $f^{-1}$ produces a canonical representative of the direct sum decomposition in which all the terms are subspaces of $V$ which are orthogonal to each other, $\oplus$ is given by the sum of vector subspaces of $V$, and the isomorphism is equality.
    \end{enumerate}
\end{lemma}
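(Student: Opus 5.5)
The plan is to write down the two maps explicitly and check that they are mutually inverse, leaning on \cref{sum_difference_inverses_surprise_surprise} for the key identities.

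\textbf{The map $f$.} Given $\varphi\colon W_1 \oplus \dots \oplus W_n \cong V$, set $V_i = \varphi(W_1 \oplus \dots \oplus W_i)$. First check that each $V_i$ is a subobject of $V$ in the sense of \cref{dip-subobject}. In the Euclidean case, $x_0$ restricted to $V_i$ is rank one, because $x_0$ on the direct sum factors through the projection to $W_1 \in \Dip^E$. In the hyperbolic case, $V_i$ contains the image of a vector of $W_1$ with negative norm. The spherical case has no condition. The $V_i$ are evidently nested, and $V_n = V$. Next check that $f$ is well defined on isomorphism classes: replacing each $W_i$ by an isomorphic $W_i'$ and precomposing $\varphi$ with the product of those isomorphisms does not change the images $V_i$.

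\textbf{The inverse $g$.} Given a nesting $V_1 \subseteq \dots \subseteq V_n = V$, put $W_1 = V_1$ and $W_i = V_i \ominus V_{i-1}$ for $i\ge 2$, all regarded as subspaces of $V$. Using \cref{dip-direct-minus} and the form on $V$ (or on $V_0$ in the Euclidean case), each $W_i$ with $i \geq 2$ lies in $V_i$ and is orthogonal to $V_{i-1}$, hence to $W_1,\dots,W_{i-1}$. So the $W_i$ are mutually orthogonal. By induction on $i$, \cref{sum_difference_inverses_surprise_surprise} applied to $V_{i-1} \subseteq V_i$ shows that the internal sum $W_1 + \dots + W_i$ equals $V_i$ and that the sum map $(V_i \ominus V_{i-1}) \oplus V_{i-1} \to V_i$ is an isomorphism in $\Dip^X$. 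Composing these gives an isomorphism $\varphi\colon W_1 \oplus \dots \oplus W_n \to V$ in $\Dip^X$, which is given by summation of subspaces. This construction is exactly the canonical representative claimed in (ii).

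\textbf{Mutual inverses.} The composite $f \circ g$ is the identity because $\varphi(W_1\oplus\dots\oplus W_i) = V_i$ by construction. For $g \circ f$, start from $\varphi$ and its images $\varphi(W_j)\subseteq V$. Because $\varphi$ preserves quadratic forms, $\varphi(W_i)$ is orthogonal to $V_{i-1}$ and lies in $V_i$. Counting dimensions then gives $\varphi(W_i) = V_i \ominus V_{i-1}$, so $\varphi$ restricts to isomorphisms $W_i \cong V_i\ominus V_{i-1}$ compatible with the sums. Hence the decomposition is isomorphic to $g(f(\varphi))$. In the Euclidean case, orthogonality must be read inside the kernels $V_0$; this works because $W_i \subseteq \ker x_0$ for $i\ge2$. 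This is the step I expect to need the most care: verifying across the three geometries that $\varphi(W_i)$ really is the direct difference, especially the hyperbolic injectivity already used in \cref{sum_difference_inverses_surprise_surprise}.

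\textbf{Part (i).} Grouping $W_i \oplus W_{i+1}$ leaves the partial sums unchanged except that the $i$-th one, $V_i$, no longer appears. Inserting $W_i=0$ makes the partial sum $V_i$ equal to $V_{i-1}$, so that subspace appears twice. Both are immediate from the definition of $f$.
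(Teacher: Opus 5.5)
Your proposal is correct and follows essentially the paper's route: the paper gets surjectivity from the isomorphism $(V\ominus U)\oplus U\cong V$ of \cref{sum_difference_inverses_surprise_surprise} and injectivity from $(W\oplus U)\ominus U\cong W$, and these are exactly your construction of $g$ and your orthogonality-plus-dimension-count identification $\varphi(W_i)=V_i\ominus V_{i-1}$. Beyond that, you make explicit several checks the paper leaves implicit: that the $V_i$ are subobjects, that $f$ is well defined on isomorphism classes, and that in the Euclidean case orthogonality is read inside the kernels.
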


In part (ii), we say that $U_1,U_2 \subseteq V$ are \emph{orthogonal} if $b(u_1,u_2) = 0$ for all $u_1 \in U_1$ and $u_2 \in U_2$ (that are in the kernel of $x_0$ in the Euclidean case).

\begin{proof}
    The map $f$ is surjective because every set of nested subspaces is isomorphic to a sequence of direct sums, by repeated application of the first isomorphism in \cref{sum_difference_inverses_surprise_surprise}. It is injective because any two direct sum decompositions can be identified by repeated application of the second isomorphism in \cref{sum_difference_inverses_surprise_surprise}.

    Properties (i) and (ii) follow from the definition of the bijection $f$. For example, $V_i \ominus V_{i-1}$ is orthogonal to $V_{i-1}$ by definition. Furthermore, if $U, W \subseteq V$ are orthogonal, then the direct sum $U \oplus W$ can be taken to be their sum as vector spaces, and $(U \oplus W) \ominus W = U$.
\end{proof}

Once we use this bijection, we often find ourselves in a situation where we are summing together subspaces of a fixed vector space $V$, and so we can ask for an equality and not just an isomorphism with a direct sum, as in the following lemma.

\begin{lemma}\label{lem:oplus_ominus}
    Let $V \in \Dip^X$. For any sequence of nested subobjects $U_1 \subseteq U_2 \subseteq U_3 \subseteq V$ of $V$, we have
    $$(U_2 \ominus U_1) \oplus (U_3 \ominus U_2) = U_3 \ominus U_1,$$
    $$(U_3 \ominus U_1) \ominus (U_2 \ominus U_1) = U_3 \ominus U_2$$
\end{lemma}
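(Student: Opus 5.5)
We will read both equalities as equalities of subspaces of $V$, or of $V_0 = \ker(x_0)$ in the Euclidean case. Here $\oplus$ of two mutually orthogonal subspaces means their sum inside $V$, as in part (ii) of \cref{lem:sequence of oplus vs ominus}. All the direct differences involved lie in $\Dip^S$, so each is a subspace of $V$ (resp. $V_0$) on which $b$ is positive definite. The plan is to reduce everything to linear algebra about orthogonal complements with respect to $b$, treating the three geometries uniformly. The Euclidean case is handled by replacing each $U_i$ by $(U_i)_0 = U_i \cap V_0$, since $\ominus$ only sees the kernels.

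For the first identity, we first check that $A = U_2 \ominus U_1$ and $B = U_3 \ominus U_2$ are orthogonal, so that their sum is a legitimate internal direct sum.
\begin{itemize}
\item The subspace $A$ lies in $U_2$, and $B$ is orthogonal to all of $U_2$. Hence $b(A,B)=0$.
\item Both $A$ and $B$ lie in $U_3$.
\item $A$ is orthogonal to $U_1$. Since $U_1 \subseteq U_2$, the subspace $B$ is also orthogonal to $U_1$.
\end{itemize}
Together these give $A + B \subseteq U_3 \ominus U_1$. For the reverse inclusion, count dimensions. By \cref{sum_difference_inverses_surprise_surprise} we have $\dim(U_j \ominus U_i) = \dim U_j - \dim U_i$, using kernels in the Euclidean case. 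Also $A \cap B = 0$, because $b$ is positive definite on $A$ and $b(A,B)=0$. Therefore $\dim(A+B) = \dim U_3 - \dim U_1$, and the inclusion is an equality.

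For the second identity, take the ambient object to be $C = U_3 \ominus U_1 \in \Dip^S$, with $A = U_2 \ominus U_1$ as a subobject. By the first identity, $C = A \oplus B$ as an orthogonal internal sum. Since $b$ is positive definite on $C$, the orthogonal complement of $A$ in $C$ is exactly $B = U_3 \ominus U_2$.

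The main subtlety is the hyperbolic case, where $b$ is indefinite on $V$. There the dimension formula $\dim(U_j\ominus U_i) = \dim U_j - \dim U_i$, computed inside $U_j$, needs $b$ to be nondegenerate on $U_i$. This holds because $U_i$ contains a negative vector, which is the content already used in the proof of \cref{sum_difference_inverses_surprise_surprise}. Positive definiteness on $A$ and $C$ is automatic, because all direct differences lie in $\Dip^S$. Finally, in the Euclidean case one should note that every space appearing is a subspace of $V_0$, so the orthogonality arguments take place entirely inside the inner product space $V_0$.
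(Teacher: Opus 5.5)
Your proof is correct and follows the paper's own route. For the first identity you show that $(U_2\ominus U_1)\oplus(U_3\ominus U_2)$ is contained in $U_3\ominus U_1$ and then compare dimensions; the second identity then follows by taking the complement of $U_2\ominus U_1$ inside $U_3\ominus U_1$. The only difference is that you spell out details the paper leaves implicit: that the two summands are orthogonal, that their intersection is zero, and the nondegeneracy needed for the dimension formula in the hyperbolic case.
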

    
\begin{proof}
    For the first equality, the left-hand side is certainly contained in $U_3 \ominus U_1$, since both factors are orthogonal to $U_1$. The dimensions of both sides are equal, and therefore they are equal. The second equality is deduced from the first equality by applying $\ominus (U_2 \ominus U_1)$ to both sides. 
\end{proof}

\section{The classical Dehn invariant and product structures}\label{sec:classical_structures}

We next recall the classical description of the product and the Dehn invariant in each of these geometries, before we define a more sophisticated lift of these structures to spectra.

\subsection{The polytope module}

\begin{definition}\label{def-convex-polytope}
    Let $X$ be a geometry of dimension $n$. A geodesic simplex in $X$ is the geodesic convex hull of $n+1$ points in general position (contained in an open hemisphere in the spherical case). A convex polytope in $X$ is a subset $P \subseteq X$ obtained by intersecting $X$ with finitely many half-spaces in $\R^{n+1}$.
\end{definition}

\begin{definition}\label{pt}
    For any geometry $X$ of dimension $n$, the \emph{polytope module} $\Pt(X)$ is the abelian group generated by elements $[P]$ for each nondegenerate geodesic simplex $P = [v_0,\ldots,v_n]$ in $X$, modulo the relation that when one simplex $P$ is covered by finitely many simplices $P_i$ with disjoint interiors, we have $[P] = \sum_i [P_i]$, as in \cite[Definition 5.1]{KKMMW1}.
    
    As in \cite{KLMMS-2}, the group $\Pt(X)$ can alternatively be defined as having one generator $[P]$ for every convex polytope $P$ (contained in an open hemisphere in the spherical case), modulo the relation that when one convex polytope $P$ is covered by finitely many convex polytopes $P_i$ with disjoint interiors, we have $[P] = \sum_i [P_i]$. Every polytope $P$ from this second presentation gives a well-defined element in the first presentation by subdividing $P$ into simplices.

    We also call this group $\Pt(V)$ if $V$ is the ambient vector space. For the empty spherical geometry $S^{-1} = \varnothing$, we define $\Pt(\R^0)$ to have a single generator corresponding to the empty simplex.

    As in \cite[Definition 5.1]{KKMMW1}, we can also write $[v_0,\ldots,v_n] \in \Pt(V)$ if the vectors $v_i$ are any vectors in the ambient vector space $V$ of dimension $(n+1)$, not just lying along the geometry $X^n$ inside.

    Also as in \cite[Definition 5.1]{KKMMW1}, in the spherical case we also define the \emph{reduced polytope module} $\tPt(V) = \Pt(V)/\Sigma$ as the quotient of $\Pt(V)$ by the image of ``suspension'' maps. Here we define a suspension map $\Pt(U) \to \Pt(V)$ for each codimension-one subspace $U \subseteq V$ with orthogonal complement spanned by $v \in S(V)$, by taking each polytope in $S(U)$ to its join with the vectors $v$ and $-v$. On the level of geodesic simplices, it takes $[v_0,\ldots,v_{n-1}]$ to the sum
	\begin{equation}\label{pt_relation}
		[v_0,\ldots,v_{n-1},v] + [v_0,\ldots,v_{n-1},-v].
	\end{equation}
\end{definition}

The order of the points $[v_0,\ldots,v_n]$ does not affect the resulting element of $\tPt(V)$, so rearranging them does not introduce a sign. The isometry group $I(X)$ acts on $\Pt(X)$ by acting on the vertices of each simplex. Similarly, $O(n)$ acts on $\tPt(S^{n-1})$ for all $n$. This does not introduce a sign, even when the matrix reverses orientation. This makes the group $\Pt(-)$ into a functor $\Dip^X\to\Ab$, which we call the \emph{(unreduced) polytope functor}.

\begin{definition}\label{homology_sah_module}
Taking the homotopy colimit of these as functors $\Dip^X \to \Ch$, or equivalently the group homology of the automorphism groups $I(X)$ with coefficients in these modules, gives graded abelian groups that we call
\begin{align*}
H_*(\mathcal{E}) := \bigoplus_{n \geq 0} H_*(I(E^n);\Pt(E^n)), & \qquad H_*(\mathcal{S}) := \bigoplus_{n \geq 0} H_*(O(n);\Pt(S^{n-1})), \\
H_*(\mathcal{H}) := \bigoplus_{n \geq 0} H_*(I(H^n);\Pt(H^n)), & \qquad H_*(\widetilde{\mathcal{S}}) := \bigoplus_{n \geq 0} H_*(O(n);\tPt(S^{n-1})).
\end{align*}
\end{definition}

In particular, at homology level 0 we get the classical scissors congruence groups
\begin{align*}
H_0(\mathcal{E}) := \bigoplus_{n \geq 0} \Pt(E^n)_{I(E^n)}, & \qquad H_0(\mathcal{S}) := \bigoplus_{n \geq 0} \Pt(S^{n-1})_{O(n)}, \\
H_0(\mathcal{H}) := \bigoplus_{n \geq 0} \Pt(H^n)_{I(H^n)}, & \qquad H_0(\widetilde{\mathcal{S}}) := \bigoplus_{n \geq 0} \tPt(S^{n-1})_{O(n)}.
\end{align*}
These groups are bigraded, with the first grading given by homological degree and the second by the summation index. In spherical geometry, this second grading is the dimension of the ambient vector space.

\subsection{Definition of the Dehn invariant and products}
Next we recall the classical Dehn invariant. Its formula looks just like the coproduct in \cite{KKMMW1}, only the meaning of some of the symbols has changed.

\begin{definition}\label{pt_dehn}
    If $X = E^n$ or $X = H^n$ with ambient vector space $V$ of dimension $(n+1)$, we define the Dehn invariant
	\[
    	\delta\colon \Pt(V) 
			\to 
		\bigoplus_{0 \subsetneq U \subseteq V} 
			\Pt(U) \otimes \tPt(V \ominus U)
	\]
	by taking each simplex $[v_0,\ldots,v_n]$ to the sum
	\[ 
		\sum_{\varnothing \subsetneq S\subseteq \{v_0,\dots, v_n\}} 
			[S] \otimes [\pr_{V \ominus \langle S\rangle} S^c]. 
	\]
    Here $U$ ranges over the \emph{nonzero} subobjects of $V$ in the sense of \cref{dip-subobject}. We also have $S^c = \{v_0,\ldots,v_n\} \setminus S$, and $\pr_{V \ominus \langle S\rangle} S^c$ means projection onto the factor $V \ominus \langle S\rangle \in \Dip^S$ along the canonical isomorphism $V \cong (V \ominus \langle S\rangle) \oplus \langle S\rangle$ from \cref{sum_difference_inverses_surprise_surprise}. As in the previous paper \cite{KKMMW1}, we allow the vectors in these expressions to be scaled by any positive scalar, so that they may again lie on the unit sphere.
    
    If $X = S(V) \cong S^{n-1}$ then the Dehn invariant has the same definition, except that we also allow $U$ to be the zero subspace:
	\[
    	\delta\colon \Pt(V) 
			\to 
		\bigoplus_{0 \subseteq U \subseteq V} 
			\Pt(U) \otimes \tPt(V \ominus U), \,
		[v_1,\ldots,v_n] 
			\mapsto 
		\sum_{\varnothing \subseteq S\subseteq \{v_1,\dots, v_n\}} 
			[S] \otimes [\pr_{V \ominus \langle S\rangle} S^c]. 
	\]
    This turns out to send the image of suspension to zero, and so it descends to a Dehn invariant in the reduced case as well, see \cite{KKMMW1}:
	\[
    	\delta\colon \tPt(V) 
			\to 
		\bigoplus_{0 \subseteq U \subseteq V} 
			\tPt(U) \otimes \tPt(V \ominus U), \,
		[v_1,\ldots,v_n] 
			\mapsto 
		\sum_{\varnothing \subseteq S\subseteq \{v_1,\dots, v_n\}} 
			[S] \otimes [\pr_{V \ominus \langle S\rangle} S^c]. 
	\]
    Geometrically, in each of these formulas, each term of this sum is one face of the simplex, and its ``link'' as a simplex in the complementary subspace of $V$. In the Euclidean and hyperbolic cases the empty face of $[v_0,\ldots,v_n]$ is not considered because it does not have a well-defined link due to $0$ not being an admissible subobject, whereas in the spherical case the empty face of $[v_1,\ldots,v_n]$ is allowed, and its link is the entire polytope $[v_1,\ldots,v_n]$.
\end{definition}

\begin{remark}\label{why_no_zero}
    Note that the Dehn invariant does not allow $U = 0$ in the Euclidean and hyperbolic cases but does allow $U = 0$ in the spherical case. This is because a Euclidean geometry must have an ambient vector space that is positive-dimensional by \cref{coordinate-free-dip}, and so the empty subset of $E^n$ does not have a span that is an object of $\Dip^E$, and similarly for hyperbolic geometry.
\end{remark}

We next define the multiplication on $E^n$ and the multiplication on $S^{n-1}$.
\begin{definition}\label{e_product}
    For two Euclidean geometries with ambient vector spaces $U$ and $V$ we define the multiplication
    \[ \mu\colon \Pt(U) \otimes \Pt(V)
        \to
    \Pt(U \times_\R V) \]
    by taking a pair of simplices $P \subseteq E^m$, $Q \subseteq E^n$ to their Cartesian product $P \times Q \subseteq E^m \times E^n$. In terms of simplices this can be written as 
    \[ \mu\colon [u_0,\ldots,u_m], [v_0,\ldots,v_n] \mapsto \sum_{\textup{paths}} [(u_{a_i},v_{b_i})_i] \]
    where $(a_i,b_i)$ ranges over lattice paths from $(0,0)$ to $(m,n)$, which at each step increase either the first or second coordinate by 1. 
\end{definition}

\begin{remark}
    Note the lack of signs in the formula in \cref{e_product}. This is because elements of $\Pt(U)$ are insensitive to the orientation induced on the simplex by the order of the vertices. There are other equivalent ways to define the group (as in \cite[Definition 5.3]{KKMMW1}) under which such signs would appear.
\end{remark}

\begin{definition}\label{s_product}
    For two spherical geometries with ambient vector spaces $U$ and $V$ we define the join product
    \[ \mu\colon \Pt(U) \otimes \Pt(V)
        \to
    \Pt(U \oplus V) \]
    by taking each pair of simplices $P \subseteq S^{m-1}$, $Q \subseteq S^{n-1}$ to their join $P * Q \subseteq S^{m+n-1}$. In terms of simplices this can be written very simply as
    \[ \mu\colon [u_1,\ldots,u_m], [v_1,\ldots,v_n] \mapsto [u_1,\ldots,u_m,v_1,\ldots,v_n]. \]
    Again this descends to a product in the reduced case as well, see \cite{KKMMW1}:
    \[ \mu\colon \tPt(U) \otimes \tPt(V)
        \to
    \tPt(U \oplus V) \]
\end{definition}

\begin{theorem}\label{mainthm_simple}
    After passing to field coefficients and homotopy colimits, or equivalently after taking group homology with field coefficients, the above products and coproducts make
\begin{itemize}
    \item $H_*(\widetilde{\mathcal{S}};k)$ into a Hopf algebra,
    \item $H_*(\mathcal{E};k) =  \bigoplus_{n \geq 0} H_*(I(E^n);\Pt(E^n) \otimes k)$ into a comodule algebra over $H_*(\widetilde{\mathcal{S}};k)$,
    \item $H_*(\mathcal{H};k) =  \bigoplus_{n \geq 0} H_*(I(H^n);\Pt(H^n) \otimes k)$ into a comodule over $H_*(\widetilde{\mathcal{S}};k)$, and
    \item $H_*(\mathcal{S};k) =  \bigoplus_{n \geq 0} H_*(O(n);\Pt(S^{n-1}) \otimes k)$ into a comodule algebra over $H_*(\widetilde{\mathcal{S}};k)$
\end{itemize}
in the category of graded abelian groups. 
\end{theorem}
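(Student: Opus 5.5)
Here is the plan. First verify the algebraic identities at the level of the functors $\Pt$ and $\tPt$ on $\Dip^X$. Then pass to homotopy colimits over the groupoids $\Dip^X$ (equivalently group homology). Finally, use a Künneth argument, valid with field coefficients, to turn maps out of or into tensor products into the required structure maps on graded vector spaces.

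Concretely, the Dehn invariant of \cref{pt_dehn} is indexed over pairs (subobject $U\subseteq V$), i.e.\ over the groupoid of decompositions $U\oplus W\cong V$ with $U\in\Dip^X$ and $W\in\Dip^S$. It is $I(V)$-equivariant, since isometries permute faces and commute with orthogonal projection. Hence it gives a natural transformation from $\Pt$ to the left Kan extension of $\Pt\boxtimes\tPt$ along $\oplus\colon\Dip^X\times\Dip^S\to\Dip^X$. By Shapiro's lemma, the homotopy colimit of the right-hand side is
\[ \bigoplus_{U,W} C_*(I(U)\times O(W);\Pt(U)\otimes\tPt(W)). \]
This is because the stabilizer of a subobject $U$ in $I(V)$ is $I(U)\times O(V\ominus U)$, using \cref{sum_difference_inverses_surprise_surprise}. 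The Eilenberg--Zilber and Künneth maps with field coefficients then identify its homology with $H_*(\mathcal X;k)\otimes H_*(\widetilde{\mathcal S};k)$. The products of \cref{e_product} and \cref{s_product} are handled similarly: they are natural transformations $\Pt\boxtimes\Pt\Rightarrow\Pt\circ\oplustimes$ over the symmetric monoidal category $\Luna^X$, and the cross product in group homology induces the multiplication.

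It remains to check the axioms. Each one reduces to an identity of natural transformations at chain level, since Künneth isomorphisms and cross products are natural and associative up to the standard homotopies.
\begin{enumerate}[(a)]
\item Coassociativity. Both composites send $[v_0,\dots,v_n]$ to a sum over chains $S_1\subseteq S_2$ of faces, of terms $[S_1]\otimes[\pr S_2\setminus S_1]\otimes[\pr S^c_2]$. Here we use \cref{lem:oplus_ominus} to identify $\pr_{(V\ominus\langle S_1\rangle)\ominus\langle \pr S_2\rangle}$ with $\pr_{V\ominus\langle S_2\rangle}$, and the bijection of \cref{lem:sequence of oplus vs ominus} to identify the indexing sets of flags.
\item Counit. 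The counit is projection to the $\tPt(0)=k$ summand, which picks out $S$ equal to all vertices.
\item Compatibility with products. For the join this is the observation that faces of $P*Q$ are joins of faces, with link the join of links. For the Cartesian product, faces of $P\times Q$ are products $F\times G$ of nonempty faces, with link the join of links. This is checked on simplices via the lattice-path formula, by grouping terms.
\item Antipode. For the Hopf algebra $H_*(\widetilde{\mathcal S};k)$, the algebra is connected graded, with degree-$0$ part in the $\dim=0$ summand equal to $k$. Therefore the antipode exists by the standard recursive formula, and it suffices to have a bialgebra.
\end{enumerate}
Commutativity of the join product up to the symmetry isomorphism in $\Dip^S$ gives the commutative version, where needed.

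I expect step (c) in the Euclidean case to be the main obstacle. The lattice-path triangulation of $P\times Q$ does not have faces that are simply products, so the Dehn invariant of the triangulated product must be matched with the product of Dehn invariants. This uses the scissors relations: interior simplices of the triangulation contribute cancelling links summing to suspensions, which vanish in $\tPt$. I would first prove the identity using the convex-polytope presentation of $\Pt$, defining $\delta$ directly on convex polytopes via faces and links and checking that this is additive. The formula for $P\times Q$ then becomes geometrically evident, with faces $F\times G$ and link $\mathrm{link}(F)*\mathrm{link}(G)$.

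A second technical point is that the Künneth map goes the wrong direction for a comultiplication. It is an isomorphism only with field coefficients, which is exactly why the theorem is stated over $k$.
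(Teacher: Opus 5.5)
Your proposal is correct, but it follows the ``check it directly'' route that the paper only mentions (citing Cathelineau), not the argument the paper develops.

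The paper proves the spectrum-level \cref{mainthm} first.
\begin{enumerate}
\item It builds the $D_1$-coalgebra-comodule and commutative-monoid structures on the configuration-space models $\ST$, $\Sigma\!\ST$, $\Sigma\!\PT$ over $\Luna^X$, using step functions.
\item It transports these through the monoidal desuspension functors $D$ and $\Sigma D$ (only $\infty$-categorically strong in the hyperbolic case), and takes homotopy colimits.
\item In \cref{sec:apartments} it uses oriented apartment maps to identify the induced maps on the coefficient groups $H_0(\Sph^{-V}\sma\Sigma\!\ST(V))\cong\Pt(V)$ with the formulas of \cref{pt_dehn}, \cref{e_product} and \cref{s_product}.
\item The homology statement then follows by taking $Hk$-homology and applying K\"unneth.
\end{enumerate}

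In that approach, all the coherences are verified once, at point-set level, by identities such as \cref{lem:oplus_ominus} and $(U\ominus U')\oplus(V\ominus V')=(U\times_\R V)\ominus(U'\times_\R V')$. The compatibility of $\delta$ with the Cartesian product needs no triangulation argument, because \cref{product_pi0_E} uses apartment-like maps on the convex polytope $P\times Q$ directly. This sidesteps exactly the obstacle you flagged. The payoff is an integral, fully coherent structure on spectra that are not known to be rational.

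Your approach combines:
\begin{itemize}
\item Shapiro's lemma with stabilizers $I(U)\times O(V\ominus U)$;
\item Eilenberg--Zilber and K\"unneth over $k$;
\item identities checked at the level of coefficient modules;
\item the connected-graded antipode argument, using that the dimension-$0$ part of $H_*(\widetilde{\mathcal S};k)$ is $k$.
\end{itemize}
It is more elementary and self-contained, but it gives only the homology statement. Its Euclidean product step also rests on the classical additivity of the faces-and-links Dehn invariant on convex polytopes, which you would need to prove or cite.
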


These homology statements from \cref{mainthm_simple} can be checked directly, see \cite{cathelineau_sc}. Our goal in this paper is go give a more structured and more topological proof of this, lifting the theorem to the $\infty$-category of spectra. This gives products and coproducts with more structure than the above ones on homology. 

As proven in \cite{scissors_thom}, the rational homotopy groups are the same as the rational homology groups for all of these spectra. Therefore,
\begin{equation}
	\bigoplus_{m \geq 0} \pi_m ^\Q(\widetilde{\Sah}) 
		\cong 
	\bigoplus_{m,n \geq 0} \ 
		\widetilde K_m(\Pol{S^{n-1}}{O(n)}) \otimes \Q
\end{equation}
form a bigraded commutative Hopf algebra. Similarly, for spherical and Euclidean geometry, the rational homotopy groups
\begin{equation}
	\bigoplus_{m \geq 0} \pi_m ^\Q(\Sah) 
		\cong 
	\bigoplus_{m,n \geq 0} \ 
		 K_m(\Pol{S^{n-1}}{O(n)}) \otimes \Q,
\end{equation}
and
\begin{equation}
	\bigoplus_{m \geq 0} \pi_m ^\Q(\mathcal{E}) 
		\cong 
	\bigoplus_{m,n \geq 0} \ 
		 K_m(\Pol{E^n}{I(E^n)}) \otimes \Q,
\end{equation}
form bigraded comodule algebras over $\bigoplus_{m \geq 0} \pi_m ^\Q(\widetilde{\Sah})$. For hyperbolic geometry, the rational homotopy groups
\begin{equation}
	\bigoplus_{m \geq 0} \pi_m ^\Q(\mathcal{H}) 
		\cong 
	\bigoplus_{m,n \geq 0} \ 
		 K_m(\Pol{H^n}{I(H^n)}) \otimes \Q,
\end{equation}
form a bigraded comodule over $\bigoplus_{m \geq 0} \pi_m ^\Q(\widetilde{\Sah})$.

Concretely, set
$$A_{m,n} = \widetilde{K}_m(\Pol{S^{n-1}} {O(n)}) \otimes \Q$$ and denote by $B_{m,n}$ one of 
$$K_m(\Pol{S^{n-1}}{O(n)}) \otimes \Q, \ K_m(\Pol{E^n}{I(E^n)}) \otimes \Q, \ \textup{ or } \ K_m(\Pol{H^n}{I(H^n)}) \otimes \Q.$$ Then the higher Dehn invariants have the form
$$\delta \colon B_{m,n} \rightarrow \bigoplus_{\substack{i+j=m\\k+l=n}} B_{i,k} \otimes_{\mathbb{Q}} A_{j,l}$$
and in the Euclidean and spherical cases, the Cartesian and join products give multiplication maps
$$\mu \colon B_{i,k}\otimes_{\mathbb{Q}} B_{j,l} \rightarrow B_{i+j,k+l}.$$

\begin{example}
    The original Dehn invariant is the single piece of the $H_*(\widetilde{\mathcal{S}})$-comodule structure on $H_*(\mathcal{E})$ where $n = 3$, $k = 1$, $l = 2$, and $i=j=m=0$:
    \begin{align*}
        H_0(I(E^3);\Pt(E^3)) &\to H_0(I(E^1);\Pt(E^1)) \otimes H_0(O(2);\tPt(S^{2-1})) \\
        K_0(\Pol{E^3}{I(E^3)}) &\to K_0(\Pol{E^1}{I(E^1)}) \otimes \widetilde{K_0}(\Pol{S^1}{O(2)})\cong \R \otimes_\Z \R/\pi\Z.
    \end{align*}
    Geometrically, it takes each 3-dimensional polyhedron $P$ to the sum over all the edges of $\ell \otimes \alpha$, where $\ell \in \R$ is the length of the edge and $\alpha \in \R/\pi\Z$ is the dihedral angle.
\end{example}

\section{The suspended and polytopal Tits complexes}

In this section we define the topological spaces $\ST(X)$ and $\PT(X)$ that appear when computing the homotopy type of scissors congruence $K$-theory, and whose homology agrees with the groups $\tPt(S^{n-1})$ and $\Pt(X)$ (see \cref{sec:apartments}). Since we prefer to think in terms of vector spaces $V$ instead of geometries $X$, we often call these spaces $\ST(V)$ and $\PT(V)$ instead.

\subsection{The Tits complex and its suspensions}
Let $X$ be a geometry with ambient vector space $V \cong \R^n$ or $V \cong \R \oplus \R^n$ equipped with the respective structure from \cref{coordinate-free-dip}.

\begin{definition}\label{df:tits_and_suspensions}
The \emph{Tits complex} $T(X)$ is the realization of the poset of all proper nonempty geometric subspaces of $X$, or equivalently the proper nonzero subobjects of $V$ in the sense of \cref{dip-subobject}. 
\end{definition}
As in \cite[Section 2]{KKMMW1}, we can suspend $T(X)$ once or twice and get quotients of realizations of posets
\begin{align*}
	\T(X) 
		& = 
	| \varnothing \subsetneq Y \subsetneq X |, \\
	\ST(X) 
		& = 
	\frac{| \varnothing \subsetneq Y \subseteq X |}%
		{| \varnothing \subsetneq Y \subsetneq X |}, \\
	\Sigma\!\ST(X) 
		& = 
	\frac{| \varnothing \subseteq Y \subseteq X |}%
		{| \varnothing \subseteq Y \subsetneq X | \cup | \varnothing \subsetneq Y \subseteq X |}. 
\end{align*}
We can also think of $\ST$ as a quotient of homotopy colimits
\[ \ST(X) = \frac{ \underset{\varnothing \subsetneq Y \subseteq X}\hocolim\ {*} }{ \underset{\varnothing \subsetneq Y \subsetneq X}\hocolim\ {*} }
=
\frac{\left( \displaystyle\coprod_{\varnothing \subsetneq Y_0 \subsetneq Y_1 \subsetneq \cdots \subsetneq Y_p \subseteq X} {*} \times \Delta^p \right) / \sim}{\left( \displaystyle\coprod_{\varnothing \subsetneq Y_0 \subsetneq Y_1 \subsetneq \cdots \subsetneq Y_p \subsetneq X} {*} \times \Delta^p \right) / \sim}. \]
Here the homotopy colimit is the Bousfield--Kan model of the homotopy colimit in unbased spaces, which we have written out explicitly on the right-hand side. The equivalence relation deletes the $i$th subspace $Y_i$ whenever the point $t \in \Delta^p$ is in the $i$th face $\Delta^{p-1} \subseteq \Delta^p$. Quotienting out by the homotopy colimit on the bottom imposes the additional relation that $\Delta^p$ is collapsed to the basepoint whenever $Y_p \neq X$.

It's convenient to also think of these in terms of the ambient vector space $V$, giving the isomorphic spaces
\begin{align*}
	\T(V) 
		= 
	| 0 \subsetneq U \subsetneq V |, & \quad \textup{$U$ a subobject of $V$}, \\
	\ST(V) 
		= 
	\frac{| 0 \subsetneq U \subseteq V |}%
		{| 0 \subsetneq U \subsetneq V |}, & \quad \textup{$U$ a subobject of $V$}, \\
	\Sigma\!\ST(V) 
		= 
	\frac{| 0 \subseteq U \subseteq V |}%
		{| 0 \subseteq U \subsetneq V | \cup | 0 \subsetneq U \subseteq V |}, & \quad \textup{$U$ either a subobject of $V$ or 0}.
\end{align*}
Note that in the spherical case $S^{n-1}$, the zero space is already a subobject of $\R^n$, but in the Euclidean and hyperbolic cases we have to explicitly add it back in. 

\subsection{The polytopal Tits complex and its suspension}
\begin{definition}
By analogy, and following \cite{scissors_thom}, when $X = E^n$ or $H^n$ with $n \geq 0$, or $S^{n-1}$ with $n \geq 1$, we define the \emph{polytopal Tits complex} as the quotient of homotopy colimits
\[ \PT(X) = \frac{ \underset{\varnothing \subsetneq Y \subseteq X}\hocolim\ Y }{ \underset{\varnothing \subsetneq Y \subsetneq X}\hocolim\ Y }
=
\frac{\left( \displaystyle\coprod_{\varnothing \subsetneq Y_0 \subsetneq Y_1 \subsetneq \cdots \subsetneq Y_p \subseteq X} Y_0 \times \Delta^p \right) / \sim}{\left( \displaystyle\coprod_{\varnothing \subsetneq Y_0 \subsetneq Y_1 \subsetneq \cdots \subsetneq Y_p \subsetneq X} Y_0 \times \Delta^p \right) / \sim} \]
Again the homotopy colimit is taken in the category of unbased spaces, which we have written out explicitly. The spaces $Y_0$ are given the subspace topology coming from $\R \oplus \R^n$ or $\R^n$. The equivalence relation again deletes the $i$th subspace $Y_i$ whenever the point $t \in \Delta^p$ is in the $i$th face $\Delta^{p-1} \subseteq \Delta^p$, and when $i = 0$ it also applies the inclusion map $Y_0 \to Y_1$. Quotienting out by the homotopy colimit on the bottom means that we also quotient $Y_0 \times \Delta^p$ to the basepoint whenever $Y_p \neq X$.

We also call this $\PT(V)$ when $V$ is the ambient vector space. But it is important that we actually take the homotopy colimit of the geometric subspaces $Y = U \cap X$ and not the vector spaces $U$, at least in the spherical case where $U$ is contractible and $Y$ is a sphere.
\end{definition}

\begin{lemma}\label{spt_simplify_euclidean}
In the Euclidean and hyperbolic cases, we have $\PT(X) \simeq \ST(X)$ and therefore
\[ \Sigma\!\PT(X) \simeq \Sigma\!\ST(X) = \frac{|\varnothing \subseteq Y \subseteq X|}{|\varnothing \subseteq Y \subsetneq X| \cup |\varnothing \subsetneq Y \subseteq X|}. \]
\end{lemma}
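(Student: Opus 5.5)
The approach is to compare the two homotopy colimits defining $\PT(X)$ with those defining $\ST(X)$ along the map induced by the natural transformation $Y \to {*}$. In the Euclidean and hyperbolic cases every nonempty geometric subspace $Y = U \cap X$ is contractible. For Euclidean geometry it is an affine space $\{x_0 = 1\} \cap U$. For hyperbolic geometry it is a copy of $H^k$, which is diffeomorphic to $\R^k$, for instance by projecting away from the negative direction or by the radial straight-line contraction to a chosen point $u$ with $q(u)=-1$ followed by renormalization. Hence for each poset indexing the hocolims, the map of diagrams $Y \Rightarrow {*}$ is an objectwise weak equivalence of unbased spaces.

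Next I would invoke homotopy invariance of the Bousfield--Kan homotopy colimit. This gives weak equivalences
\[ \underset{\varnothing \subsetneq Y \subseteq X}\hocolim\ Y \to \underset{\varnothing \subsetneq Y \subseteq X}\hocolim\ {*}, \qquad \underset{\varnothing \subsetneq Y \subsetneq X}\hocolim\ Y \to \underset{\varnothing \subsetneq Y \subsetneq X}\hocolim\ {*}, \]
and these are compatible with the inclusions of the smaller poset into the larger. One subtlety is that the $Y$ carry the subspace topology and the hocolim is formed with Euclidean-topologized spaces. Homotopy invariance still holds, since the Bousfield--Kan hocolim is the realization of a Reedy cofibrant simplicial space of coproducts, here with all spaces CW-like or metrizable.

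The remaining step is to pass to the quotients. The inclusion of the subposet's hocolim into the full hocolim is a sub-CW/NDR inclusion: it is the union of the closed cells $Y_0\times\Delta^p$ indexed by chains with $Y_p \ne X$. Being a cofibration, the quotient agrees with the homotopy cofiber, and a map of cofiber sequences that is an equivalence on the first two terms is an equivalence on cofibers by the gluing lemma. This gives $\PT(X)\simeq\ST(X)$.

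Suspending gives $\Sigma\PT(X)\simeq\Sigma\ST(X)$. The displayed identification of $\Sigma\ST(X)$ is the poset formula already recorded above for the double suspension of $T(X)$, adding back the empty subspace as a cone point, so it needs no further argument. I expect the main, though mild, obstacle to be the point-set verification that the pair is an NDR pair with these subspace topologies, which I would handle by the standard filtration of the Bousfield--Kan construction by simplicial degree.
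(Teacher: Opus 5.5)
Your proposal is correct and follows the same route as the paper. The paper argues in one line that every nonempty geometric subspace is contractible, so the objectwise equivalence $Y \to *$ induces equivalences of homotopy colimits and of the resulting homotopy cofibers; your point-set details (the contraction of $H^k$, the cofibration property of the subposet inclusion, and the gluing lemma) fill in what the paper leaves implicit.
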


\begin{proof}
    This follows since every nonempty geometric subspace is contractible, and weak equivalences of diagrams induce weak equivalences of homotopy colimits and total homotopy cofibers.
\end{proof}

\begin{lemma}\label{spt_simplify_spherical}
In the spherical case, when $n \geq 1$, we have a homotopy equivalence
\[ \Sigma\!\PT(S^{n-1}) = \Sigma\!\PT(\R^n) \simeq \frac{ \underset{0 \subseteq U \subseteq \R^n}\bhocolim\, S^U }{ \underset{0 \subseteq U \subsetneq \R^n}\bhocolim\, S^U }
=
\frac{\left( \displaystyle\coprod_{0 \subseteq U_0 \subsetneq U_1 \subsetneq \cdots \subsetneq U_p \subseteq \R^n} S^{U_0} \times \Delta^p \right) / \sim}{\left( \displaystyle\coprod_{0 \subseteq U_0 \subsetneq U_1 \subsetneq \cdots \subsetneq U_p \subsetneq \R^n} S^{U_0} \times \Delta^p \right) / \sim}. \]
\end{lemma}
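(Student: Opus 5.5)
Throughout, $X = S(\R^n)$. The plan is to compare the defining homotopy colimit of $\PT(X)$, which runs over geometric subspaces $Y = S(U)$ with $U \neq 0$, with a diagram of one-point compactifications. Write $\mathrm{hocolim}$ for the unbased Bousfield--Kan construction and $\bhocolim$ for the based one, in which every $\{\ast\} \times \Delta^p$ is collapsed.

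The first step is to identify unreduced suspensions with compactifications. For each subspace $U$, the unreduced suspension of $S(U)$ is naturally homeomorphic to $S^U$: send $(y,s)$ to $\tan(\pi s/2)\, y$, with cone point $0$ and suspension point $\infty$. This holds also for $U = 0$, where $S(0) = \varnothing$ and the unreduced suspension is $S^0 = \{0,\infty\}$. The identification is natural in the inclusions $U \subseteq U'$. Consequently the poset diagram $U \mapsto S^U$, indexed over $0 \subseteq U \subseteq \R^n$, is the fiberwise unreduced suspension of $U \mapsto S(U)$, extended by $S^0$ at $U = 0$.

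The second step is to interpret the unreduced-hocolim quotient in cofiber language. The homotopy colimit over $0 \subsetneq U$ of $S(U)$ maps to the homotopy colimit of the point diagram, which is contractible because the poset has a top element $\R^n$. Taking the homotopy cofiber, and using that homotopy colimits commute with cofibers, expresses the cofiber as a homotopy colimit of unreduced suspensions. Adding the initial object $U = 0$ with value $S^0$ should then give a based homotopy colimit. Concretely, I would use the standard fact that a based homotopy colimit over a poset with an initial object $0$, whose value is $S^0 = \{0,\infty\}$, realizes the cofiber of the cone at $0$. I would check this directly on the explicit simplicial formulas: chains starting at $U_0 = 0$ contribute $S^0 \times \Delta^p$, and after collapsing the basepoint $\infty$ these supply exactly the cone coordinate $0$ in each $S^{U_0}$.

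The third step is to do the same for the denominators, with $U \subsetneq \R^n$, and take the quotient. Both sides are cofibers of inclusions of subcomplexes, so they are homotopy cofibers, and the comparison maps are compatible with these inclusions. The result is
\[ \Sigma\!\PT(\R^n) \simeq \frac{\bhocolim_{0 \subseteq U \subseteq \R^n} S^U}{\bhocolim_{0 \subseteq U \subsetneq \R^n} S^U}. \]
The explicit coproduct description on the right is then just the Bousfield--Kan model, written out the same way as in the definition of $\PT$.

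The main obstacle is the bookkeeping in the second step: making precise that adjoining the initial object $U=0$ with value $S^0$ converts "suspend, then take the total cofiber" into a based hocolim quotient, rather than producing an extra suspension or a wedge summand. My first attempt would be a direct point-set map from the explicit coproduct models on both sides. I would then verify that it is a homotopy equivalence by filtering by chain length $p$ and comparing the associated graded pieces, which are wedges of $\Sigma^p$ applied to the relevant spaces.
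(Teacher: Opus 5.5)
Your ingredients are the right ones (the unreduced suspension of $S(U)$ is $S^U$, and cofibers commute with homotopy colimits). But the step you flag as the obstacle carries the whole content of the lemma, and as written it fails for the denominator.

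In Step~2 you get contractibility of $\hocolim *$ from the top element $\R^n$. That only applies to the numerator. The poset $0 \subsetneq U \subsetneq \R^n$ has no top element: its nerve is the Tits building, which is empty for $n=1$ and a nontrivial wedge of $(n-2)$-spheres for $n \geq 2$. So $\bhocolim_{0 \subsetneq U \subsetneq \R^n} S^U$ is the mapping cone of $\hocolim_{0\subsetneq U\subsetneq\R^n} S(U) \to |0 \subsetneq U \subsetneq \R^n|$, not the suspension of the source. For $n=1$ it is a point rather than $S^0$. Hence ``do the same for the denominators'' does not go through.

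Your account of what adjoining $U=0$ does is also not accurate. The points $0 \in S^{U_0}$ for $U_0 \neq 0$ are already present. What the chains beginning with $U_0=0$ contribute is the cone, with apex the vertex $0$, on the ``zero section'' $\hocolim_{0 \subsetneq U}\{0\}$. A symptom of the gap is that your argument never uses the hypothesis $n \geq 1$.

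The missing idea is to work from the start over posets that contain $0$. Since $S(0)=\varnothing$, this does not change $\hocolim S(U)$. For $n\geq 1$, both $P=\{0\subseteq U\subseteq\R^n\}$ and $P'=\{0\subseteq U\subsetneq\R^n\}$ have $0$ as an initial object, so $|P|$ and $|P'|$ are contractible. Then $\bhocolim_P S^U$ is, naturally in $P$, the mapping cone of $\hocolim_P S(U)\to|P|$, hence equivalent to $\Sigma\hocolim_P S(U)$, and your Step~3 then works.

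This is what the paper does at the point-set level. The points $0\in S^U$ span the subcomplex $|P|/|P'|$, which is contractible because of the initial object. Collapsing it and using $S(U)\times I/S(U)\times\partial I\cong S^U/\{0,\infty\}$ gives a homeomorphism with $\Sigma\!\PT(\R^n)$.

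Your fallback would not work as stated either. With chain-length filtrations, the left side has graded pieces $\Sigma(S(U_0)_+)\wedge S^p\simeq (S^{U_0}\vee S^1)\wedge S^p$. On the right, the compensating spheres come from chains starting at $0$ and sit one filtration level higher; already at $p=0$ you would be comparing $S^n\vee S^1$ with $S^n$. So the associated graded pieces do not match.
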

Here the homotopy colimit is along the diagram of spheres $S^U \cong D(U)/S(U)$, which are based spaces with basepoint $\{\infty\} = S(U)/S(U)$. We call this basepoint $\infty$ because you can also think of $S^U$ as the one-point compactification of $U$. These homotopy colimits are taken in the category of based spaces, so the equivalence relation sends any pair of the form $(\infty,t) \in S^{U_0} \times \Delta^p$ to the basepoint $*$, in addition to deleting the $i$th subspace $U_i$ whenever the point $t \in \Delta^p$ is in the $i$th face $\Delta^{p-1} \subseteq \Delta^p$, and when $i = 0$ also applying the inclusion map $S^{U_0} \to S^{U_1}$. Quotienting out by the homotopy colimit on the bottom, we also quotient $S^{U_0} \times \Delta^p$ to the basepoint whenever $U_p \neq \R^n$.

\begin{proof}
    Since geometric subspaces of $S^{n-1}$ correspond to linear subspaces $U \subseteq \R^n$, we can rewrite $\PT(\R^n)$ as
\[ \PT(\R^n) = \frac{ \underset{0 \subsetneq U \subseteq \R^n}\hocolim\  S(U) }{ \underset{0 \subsetneq U \subsetneq \R^n}\hocolim\  S(U) }. \]
Then we allow $U = 0$ and observe that this does not change the homotopy colimit since $S(0) = \varnothing$:
\[ \PT(\R^n) \cong \frac{ \underset{0 \subseteq U \subseteq \R^n}\hocolim\  S(U) }{ \underset{0 \subseteq U \subsetneq \R^n}\hocolim\  S(U) }. \]
So now we just have to prove that whenever $n > 0$,
\begin{equation}\label{sigmapt_proof_step}
\Sigma \left( \frac{ \underset{0 \subseteq U \subseteq \R^n}\uhocolim\, S(U) }{ \underset{0 \subseteq U \subsetneq \R^n}\uhocolim\, S(U) } \right)
\simeq \frac{ \underset{0 \subseteq U \subseteq \R^n}\bhocolim\, S^U }{ \underset{0 \subseteq U \subsetneq \R^n}\bhocolim\, S^U }.
\end{equation}

		The right-hand side of \eqref{sigmapt_proof_step} has a contractible subcomplex built out of the points $0 \in S^U$ (which are not the basepoint, the basepoint is $\infty \in S^U$):
		\[ \frac{ \underset{0 \subseteq U \subseteq \R^n}\hocolim\  \{0\}  }{ \underset{0 \subseteq U \subsetneq \R^n}\hocolim\  \{0\} }
		\subseteq \frac{ \underset{0 \subseteq U \subseteq \R^n}\bhocolim\, S^U }{ \underset{0 \subseteq U \subsetneq \R^n}\bhocolim\, S^U }. \]
		This subcomplex is contractible if $n > 0$ because the poset of all $0 \subseteq U \subsetneq \R^n$ has an initial object. If we quotient the right-hand side of \eqref{sigmapt_proof_step} by this contractible subcomplex, we get
        \[ \frac{\coprod_{U_0 \subsetneq \cdots \subsetneq U_k} S^{U_0} \times \Delta^k}{\textrm{face relations, }\{0,\infty\} \times \textrm{anything, }U_k \neq \R^n}. \]
        On the other hand, using the fact that $\Sigma \frac{X}{A} \cong \frac{X \times I}{(X \times \partial I) \cup (A \times I)}$, we write the left-hand side of \eqref{sigmapt_proof_step} as
        \[ \frac{\coprod_{U_0 \subsetneq \cdots \subsetneq U_k} I \times S(U_0) \times \Delta^k}{\textrm{face relations, }\partial I \times \textrm{anything, }U_k \neq \R^n}. \]
        Using the fact that $\frac{S(U) \times I}{S(U) \times \partial I} \cong \frac{S^U}{\{0,\infty\}}$, we can identify these two quotients together.
        \end{proof}

As in \cite{KKMMW1}, these definitions are all made for the geometries $E^n$ and $H^n$ when $n \geq 0$ and $S^{n-1}$ when $n \geq 1$, but we can take the result of \cref{spt_simplify_spherical} as the \emph{definition} of $\Sigma\!\PT(\R^0) = \Sigma\!\PT(S^{-1})$, which just gives the space $S^0$. More generally, we could define $\Sigma\!\PT$ of any empty geometry to be $S^0$. (However, in this paper we won't consider empty Euclidean or hyperbolic geometries, so this will only be relevant for the empty spherical geometry $\Sigma\!\PT(\R^0) = S^0$.)

\subsection{Configuration space models for $\ST(V)$, $\Sigma\!\ST(V)$, $\PT(V)$ and $\Sigma\!\PT(V)$} As in the article \cite{KKMMW1}, we need to describe points in these spaces in an explicit way.

\begin{definition}
    Let $V$ be the ambient vector space of a (spherical, hyperbolic or Euclidean) geometry. This is a vector space with additional structure, as in \cref{coordinate-free-dip}.
    \begin{itemize}
        \item Let $\Sub(V)$ be the poset of vector spaces $U \subseteq V$ that are either subobjects in the sense of \cref{dip-subobject} or zero.
        \item Let $\Step([0,1],\Sub(V))$ be the set of order-preserving functions $f\colon [0,1] \to \Sub(V)$. They are ``step functions'' because such a function $f$ must take finitely many values $U_0 \subsetneq \cdots \subsetneq U_p$, where $f^{-1}(U_i)$ is an interval in $[0,1]$.
        \item For $f\in \Step([0,1],\Sub(V))$, we define its \emph{cut points} to be the points $x\in [0,1]$ such that $f(t)$ for $t<x$ is different from $f(t)$ for $t>x$. Note that the number of cut points is always finite.
    \end{itemize}
\end{definition}

The following has the same proof as in the $X = S^n$ case in \cite[Proposition 2.12]{KKMMW1}:
\begin{proposition}\label{prop-SST-conf}
    Let $X = E^n, H^n$, or $S^{n-1}$ with $n \geq 0$. As a set, we can identify $\Sigma\!\ST(V)$ with a quotient
    \[ \Step([0,1],\Sub(V))/\sim. \]
   The equivalence relation $\sim$ identifies two functions $f$ and $g$ if they differ on a finite set of points, and identifies a step function $f$ with the basepoint if $f(t)\neq 0$ for all $t>0$, or if $f(t)\neq V$ for all $t<1$.
\end{proposition}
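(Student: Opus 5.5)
We identify $\Sigma\!\ST(V)$ with the quotient $\Step([0,1],\Sub(V))/\sim$ by writing down a bijection from the simplex-level description and checking that it respects the identifications. The starting point is the explicit model of $\Sigma\!\ST(V)$ as a quotient of the realization of the poset $0 \subseteq U \subseteq V$, where $U$ ranges over $\Sub(V)$, i.e.\ subobjects together with $0$. A point of $|0 \subseteq U \subseteq V|$ is represented by a chain $U_0 \subsetneq U_1 \subsetneq \cdots \subsetneq U_p$ in $\Sub(V)$ together with a point $t = (t_0,\ldots,t_p) \in \Delta^p$. This representative is unique once we require all $t_i > 0$.

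\textbf{Step 1: the map to step functions.} To such a nondegenerate representative we assign the step function $f\colon [0,1] \to \Sub(V)$ that takes the value $U_i$ on the interval $(t_0+\cdots+t_{i-1},\, t_0+\cdots+t_i)$. This $f$ is order-preserving. It is well defined only up to its values at the finitely many cut points, which is exactly why the first equivalence relation (functions differing on a finite set are identified) is imposed.

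\textbf{Step 2: bijection on unbased data.} Conversely, any step function modulo finite sets has finitely many cut points $0 < x_1 < \cdots < x_p < 1$ and distinct values $U_0 \subsetneq \cdots \subsetneq U_p$ on the open intervals between them. The interval lengths give a point in the interior of $\Delta^p$. This inverts Step 1 on nondegenerate simplices.

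The face relation in the realization (deleting $U_i$ when $t_i = 0$) corresponds to a step interval shrinking to length zero. That shrinking leaves the step function unchanged up to finitely many points, so the correspondence is compatible with the simplicial identifications.

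\textbf{Step 3: the basepoint relation.} The subspace being collapsed is $|0 \subseteq U \subsetneq V| \cup |0 \subsetneq U \subseteq V|$, namely the chains that either omit $V$ at the top or omit $0$ at the bottom.
\begin{itemize}
\item A chain with $U_0 \neq 0$ corresponds to a step function with $f(t) \neq 0$ for all $t > 0$, since the value on the first open interval is $U_0$.
\item Dually, a chain with $U_p \neq V$ corresponds to $f(t) \neq V$ for all $t < 1$.
\end{itemize}
So the collapsed points are exactly those declared equivalent to the basepoint, and we obtain a set bijection. This is the same argument as in \cite[Proposition 2.12]{KKMMW1}, now using $\Sub(V)$ in place of the subspace poset of $\R^n$.

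\textbf{Main obstacle.} The only step needing real care is the bookkeeping in the Euclidean and hyperbolic cases. There $0$ is not a subobject, so we must check that $\Sub(V)$, with $0$ adjoined, is exactly the poset appearing in the definition of $\Sigma\!\ST(V)$. This holds by the remark following the definitions of $\T(V)$, $\ST(V)$ and $\Sigma\!\ST(V)$ in terms of $V$.

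The degenerate cases are routine. For $n = 0$, Euclidean or hyperbolic, $V$ is the only subobject. For $S^{-1}$ we have $V = 0$. In both cases the step-function description should reproduce the given space; we check this directly.
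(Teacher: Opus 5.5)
Your proof is correct and follows the paper's argument, which defers to \cite[Proposition 2.12]{KKMMW1}: the barycentric coordinates $(t_0,\ldots,t_p)$ of a nondegenerate simplex become a step function with cut points at the partial sums $t_0, t_0+t_1, \ldots, t_0+\cdots+t_{p-1}$, face relations correspond to intervals shrinking to length zero, and the collapsed subcomplex corresponds exactly to the two basepoint conditions. One small point: in the case $V=0$ the collapsed subspace is empty, so $\Sigma\!\ST(0)=S^0$, and the step-function side must be read as the class of the constant function $0$ together with an adjoined basepoint; your general argument handles this uniformly under that reading.
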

Informally, the first part of the relation $\sim$ ensures that only the location of the cut points matters, not which space $U_i$ is assigned to each cut point. The second part enforces that the first space in the list must be $U_0 = 0$, and the last space must be $U_p = V$, otherwise it is collapsed to the basepoint.

\begin{figure}[h]
\begin{center}
	\begin{tikzpicture}
		\node at (9.3,0) {$ \in \Step\big([0,1],\Sub(V)\big)$};
		\draw[ultra thick,|-|] 
			(0,0) node[below=1.5mm]{$0$} 
				--node[above]{$V_0=  0$} 
			(2.5,0);
        \draw[ultra thick,-|] 
        	(2.5,0) node[below=2mm]{$s_1$} 
				--node[above]{$V_1$} 
			(4,0) node[below=2mm]{$s_2$};
		\draw[ultra thick,dashed]
			(4,0) -- (5,0);
		\draw[ultra thick,|-|]
			(5,0) node[below=2mm]{$s_k$}
				--node[above]{$V_k = V$} 
			(7,0) node[below=1.5mm]{$1$};
	\end{tikzpicture}
\end{center}
\caption{A schematic illustration of a step function representing a non-basepoint element of $\Sigma\!\ST(V)$.} 
\end{figure}

\begin{proposition}\label{prop-ST-conf}
    Let $X = E^n, H^n$, with $n \geq 0$, or $S^{n-1}$ with $n \geq 1$. As a set, we can identify $\ST(V)$ with a quotient
    \[ \Step([0,1],\Sub(V)_{\neq 0})/\sim. \]
   where $f \sim g$ if $f$ and $g$ differ on a finite set of points, and $f$ is identified with the basepoint if $f(t)\neq V$ for all $t<1$.
\end{proposition}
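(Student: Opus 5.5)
We identify $\ST(V)$ with step functions by following the proof of \cref{prop-SST-conf}, that is, the argument of \cite[Proposition 2.12]{KKMMW1}, with the poset restricted to nonzero subobjects. By definition,
\[ \ST(V) = \frac{|0 \subsetneq U \subseteq V|}{|0 \subsetneq U \subsetneq V|}, \]
so as a set $\ST(V)$ is the basepoint together with the interiors of those nondegenerate simplices $U_0 \subsetneq \cdots \subsetneq U_p$ with every $U_i \neq 0$ and $U_p = V$. The point of the plan is to match these interior points with equivalence classes of step functions.

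The first step is to parametrize each simplex by cut points. We write the $p$-simplex $\Delta^p$ as $\{0 \leq s_1 \leq \cdots \leq s_p \leq 1\}$. A point $(U_0 \subsetneq \cdots \subsetneq U_p;\, s_1,\dots,s_p)$ is sent to the step function $f$ that takes the value $U_i$ on the interval between $s_i$ and $s_{i+1}$, with the conventions $s_0 = 0$ and $s_{p+1} = 1$. The values of $f$ at the cut points themselves are left arbitrary, and this ambiguity is exactly the relation ``differ on a finite set of points.''

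The second step is to check that the face identifications match. A face of $\Delta^p$ corresponds to $s_i = s_{i+1}$, which is the face that deletes $U_i$ in the hocolim relation. On the step-function side, the interval carrying $U_i$ collapses to a single point, so $U_i$ survives only as a value at one point, and modulo finite sets this is the same as deleting it. Degenerate simplices, where a space is repeated, correspond to a fake cut point across which $f$ does not change; by definition this is not a cut point, so such points are absorbed as well.

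The third step is to match the basepoints. A simplex is collapsed in the quotient exactly when $U_p \neq V$. This happens exactly when the last value of $f$, the one it takes on an interval ending at $1$, is not $V$, which by monotonicity means $f(t) \neq V$ for all $t < 1$. The unreduced endpoint at $0$ imposes no condition, and this is why here, unlike in \cref{prop-SST-conf}, there is no requirement that $f = 0$ near $0$. Nonzero values are guaranteed because we work in $\Sub(V)_{\neq 0}$.

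Finally we check that the map is a bijection. Each equivalence class of non-basepoint step functions has a unique representative given by its strictly increasing list of values together with its cut points in $(0,1)$. This representative lies in the interior of exactly one nondegenerate simplex, except when some cut points sit at $0$ or $1$. A cut point at $0$ or $1$ never happens for a step function defined on all of $[0,1]$, since $f$ is only relevant up to finite sets; boundary coordinates $s_1 = 0$ or $s_p = 1$ correspond to faces, which are already handled by the second step. The main care point is this boundary bookkeeping at $t = 0$ and $t = 1$: in the Euclidean and hyperbolic cases $0 \notin \Sub(V)_{\neq 0}$ is never allowed, and the condition $n \geq 1$ in the spherical case ensures that $V \neq 0$, so that $\Sub(V)_{\neq 0}$ is nonempty and contains $V$.
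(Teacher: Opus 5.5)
Your proposal is correct and follows essentially the same route as the paper, which defers to the proof of \cite[Proposition 2.12]{KKMMW1}. That proof parametrizes each simplex $U_0 \subsetneq \cdots \subsetneq U_p$ by cut points (the partial sums $t_0 + \cdots + t_i$ of barycentric coordinates, i.e.\ your $s_i$), checks that the face maps delete the corresponding $U_i$, and matches the subcomplex where $U_p \neq V$ with the basepoint condition. One small edge case is worth recording: when $V$ has no proper nonzero subobject (as for $E^0$, $H^0$, or $S^0$), no step function meets the basepoint condition, so the basepoint must be adjoined disjointly to recover $\ST(V) = S^0$.
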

Here $\Sub(V)_{\neq 0}$ is the poset of nonzero subobjects $0 \neq U \subseteq V$. The proof is also the same as in \cite[Proposition 2.12]{KKMMW1}.

\begin{figure}[h]
\begin{center}
	\begin{tikzpicture}
		\node at (9.5,0) {$ \in \Step\big([0,1],\Sub(V)_{\neq 0}\big)$};
		\draw[ultra thick,|-|] 
			(0,0) node[below=1.5mm]{$0$} 
				--node[above]{$V_0 \neq 0$} 
			(1.5,0);
        \draw[ultra thick,-|] 
        	(1.5,0) node[below=2mm]{$s_1$} 
				--node[above]{$V_1$} 
			(3.5,0) node[below=2mm]{$s_2$};
		\draw[ultra thick,dashed]
			(3.5,0) -- (4.5,0);
		\draw[ultra thick,|-|]
			(4.5,0) node[below=2mm]{$s_k$}
				--node[above]{$V_k = V$} 
			(7,0) node[below=1.5mm]{$1$};
	\end{tikzpicture}
\end{center}
\caption{A schematic illustration of a step function representing a non-basepoint element of $\ST(V)$.} 
\end{figure}

For $X = E,H$, $\PT(V)$ is equivalent to $\ST(V)$, so it is unnecessary to define a configuration space model for it. For $X = S$, to give a step function definition of $\PT(V)$, we need to introduce labels. Let $\Step^P([0,1],\Sub(V))$ denote the subspace of $\Step([0,1],\Sub(V)) \times S(V)$ consisting of $(f,v)$ such that $v \in S(f(0))$. (Necessarily, the step function also lands in $\Step([0,1],\Sub(V)_{\neq 0})$ because $S(0) = \varnothing$.)

\begin{proposition}\label{prop-PT-conf}
    Let $X = S^{n-1}$ with $n \geq 1$. As a set, we can identify $\PT(V)$ with 
    $$\Step^P([0,1],\Sub(V)) / \sim$$
    where $(f,v) \sim (g,v)$ if $f$ and $g$ differ on a finite number of points, and $(f,v)$ is identified with the basepoint if  $f(t)\neq V$ for all $t<1$.
\end{proposition}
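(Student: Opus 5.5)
We identify the unbased Bousfield--Kan homotopy colimit defining $\PT(V)$ with the labelled step-function model directly, following the pattern of \cite[Proposition 2.12]{KKMMW1} and of \cref{prop-ST-conf}. The guiding idea is that a point of $\coprod Y_0 \times \Delta^p$ indexed by a chain $Y_0 \subsetneq \cdots \subsetneq Y_p \subseteq X$ consists of a point $v \in Y_0 = S(U_0)$ together with barycentric coordinates $(t_0,\ldots,t_p)$. The barycentric coordinates are exactly the data of a step function: we cut $[0,1]$ into consecutive intervals of lengths $t_0,\ldots,t_p$ and let $f$ take the value $U_i$ on the $i$th interval. This gives an order-preserving $f\colon [0,1] \to \Sub(V)_{\neq 0}$ with $f(0) = U_0$, so the pair $(f,v)$ lies in $\Step^P([0,1],\Sub(V))$.

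First I would define the map $\coprod Y_0 \times \Delta^p \to \Step^P([0,1],\Sub(V))/\sim$ by $(v,t) \mapsto (f_t, v)$ as above. The values at the finitely many cut points are ambiguous, and the relation $\sim$ is designed to remove exactly this ambiguity. Next I would check that the map respects the face relations.

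If $t$ lies in the $i$th face with $i > 0$, then $t_i = 0$, so the interval labelled $U_i$ is degenerate. Deleting $U_i$ therefore changes $f$ only at finitely many points.

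If $i = 0$, the interval labelled $U_0$ is degenerate, so $f(0)$ becomes $U_1$ after changing finitely many values. This matches the relation's instruction to push $v$ along the inclusion $S(U_0) \subseteq S(U_1)$, and the label is still a point $v \in S(f(0))$. This endpoint case needs a small convention: $f(0)$ should be read as the value of $f$ just after $0$, which is well defined up to $\sim$.

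Finally, the collapse of chains with $Y_p \neq X$ corresponds to $f(t) \neq V$ for all $t < 1$, which is the basepoint condition.

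For bijectivity, I would use that every point of the Bousfield--Kan colimit has a unique nondegenerate representative. This is a chain with all $t_i > 0$, which is unique because the poset nerve has no degeneracies. Such representatives correspond exactly to $\sim$-classes of $(f,v)$, via the reduced form in which distinct consecutive values sit on intervals of positive length. Conversely, from $(f,v)$ we read off the distinct values $U_0 \subsetneq \cdots \subsetneq U_p$ and the lengths of their preimages, which recovers the chain and the barycentric coordinates.

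The main obstacle is the bookkeeping at $t=0$. The $i=0$ face relation changes the label space, so one must verify that the reduced representative's label $v \in S(U_0)$ is determined consistently, and in particular that $\sim$ does not identify $(f,v)$ with $(g,v')$ for $v \neq v'$. This holds because $\sim$ fixes $v$ by definition. Once that is settled, the remaining steps are routine.
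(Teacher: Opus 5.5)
Your proposal is correct and is essentially the paper's argument. The paper only says the proof is the same as \cite[Proposition 2.12]{KKMMW1}, using the barycentric coordinates $(t_0,\ldots,t_p)$ to place cut points at the partial sums $t_0+\cdots+t_i$ and carrying the label $v \in Y_0$ along; you spell out that same identification, including the face relations (with the $i=0$ case matching the push-forward along $S(U_0)\subseteq S(U_1)$), the basepoint condition, and bijectivity via unique interior representatives.
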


\begin{figure}[h]
\begin{center}
	\begin{tikzpicture}
        \node at (-0.5,0) {$\big($};
        \node at (8.1,0) {$, v\in S(V_0) \big)$};
		\node at (11.2,0) {$ \in \Step^P\big([0,1],\Sub(V)\big)$};
		\draw[ultra thick,|-|] 
			(0,0) node[below=1.5mm]{$0$} 
				--node[above]{$ V_0 \neq 0$} 
			(2.5,0);
        \draw[ultra thick,-|] 
        	(2.5,0) node[below=2mm]{$s_1$} 
				--node[above]{$V_1$} 
			(4,0) node[below=2mm]{$s_2$};
		\draw[ultra thick,dashed]
			(4,0) -- (5,0);
		\draw[ultra thick,|-|]
			(5,0) node[below=2mm]{$s_k$}
				--node[above]{$V_k = V$} 
			(7,0) node[below=1.5mm]{$1$};
	\end{tikzpicture}
\end{center}
\caption{A schematic illustration of a step function representing a non-basepoint element of $\PT(V)$.} 
\end{figure}

Similarly, let $\Step^{\Sigma P}([0,1],\Sub(V))$ denote the subspace of $\Step([0,1],\Sub(V)) \times S^V$ consisting of $(f,v)$ such that $v \in S^{f(0)}$. We think of $S^V$ as $D(V)/S(V)$.

\begin{proposition}\label{prop-SPT-conf}
    Let $X = S^{n-1}$ with $n \geq 0$. As a set, we can identify $\Sigma\!\PT(V)$ with 
    $$\Step^{\Sigma P} ([0,1],\Sub(V)) / \sim$$
    where $(f,v) \sim (g,v)$ if $f$ and $g$ differ on a finite number of points, and $(f,v)$ is identified with the basepoint if  $f(t)\neq V$ for all $t<1$ or if $v$ is the basepoint of $S^V$.
\end{proposition}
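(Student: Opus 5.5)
The plan is to write $\Sigma\!\PT(V)$ down explicitly using the based homotopy colimit model of \cref{spt_simplify_spherical}, and then match its points with step functions carrying labels, exactly as in the proof of \cref{prop-SST-conf}. For $n \geq 1$, \cref{spt_simplify_spherical} presents $\Sigma\!\PT(V)$ as the quotient of $\coprod S^{U_0} \times \Delta^p$, over chains $0 \subseteq U_0 \subsetneq \cdots \subsetneq U_p \subseteq V$, by four relations: the face relations, the relation $\infty \times \Delta^p \sim *$, and collapse whenever $U_p \neq V$. For $n = 0$ the space is by definition $S^0$, and that case is checked directly. There the only subspace is $0$, the step function is constant at $0 = V$, and the label lies in $S^0 = S^{\R^0}$. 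The relation collapses only the basepoint label, so the set is $S^0$.

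For $n \geq 1$, I identify $\Delta^p$ with ordered tuples of cut points $0 \leq s_1 \leq \cdots \leq s_p \leq 1$. A point $(v,(s_i))$ of $S^{U_0} \times \Delta^p$ goes to the pair $(f,v)$, where $f$ is the step function with value $U_i$ on $(s_i,s_{i+1})$, using $s_0 = 0$ and $s_{p+1} = 1$. The condition $v \in S^{U_0} = S^{f(0)}$ then holds, so $(f,v) \in \Step^{\Sigma P}([0,1],\Sub(V))$. Since only cut points are recorded, this is well-defined modulo changing $f$ at finitely many points. The next step is to check that each generating relation of the hocolim matches the relation $\sim$:

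\begin{itemize}
\item When two cut points coincide, or an interior face is reached, the corresponding $U_i$ occupies a degenerate interval and disappears. This is the finite-set ambiguity.
\item When $s_1 = 0$, the space $U_0$ is deleted and $v$ is pushed into $S^{U_1}$ by inclusion. This matches the fact that $f(0)$ is then effectively $U_1$ up to a finite set, while the label $v$ is unchanged.
\item When $s_p = 1$, the space $U_p$ is deleted. If $U_p = V$ this lands in a chain ending below $V$, which is collapsed; in step-function terms, $f(t) \neq V$ for all $t < 1$, giving the basepoint.
\item Chains with $U_p \neq V$ collapse, which is the same condition.
\item $v = \infty$ collapses, matching the basepoint of $S^V$.
\end{itemize}

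Bijectivity is then the usual argument. Every class in the quotient has a unique nondegenerate representative: a strictly increasing chain with cut points strictly inside $(0,1)$ and $v \neq \infty$. Such a representative is recovered from $(f,v)$ by reading off the distinct values of $f$ and the locations of its jumps.

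The main obstacle is the boundary behaviour at $t = 0$. On the step-function side, changing $f$ at finitely many points could change $f(0)$ and so break the condition $v \in S^{f(0)}$. To handle this, I would interpret $f(0)$ as the right-hand limit value $\lim_{t \to 0^+} f(t)$ whenever $0$ is not a cut point. If $0$ is a cut point, with jump from $f(0)$ to a larger space, the label condition refers to the value on the first interval $[0,s_1)$, which contains $0$. The face relation at $s_1 = 0$ shows that all these choices agree: $S^{U_0} \subseteq S^{U_1}$ is the structure map. A final step is to confirm that the inclusion relation at $i = 0$ in the based hocolim is exactly this change of label space, with no further identification of labels.
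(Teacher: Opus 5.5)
Your proposal is correct and is essentially the paper's argument: the paper also converts the barycentric coordinates of each simplex in the based homotopy colimit model of \cref{spt_simplify_spherical} into cut points $s_i = t_0 + \cdots + t_i$ of a step function, as in \cite[Proposition 2.12]{KKMMW1}, and matches the face, basepoint, and $U_p \neq V$ collapse relations exactly as you do. Your treatment of the label at $t = 0$, and of the case $n = 0$, only makes explicit what the paper leaves implicit. For the $t = 0$ point, note that order-preservation forces $f(0) \subseteq \lim_{t \to 0^+} f(t)$, so the $0$th face relation absorbs any change of $f(0)$.
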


\begin{figure}[h]
\begin{center}
	\begin{tikzpicture}
        \node at (-0.5,0) {$\big($};
        \node at (8.5,0) {$, v\in S^{V_0}\setminus\{*\} \big)$};
		\node at (12,0) {$ \in \Step^{\Sigma P}\big([0,1],\Sub(V)\big)$};
		\draw[ultra thick,|-|] 
			(0,0) node[below=1.5mm]{$0$} 
				--node[above]{$ V_0$} 
			(2.5,0);
        \draw[ultra thick,-|] 
        	(2.5,0) node[below=2mm]{$s_1$} 
				--node[above]{$V_1$} 
			(4,0) node[below=2mm]{$s_2$};
		\draw[ultra thick,dashed]
			(4,0) -- (5,0);
		\draw[ultra thick,|-|]
			(5,0) node[below=2mm]{$s_k$}
				--node[above]{$V_k = V$} 
			(7,0) node[below=1.5mm]{$1$};
	\end{tikzpicture}
\end{center}
\caption{A schematic illustration of a step function representing a non-basepoint element of $\Sigma\!\PT(V)$.} 
\end{figure}

The proofs are the same as the proof of Proposition 2.12 in \cite{KKMMW1}, using the barycentric coordinates $(t_0,\ldots,t_p)$ to define a step function with cut points located at $s_0 = t_0$, $s_1 = t_0 + t_1$, $\ldots$, and $s_{p-1} = t_0 + \cdots + t_{p-1}$.

We can also visualize the proof of \cref{spt_simplify_spherical} in these models: if we take all configurations in $\Sigma\!\PT(V)$ and collapse to $*$ those configurations in which the point $v \in S^V$ is the center point 0, all that is left is given by a suspension coordinate (the radial coordinate in $D(V)$), and a point in $\PT(V)$.

\section{Scissors congruence spectra and scissors congruence functors}
\subsection{Scissors congruence spectra}
\begin{definition}\label{spectral_sah_module}
    For each type of geometry (Euclidean, hyperbolic, or spherical) we define the corresponding scissors congruence spectrum by the wedge sum
    \[ \mathcal{E} = \bigvee_{n \geq 0} (EI(E^n)_+ \wedge_{I(E^n)} (\Sph^{-(\R \oplus \R^n)} \wedge \Sigma\!\ST(E^n))), \]
    \[ \mathcal{H} = \bigvee_{n \geq 0} (EI(H^n)_+ \wedge_{I(H^n)} (\Sph^{-(\R \oplus \R^n)} \wedge \Sigma\!\ST(H^n))), \]
    \[ \mathcal{S} = \bigvee_{n \geq 0} (EI(S^{n-1})_+ \wedge_{I(S^{n-1})} (\Sph^{-\R^n} \wedge \Sigma\!\PT(\R^n))). \]
    \[ \widetilde{\mathcal{S}} = \bigvee_{n \geq 0} (EI(S^{n-1})_+ \wedge_{I(S^{n-1})} (\Sph^{-\R^n} \wedge \Sigma\!\ST(\R^n))). \]
    Here $\Sigma\!\ST$ and $\Sigma\!\PT$ are defined in \cref{df:tits_and_suspensions} and \cref{spt_simplify_spherical} respectively, and $EI(X^n)$ is the total space of the principal bundle $EI(X^n) \to BI(X^n)$ for the isometry group $I(X^n)$. In other words, $EI(X^n)$ is any contractible free $I(X^n)$-CW complex. Note that $\widetilde{\mathcal{S}}$ is the spectral Sah algebra from \cite{KKMMW1}, while $\mathcal{S}$ is a version of it for unreduced spherical scissors congruence. Each of these agrees with the direct sum of scissors congruence $K$-theory spectra of the appropriate geometry in all dimensions, by \cite{scissors_thom}:
    \[ \mathcal{E} \simeq \bigvee_{n \geq 0} K(\Pol{E^n}{I(E^n)}), \quad \mathcal{H} \simeq \bigvee_{n \geq 0} K(\Pol{H^n}{I(H^n)}), \quad \mathcal{S} \simeq \bigvee_{n \geq 0} K(\Pol{S^{n-1}}{O(n)}), \quad \widetilde{\mathcal{S}} \simeq \bigvee_{n \geq 0} \widetilde{K}(\Pol{S^{n-1}}{O(n)}), \]
\end{definition}

Taking homology with field coefficients recovers the graded abelian groups $H_*(\mathcal{E};k)$, $H_*(\mathcal{H};k)$, $H_*(\mathcal{S};k)$, and $H_*(\widetilde{\mathcal{S}};k)$ from \cref{homology_sah_module}.

Our goal is to prove that $\mathcal{H}$, $\mathcal{E}$, and $\mathcal{S}$ are $E_1$-comodules over $\widetilde{\mathcal{S}}$. As in the previous paper, we approach this by thinking of them as colimits of diagrams over the category $\Dip^X$ from \cref{coordinate-free-dip}.

\begin{definition}
    We define functors
    \begin{align*}
    \ST, \Sigma\!\ST, \Sigma\!\PT\colon \Dip^X & \to \topp
    \end{align*}
    by sending each object $V \in \Dip^X$ to the appropriate space from \cref{df:tits_and_suspensions} or \cref{spt_simplify_spherical}, and each isomorphism $V \cong V'$ preserving the relevant structure to the evident induced isomorphism on $\ST$, $\Sigma\!\PT$, or $\Sigma\!\ST$.

Recall the extended category $\Luna^X$ from \cref{def-Luna}. We define extended functors
    \begin{align*}
\Sigma\!\PT_S &: \Luna^S \to  \topp\\
\Sigma\!\ST_S &: \Luna^S \to \topp \\
  (\ST,*)_E &:\Luna^E \to \topp, \\
   (\Sigma\!\ST,*)_E &:\Luna^E \to \topp\\
(*, \Sigma\!\ST)_E &: \Luna^E \to \topp \\
(\ST,*)_H&:\Luna^H \to \topp \\
(\Sigma\!\ST,*)_H&:\Luna^H \to \topp \\
(*, \Sigma\!\ST)_H &: \Luna^H \to \topp
\end{align*}
as follows.
\begin{itemize}
    \item $\Sigma\!\PT_S$ is the same as $\Sigma\!\PT\colon \Luna^S = \Dip^S \to  \topp$.
    \item $\Sigma\!\ST_S$ is the same as $\Sigma\!\ST\colon \Luna^S = \Dip^S \to \topp $.
    \item $(\ST,*)_E$ is defined as $\ST$ on $\Dip^E$ and the trivial functor $*$ on $\Dip^S \setminus \{0\}$.
    \item $(\Sigma\!\ST,*)_E$ is defined as $\Sigma\!\ST$ on $\Dip^E$ and the trivial functor $*$ on $\Dip^S \setminus \{0\}$.
    \item $(*, \Sigma\!\ST)_E$ is the trivial functor $*$ on $\Dip^S$ and $\Sigma\!\ST$ on $\Dip^E \setminus \{(\R,\id_\R)\}$.
    \item $(\ST,*)_H$ is defined as $\ST$ on $\Dip^H$ and the trivial functor $*$ on $\Dip^S$.
    \item $(\Sigma\!\ST,*)_H$ is defined as $\Sigma\!\ST$ on $\Dip^H$ and the trivial functor $*$ on $\Dip^S$.
    \item $(*, \Sigma\!\ST)_H$ is the trivial functor $*$ on $\Dip^S$ and $\Sigma\!\ST$ on $\Dip^H$.
\end{itemize}
The last three functors are also defined to be the trivial functor $*$ on ${*} \in \Luna^H$. The idea behind this definition is that functors in the right-hand slot will play the role of a coalgebra, and functors in the left-hand slot will play the role of a comodule over that coalgebra.
\end{definition}

\subsection{Desuspensions and Sah functors.}

These functors are closely related to the spectra $\mathcal{E},\mathcal{H}, \mathcal{S}$ and $\widetilde{\mathcal{S}}$, as is clear from Definition \ref{spectral_sah_module}. To make the connection more precise, we use the following functor to desuspend them.

\begin{definition}\label{def-desusp-Dip}
    In the spherical case, let
    \[
    	D \colon \Fun(\Dip^S, \topp ) \to \Fun(\Dip^S, \Sp^O)
	\]
    be the functor that sends $G \colon \Dip^S \to \topp$ to the functor given by 
    \[
    	V \mapsto \Sph^{-V} \sma G(V)
	\]
    where $\Sph^{-V} = F_VS^0$ is the negative $V$-sphere from \cite[Definition 2.5]{KKMMW1}. In the Euclidean and hyperbolic cases, we define analogues of this functor as follows:
    \begin{align*}
    D \colon \Fun(\Dip^E, \topp ) \to \Fun(\Dip^E, \Sp^O) & \qquad
    V \mapsto F(S^V,\Sph) \sma G(V) \\
    \Sigma D \colon \Fun(\Dip^E, \topp ) \to \Fun(\Dip^E, \Sp^O) & \qquad
    V \mapsto \Sph^{-V_0} \sma G(V) \\
    D \colon \Fun(\Dip^H, \topp ) \to \Fun(\Dip^H, \Sp^O) & \qquad
    V \mapsto F(S^V,\Sph) \sma G(V) \\
    \Sigma D \colon \Fun(\Dip^H, \topp ) \to \Fun(\Dip^H, \Sp^O) & \qquad
    V \mapsto \Sigma F(S^V, \Sph) \wedge G(V).
    \end{align*}
    In each of these cases, $S^V$ is the one-point compactification of the vector space $V$,
   and $F(S^V,\Sph) = \Omega^V\Sph$ denotes the cotensor or Spanier--Whitehead dual of $S^V$ in orthogonal spectra. In the second of these functors, we replace the expected expression $\Sigma F(S^V,\Sph)$ with the equivalent expression $\Sph^{-V_0} = F_{V_0}S^0$, because it has better monoidal properties. Here $V_0 = \ker(x_0\colon V \to \R)$ and $\Sph^{-V_0} = F_{V_0}S^0$ is also the negative $V_0$-sphere from \cite[Definition 2.5]{KKMMW1}.

   In all of the resulting functors, the left action of the isometries $I(V)$ is by a diagonal action on $G(V)$ and on $\Sph^{-V}$, $\Sph^{-V_0}$, or $F(S^V,\Sph)$. In all three of these cases, a matrix $A$ acts by pre-composition with $A^{-1}$, as in \cite[Definition 2.5]{KKMMW1}.
\end{definition}

\begin{lemma}\label{switch_which_one_is_suspended}
    In the Euclidean and hyperbolic cases, we have natural equivalences $D(\Sigma G) \simeq \Sigma D(G)$.
\end{lemma}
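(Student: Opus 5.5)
We need to produce natural equivalences, for $G\colon \Dip^E\to\topp$ or $G\colon\Dip^H\to\topp$, between $D(\Sigma G)$ and $\Sigma D(G)$, natural in both $G$ and $V$. In the hyperbolic case the two sides are
\[ D(\Sigma G)(V)=F(S^V,\Sph)\sma \Sigma G(V), \qquad \Sigma D(G)(V)=\Sigma F(S^V,\Sph)\sma G(V). \]
Writing $\Sigma G(V) = S^1\sma G(V)$, the associativity and commutativity isomorphisms of the smash product in orthogonal spectra give an isomorphism
\[ F(S^V,\Sph)\sma S^1\sma G(V)\cong S^1\sma F(S^V,\Sph)\sma G(V). \]
The $S^1$ carries the trivial action of $I(V)$, and the action of $A$ on $F(S^V,\Sph)$ is by precomposition with $A^{-1}$. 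The symmetry isomorphism therefore commutes with the diagonal action, so this is an isomorphism of functors on $\Dip^H$ and in particular a natural equivalence.

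In the Euclidean case the two sides are
\[ D(\Sigma G)(V)=F(S^V,\Sph)\sma\Sigma G(V), \qquad \Sigma D(G)(V)=\Sph^{-V_0}\sma G(V). \]
By the same shuffle as above, it suffices to give a natural equivalence of functors $\Dip^E\to\Sp^O$
\[ \Sigma F(S^V,\Sph)\simeq \Sph^{-V_0}, \]
and then smash it with $G(V)$.

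\begin{itemize}
\item[Step 1.] Let $L=V/V_0$. The functional $x_0$ identifies $L$ with $\R$, and every morphism of $\Dip^E$ is compatible with the functionals, so $L\cong\R$ naturally and the isometries act trivially on $L$.
\item[Step 2.] The short exact sequence $0\to V_0\to V\to \R\to 0$ gives a natural homeomorphism $S^V\cong S^{V_0}\sma S^1$ once we choose a splitting. We take the splitting given by the affine structure, i.e.\ by choosing a point of $x_0^{-1}(1)$. The choices form a contractible affine space that is acted on compatibly by $I(E^n)$. So we get a zigzag of $I(V)$-equivariant equivalences, not a strict isomorphism; alternatively we replace $S^V$ by the Thom-space model and use that translations act trivially up to homotopy.
\item[Step 3.] Combine this with $F(S^{V_0}\sma S^1,\Sph)\simeq \Omega F(S^{V_0},\Sph)$ and with the standard equivalence $F(S^{V_0},\Sph)\simeq F_{V_0}S^0=\Sph^{-V_0}$ from \cite{KKMMW1}, which is $O(V_0)$-equivariant. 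Suspending gives $\Sigma\Omega\Sph^{-V_0}\simeq\Sph^{-V_0}$, since $\Sigma\Omega\simeq\id$ in spectra.
\end{itemize}

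The main obstacle is making the equivalences in Steps 2 and 3 natural on $\Dip^E$, i.e.\ equivariant for the full group $I(E^n)$, including translations. Translations act nontrivially on $V$, so there is no strictly invariant splitting $V\cong V_0\oplus\R$. They do act trivially on $V_0$ and on $V/V_0$, and they are unipotent. I would handle this by using the filtration $V_0\subset V$, or by using the contractibility of the space of splittings, to produce an $I(E^n)$-equivariant zigzag through the mapping spectrum built with a space of splittings. Since the translation group acts on a contractible space of splittings, the comparison maps are equivariant weak equivalences of underlying spectra. That is exactly what is needed for naturality in $\Fun(\Dip^E,\Sp^O)$ with the projective model structure.
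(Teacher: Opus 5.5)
Your proposal is correct and is essentially the paper's argument. The hyperbolic case is just moving the $S^1$ past $F(S^V,\Sph)$, and in the Euclidean case the paper likewise reduces to an equivariant equivalence $\Sph^{-V_0}\simeq\Sigma F(S^V,\Sph)$ and makes the splitting $\R\oplus V_0\cong V$ equivariant by smashing with the contractible space $E^n=x_0^{-1}(1)$ of splittings, via the zigzag $S^{\R\oplus V_0}\leftarrow E^n_+\sma S^{\R\oplus V_0}\cong E^n_+\sma S^V\to S^V$ given by $(x,t,v)\mapsto(x,tx+v)$; the only difference is cosmetic (the paper desuspends once and quotes $\Sph^{-(\R\oplus V_0)}\simeq F(S^{\R\oplus V_0},\Sph)$ from \cite{KKMMW1} instead of your $\Sigma\Omega\simeq\id$ step), though your fallback of translations acting ``trivially up to homotopy'' would not suffice on its own, so the space-of-splittings zigzag is the argument to keep.
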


\begin{proof}
    In the hyperbolic case this is clear from the definition. In the Euclidean case, this comes down to the claim that there is an $I(E^n)$-equivariant equivalence of spectra $\Sph^{-V_0} \simeq \Sigma F(S^V,\Sph)$ for each $V$. Smashing with $\Sph^{-1}$, this is equivalent to specifying an equivalence $\Sph^{-(\R \oplus V_0)} \simeq F(S^V,\Sph)$. The proof of \cite[Lemma 2.6]{KKMMW1} gives an equivariant equivalence $\Sph^{-(\R \oplus V_0)} \simeq F(S^{\R \oplus V_0},\Sph)$, so now we just have to give an equivalence $S^{\R \oplus V_0} \simeq S^V$ that commutes with the $I(E^n)$-action. We do this by picking an $O(n)$-equivariant identification $\R \oplus V_0 \cong V$, and then making it $I(E^n)$-equivariant by smashing with $E^n \subseteq V$, which is equivalent to a point $*$ with trivial action, and forming an equivariant homeomorphism as follows:
    \[ \begin{tikzcd}
        S^{\R \oplus V_0} & S^V \\
        E^n_+ \sma S^{\R \oplus V_0} \uar{\simeq} \rar[<->]{\cong} & E^n_+ \sma S^V \uar{\simeq}
    \end{tikzcd} \]
    The bottom map has the formula $(x,t,v) \mapsto (x,tx+v)$ for $x \in E^n \subseteq V$, $t \in \R$, and $v \in V_0$. This is invertible with inverse $(x,v) \mapsto (x,x_0(v),v-x_0(v)x)$, and can be directly checked to be $I(E^n)$-equivariant from the formula $A(tx + v) = tAx + Av$.
\end{proof}

The desuspension functors $D$ and $\Sigma D$ also extend to the corresponding Luna categories.

\begin{definition}\label{def-desusp-Luna}
   \begin{itemize}
       \item $D \colon \Fun(\Luna^S, \topp ) \to \Fun(\Luna^S, \Sp^O)$ is the same as the functor $D \colon \Fun(\Dip^S, \topp ) \to \Fun(\Dip^S, \Sp^O)$.
       \item $(\Sigma D,D)_E: \Fun(\Luna^E, \topp ) \to \Fun(\Luna^E, \Sp^O)$ is defined as $\Sigma D$ on the category $\Fun(\Dip^E, \topp)$ and as $D$ on $\Fun(\Dip^S, \topp)$.
       \item $(\Sigma D,D)_H:\Fun(\Luna^H, \topp ) \to \Fun(\Luna^H, \Sp^O)$ is defined as $\Sigma D$ on the category $\Fun(\Dip^H, \topp)$, as $D$ on $\Fun(\Dip^S, \topp)$, and as $*$ on $*$.
   \end{itemize} 
\end{definition}

The categories $\Fun(\Luna^X, \topp )$ and $\Fun(\Luna^X, \Sp^O)$ are symmetric monoidal categories with the Day convolution product, which we denote by $\boxwedge$ in all cases. To transfer our algebras, coalgebras, and comodules from $\Fun(\Luna^X, \topp )$ to $\Fun(\Luna^X, \Sp^O)$, we use the following proposition.

\begin{proposition}\label{prop-D-symm-mon-Dip}
    The functors 
    \[
    	D \colon \Fun(\Dip^S, \topp ) \to \Fun(\Dip^S, \Sp^O)
	\]
    \[
    	\Sigma D \colon \Fun(\Dip^E, \topp ) \to \Fun(\Dip^E, \Sp^O)
	\]
    are strong symmetric monoidal. 
\end{proposition}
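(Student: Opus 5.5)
The plan is to build the structure maps of a strong monoidal functor directly from the structure of the negative spheres $\Sph^{-V} = F_V S^0$, and then verify coherence levelwise. Concretely, for $G, G'$ in $\Fun(\Dip^S,\topp)$ the Day convolution is the left Kan extension
\[ (G \boxwedge G')(V) = \underset{V_1 \oplus V_2 \cong V}{\operatorname{colim}} \; G(V_1) \sma G(V_2), \]
and similarly in $\Fun(\Dip^S,\Sp^O)$ (resp.\ with $\times$ on $\Dip^E$). Since $\Sph^{-(-)} \sma (-)$ commutes with colimits in the $\topp$ variable, there is a canonical comparison map
\[ D(G) \boxwedge D(G') \longrightarrow D(G \boxwedge G'). \]
I would assemble it from the canonical isomorphisms $F_{V_1}S^0 \sma F_{V_2}S^0 \cong F_{V_1 \oplus V_2} S^0$ of orthogonal spectra, which are $O(V_1)\times O(V_2)$-equivariant and natural in isometries. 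The unit comparison is the identity $\Sph = F_0 S^0 = D(\mathbb{1})(0)$, where $\mathbb{1}$ is the unit functor concentrated at $0$ with value $S^0$.

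Next I would check that this comparison is an isomorphism. Both sides are colimits over the comma category of decompositions $V_1 \oplus V_2 \cong V$. Pulling $F_V S^0 \cong F_{V_1}S^0 \sma F_{V_2}S^0$ out through the isomorphism $\varphi\colon V_1 \oplus V_2 \cong V$ identifies the summand indexed by $\varphi$ on each side. This identification is compatible with the morphisms of the indexing category, since those are pairs of isometries and the isomorphism $F_{V_1}S^0\sma F_{V_2}S^0\cong F_{V_1\oplus V_2}S^0$ is natural in them. Since smashing with $F_VS^0$ preserves colimits, the two colimits then agree. The associativity, unit, and symmetry coherences reduce to the corresponding coherences of the isomorphisms $F_{V}S^0\sma F_WS^0 \cong F_{V\oplus W}S^0$, i.e.\ to the fact that $V\mapsto F_VS^0$ is a strong symmetric monoidal functor from inner product spaces to orthogonal spectra. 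This is standard (Mandell--May--Schwede--Shipley) and was presumably already used in \cite{KKMMW1}. The symmetry requires care: the twist on $F_{V}S^0\sma F_WS^0$ must correspond to precomposition with the swap $V\oplus W\cong W\oplus V$, and this holds by construction of the isomorphism.

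For $\Sigma D$ on $\Dip^E$, the same argument works with $V \mapsto V_0 = \ker x_0$. The key observation is that the Cartesian product of \cref{dip-cartesian} satisfies $(V\times_\R W)_0 = V_0 \oplus W_0$ as inner product spaces, naturally in morphisms of $\Dip^E$. The unit $(\R,\id_\R)$ has $V_0 = 0$, so $\Sigma D$ sends the unit to $F_0S^0=\Sph$. Thus $V\mapsto \Sph^{-V_0}$ is the composite of the strong symmetric monoidal functor $(-)_0\colon(\Dip^E,\times)\to(\Dip^S,\oplus)$ with $F_{(-)}S^0$, and the comparison maps are isomorphisms exactly as before. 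This is also the reason the paper uses $\Sph^{-V_0}$ rather than $\Sigma F(S^V,\Sph)$. The translation part of $I(E^n)$ acts on $V_0$ through its linear part, and this action is compatible with the identification of $(V\times_\R W)_0$.

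I expect the main obstacle to be bookkeeping in the Euclidean case rather than any homotopical subtlety, since everything is a point-set isomorphism. One must check that the functor $V\mapsto V_0$ on morphisms, where an affine isometry restricts to a linear isometry of kernels, is compatible with the coherence isomorphisms of the fiber product. One must also check that the diagonal action convention, precomposition with $A^{-1}$, is respected. I would settle this first by writing the fiber product explicitly as $\{(v,w): x_0(v)=x_0(w)\}$ and verifying that the kernel is literally $V_0\oplus W_0$.
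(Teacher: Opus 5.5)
Your proposal is correct and follows the same route as the paper. The paper cites \cite[Lemma 4.4]{KKMMW1} for the spherical case, and you essentially reprove that lemma: you use the natural isomorphisms $F_{V_1}S^0 \sma F_{V_2}S^0 \cong F_{V_1 \oplus V_2}S^0$ and commute $\Sph^{-V} \sma (-)$ past the Day convolution colimit. The paper then gets the Euclidean case from that lemma together with the natural identification $(V \times_\R W)_0 \cong V_0 \oplus W_0$, which is exactly your factorization through $V \mapsto V_0$.
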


\begin{proof}
    For the first functor this is \cite[Lemma 4.4]{KKMMW1}. For the second functor this follows from the same lemma and the natural isomorphism $(V \times_\R W)_0 \cong V_0 \oplus W_0$.
\end{proof}

\begin{corollary}\label{cor-D-symm-mon-Luna}
    The functors
    $$D \colon \Fun(\Luna^S, \topp ) \to \Fun(\Luna^S, \Sp^O)$$
    $$(\Sigma D,D)_E: \Fun(\Luna^E, \topp ) \to \Fun(\Luna^E, \Sp^O)$$
    are strong symmetric monoidal, and
    $$(\Sigma D,D)_H: \Fun(\Luna^H, \topp ) \to \Fun(\Luna^H, \Sp^O)$$
    is lax symmetric monoidal with structure maps $F(X) \boxwedge F(Y) \to F(X \boxwedge Y)$ that are stable equivalences of spectra whenever $X,Y \in \Dip^S \amalg \Dip^H$.
\end{corollary}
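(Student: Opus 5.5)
We prove the corollary by reducing to \cref{prop-D-symm-mon-Dip}, using the explicit formula for Day convolution on a groupoid-like indexing category. For functors $F,G\colon \Luna^X \to \mathcal C$, the Day convolution is
\[ (F \boxwedge G)(Z) = \bigvee_{[X_1,X_2]} \Luna^X(X_1 \oplus X_2, Z)_+ \wedge_{\mathrm{Aut}(X_1)\times \mathrm{Aut}(X_2)} F(X_1)\wedge G(X_2), \]
where the wedge runs over isomorphism classes of pairs. This is valid because $\Luna^X$ is a groupoid, or a wedge/disjoint union of groupoids. Since $\Luna^X$ splits into the pieces $\Dip^S$, $\Dip^E$ (respectively $\Dip^H$ and $*$), the convolution splits into summands indexed by which pieces $X_1,X_2$ lie in. The desuspension functor acts pieceswise. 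So we only have to produce, for each piece-pair, a natural comparison map $D(F)(X_1)\wedge D(G)(X_2)\to D(F\wedge G)(X_1\oplus X_2)$ and check what it is.

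\textbf{Spherical case.} Here $\Luna^S=\Dip^S$, so the statement is literally \cref{prop-D-symm-mon-Dip}.

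\textbf{Euclidean case.} The comparison maps are the canonical isomorphisms of negative spheres, $F_{A}S^0\wedge F_BS^0\cong F_{A\oplus B}S^0$, taken from \cite[Lemma 4.4]{KKMMW1}. For each piece-pair we check that the relevant shift adds up:
\begin{itemize}
\item For $\Dip^E\times\Dip^E\to\Dip^E$ we use $(V\times_\R W)_0\cong V_0\oplus W_0$, exactly as in \cref{prop-D-symm-mon-Dip}.
\item For $\Dip^E\times\Dip^S\to\Dip^E$ we use $(V\oplus W)_0 = V_0\oplus W$. Hence $\Sph^{-V_0}\wedge\Sph^{-W}\cong\Sph^{-(V\oplus W)_0}$ is again an isomorphism.
\item For $\Dip^S\times\Dip^S$ we are in the spherical case.
\end{itemize}
We must also handle the wedge point $(\R,\id)=0$. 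On it, $\Sigma D$ and $D$ agree, since $V_0=0=$ the $\Dip^S$ zero space. So the definition is consistent, and the unit $\Sph\cong\Sph^{-0}$ is preserved.

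Coherence with associators, unitors and symmetries is inherited from that of $F_{(-)}S^0$. This reduces to the fact that the structure isomorphisms of $\Luna^E$ are the usual vector space isomorphisms. Each summand comparison is an isomorphism, and Day convolution commutes with these wedge/orbit constructions. Therefore we get strong monoidality.

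\textbf{Hyperbolic case.} The functor is only lax, because of the trash object $*$, so we treat the piece-pairs separately:
\begin{itemize}
\item Pairs involving $\Dip^H\times\Dip^H$ or $*$ map to $*$, where the target value is $*$. We take the zero map, which suffices for lax monoidality.
\item For $\Dip^S\times\Dip^S$, we use the spherical case.
\item For $\Dip^H\times\Dip^S\to\Dip^H$ (and symmetrically $\Dip^S\times\Dip^H$), we need a natural map
\[ \Sigma F(S^V,\Sph)\wedge \Sph^{-W}\to \Sigma F(S^{V\oplus W},\Sph). \]
\end{itemize}

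For the last map, we use the equivariant stable equivalence $\Sph^{-W}\simeq F(S^W,\Sph)$ of \cite[Lemma 2.6]{KKMMW1}, realized by a strict natural map $\Sph^{-W}=F_WS^0\to F(S^W,\Sph)$ (adjoint to the identity). We then use the pairing $F(S^V,\Sph)\wedge F(S^W,\Sph)\to F(S^V\wedge S^W,\Sph)$ induced by the multiplication on $\Sph$, together with $S^V\wedge S^W\cong S^{V\oplus W}$. Both maps are natural in isometries and compatible with the orientation data, since the isometries act through $A^{-1}$.

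\textbf{Main obstacle.} The step needing the most care is the hyperbolic case. We must show that the composite comparison is a stable equivalence for $X,Y\in\Dip^S\amalg\Dip^H$, and that it satisfies the lax coherence diagrams strictly, not just up to homotopy.

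Equivalence follows because $S^V$ and $S^W$ are finite. The duality pairing is then an equivalence, and the first map is an equivalence by \cite[Lemma 2.6]{KKMMW1}. The result survives the orbit construction because the orbit construction is homotopical on cofibrant (free) pieces, or can be checked after forgetting the action.

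Strict coherence holds because both the shift isomorphisms and the duality pairing are strictly associative and unital in orthogonal spectra. We must also make sure the zero maps into $*$ are compatible with these diagrams; this is automatic since $*$ absorbs everything.
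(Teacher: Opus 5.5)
Your overall structure matches the paper's proof. Same-piece products reduce to \cref{prop-D-symm-mon-Dip}. Mixed Euclidean products use $\Sph^{-V_0}\sma\Sph^{-W}\cong\Sph^{-(V_0\oplus W)}$, with coherences from \cite[Lemma 4.4]{KKMMW1}. The wedge point is handled by noting that $\Sph^{-V_0}=\Sph^0$ there, and the maps into the trash object $*$ are forced to be trivial. Your hyperbolic map, the composite of $\Sph^{-W}\to F(S^W,\Sph)$ with the cotensor pairing, is literally the paper's map: its adjoint is the double evaluation $F(S^V,\Sph)\sma\Sph^{-W}\sma S^V\sma S^W\to\Sph$.

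\textbf{The gap.} The step that does not go through as written is your argument that this map is a stable equivalence. You factor it through $F(S^V,\Sph)\sma F(S^W,\Sph)$ and use 2-out-of-3, claiming two things. First, that the pairing is an equivalence ``since spheres are finite''. Second, that $F(S^V,\Sph)\sma(\Sph^{-W}\to F(S^W,\Sph))$ is an equivalence by \cite[Lemma 2.6]{KKMMW1}. Both are statements about \emph{derived} smash products. At the point-set level in orthogonal spectra, $X\sma(-)$ is only guaranteed to preserve stable equivalences when $X$ is cofibrant (or flat), and neither $F(S^V,\Sph)$ nor $F(S^W,\Sph)$ is. So neither intermediate claim is justified. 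This matters because the hyperbolic assertion consists precisely of this equivalence, and the paper stresses that no strong monoidal point-set model is available here.

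\textbf{The fix.} Follow the paper's route and work with the adjoint instead.
\begin{enumerate}[(i)]
\item The map $F(S^V,\Sph)\sma(\Sph^{-W}\sma S^W)\to F(S^V,\Sph)\sma\Sph$ is a stable equivalence. It smashes an arbitrary spectrum with a stable equivalence between cofibrant spectra; to see this, cofibrantly replace $F(S^V,\Sph)$ and apply 2-out-of-3.
\item The evaluation $F(S^V,\Sph)\sma S^V\to\Sph$ is a stable equivalence, since $S^V\sma(-)$ and $F(S^V,-)$ just shift naive homotopy groups.
\item Hence the adjoint $f\colon F(S^V,\Sph)\sma\Sph^{-W}\sma S^{V\oplus W}\to\Sph$ is a stable equivalence.
\item Write $f=\mathrm{ev}\circ(\tilde f\sma S^{V\oplus W})$, where $\tilde f$ is your map and $\mathrm{ev}\colon F(S^{V\oplus W},\Sph)\sma S^{V\oplus W}\to\Sph$ is a stable equivalence. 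Then $\tilde f\sma S^{V\oplus W}$ is a stable equivalence. Since $S^{V\oplus W}\sma(-)$ reflects stable equivalences, so is $\tilde f$, and hence so is its suspension.
\end{enumerate}
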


\begin{proof}
    When multiplying two spaces from the same copy of $\Dip$, this follows from \cref{prop-D-symm-mon-Dip}. For the isomorphism $\Sph^{-V_0} \sma \Sph^{-W} \cong \Sph^{-(V_0 \oplus W)}$ when $V \in \Dip^E$ and $W \in \Dip^S$ we use the same isomorphism, and the coherences follow from \cite[Lemma 4.4]{KKMMW1}. Note that the definition of $\Sigma D$ and $D$ agree on the unit object $(\R,\id_\R) \sim (0)$ because the first smashes with $\Sph^{-V_0} = \Sph^0$ and the second smashes with $\Sph^0$.
    
    On the object $* \in \Luna^H$, the lax symmetric monoidal structure exists trivially. For the remaining objects in the hyperbolic case, we construct a stable equivalence $\Sigma F(S^V,\Sph) \sma \Sph^{-W} \to \Sigma F(S^{V \oplus W},\Sph)$ using the suspension of the adjoint of the stable equivalence
    \[ F(S^V,\Sph) \sma \Sph^{-W} \sma S^{V \oplus W} \to \Sph \]
    that decomposes $S^{V \oplus W} \cong S^V \sma S^W$, applies the stable equivalence $\Sph^{-W} \sma S^W \to \Sph$ from \cite[Lemma 2.6]{KKMMW1}, and then applies the evaluation map $F(S^V,\Sph) \sma S^V \to \Sph$, which is also a stable equivalence. It is straightforward to check that applying this map with $W = 0$ induces the identity, and applying it with $W_1$ and then $W_2$ gives the same map as applying it with $W_1 \oplus W_2$. This finishes the verification that it defines a lax symmetric monoidal functor. It is a stable equivalence because the adjunction between $S^{V \oplus W} \sma (-)$ and $F(S^{V \oplus W},-)$ preserves stable equivalences.
\end{proof}

\begin{remark}
    We are unable to find a point-set model for $\Sigma D$ in the hyperbolic case that defines a strong symmetric monoidal functor $\Fun(\Luna^H, \topp) \to \Fun(\Luna^H, \Sp^O)$, and we suspect that one does not exist. However, since $(\Sigma D, D)_H$ is lax symmetric monoidal along weak equivalences, it will be strong symmetric monoidal when we pass to $\infty$-categories.
\end{remark}

\begin{definition}\label{def-spectral-sah}
    We use the following notation for certain desuspensions of our earlier diagrams on $\Luna^X$:
    \begin{align*}
    \funSah^E = (\Sigma D, D)_E((\ST, *)_E), & \qquad \funSah^{\widetilde S} = (\Sigma D, D)_E((*, \Sigma\!\ST)_E), \\
    \funSah^H = (\Sigma D, D)_H((\ST, *)_H), & \qquad \funSah^{\widetilde S} = (\Sigma D, D)_H((*, \Sigma\!\ST)_H), \\
    \funSah^S = D(\Sigma PT_S), & \qquad \funSah^{\widetilde S} = D(\Sigma\!\ST_S).
    \end{align*}
\end{definition}

This notation is somewhat ambiguous since $\funSah^{\widetilde S}$ has three distinct definitions. But they are essentially the same, as they are desuspensions by the same functor $D$ of the same diagram $\Sigma\!\ST$ on the same subcategory $\Dip^S$ of different categories $\Luna^X$. This will allow us to define different comodules over the same coalgebra.

We have defined these precisely so that their homotopy colimits are the scissors congruence spectra from \cref{spectral_sah_module}:
\begin{align*}
\hocolim \funSah^E = \mathcal{E}, & \qquad \hocolim \funSah^S = \mathcal{S}, \\
\hocolim \funSah^H = \mathcal{H}, & \qquad \hocolim \funSah^{\widetilde S} = \widetilde{\mathcal{S}}.
\end{align*}
Rewriting the spectra in this way will allow us to define their algebra, coalgebra, and comodule structures in the next section.

\begin{remark}
    By \cref{switch_which_one_is_suspended}, we also have two more alternative definitions for $\funSah^E$ and $\funSah^H$:
    \begin{align*}
    \funSah^E \simeq (D, D)_E((\Sigma\!\ST, *)_E), & \qquad \funSah^H \simeq (D, D)_H((\Sigma\!\ST, *)_H).
    \end{align*}
    In essence, this means that we can move the suspension from the $D$ term to the $\ST$ term if desired. This observation will be important when we use apartment maps to check that our comodule structures give the Dehn invariant on $\pi_0$.
\end{remark}

\section{Modules and comodules over an operad}

In this section we prepare for the construction of the higher Dehn invariants, by making precise what a comodule over a coalgebra over an operad should be. To do this carefully, we find it is easiest if we work in the setting of coordinate-free operads as in \cite[2.1.1.1]{HA}, so we recall the setup here first, for operads in topological spaces. 
\begin{definition}\label{df:operad_coord_free}
	A \emph{coordinate-free operad} is
	\begin{itemize}
		\item an assignment of a topological space $\mathcal O(I)$ to each finite set $I$,
		\item an identity element $\id_i \in \mathcal O(\{i\})$ for each singleton $\{i\}$, and
		\item composition maps
		\[ \circ\colon \mathcal O(J) \times \prod_{j \in J} \mathcal O(\alpha^{-1}(j)) \longrightarrow \mathcal O(I) \]
		for each function $\alpha\colon I \to J$
	\end{itemize}
	such that the following equations hold:
	\begin{itemize}
		\item (Identity) \ $\id_j \circ f = f$ when $f \in \mathcal O(I)$ and the composition is along the unique map $I \to \{j\}$, and $f = f \circ (\id_i)_{i \in I}$ with composition along the identity map $I \to I$,
		\item (Associativity) \ for two composable maps $\alpha\colon I \to J$ and $\beta\colon J \to K$ and elements $f \in \mathcal O(K)$, $g_k \in \mathcal O(\beta^{-1}(k))$, and $h_j \in \mathcal O(\alpha^{-1}(j))$ we have
		\[ (f \circ (g_k)_{k \in K}) \circ (h_j)_{j \in J} = f \circ (g_k \circ (h_j)_{j \in \beta^{-1}(k)})_{k \in K}. \]
	\end{itemize}
	A map of operads $\mathcal O \to \mathcal O'$ again consists of maps $\mathcal O(I) \to \mathcal O'(I)$ commuting with the composition map and preserving the identity elements.
\end{definition}

We can go from a coordinate-free operad $\mathcal O$ to a standard operad $\mathcal O'$ as follows: we let $\mathcal O'(n)=\mathcal O(\{1,\dots, n \})$. The right $\Sigma_n$-action on $\mathcal O'(n)$ is given by setting $f \cdot \sigma= (\id)_{\{1,\dots, n \}} \circ f$ along the map $\sigma:\{1,\dots,n\} \to \{1,\dots, n \}$, for $f\in \mathcal O'(n)$. The composition $f\circ(g_1,\dots, g_n)$ for $f\in \mathcal O'(n)$ and $g_i$ in $\mathcal O'(k_i)$ is defined as the composition in $\mathcal O$ along the map $$\{1,\dots, k_1+\dots+k_n\}  \to \{1,\dots, n\}$$ that sends the first $k_1$ letters to $1$, the second $k_2$ to $2$, and so on.

An example of a coordinate-free operad that we will use is the following. Recall that a \emph{rectilinear embedding} $[0,1] \to [0,1]$ is an embedding of the form $x\mapsto a+bx$ with $0 < b\leq 1$.
\begin{definition}\label{little-intervals-operad}
    The coordinate-free little intervals operad $D_1$ is given by 
    \begin{itemize}
        \item $D_1(J)$ is the space whose points are tuples $\{e_j\}_{j\in J}$ of rectilinear embeddings $e_j:[0,1] \to [0,1]$, such that the sets $e_j((0,1))$ are pairwise disjoint. This is topologized as a subspace of the space $\textup{Map}(\amalg_J [0,1], [0,1])$ or equivalently as a subspace of $[0,1]^{2|J|}$.
        \item The identity element in $ D_1(\{j\})$ is $\{\id:[0,1] \to [0,1]\}_{\{j\}}$.
        \item For a function $\alpha:J\to K$, the composition 
        $$D_1(K) \times \prod_{k\in K} D_1(\alpha^{-1}(k)) \to D_1(J) $$
        is given by sending $(\{e_k\}_K, \{e_j \}_{\alpha^{-1}(k)} : k\in K) $ to the set $\{e_{\alpha(j)} \circ e_j  \}_{j \in J}$.
    \end{itemize}
\end{definition}
The associative operad also has a coordinate free version.
\begin{definition}
    The coordinate-free associative operad $\Assoc$ is given by 
    \begin{itemize}
        \item $\Assoc(I)$ is the discrete space of total orderings of $I$,
        \item the identity element in $\Assoc(\{i\})$  is the unique total ordering on $\{i \}$
        \item  For a function $\alpha:I\to J$, the composition 
        $$\Assoc(J) \times \prod_{j\in J} \Assoc(\alpha^{-1}(j)) \to \Assoc(I) $$
        is defined by putting the lexicographical ordering on $I$: the ordering on $J$ is used to put the subsets $\alpha^{-1}(j)$ in order, and the orderings on $\alpha^{-1}(j)$ order the elements within $\alpha^{-1}(j)$.
    \end{itemize}
\end{definition}
There is a natural operad map from $D_1$ to $\Assoc$: a map
$$D_1(J)\to  \Assoc(J) $$
can be given by sending $\{e_j \}_{j\in J}$ to the ordering on $J$ induced by the ordering on the elements $e_j(1)$ coming from the total ordering on the interval $I$. 

When an operad takes values in the category $\Top$ of topological spaces and continuous functions, we can define algebras and modules over that operad in any symmetric monoidal topological category:
\begin{definition}\label{df:smtc}
    A \emph{symmetric monoidal topological category} is a closed symmetric monoidal category $(C,\otimes,1)$ with a strong symmetric monoidal left adjoint $L\colon (\Top,\times,\ast) \to (C,\otimes,1)$, where $\Top$ is equipped with the Cartesian symmetric monoidal structure. We write $R$ for the right adjoint functor to $L$.
\end{definition}

Given such a symmetric monoidal topological category $(C,\otimes,1)$ with adjunction $L \dashv R$, we see that $C$ is enriched in spaces with morphism spaces $RC(X,Y)$, where $C(X,Y) \in \ob(C)$ is the internal hom in the closed symmetric monoidal category $C$. This category $C$ is also tensored and cotensored over spaces via 
\[
    \begin{tikzcd}[row sep=0]
        \Top \times C \ar[r] & C \\
        (K,X) \ar[r, mapsto] & L(K) \otimes X
    \end{tikzcd}
    \qquad
    \text{ and }
    \qquad
    \begin{tikzcd}[row sep=0]
        \Top^{\op} \times C \ar[r] & C \\
        (K,X) \ar[r, mapsto] & C(L(K),X).
    \end{tikzcd}
\]

\begin{example}
    The following symmetric monoidal functors define symmetric monoidal topological categories:
    \begin{enumerate}[(a)]
        \item The identity functor $\id \colon (\Top, \times, \ast) \to (\Top, \times, \ast)$.
        \item The suspension spectrum functor 
        $\Sigma^\infty_+ \colon (\Top,\times,\ast) \to (\Sp^O,\wedge,\Sph)$. 
        \item For any symmetric monoidal category $(D,\cdot,1)$, we may turn $\Fun(D,\Sp^O)$ into a symmetric monoidal category $(\Fun(D,\Sp^O),\boxdot,1_D)$ via the Day convolution. The functor 
        \[
            L \colon (\Top, \times, \ast) \to (\Fun(D,\Sp^O),\boxdot,1)
        \]
        given by 
        \[
            L(K)(X) = (\Sigma^\infty_+K) \sma D(1_D,X)_+
        \]
        defines a symmetric monoidal topological category structure on $\Fun(D,\Sp^O)$. In fact, this category is a symmetric monoidal topological model category by \cite[Prop 3.5]{KKMMW1}.
    \end{enumerate}

\end{example}

We will be working with modules over an algebra over an operad, so we will also need the version of an operad where there are many objects.

\begin{definition}\label{df:multicategory}
	A \emph{coordinate-free symmetric multicategory}, or just a \emph{multicategory} for short, is
	\begin{itemize}
		\item a collection of objects $\ob \mathcal M$,
		\item a topological space of \emph{multimorphisms} $\mathcal M(\{a_i\}_{i \in I};b)$ for each finite set $I$, function $a\colon I \to \ob(\mathcal M)$ and for each object $b$
		\item an identity element $\id_{a,i} \in \mathcal M(\{a\}_{i \in \{i\}};a)$ for each object $a$ and singleton $\{i\}$, and
		\item continuous composition maps
		\[ \circ\colon \mathcal M(\{b_j\}_{j \in J};c) \times \prod_{j \in J} \mathcal M(\{a_i\}_{i \in \alpha^{-1}(j)};b_j) \longrightarrow \mathcal M(\{a_i\}_{i \in I};c) \]
		for each map of finite sets $\alpha\colon I \to J$ and indexed families of objects $\{a_i\}_{i \in I}$, $\{b_j\}_{j \in J}$, and $c$,
	\end{itemize}
	such that the following equations hold:
	\begin{itemize}
		\item (Identity) \ $\id_{b,j} \circ f = f$ when $f \in \mathcal M(\{a_i\}_{i \in I};b)$, $\id_{b,j} \in \mathcal M(\{b\}_{j \in \{j\}};b)$ and the composition is along the unique map $I \to \{j\}$, and $f = f \circ (\id_{a_i,i})_{i \in I}$ when $f \in \mathcal M(\{a_i\}_{i \in I};b)$, $\id_{a_i,i} \in \mathcal M(\{a_i\}_{i \in \{i\}};a_i)$ and the composition is along the identity map $I \to I$,
		\item (Associativity) \ for two composable maps $\alpha\colon I \to J$ and $\beta\colon J \to K$ and elements $f \in \mathcal M(\{c_k\}_{k \in K};d)$, $g_k \in \mathcal M(\{b_j\}_{j \in \beta^{-1}(k)};c_k)$, and $h_j \in \mathcal M(\{a_i\}_{i \in \alpha^{-1}(j)};b_j)$ we have
		\[ (f \circ (g_k)_{k \in K}) \circ (h_j)_{j \in J} = f \circ (g_k \circ (h_j)_{j \in \beta^{-1}(k)})_{k \in K}. \]
	\end{itemize}
	A \emph{multifunctor} or map of multicategories $\mathcal M \to \mathcal M'$ consists of a function $F\colon \ob \mathcal M \to \ob \mathcal M'$ and continuous maps $\mathcal M(\{a_i\}_{i \in I};b) \to \mathcal M'(\{F(a_i)\}_{i \in I};F(b))$, commuting with the composition and preserving the identity elements.
\end{definition}

\begin{definition}
    For any symmetric monoidal topological category $(C,\otimes,1)$ with adjunction $L \colon \Top \leftrightarrows C \colon R$, the \emph{underlying multicategory} $\C$ has the same objects as $C$, and the multimorphisms from $\{a_i\}_{i \in I}$ to $b$ are given by the space of maps $RC(\bigotimes_{i \in I} a_i,b)$. The composition is by tensoring the morphisms and then composing them.
    The opposite of $C$ also has an underlying multicategory, denoted $\C^{\op}$, with the same objects as $C$, and multimorphisms from $\{a_i\}_{i \in I}$ to $b$ given by $C^{\op}(\bigotimes_{i \in I} a_i,b) = RC(b,\bigotimes_{i \in I} a_i)$.
\end{definition}

\begin{definition}
    For $\mathcal{M}$ a multicategory and $C$ a symmetric monoidal topological category, an \emph{$\mathcal{M}$-algebra in $C$} is a functor of multicategories
    $$\mathcal{M} \to \C.$$ An \emph{$\mathcal{M}$-coalgebra in $C$} is a functor of multicategories 
    $$\mathcal{M}\to \C^{\op}.$$
    
\end{definition}

\begin{example}
    Let $(C, \otimes, 1)$ be a symmetric monoidal topological category with adjunction $L \colon \Top \leftrightarrows C \colon R$. 
    If $\mathcal M$ is an operad $\mathcal O$, then an $\mathcal O$-algebra in $C$ is a single object $X \in C$ and action maps
    \[ \mu_I\colon L(\mathcal O(I)) \otimes X^{\otimes I} \longrightarrow X \]
    that satisfy the usual axioms for an algebra over an operad:
    \begin{itemize}
        \item the identity element $\id \in \mathcal O(\{i\})$ is sent to the identity map of $X$, and
        \item for each function $\alpha\colon I \to J$, the following diagram commutes:
        \begin{center}
        \begin{tikzcd}[column sep = 7em]
            L\mathcal O(J) \otimes \bigotimes_{j \in J} L\mathcal O(\alpha^{-1}(j)) \otimes X^{\otimes I} \dar{\circ \otimes \id} 
            	\rar{\id \otimes (\mu_{\alpha^{-1}(j)})_{j \in J}}
            	& 
            L\mathcal O(J)  \otimes X^{\otimes J} 
            	\dar{\mu_J}
            	\\
            L\mathcal O(I)  \otimes X^{\otimes I} 
            	\rar{\mu_I}
				& 
			X.
        \end{tikzcd}
        \end{center}
    \end{itemize}
    Similarly, an $\mathcal O$-coalgebra in $C$ is a single object $X \in C$ and comultiplication maps
    \[ \delta_I\colon L(\mathcal O(I)) \otimes X \longrightarrow X^{\otimes I} \]
    such that
    \begin{itemize}
        \item the identity element $\id \in \mathcal O(\{i\})$ is sent to the identity map of $X$, and
        \item for each map of finite sets $\alpha\colon I \to J$ the following diagram commutes:
        \begin{center}
        \begin{tikzcd}[column sep = 7em]
            L\mathcal O(J) \otimes (\bigotimes_{j \in J} L\mathcal O(\alpha^{-1}(j))) \otimes X \dar{\circ \otimes \id} 
            	\rar{\delta_J}
            	& 
            \bigotimes_{j \in J} (L\mathcal O(\alpha^{-1}(j)) \otimes X)
            	\dar{(\delta_{\alpha^{-1}(j)})_{j \in J}}
            	\\
            L\mathcal O(I)  \otimes X
            	\rar{\delta_I}
				& 
			X^{\otimes I}
        \end{tikzcd}
        \end{center}
    \end{itemize}
\end{example}

Now that we have defined algebras for multicategories and not just for operads, we can recall the operadic notion of a right module over an algebra.
\begin{definition}[cf.~{\cite[4.2.1.1]{HA}}]
\label{left_module_multicategory}
	Suppose $\mathcal O$ is any operad with a map to the associative operad $\Assoc$. For each finite set $I$ and element $j \in I$, define the subspace
    \[ \mathcal O(I)^j \subseteq \mathcal O(I) \]
    to consist of those connected components whose image in $\Assoc(I)$ gives a total ordering on $I$ in which the element $j \in I$ comes first. Define the \emph{right module multicategory} $\mathcal{RM}_{\mathcal O}$ to be the multicategory with two objects $a$ and $m$, corresponding to an ``algebra'' and a ``right module'' over it, and multimorphisms
	\begin{itemize}
		\item $\mathcal{RM}_\mathcal{O}(\{a\}_{i \in I};a) = \mathcal O(I)$,
        \item $\mathcal{RM}_\mathcal{O}(\{a\}_{i \in I \setminus \{j\}} \amalg \{m\}_{\{j\}};m) = \mathcal O(I)^j$, and
        \item $\mathcal{RM}_\mathcal{O}(\{x_i\}_{i \in I};x) = \varnothing$ otherwise.
	\end{itemize}
	The identity morphisms and compositions are all given by the corresponding identity morphisms and compositions in $\mathcal O(I)$.
\end{definition}
Let $C$ be a topological symmetric monoidal category. Explicitly, an $\mathcal{RM}_{\mathcal O}$-algebra in $C$, which we also call a \emph{right $\mathcal O$-algebra module}, is a functor of multicategories $X:\mathcal{RM}_{\mathcal O} \to \C $ which singles out objects $A:=X(a)$ and $M:=X(m)$ in $C$. The maps
$\mathcal{RM}_{\mathcal O}( \{a \}_{i\in I}; a) \to C(\otimes_{i\in I} A,A)$ make $A$ into an $\mathcal O$-algebra in the traditional sense. Moreover, the maps 
$\mathcal{RM}_{\mathcal O}( \{a \}_{i\in I\setminus \{j\}} \amalg \{m\}_{\{j\}}; m) \to C(M \otimes \bigotimes_{i\in I \setminus \{ j \} } A,M)$ provide, for any $f\in \mathcal{O}(I)$, a map
$M\otimes A \otimes \dots \otimes A \to M$
where the order of the $A$'s is determined by the element in $\Assoc(I)$ that is the image of $f$ under the map $\mathcal{O} \to \Assoc$.

The axioms that need to be satisfied are the usual axioms for an $\mathcal O$-algebra structure on $A$, and in addition:
\begin{itemize}
    \item 
    For any singleton $\{i\}$, the identity element $\id_{m,i}\in \mathcal{RM}_{\mathcal{O}}(\{m\}_{\{i\}}, m)$ is sent to the identity in $C(M,M)$.
    \item For any map of finite sets $\alpha:I\to J$ and element $i' \in I$ whose image is $\alpha(i') = j' \in J$, the following diagram commutes:
    \begin{center}
        \begin{tikzcd}[column sep = -7em, row sep = 1.5em]
            L\mathcal{O}(J)^{j'} \otimes \big[ L\mathcal O(\alpha^{-1}(j'))^{i'} \otimes \bigotimes_{j \in J \setminus \{j'\}} L\mathcal O(\alpha^{-1}(j))\big] \otimes M \otimes A^{\otimes I \setminus \{i'\}} \ar{dd}{\circ \otimes \id} 
            	\ar{rd}{\id \otimes (\mu_{\alpha^{-1}(j)})_{j \in J}}
            	& \\
            &
            L\mathcal{O}(J)^{j'} \otimes (M \otimes A^{\otimes J \setminus \{j'\}})
            	\ar{dd}{\mu_J}
            	\\
            L\mathcal O(I)^{i'}  \otimes M \otimes A^{\otimes I \setminus \{i'\}}
            	\ar{rd}{\mu_I}
				& \\
			&
            M.
        \end{tikzcd}
    \end{center}
\end{itemize}

On the other hand, an $\mathcal{RM}_\mathcal{O}$-coalgebra in $C$, which we also call a \emph{ right $\mathcal{O}$-coalgebra-comodule},  is a functor of multicategories $X:\mathcal{RM}_{\mathcal O} \to \C^{\op} $ which singles out objects $A:=X(a)$ and $M:=X(m)$ in $C$. The maps
$\mathcal{RM}_{\mathcal O}( \{a \}_{i\in I}; a) \to C(A,\otimes_{i\in I} A)$ make $A$ into an $\mathcal O$-coalgebra in the traditional sense. Moreover, the maps 
$\mathcal{RM}_{\mathcal O}( \{a \}_{i\in I\setminus \{j\}} \amalg \{m\}_{\{j\}}; m) \to C(M,M \otimes \bigotimes_{i\in I \setminus \{ j \}} A)$ provide, for any $f\in \mathcal{O}(I)$, a map
$M \to M\otimes A \otimes \dots \otimes A  $
where the order of the $A$'s is determined by the element in $\Assoc(I)$ that is the image of $f$ under the map $\mathcal{O} \to \Assoc$.

The axioms that need to be satisfied are the usual axioms for an $\mathcal O$-coalgebra structure on $A$, and in addition:
\begin{itemize}
    \item 
    For any singleton $\{i\}$, the identity element $\id_{m,i}\in \mathcal{RM}_{\mathcal{O}}(\{m\}_{\{i\}}, m)$ is sent to the identity in $C(M,M)$.
    \item For any map of finite sets $\alpha:I\to J$ and element $i' \in I$ whose image is $\alpha(i') = j' \in J$, the following diagram commutes:
        \begin{center}
        \begin{tikzcd}
         L\mathcal{O}(J)^{j'} \otimes \big[ L\mathcal O(\alpha^{-1}(j'))^{i'} \otimes \bigotimes_{j \in J \setminus \{j'\}} L\mathcal O(\alpha^{-1}(j))\big] \otimes M \ar{d}{\delta_I} \arrow[r, "\circ \otimes \id"] & L\mathcal O(I)^{i'}  \otimes M \ar{dd}{\delta_J}\\
         \big[ L\mathcal O(\alpha^{-1}(j'))^{i'} \otimes \bigotimes_{j \in J \setminus \{j'\}} L\mathcal O(\alpha^{-1}(j))\big] \otimes (M \otimes A^{\otimes J \setminus \{j'\}})  \arrow[d, "\cong"] & \\ 
         (L\mathcal O(\alpha^{-1}(j'))^{i'} \otimes M)
            \bigotimes_{j \in J \setminus \{j'\}} (L\mathcal O(\alpha^{-1}(j)) \otimes A) \arrow[r, "(\delta_{\alpha^{-1}(j)})_{j \in J}"] & M \otimes A^{\otimes I \setminus \{i'\}}
        \end{tikzcd}
    \end{center}
\end{itemize}

\begin{example}\label{ex:little_intervals_righ_module_cat}
For the operad $D_1$, the right module multicategory $\mathcal{RM}_{D_1}$ looks as follows. The space of multimorphisms $\mathcal{RM}_{D_1}(\{a \}_{j\in J};a)$ is $D_1(J)$, and the space
\[ \mathcal{RM}_{D_1}(\{a\}_{j \in J \setminus \{k\}} \amalg \{m\}_k;m) \]
is the subspace of $D_1(J)$ consisting of those $\{e_j\}_{j\in J}$ such that $e_k(1)<e_j(1)$ for all $j\in J\setminus \{k\}$. In other words, the interval corresponding to the ``module'' must be the first interval. We denote this space also by $D_1(J)^k$.
\end{example}

Spelling this out, a right $D_1$-coalgebra comodule consists of two objects $A, M \in C$, composition maps
\[ L(D_1(I)) \otimes A \longrightarrow A^{\otimes I} \]
for each finite set $I$ and composition maps
\[ L(D_1(I)^j) \otimes M \longrightarrow M \otimes A^{\otimes I \setminus \{j\}} \]
for each finite set $I$ and element $j \in I$, such that
\begin{itemize}
    \item the identity element $\id \in D_1(\{i\})$ is sent to the identity maps of $A$ and $M$,
    \item for each map of finite sets $\alpha\colon I \to J$ the following diagram commutes:
        \begin{center}
        \begin{tikzcd}[column sep = 7em]
            LD_1(J) \otimes (\bigotimes_{j \in J} LD_1(\alpha^{-1}(j))) \otimes A \dar{\circ \otimes \id} 
            	\rar{\delta_J}
            	& 
            \bigotimes_{j \in J} (LD_1(\alpha^{-1}(j)) \otimes A)
            	\dar{(\delta_{\alpha^{-1}(j)})_{j \in J}}
            	\\
            LD_1(I)  \otimes A
            	\rar{\delta_I}
				& 
			A^{\otimes I}
        \end{tikzcd}
        \end{center}
    \item For any map of finite sets $\alpha:I\to J$ and element $i' \in I$ whose image is $\alpha(i') = j' \in J$, the following diagram commutes:
         \begin{center}
        \begin{tikzcd}
         LD_1(J)^{j'} \otimes \big[ LD_1(\alpha^{-1}(j'))^{i'} \otimes \bigotimes_{j \in J \setminus \{j'\}} LD_1(\alpha^{-1}(j))\big] \otimes M \ar{d}{\delta_I} \arrow[r, "\circ \otimes \id"] & LD_1(I)^{i'}  \otimes M \ar{dd}{\delta_J}\\
         \big[ LD_1(\alpha^{-1}(j'))^{i'} \otimes \bigotimes_{j \in J \setminus \{j'\}} LD_1(\alpha^{-1}(j))\big] \otimes (M \otimes A^{\otimes J \setminus \{j'\}})  \arrow[d, "\cong"] & \\ 
         (LD_1(\alpha^{-1}(j'))^{i'} \otimes M)
            \bigotimes_{j \in J \setminus \{j'\}} (LD_1(\alpha^{-1}(j)) \otimes A) \arrow[r, "(\delta_{\alpha^{-1}(j)})_{j \in J}"] & M \otimes A^{\otimes I \setminus \{i'\}}
        \end{tikzcd}
    \end{center}
\end{itemize}

\begin{definition}\label{def-comodule-algebra}
    Let $A$ be a $(\text{Comm}, D_1)$-bialgebra. That is, it is a $D_1$-algebra in the opposite category of the category of Comm-algebras; see \cite{KKMMW1} for details. We say that $M$ is a right $D_1$-comodule algebra over $A$ if $M$ is a commutative monoid, and its comodule structure maps are maps of commutative monoids. That is, the corresponding functor of multicategories $\mathcal{RM}_{D_1} \to \mathcal{C}$ refines to a functor $\mathcal{RM}_{D_1} \to \CMon(\mathcal{C})$.
\end{definition}

\section{The comodule structure (the higher Dehn invariant)}

One of the main goals of this paper is to show that the spectra $\mathcal{E}$, $\mathcal{H}$ and $\mathcal{S}$ are $D_1$-coalgebra comodules over the $D_1$-coalgebra $\widetilde{\mathcal{S}}$, where $D_1$ is the little intervals operad from \cref{little-intervals-operad}. Recall that these spectra are homotopy colimits of the spectra-valued functors $\funSah^E$, $\funSah^H$, $\funSah^S$ and $\funSah^{\tilde S}$, respectively.

The main technical ingredient that we provide in this section is to show that $(\ST,*)_E$ is a comodule over $(*,\Sigma\!\ST)_E$, $(\ST,*)_H$ is a comodule over $(*,\Sigma\!\ST)_H$, and $\Sigma\!\PT_S$ is a comodule over $\Sigma\!\ST_S$. Since the homotopy colimit functors $(\Sigma D,D)_E$ and $D$ are strong symmetric monoidal, this implies that $\funSah^E$ and $\funSah^S$ are comodules over $\funSah^{\tilde S}$. In Section \ref{sect:infinity}, we will pass to $\infty$-categories to show that this implies that $\mathcal{E}$ and $\mathcal{S}$ are comodules over $\widetilde{\mathcal{S}}$.

We want to conclude the same for $\mathcal{H}$, but as an ordinary functor, $(\Sigma D, D)_H$ is not strong symmetric monoidal. Therefore in this case we will have to pass to the $\infty$-category a little earlier so that $(\Sigma D, D)_H$ is strong symmetric monoidal. This will then show that $\funSah^{H}$ is a comodule over $\funSah^{\tilde S}$, and therefore $\mathcal{H}$ is a comodule over $\widetilde{\mathcal{S}}$.

\subsection{The comodules}

Recall that $\Sigma\!\ST:\Dip^S \to \topp$ is a $D_1$-coalgebra by \cite[Lemma 4.14]{KKMMW1}.

We showed this for the operad $D_1$ rather than the coordinate-free version of the operad, but these concepts are equivalent. Therefore, there is a functor of multicategories 
$$A:D_1 \to \Fun(\Dip^S ,\topp)^{\op}$$
that singles out the object $\Sigma\!\ST$ and exhibits it as a $D_1$-coalgebra. When $X = S$, this also makes $\Sigma\!\ST_S$ a $D_1$-coalgebra in $\Fun(\Luna^S ,\topp)^{\op}$, because this is the same functor on the same category. When $X = E$ or $H$, the same structure makes $(*,\Sigma\!\ST)_X$ into a $D_1$-coalgebra in $\Fun(\Luna^X ,\topp)$.

In the following construction, we define the structure maps that are needed to show that $(\ST, *)_E, ( \ST, *)_H$ and $\Sigma\!\PT_S$ are right $D_1$-coalgebra comodules over $(*,\Sigma\!\ST)_E$, $(*,\Sigma\!\ST)_H$ and $\Sigma\!\ST_S$ respectively. 

In the construction, we use that for $X=E,H$ and $S$, the symmetric monoidal category $(\Fun(\Luna^X,\topp), \boxwedge)$ is a symmetric monoidal topological category, where the left adjoint is the functor 
$$L:(\Top, \times) \to (\Fun(\Luna^X,\topp),\boxwedge)$$ which sends an unbased space $X$ to the diagram that is $X_+ = X \sqcup *$ on $0\in \Dip^S$, and $*$ on all other objects in $\Luna^X$. We denote the right adjoint of $L$ by $R$.

\begin{construction} \label{constr:d_1-coalg_comod}
We recall that a right $D_1$-coalgebra comodule is a functor of multicategories 
$$ F:\mathcal{RM}_{D_1} \to \Fun(\Luna^X,\topp)^{\op}.$$
To construct such a functor, we need objects $F(a)$ and $F(m)$ in $\Fun(\Luna^X,\topp)$. When $X=E,H$, we define $F(a)$ to be $(*,\Sigma\!\ST)_X$, and $F(m)$ to be $(\ST,*)_X$. For $X=S$, we define $F(a) = \Sigma\!\ST_S$ and $F(m)=\Sigma\!\PT_S$. 

We need to provide maps 
$$ \mathcal{RM}_{D_1}(\{a\}_{j\in J}; a) \to R\Fun(\Luna^X,\topp)(F(a),\boxwedge_{j\in J} F(a)),$$
or equivalently, 
maps 
$$ L\mathcal{RM}_{D_1}(\{a\}_{j\in J}; a) = D_1(J)_+ \to \Fun(\Luna^X,\topp)(F(a),\boxwedge_{j\in J} F(a)).$$
For this we can use the maps that make $F(a)$ a $D_1$-coalgebra, which we already have by the discussion above. The only other maps that we need to define are
$$ L\mathcal{RM}_{D_1}(\{a\}_{j\in J\setminus \{k\}}\amalg  \{m\}_{\{k\}}, m )  = D_1(J)^k_+ \to \Fun(\Luna^X,\topp)(F(m), F(m)\boxwedge  F(a)^{\boxwedge J\setminus \{k\}}) $$
for finite sets $J$ and $k\in J$, where $D_1(J)^k$ is as in Example \ref{ex:little_intervals_righ_module_cat}. This is equivalent to defining natural transformations
$$D_1(J)^k_+ \boxwedge F(m) \to F(m)\boxwedge F(a)^{\boxwedge J \setminus \{ k\}}.$$

Suppose $X=E,H$. Then on $V$ in $\Dip^S$ such a transformation necessarily evaluates to the trivial map $* \to *$. On $V$ in $\Dip^X$, the transformation evaluates to a map
$$D_1(J)^k_+ \wedge \ST (V) \to \bigvee_{\substack{W_k \in \Dip^X, W_j \in \Dip^S \text{ for }j \in J \setminus \{k\}\\ W_k \oplus \bigoplus_{J\setminus \{k\}} W_j \cong V}}\ST(W_k) \wedge  \Sigma ST(W_j)^{\wedge J\setminus \{k\}  },$$
where summands are identified together whenever there is an isomorphism on each $W_j$ that commutes with the isomorphism to $V$. If $X=S$, then on $V$ in $\Dip^S$, the natural transformation evaluates to a map
$$D_1(J)^k_+ \wedge \Sigma\!\PT (V) \to \bigvee_{\substack{W_k \in \Dip^X, W_j \in \Dip^S \text{ for }j \in J \setminus \{k\}\\ W_k \oplus \bigoplus_{J\setminus \{k\}} W_j \cong V}} \Sigma\!\PT(W_k) \wedge  \Sigma\!\ST(W_j)^{\wedge J\setminus \{k\}  }$$
where the summands are identified in the same way. It is helpful here to recall that for an element $e = (e_j)_{j\in J}$ in the space $D_1(J)^k$, there is a canonical total ordering on $J$ induced by the map $D_1 \to \Assoc$, and in particular a total ordering on $J$ in which $k$ is the first element.

The above wedge sums are messy because they have to be considered up to isomorphism in each summand $W_j$. To clean this up, we re-index using Lemma \ref{lem:sequence of oplus vs ominus}. If we let $n-1 = |J \setminus \{k\} |$, then the comultiplication for each $J$, $k\in J$, and $(e_j)_{j\in J}$ in $D_1(J)^k$ is of the form
\begin{equation}\label{eq:comodule_one_point_in_D1J_EH}
m:\ST (V) \to \bigvee_{\substack{0 \neq V_0\subseteq \dots \subseteq V_{n-1} = V \\ \textup{ in }\Dip^X }} \ST(V_0) \wedge  \Sigma\!\ST(V_1\ominus V_0)\dots \wedge \Sigma\!\ST(V_{n-1} \ominus V_{n-2})
\end{equation}
if $X=E,H$, and
\begin{equation}\label{eq:comodule_one_point_in_D1J_S}
m:\Sigma\!\PT (V) \to \bigvee_{\substack{0 \subseteq V_0\subseteq \dots \subseteq V_{n-1} = V \\ \textup{ in }\Dip^S }} \Sigma\!\PT(V_0) \wedge  \Sigma\!\ST(V_1\ominus V_0)\dots \wedge \Sigma\!\ST(V_{n-1} \ominus V_{n-2})
\end{equation}
if $X=S$. Now there is just one summand for every such string of subspaces $V_0\subseteq \dots \subseteq V_{n-1} = V$ with no further identifications.

We first construct the map $m$ in \eqref{eq:comodule_one_point_in_D1J_EH} when $X=E, H$. 
Let $e=(e_j)_{j\in J}$ in $D_1(J)^k$ and let $f\in \ST (V)$.  By \cref{prop-ST-conf}, $f$ is a step function $f:[0,1]\to \Sub(V)_{\neq 0}$, with corresponding string of subspaces $0 \neq U_0 \subsetneq \cdots \subsetneq U_m = V$. If any of the cut points of $f$ lie outside the set $[0,e_k(0)] \cup \bigcup_{j \in J} e_j((0,1))$, then $m(f)$ is defined to be the basepoint $*$. In other words, we allow cut points to the left of the leftmost interval $e_k((0,1))$, but if any cut point appears in between any two intervals, or to the right of the rightmost interval, then $m(f) = *$.

Outside of these cases, the cut points are all inside the open intervals $e_j((0,1))$ or to the left of the leftmost interval. Then we define $V_0$ to be $f(e_k(1))$, the endpoint of the leftmost interval. We let $\alpha: \{1,\dots, n-1 \} \to J\setminus \{k\}$ be the total ordering induced by the map $D_1 \to \Assoc$, and for $i=1, \dots, n-1$ we define $V_i$ to be $f(e_{\alpha(i)}(1))$, the endpoints of the remaining intervals. This gives a chain of subspaces $0 \neq V_0\subseteq \dots \subseteq V_{n-1} = V$ in $\Dip^X$.

Next we define $f_0:[0,1] \to \Sub(V_0)_{\neq 0}$ to be the step function $f\circ e_k$, and think of this as an element $f_0 \in \ST(V_0)$. For $i=1, \dots, n-1$ we have a step function $f_i:[0,1] \to \Sub(V_i)$ that is given by $f\circ e_{\alpha(i)}$. Note that for all $t \in [0,1]$, we have that $f(e_{\alpha(i)}(t))$ contains $V_{i-1}$. Therefore we can subtract $V_{i-1}$ from all values of $f \circ e_{\alpha(i)}$ and obtain a step function $f_i = (f \circ e_{\alpha(i)}) \ominus V_{i-1}: [0,1] \to \Sub(V_i \ominus V_{i-1})$. This defines an element $f_i \in \Sigma\!\ST (V_i \ominus V_{i-1})$. Finally, we define $m(f) = (f_0,f_1,\dots, f_{n-1})$.

Next we construct the map $m$ in \eqref{eq:comodule_one_point_in_D1J_S} when $X=S$. Let $\phi$ be in $\Sigma\!\PT(V)$. By Proposition \ref{prop-SPT-conf}, we can identify $\phi$ with a pair $(f,v)$ where $f:[0,1] \to \Sub(V)$ is a step function with corresponding string of subspaces $0 \subseteq U_0 \subsetneq \cdots \subsetneq U_m = V$, and $v \in S^{U_0} = S^{f(0)}$. Similarly to the case where $X=E,H$, we first define $m(f,v) = *$ if any of the cut points of $f$ lie outside the set $[0,e_k(0)] \cup \bigcup_{j \in J} e_j((0,1))$. Otherwise we define $V_0 = f(e_k(1))$ and $f_0 =  f \circ e_k$. The inclusion $U_0 = f(0) \subseteq f(e_k(0)) = f_0(0)$ induces an inclusion $S^{U_0} \subseteq S^{f_0(0)}$, so that we can regard $v \in S^{f_0(0)}$, making $(f_0,v) \in \Sigma\!\PT(V_0)$.

Then we define $V_i = f(e_{\alpha(i)}(1))$ as before, giving a chain of subspaces $0 \subseteq V_0\subseteq \dots \subseteq V_{n-1} = V$ in $\Dip^S$. Then we take $f_i = (f \circ e_{\alpha(i)}) \ominus V_{i-1} \in \Sigma\!\ST (V_i \ominus V_{i-1})$ and define $m(f,v) = ((f_0,v), f_1,\dots, f_{n-1})$.
\end{construction}
\begin{figure}[ht]
\begin{center}
\resizebox{\textwidth}{!}{ 
    \begin{tikzpicture}
        \draw[dotted,gray] (0.025,0) -- (0.025,-3) -- (-1,-4);
        \draw[dotted,gray] (0.975,-1) -- (0.975,-3) -- (0,-4);
        \draw[dotted,gray] (1.975,0) -- (1.975,-3) -- (1,-4);
        \draw[dotted,gray] (1.975,-3) -- (2.025,-4);
        \draw[dotted,gray] (2.975,-1) -- (2.975,-3) -- (2.975,-4);
        \draw[dotted,gray] (3.975,-1) -- (3.975,-3) -- (3.975,-4);
        \draw[dotted,gray] (4.975,0) -- (4.975,-3) -- (4.975,-4);
        \draw[dotted,gray] (4.975,-3) -- (6,-4);
        \draw[dotted,gray] (5.975,-1) -- (5.975,-3) -- (7,-4);        
        \draw[dotted,gray] (7.975,-1) -- (7.975,-3) -- (9,-4);        
        \draw[dotted,gray] (9.975,0) -- (9.975,-3) -- (11,-4);
    
        \draw[ultra thick,OI6,|-|] (0,0) -- (2,0);
        \draw[ultra thick,OI6,-|] (2,0) -- (5,0);
        \draw[ultra thick,OI6,-|] (5,0) -- (10,0);
        \node at (11.6,0) {$\in \textcolor{OI6}{D_1(\{1,2,3\})^1_+}$};

        \draw[ultra thick,OI2,|-|] (0,-1) --node[above]{$U_0$} (1,-1);
        \draw[ultra thick,OI2,-|] (1,-1) --node[above,fill=white]{$U_1$} (3,-1);
        \draw[ultra thick,OI2,-|] (3,-1) --node[above]{$U_2$} (4,-1); 
        \draw[ultra thick,OI2,-|] (4,-1) --node[above,fill=white]{$U_3$} (6,-1); 
        \draw[ultra thick,OI2,-|] (6,-1) --node[above]{$U_4$} (8,-1); 
        \draw[ultra thick,OI2,-|] (8,-1) --node[above]{$U_5=V$} (10,-1); 
        \node at (11,-1) {$\in \textcolor{OI2}{\ST(V)}$};

        \draw[ultra thick,OI2,|-|] (0,-3) --node[above]{$U_0$} (1,-3);
        \draw[ultra thick,OI2,-|] (1,-3) --node[above,fill=white]{$U_1$} (3,-3);
        \draw[ultra thick,OI6] (1.975,-3.15) -- (1.975,-2.85); 
        \draw[ultra thick,OI2,-|] (3,-3) --node[above]{$U_2$} (4,-3);
        \draw[ultra thick,OI2,-|] (4,-3) --node[above]{$U_3$} (6,-3);
        \draw[ultra thick,OI6] (4.975,-3.15) -- (4.975,-2.85); 
        \draw[ultra thick,OI2,-|] (6,-3) --node[above]{$U_4$} (8,-3);
        \draw[ultra thick,OI2,-|] (8,-3) --node[above]{$U_5$} (10,-3); 

        \draw[ultra thick,|-|] (-1,-4) --node[below]{\scriptsize $U_0$} (0,-4);
        \draw[ultra thick,-|] (0,-4) --node[below]{\scriptsize $U_1$} (1,-4);
        \node at (0,-5) {$\in \ST(U_1)$};

        \draw[ultra thick,|-|] (2,-4) --node[below]{\scriptsize $0$} (3,-4);
        \draw[ultra thick,-|] (3,-4) --node[below]{\scriptsize $U_2/U_1$} (4,-4);
        \draw[ultra thick,-|] (4,-4) --node[below]{\scriptsize $U_3/U_1$} (5,-4);
        \node at (3.5,-5) {$\in \Sigma \ST(U_3\ominus U_1)$};

        \draw[ultra thick,|-|] (6,-4) --node[below]{\scriptsize $0$} (7,-4);
        \draw[ultra thick,-|] (7,-4) --node[below]{\scriptsize $U_4/U_3$} (9,-4);
        \draw[ultra thick,-|] (9,-4) --node[below]{\scriptsize $U_5/U_3$} (11,-4);
        \node at (8.5,-5) {$\in \Sigma\ST(U_5\ominus U_3)$};
    \end{tikzpicture}
}
\end{center}
\caption{An illustration of the comultiplication $$D(3)_+ \wedge \ST(V) \to (\ST\protect \boxwedge \Sigma\ST^{\hspace{1pt}\protect \boxwedge 2})(V).$$
The depicted element of $D_1(\{1,2,3\})^1_+$ consists of embeddings $e_1,e_2,e_3:[0,1]\to [0,1]$ whose images together cover $[0,1]$. 
This particular coproduct lands in the product 
\(
	\ST(U_1) \wedge \Sigma \ST(U_3/U_1) \wedge \Sigma \ST(U_5/U_3),
\)
.}
\label{CoproductFigure}
\end{figure}

\begin{proposition}\label{prop:coalg_comod_continuous}
    The maps $m$ defined in \eqref{eq:comodule_one_point_in_D1J_EH} and \eqref{eq:comodule_one_point_in_D1J_S} 
    give rise to continuous maps 
    $$m_{J,k}^V: D_1(J)^k_+ \wedge \ST (V) \to \bigvee_{\substack{0 \neq V_0 \subseteq \dots \subseteq V_{n-1} = V \\ \textup{ in } \Dip^X}}\ST(V_0) \wedge  \Sigma\!\ST(V_1\ominus V_0) \wedge \dots \wedge \Sigma\!\ST(V_{n-1}\ominus V_{n-2}) $$
    for $V$ in $\Dip^X$ when $X=E,H$, and  
    $$m_{J,k}^V :D_1(J)^k_+ \wedge \Sigma\!\PT (V) \to \bigvee_{\substack{0 \subseteq V_0 \subseteq \dots \subseteq V_{n-1} = V \\ \textup{ in }\Dip^S }} \Sigma\!\PT(V_0) \wedge \Sigma\!\ST(V_1 \ominus V_0) \wedge \dots \wedge \Sigma\!\ST(V_{n-1}\ominus V_{n-2})$$ for $V$ in $\Dip^S$ when $X=S$. 
\end{proposition}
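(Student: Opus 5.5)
I will prove well-definedness first, and then continuity, working cell by cell on the simplicial models underlying the step-function descriptions of \cref{prop-ST-conf} and \cref{prop-SPT-conf}. \emph{Well-definedness} means checking that $m$ respects the relation $\sim$. Changing the value of $f$ at finitely many points does not change the cut points. It also does not change the values $f(e_j(1))$ that define the $V_i$. This holds because each $e_j(1)$ is either a point where $f$ is locally constant, or the right endpoint of an interval; and cut points cannot sit at such an endpoint, since they must lie in the open intervals $e_j((0,1))$ or in $[0,e_k(0)]$. The one delicate case is a cut point at $e_k(0)$, which is allowed. There we use the step function on the left of $e_k((0,1))$ only through $f_0=f\circ e_k$, which again is well defined up to finitely many points. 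If $f$ is a basepoint, i.e.\ $f(t)\neq V$ for all $t<1$, then either $V_{n-1}\neq V$ forces a cut point outside the allowed set, or some $f_i$ is itself a basepoint. Similarly, for the $\Sigma\!\ST(V_i\ominus V_{i-1})$ factors, $f_i(0)=0$ exactly when $f$ has no cut point in $[e_{\alpha(i)}(0),e_{\alpha(i)}(0)+\epsilon)$ that would leave $V_{i-1}$, and otherwise $f_i$ is collapsed. So basepoints go to basepoints. In the spherical case we also check that $v\in S^{U_0}\subseteq S^{f_0(0)}$, and that $v=\infty$ gives the basepoint.

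For \emph{continuity}, I will use the fact that $D_1(J)^k_+\wedge \ST(V)$ is a quotient of $\coprod D_1(J)^k\times\Delta^p$, indexed over flags $U_0\subsetneq\cdots\subsetneq U_p$, and argue one flag at a time. Since $\ST(V)$ carries the colimit topology along the flags, and these pieces are finitely many per simplex, it suffices to show that for a fixed flag the map $D_1(J)^k\times\Delta^p\to$ target is continuous. On a fixed flag the cut points $s_0\le\cdots\le s_{p-1}$ are continuous functions of the barycentric coordinates. The open set $\mathcal U$ of pairs $(e,s)$ for which all cut points lie in $[0,e_k(0)]\cup\bigcup_j e_j((0,1))$ is then a finite union of pieces. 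On each piece, which cut points fall into which interval is fixed, so the summand $(V_0,\ldots,V_{n-1})$ is locally constant. The coordinates of $f_i$ are affine rescalings $(s-e_j(0))/(e_j(1)-e_j(0))$, continuous in $(e,s)$. One subtlety is that when several cut points fall into the same interval, that interval only sees a degenerate face. The face relations of the source match the face relations (deletion of repeated subspaces) in the target, so the formula is compatible with gluing. In the spherical case, the extra coordinate $v$ is carried along via the continuous inclusions $S^{U_0}\subseteq S^{f_0(0)}$.

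\textbf{The main obstacle} is continuity at the boundary of $\mathcal U$, where the map jumps to the basepoint. I need every point of the closure of the complement to map to the basepoint compatibly. When a cut point approaches an endpoint $e_j(0)$ from inside (for $j\ne k$), the factor $f_{\alpha(i)}$ acquires a cut point near $0$ in $\Sigma\!\ST$, where $f_i(0)\ne0$, which is the basepoint. When it approaches $e_j(1)$, we reach $f_i(t)\neq V_i\ominus V_{i-1}$ for $t<1$, which is again the basepoint. For the leftmost interval, a cut point may slide across $e_k(0)$ freely. This is exactly why $[0,e_k(0)]$ is allowed: in $\ST(V_0)$ there is no condition at $0$, since the first subspace just becomes $f(e_k(0))$, and this is continuous because of the face relation that applies the inclusion/deletes the $0$th subspace. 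A cut point approaching $e_k(1)$ gives the basepoint of $\ST(V_0)$. Finally, as the intervals of $e$ themselves move or collide, the same endpoint analysis applies. I will verify these limits with a neighborhood argument in the quotient topology, using that the basepoint's neighborhoods in the target are images of neighborhoods of the collapsed faces.
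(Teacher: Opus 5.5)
Your proposal is correct and is essentially the paper's argument. You reduce to the cells $D_1(J)^k\times\Delta^p$ (times $D(U_0)$ in the spherical case), and note that on each region fixed by which interval each cut point lies in, the map is given by affine rescalings together with the inclusion $D(U_0)\subseteq D(U_i)$. You then check that neighbouring formulas agree on the interfaces: they give the basepoint everywhere except at $e_k(0)$, where the $0$th face relation makes them coincide.

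The difference is cosmetic. The paper uses a finite closed cover plus the pasting lemma, where you use an open decomposition of $\mathcal U$ with a limit check at its boundary. You also spell out well-definedness on representatives, which the paper leaves implicit. There, basepoint representatives need no argument, since $m$ sends the basepoint to the basepoint by definition.
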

\begin{proof}
  First suppose $X = E$ or $H$. Recall that $\ST(V)$ is a quotient of realizations of posets, so we can reduce to showing that the restriction of $m^V_{J,k}$ to simplices $D_1(J)^k \times \Delta^p$ is continuous. The proof of this is essentially the same as in \cite[Lemma 4.12]{KKMMW1}: we cut $[0,1]$ into finitely many intervals $A_l$ using the endpoints $e_j(0)$ and $e_j(1)$, then we cover $D_1(J)^k \times \Delta^p$ by finitely many closed sets, each one defined by picking a closed interval $A_l$ where the $i$th cut point $s_i = t_0 + \cdots + t_i$ of the step function must live. We observe that on each of these closed sets $m^V_{J,k}$ is either the constant map to $*$ or is given by a linear formula, and is thus continuous.
  
  The key feature in \cref{constr:d_1-coalg_comod} that allows this to work is that when a cut point lies on the boundary of an interval $e_j((0,1))$, so that we are in the intersection of two of the above closed sets, the two possible interpretations for $m(f)$ agree. In most cases this is because they are both the basepoint $*$, but for the left-hand side of the leftmost little interval $e_k(0)$, we use the relation coming from the 0th face map in $|\varnothing \subsetneq U \subseteq V_0|$ to see that both interpretations give the same nontrivial point in $\ST(V_0)$.

  For $X=S$, we recall that by \cref{spt_simplify_spherical} the space $\Sigma\!\PT(V)$ is topologized as the quotient
  $$ \left(\coprod_{0 \subseteq U_0 \subsetneq \dots \subsetneq U_p = V} \frac{D(U_0)}{S(U_0)} \times \Delta^p\right)/\sim.$$
  Hence, it suffices to show that the restriction of $m^V_{J,k}$ to each $D_1(J)^k \times D(U_0) \times \Delta^p$, corresponding to a flag $0 \subseteq U_0 \subsetneq \dots \subsetneq U_p = V$, is continuous. To do this, we again cut $[0,1]$ into finitely many intervals $A_l$ using the endpoints $e_j(0)$ and $e_j(1)$, then we cover $D_1(J)^k \times D(U_0) \times \Delta^p$ by finitely many closed sets, each one defined by picking a closed interval $A_l$ where the $i$th cut point of the step function $s_i = t_0 + \cdots + t_i$ must live, as in the proof for $X = E$ and $H$.
  
  In this case, on each of these closed sets $m^V_{J,k}$ is given by the product of a linear map and an inclusion $D(U_0) \subseteq D(U_i)$. As a concrete example, take $J = \{j,k\}$. Then a point in  $D_1(J)^k \times D(U_0) \times \Delta^p$ is a tuple
  $$((a_k,b_k, a_j, b_j), v, (t_0, \dots, t_p))$$
  where $(a_k, b_k)$ and $(a_j, b_j)$ are disjoint open intervals in $[0,1]$ with $b_k\leq a_j$, $v\in D(U_0)$, and $(t_0,\dots, t_p) \in \Delta^p$. Recall from Proposition \ref{prop-SPT-conf} that the coordinates $(t_0,\dots, t_p)$ can be interpreted as the step lengths of a step function $\phi$ with images $U_0,\dots, U_p$ . One of the above closed sets of $D_1(J)^k \times D(U_0) \times \Delta^p$ is
  \begin{multline*}
      \{((a_k,b_k, a_j, b_j), v, (t_0, \dots, t_p)) \in D_1(J)^k \times D(U_0) \times \Delta^p : \\
      t_0 + \dots + t_h \leq a_k \leq t_0 + \dots + t_{h+1}, \\
      t_0 + \dots + t_i \leq b_k \leq a_j \leq t_0 + \dots + t_{i+1}, \\
      t_0 + \dots + t_{p-1} \leq b_j \leq 1 = t_0 + \dots + t_p  \}.
  \end{multline*}
 
 Again, the key feature of our construction that makes this work is that on the boundary of each of our closed sets where one of the cut points is at an endpoint of an interval $e_j((0,1))$, both interpretations of $m(f,v)$ agree.
\end{proof}

\begin{proposition}\label{prop:dehn_multifunctor}
    The continuous maps $m_{J,k}^V$ that arise from Proposition \ref{prop:coalg_comod_continuous} define a functor of multicategories $F:\mathcal{RM}_{D_1} \to \Fun(\Luna^X,\topp)^{\op}$. 
\end{proposition}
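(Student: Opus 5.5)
We need to verify three things: the structure maps $m_{J,k}^V$ assemble into natural transformations of diagrams on $\Luna^X$, these are compatible with identities, and they satisfy the composition (coassociativity) diagram for $\mathcal{RM}_{D_1}$-coalgebras. The coalgebra part on $F(a)$ is already supplied by \cite[Lemma 4.14]{KKMMW1}, so only the multimorphisms with output $m$ need checking.

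\emph{Naturality and identities.} For an isomorphism $\varphi\colon V\to V'$ in $\Dip^X$, every step in \cref{constr:d_1-coalg_comod} commutes with applying $\varphi$ to all subspaces. This covers evaluating $f$ at the endpoints $e_j(1)$, precomposing with $e_j$, taking $\ominus V_{i-1}$, and the inclusion $S^{U_0}\subseteq S^{f_0(0)}$. This is because $\ominus$ and the bijection of \cref{lem:sequence of oplus vs ominus} are functorial in isomorphisms. On objects of $\Dip^S\subseteq\Luna^X$ for $X=E,H$, and on $*\in\Luna^H$, all maps are $*\to *$, so naturality is automatic there.

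For continuity in the $D_1(J)^k$ variable we use \cref{prop:coalg_comod_continuous}. Adjointing then gives the maps $D_1(J)^k_+\to\Fun(\Luna^X,\topp)(F(m),F(m)\boxwedge F(a)^{\boxwedge J\setminus\{k\}})$. For the identity axiom, take $J=\{k\}$ and $e_k=\id$. Then $[0,e_k(0)]=\{0\}$, and $m(f)=f\circ\id=f$ with $V_0=f(1)=V$; in the spherical case the label $v$ is unchanged. So the identity element goes to the identity.

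\emph{Composition.} This is the main step. Fix $\alpha\colon I\to J$, $i'\mapsto j'$, and elements $e\in D_1(J)^{j'}$ and $e^{(j)}\in D_1(\alpha^{-1}(j))$, with $e^{(j')}\in D_1(\alpha^{-1}(j'))^{i'}$. We compare two maps. The first applies $m$ for the composite $\{e_{\alpha(i)}\circ e^{(\alpha(i))}_i\}_{i\in I}$. The second applies $m$ for $e$ and then, factorwise, $m$ for $e^{(j')}$ on the $\ST(V_0)$ (or $\Sigma\!\PT(V_0)$) factor and the coalgebra comultiplication of \cite{KKMMW1} for $e^{(j)}$ on each $\Sigma\!\ST(V_j\ominus V_{j-1})$ factor.

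\emph{Basepoint conditions.} We check that both sides send $f$ to the basepoint under the same conditions. On the composite side, the allowed region for cut points is $[0,e_{j'}(e^{(j')}_{i'}(0))]\cup\bigcup_i e_{\alpha(i)}(e^{(\alpha(i))}_i((0,1)))$. On the two-stage side, the first stage allows $[0,e_{j'}(0)]\cup\bigcup_j e_j((0,1))$. The second stage then pulls back, through each $e_j$, the requirement that cut points lie in the little intervals $e^{(j)}_i((0,1))$, or for $j=j'$ also in $[0,e^{(j')}_{i'}(0)]$. Since $e_{j'}$ is increasing and $e_{j'}(0)\ge 0$, these two allowed regions coincide.

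\emph{Non-basepoint case.} The flags agree: the endpoints are $e_j(e^{(j)}_i(1))$ on both sides, and the orderings agree because the composition in $\Assoc$ is lexicographic. The factors agree by two identities. The first is \cref{lem:oplus_ominus}: $(f\circ e_j\circ e_i^{(j)}\ominus V_{j-1})\ominus(V'_{i-1}\ominus V_{j-1}) = f\circ e_j\circ e^{(j)}_i\ominus V'_{i-1}$. The second is associativity of precomposition. The spherical label $v$ is carried along by the inclusions $S^{U_0}\subseteq S^{f(e_{j'}(e^{(j')}_{i'}(0)))}$ in both routes.

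\emph{Bookkeeping on wedge summands.} Through the isomorphism of \cref{lem:sequence of oplus vs ominus}(i), grouping summands corresponds to omitting subspaces from the flag. We check that the Day convolution $\boxwedge$ of the iterated comultiplications lands in the correct summand. This is where I expect the real obstacle: making sure the identifications in Day convolution (summands up to isomorphism) match the canonical representatives, and handling degenerate cases. These include repeated subspaces, $V_0=0$ in the spherical case, and the unit object $(\R,\id_\R)$ in $\Luna^E$.
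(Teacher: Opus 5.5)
Your proposal is correct and follows essentially the same route as the paper. Naturality comes from the fact that all morphisms of $\Luna^X$ are isomorphisms, the identity axiom holds by inspection, and the composition axiom reduces, once basepoint conditions, flags and orderings are matched, to the identity $(C\ominus A)\ominus(B\ominus A)=C\ominus B$ of \cref{lem:oplus_ominus}; the paper argues the same way by deferring to \cite[Proposition 4.14]{KKMMW1}, and the wedge-summand bookkeeping you flag is not an additional obstacle, since Lemma~\ref{lem:sequence of oplus vs ominus}(i)--(ii) turns the Day convolution identifications and regroupings into literal equalities of nested subspaces of $V$.
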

\begin{proof}
    All morphisms in $\Luna^X$ are isomorphisms and they commute with the maps $m_{J,k}^V$ as we defined them, giving a natural transformation
    $$m_{J,k}:D_1(J)^k_+ \boxwedge F(m) \to F(m) \boxwedge (F(a))^{\boxwedge J \setminus \{ k\}}.$$ 
    We need to show that the transposes of these natural transformations
    $$\mu_{J,k}: D_1(J)^k_+ \to \Fun(\Luna^X,\topp)(F(m), F(m) \boxwedge  F(a)^{\boxwedge J\setminus \{k\}}) $$
    along with the maps
    $$a_J:D_1(J) \to \Fun(\Luna^X,\topp)(F(a),\boxwedge_{j\in J}  F(a))$$
    indeed define a functor of multicategories
    $$ F:\mathcal{RM}_{D_1} \to \Fun(\Luna^X,\topp)^{\op}.$$
    Since we already know that the maps $a_J$ define a functor of multicategories $D_1 \to \Fun(\Luna^X,\topp)$, we only need to check the remaining conditions involving the maps $\mu_{J,k}$.

    First we check that for a singleton $\{j\}$, the map 
    $$\mu_{\{j\},j}:D_1(\{j\})^j_+ = D_1(\{j\})_+ \to \Fun(\Luna^X ,\topp)(F(m),F(m)) $$
    sends $\id_{\{j\}}$ to the identity $\id:(\ST, *)_X\to (\ST, *)_X$ (or, if $X=S$, $\Sigma \PT_S$), which it does by definition. 

    We also need to check that for any map of finite sets $\alpha:J\to K$ and element $j' \in J$ whose image is $\alpha(j') = k' \in K$, the following diagram commutes:
    \begin{center}
        \begin{tikzcd}
         D_1(K)^{k'}_+ \boxwedge \big[D_1(\alpha^{-1}(k'))^{j'}_+ \boxwedge D_1(\alpha^{-1}(k))_+^{ \boxwedge {k \in K \setminus \{k'\}}}\big] \boxwedge F(m) 
            \ar{d}{m_{K,k'}} 
            \arrow[r, "\circ \boxwedge \id"] 
            & 
        D_1(J)^{j'}_+  \boxwedge F(m) 
            \ar{dd}{m_{J,j'}}
            \\
         \big[ D_1(\alpha^{-1}(k'))^{j'}_+ \boxwedge   D_1(\alpha^{-1}(k))_+^{\boxwedge{k \in K \setminus \{k'\}}}\big] \boxwedge (F(m) \boxwedge F(a)^{\boxwedge K \setminus \{k'\}}) 
            \arrow[d, "\cong"] 
            & 
            \\ 
         (D_1(\alpha^{-1}(k'))^{j'}_+ \boxwedge F(m)) \boxwedge
        (D_1(\alpha^{-1}(k))_+ \boxwedge F(a))^{ \boxwedge{k \in K \setminus \{k'\}} }
            \arrow[r] 
            & 
        F(m) \boxwedge F(a)^{\boxwedge J \setminus \{j'\}}
        \end{tikzcd}
    \end{center}
where the bottom map is $(m_{\alpha^{-1}(k'),j'}, a_{\alpha^{-1}(k)})_{k\in  K\setminus \{k'\} )}$. Though the notation is more complicated, the proof of this is the same as in \cite[Proposition 4.14]{KKMMW1}. As in that case, the commutativity of the diagram at a certain $V$ in $\Dip^X$ boils down to showing that for nested geometric subspaces $A \subseteq B \subseteq C \subseteq V$ we have $(C \ominus A)\ominus (B \ominus A) = C \ominus B$ as subspaces of $V$. In this context, we proved this in Lemma \ref{lem:oplus_ominus}.
\end{proof}

\subsection{The suspended comodules}
We have now shown that $(\ST, *)_E, (\ST, *)_H$ and $\Sigma\!\PT_S$ are right $D_1$-coalgebra comodules over $(*,\Sigma\!\ST)_E$, $(*,\Sigma\!\ST)_H$ and $\Sigma\!\ST_S$ respectively. 

In this subsection, we take a look at $(\Sigma\!\ST, *)_E$ and $(\Sigma\!\ST, *)_H$, in other words, the suspended versions of  $(\ST, *)_E$ and $(\ST, *)_H$. By suspending the structure maps defined in Construction \ref{constr:d_1-coalg_comod}, these inherit the structure of comodules over $(*,\Sigma\!\ST)_E$ and $(*,\Sigma\!\ST)_H$ respectively. We will need the explicit version of the structure maps for the proofs in \cref{sec:apartments}.

\begin{construction}\label{constr:d_1-coalg_comod_suspended}
     Let $X=E,H$. Then for $V$ in $\Dip^X$, $J$ a finite set, $k\in J$, and $e = (e_j)_J$ in $D_1(J)^k$, we define maps
     $$Cm:C\!\ST (V) \to \bigvee_{\substack{0 \neq V_0\subseteq \dotsc \subseteq V_{n-1} = V \\ \textup{ in }\Dip^X }} C\!\ST(V_0) \wedge  \Sigma\!\ST(V_1\ominus V_0) \wedge \dotsc \wedge \Sigma\!\ST(V_{n-1} \ominus V_{n-2})$$
     $$\Sigma m:\Sigma\!\ST (V) \to \bigvee_{\substack{0 \neq V_0\subseteq \dotsc \subseteq V_{n-1} = V \\ \textup{ in }\Dip^X }} \Sigma\!\ST(V_0) \wedge  \Sigma\!\ST(V_1\ominus V_0) \wedge \dotsc \wedge \Sigma\!\ST(V_{n-1} \ominus V_{n-2})$$
     as follows. The cone $C\!\ST(V)$ is modeled by allowing our step functions to have $f(0) = 0$, so they take values in $\Sub(V)$ rather than $\Sub(V)_{\neq 0}$. The suspension $\Sigma\!\ST(V)$ is obtained from this by quotienting $f$ to the basepoint if $f(t)\neq 0$ for all $t>0$, as in \cref{prop-SST-conf}.

     We define $Cm$ as in \cref{constr:d_1-coalg_comod}, but with the following changes. As before we send $f$ to the basepoint if any of the cut points of $f$ lie outside the set $[0,e_k(0)] \cup \bigcup_{j \in J} e_j((0,1))$. We define $V_0 = f(e_k(1))$, and also send $f$ to the basepoint if $V_0 = 0$. Otherwise we set $f_0 =  f \circ e_k$ so that $f_0$ is in $C\!\ST(V_0)$, and for the remaining intervals $j = \alpha(i)$ for $i \in \{1,\dots, n-1 \}$, we let $V_i = f(e_{\alpha(i)}(1))$ and $f_i = (f\circ e_{\alpha(i)}) \ominus V_{i-1}$, giving an element of $\Sigma\!\ST (V_i \ominus V_{i-1})$. Finally, we define $Cm(f) = (f_0,f_1,\dots, f_{n-1})$.

     We define $\Sigma m$ as the quotient of $Cm$ in which we additionally send $f$ to the basepoint if $f(t) \neq 0$ for all $t > e_k(0)$, in other words if the zero subspace does not show up in the resulting point in $\Sigma\!\ST(V_0)$. In particular, if $f$ has any cut points to the left of the leftmost open interval then it will be sent to the basepoint, as was the case for our comultiplication map in \cite[Construction 4.10]{KKMMW1}.
 \end{construction}

\begin{lemma}
    The map $\Sigma m$ is homotopic to the suspension of the map $m$ from \eqref{eq:comodule_one_point_in_D1J_EH}.
\end{lemma}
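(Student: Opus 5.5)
We model the suspension $\Sigma\!\ST(V)$ as the cone $C\!\ST(V)$ modulo $\ST(V)$, i.e.\ step functions $f\colon[0,1]\to\Sub(V)$, where $f$ goes to the basepoint when $f(t)\neq 0$ for all $t>0$. In this model the suspension coordinate is the first cut point $s_0$, the place where $f$ jumps from $0$ to its first nonzero subspace. The suspension of $m$ applies $m$ to the ``tail'' of $f$ after $s_0$, rescaled from $[s_0,1]$ to $[0,1]$, and keeps $s_0$ as the suspension coordinate. By contrast, $\Sigma m$ keeps $f$ in place and requires the jump from $0$ to lie inside the leftmost little interval $e_k((0,1))$. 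The plan is to interpolate between these two recipes by a homotopy that slides the leftmost interval.

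First I write both maps in a common form. Both have the same target summands, indexed by $V_0=f(e_k(1))$ and $V_i=f(e_{\alpha(i)}(1))$, and the components $f_i$ for $i\ge 1$ are computed identically from the intervals $e_{\alpha(i)}$. They differ only in the first factor, $\Sigma\!\ST(V_0)$. For $\Sigma m$ this factor is $f\circ e_k$. For $\Sigma(m)$, after reparametrizing the suspension coordinate linearly, it is $f\circ\tilde e_k$, where $\tilde e_k$ is the embedding from $0$ to $e_k(1)$. In addition, $\Sigma(m)$ uses the rescaled embeddings $e^{s_0}_j$ obtained by conjugating with the affine map $[0,1]\cong[s_0,1]$. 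Before comparing, I would first deform $\Sigma(m)$, through the straight-line homotopy $e_j^{\lambda s_0}$, to the version with the original intervals $e_j$. This is continuous by the same closed-cover argument as \cref{prop:coalg_comod_continuous}.

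Next I write down the homotopy $H_\lambda$ for $\lambda\in[0,1]$. Its first factor is $f\circ e_k^\lambda$, where $e_k^\lambda$ is the embedding with left endpoint $\lambda e_k(0)$ and right endpoint $e_k(1)$. The other factors are as in \cref{constr:d_1-coalg_comod_suspended}. A configuration goes to the basepoint if a cut point lies outside $[0,\lambda e_k(0)]\cup\bigcup_j e_j((0,1))$, or if $f(t)\neq0$ for all $t>\lambda e_k(0)$. At $\lambda=1$ this is exactly $\Sigma m$. At $\lambda=0$ it is the deformed $\Sigma(m)$: cut points before $e_k(0)$ are now absorbed into the first factor via the face relations, which is exactly what $m$ does in $\ST(V_0)$. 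Throughout, a configuration is trivial when the jump from $0$ is too late, which matches the basepoint condition of the suspension.

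Well-definedness and continuity follow the pattern of \cref{prop:coalg_comod_continuous}. Cover $[0,1]\times D_1(J)^k\times\Delta^p$ by finitely many closed sets, according to which interval (cut by the points $\lambda e_k(0)$, $e_j(0)$, $e_j(1)$) each cut point lies in. On each piece the map is linear or constant, so it remains to check agreement on the overlaps. I expect the main obstacle to be the overlaps where a cut point crosses $\lambda e_k(0)$, or where the jump from $0$ sits at $\lambda e_k(0)$. There I must check that the two interpretations either both collapse to the basepoint or agree via the 0th face relation in $|0\subseteq U\subseteq V_0|$. I would also verify separately that $H_\lambda$ respects the relation $f\sim g$ for step functions differing at finitely many points. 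Finally, naturality in $V$ is clear, because everything is defined from subspaces intrinsically.
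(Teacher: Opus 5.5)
You are taking a different route from the paper, but as written the chain of homotopies does not close up.

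\textbf{The sliding step.} Your homotopy is fine once $H_\lambda$ is read as $\Sigma m$ computed for the configuration $e^\lambda$. That means the allowed set for cut points must use $e^\lambda_k((0,1))$, not $e_k((0,1))$. As literally written, cut points in $(\lambda e_k(0),e_k(0)]$ go to the basepoint, and $H_0$ is merely a reparametrized $\Sigma m$. With the corrected reading, continuity in $\lambda$ comes for free from the joint continuity in the little-intervals variable (\cref{prop:coalg_comod_continuous}).

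\textbf{The gap.} At $\lambda=0$, $H_0$ is not a deformation of the suspension of $m$. In \cref{constr:d_1-coalg_comod} the first factor $f_0=f\circ e_k$ only sees $f$ on $[e_k(0),e_k(1)]$. So cut points to the left of the leftmost interval are deleted, not kept. Write $\phi\colon[0,1]\to[s_0,1]$ for the affine identification. The first factor of $\Sigma(m)(f)$ is $0$ on $[0,s_0)$ and then jumps straight to $f(\phi(e_k(0)))$, forgetting every cut point of $f$ in $(s_0,\phi(e_k(0)))$. The first factor $f\circ e^0_k$ of $H_0(f)$ retains all of them.

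So your ``common form'' claim is false: the first factor of $\Sigma(m)$ is not $f\circ\tilde e_k$ after a linear change of suspension coordinate. No reparametrization of the suspension coordinate can remove those intermediate subspaces.

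Your preliminary straight-line deformation of $\Sigma(m)$ to the original intervals is also undefined as stated. Once the interpolated left endpoint of the leftmost interval drops below $s_0$, that interval sees the region where $f=0$, and you are no longer applying $m$ to a point of $\ST(V)$. Both problems sit exactly where the jump from $0$ meets the leftmost interval. That interaction is the entire content of the lemma, and the proposal only names it as the expected obstacle.

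\textbf{The paper's route.} The paper avoids explicit homotopies. By definition $\Sigma m$ is the map on quotients induced by $Cm\colon C\!\ST(V)\to CB$. Here $B$ is the target of $m$ and $CB=\bigvee C\!\ST(V_0)\wedge\Sigma\!\ST(V_1\ominus V_0)\wedge\cdots$ is its cone. For $f$ in the base $\ST(V)$ one has $Cm(f)=m(f)$. The cone on $m$ is another map of pairs $(C\!\ST(V),\ST(V))\to(CB,B)$ restricting to $m$. Since $CB$ is contractible and $\ST(V)\subseteq C\!\ST(V)$ is a cofibration, the two maps are homotopic rel $\ST(V)$. Collapsing the bases gives $\Sigma m\simeq\Sigma(m)$.

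\textbf{Repairing your route.} The missing step is to slide the leftmost interval on the $m$ side as well. The path $\lambda\mapsto m_{e^\lambda}$ is a homotopy by \cref{prop:coalg_comod_continuous}, so $\Sigma(m_e)\simeq\Sigma(m_{e^0})$. With the leftmost interval starting at $0$ on both sides, nothing is discarded. Then $\Sigma m_{e^0}$ and $\Sigma(m_{e^0})$ differ only by reparametrizing the intervals along $[0,1]\cong[s_0,1]$. Since $s_0$ is the cone coordinate, which is continuous on $|0\subseteq U\subseteq V|$, a straight-line homotopy of these reparametrizations finishes the argument.
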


\begin{proof}
    Both $Cm$ and $\Sigma m$ are continuous by the same proof as in \cref{prop:coalg_comod_continuous}. The space $C\!\ST(V)$ is homeomorphic to the cone on $\ST(V)$, so the map $Cm$ is homotopic to the cone on the map $m$, relative to the base of the cone $\ST(V)$. This induces a homotopy between $\Sigma m$ and the suspension of $m$ after collapsing the base of the cone to $*$.
\end{proof}

\begin{remark}
    In \cref{constr:d_1-coalg_comod_suspended} we made the additional convention of sending $f$ to the basepoint when $V_0 = 0$. This is necessary in order to land in the wedge of chains of nonzero subspaces, which is consistent with the classical Dehn invariant as discussed in \cref{why_no_zero}. This convention is necessary when defining the comodule structure on $\Sigma\!\ST(V)$, but not necessary when defining that structure on $\ST(V)$ as in \cref{constr:d_1-coalg_comod}, because in configurations that represent points of $\ST(V)$, only nonzero subspaces of $V$ appear.
\end{remark}

\section{The multiplication on Euclidean and spherical scissors congruence }

The categories $\Dip^E$ and $\Dip^S$ are both symmetric monoidal, and this is the inherent reason that $\mathcal{E}$ and $\mathcal{S}$ have a multiplicative structure, comparable to the multiplicative structure on $\widetilde{\mathcal{S}}$ that was defined in \cite{KKMMW1}. Like the multiplicative structure on $\widetilde{\mathcal{S}}$, we show that the ones on $\mathcal{E}$ and $\mathcal{S}$ are also compatible with the comultiplication (i.e. the comodule structure). This makes $\mathcal{E}$ and $\mathcal{S}$ right $D_1$-comodule algebras over the $(\Comm,D_1)$-bialgebra $\widetilde{\mathcal{S}}$, as in Definition \ref{def-comodule-algebra}. Conceptually, this means that the coaction maps 
$$\mathcal{E} \to \mathcal{E}\wedge \widetilde{\mathcal{S}}\ \ \ \textup{and}\ \ \  \mathcal{S} \to \mathcal{S}\wedge \widetilde{\mathcal{S}}$$
are compatible with the multiplicative structures on the source and target.

\subsection{The multiplication on Euclidean scissors congruence}

In this section, we will use Lemma 6.14 of \cite{KLMMS-1} to define a commutative multiplication on the Euclidean spectral Sah functor $\mathfrak{S}^E$, and show that this multiplication behaves well with respect to the $\mathfrak{S}^{\tilde{S}}$-comodule structure. Namely, this multiplication makes $\mathfrak{S}^E$ into a $D_1$-comodule algebra over $\mathfrak{S}^{\tilde{S}}$.

Recall from Section 6 of \cite{KLMMS-1}:
\begin{definition}\label{def-PTX}
    Let $X,Y$ be Euclidean geometries. Define
    $$\ST^\times (X \times Y) = \frac{|\emptyset \subsetneq X' \times Y' \subseteq X \times Y|}{|\emptyset \subsetneq X' \times Y' \subsetneq X \times Y|}$$
    Here we are taking only subgeometries of $X \times Y$ which are of the form $X' \times Y'$, where $X'$ is a subgeometry of $X$ and $Y'$ is a subgeometry of $Y$.
\end{definition}

\begin{remark}\label{rem-PTX-vector}
    If $(U, x_0: U \to \mathbb{R})$ is the ambient vector space of $X$ and $(V, y_0: V \to \mathbb{R})$ is the ambient vector space of $Y$, we can also write this space as
    $$\ST^\times (U \times_\R V) = \frac{|\emptyset \subsetneq U' \times_\R V' \subseteq U \times_\R V|}{|\emptyset \subsetneq U' \times_\R V' \subsetneq U \times_\R V|}$$
    where we are taking only vector subspaces of the form $U' \times_\R V' \subseteq U \times_\R V$, where $U' \subseteq U$ is a vector subspace on which $x_0$ is rank 1, and $V' \subseteq V$ is a vector subspace on which $y_0$ is rank 1.
\end{remark}

We recall from Lemma 6.14 of \cite{KLMMS-1}:

\begin{lemma}\label{lem-6.14-klmms}
    There is a natural homeomorphism
    $$\ST^\times(U \times_\R V) \cong \ST(U) \wedge \ST(V)$$
\end{lemma}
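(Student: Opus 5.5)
The plan is to build the homeomorphism directly from the combinatorics of the posets involved and then pass to realizations. Write $P_U$ for the poset of subobjects $0 \neq U' \subseteq U$ in $\Dip^E$ (those on which $x_0$ has rank one), and similarly $P_V$. Write $P_{U\times V}$ for the poset of product subspaces $U' \times_\R V' \subseteq U \times_\R V$.

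The first step is to show that the map
\[ P_U \times P_V \to P_{U \times V}, \qquad (U',V') \mapsto U' \times_\R V', \]
is an isomorphism of posets. For surjectivity, note that every element of the target has this form by definition. For injectivity and order reflection, recover $U'$ as the image of $U' \times_\R V'$ under the projection $U \times_\R V \to U$. This projection is surjective onto $U'$ because $y_0|_{V'}$ is onto $\R$, so each $u \in U'$ can be paired with some $v \in V'$ satisfying $y_0(v) = x_0(u)$. The same argument recovers $V'$. Inclusions are preserved by construction and reflected by applying the projections.

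Under this isomorphism, the proper elements of $P_{U\times V}$ correspond to the pairs $(U',V')$ with $U' \neq U$ or $V' \neq V$. The second step uses the standard fact that the nerve of a product of posets is the product of the nerves, so $|P| \times |Q| \cong |P \times Q|$. This is a homeomorphism when the realizations are given the compactly generated topology. It identifies the subspace coming from proper pairs with $(|P_U^{<}| \times |P_V|) \cup (|P_U| \times |P_V^{<}|)$, where $P^{<}$ denotes proper subobjects.

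The third step takes quotients:
\[ \frac{|P_U| \times |P_V|}{(|P_U^{<}| \times |P_V|) \cup (|P_U| \times |P_V^{<}|)} \cong \frac{|P_U|}{|P_U^{<}|} \wedge \frac{|P_V|}{|P_V^{<}|} = \ST(U) \wedge \ST(V). \]
Naturality follows because every isomorphism $U \times_\R V \to U_1 \times_\R V_1$ induced by a pair of isomorphisms in $\Dip^E$ acts compatibly on both sides.

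The main obstacle is point-set topology. One issue is that $|P \times Q| \cong |P| \times |Q|$ holds only in compactly generated spaces, since these posets are uncountable but the morphisms are discrete. The other is that the quotient of a product by this union is the smash product, which again relies on working in $k$-spaces; this is the convention already in force for $\topp$. As a cross-check, the step-function model of \cref{prop-ST-conf} gives an explicit map. A pair $(f,g)$ of step functions is sent to the step function $t \mapsto f(t) \times_\R g(t)$ on the product, so the cut points of the image are the union of the cut points of $f$ and $g$. The inverse sends a step function to its pair of projections. This formula makes continuity on each simplex evident.
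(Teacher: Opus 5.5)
Your argument is correct. The paper gives no proof of its own here: it imports the statement from \cite[Lemma 6.14]{KLMMS-1}, so there is nothing internal to compare against. Your route is the natural one: the poset isomorphism $(U',V')\mapsto U'\times_\R V'$ (inverted by the two projections), then realization commuting with finite products of nerves in compactly generated spaces, then the quotient identification. It also agrees with the pointwise formula $(fg)(t)=f(t)\times_\R g(t)$ that the paper uses right afterwards to describe the product.

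Two small points are worth making explicit. First, you assert without comment that the realization of the subposet of proper pairs is exactly $(|P_U^{<}|\times|P_V|)\cup(|P_U|\times|P_V^{<}|)$, rather than something larger. This holds because both pieces are downward closed in $P_U\times P_V$: a chain of proper pairs lies in whichever piece contains its top element. Second, in the degenerate case $\dim U=1$ one has $P_U^{<}=\varnothing$, and the convention $|P_U|/\varnothing=|P_U|_+=S^0$ keeps the smash-product formula valid.
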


With the natural forgetful map $\ST^\times(U \times_\R V) \to \ST(U \times_\R V)$, we obtain

\begin{corollary}\label{cor-extended-PTE-commalg}
    The Cartesian product (as in \cref{dip-cartesian}) makes $\ST : \Dip^E \to \Top_*$ into a commutative monoid under Day convolution.
\end{corollary}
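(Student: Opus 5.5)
The approach is to give $\ST$ a multiplication via the universal property of Day convolution, and then deduce the commutative monoid axioms from those of $\times_\R$ (\cref{dip-cartesian-coherent}). By that universal property, a map $\ST \boxwedge \ST \to \ST$ in $\Fun(\Dip^E,\topp)$ is the same as a family of based maps
\[ \mu_{U,V}\colon \ST(U) \wedge \ST(V) \to \ST(U \times_\R V), \]
natural in $(U,V) \in \Dip^E \times \Dip^E$. I define $\mu_{U,V}$ as the composite of the inverse of the homeomorphism of \cref{lem-6.14-klmms} with the forgetful map $\ST^\times(U\times_\R V) \to \ST(U\times_\R V)$. The forgetful map is induced by the inclusion of the subposet of product subobjects $U'\times_\R V'$ into all subobjects of $U\times_\R V$. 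This inclusion preserves the conditions ``nonzero'' and ``proper'': $U'\times_\R V' = U\times_\R V$ exactly when $U'=U$ and $V'=V$. So it induces a map of the quotients of realizations. Naturality in isomorphisms of $U$ and $V$ is clear, because both maps are defined from posets of subspaces, and isomorphisms act compatibly on them.

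For the unit, take $\Dip^E$'s unit $(\R,\id_\R)$. Here $\ST(\R)=|\{\R\}|/\varnothing = S^0$, which is the Day unit evaluated at $\R$. The unit map is the inclusion of $S^0$, which picks out the non-basepoint given by the constant step function at $\R$.

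To check the axioms, I make the formula for $\mu$ explicit in the configuration model of \cref{prop-ST-conf}. I expect the homeomorphism of \cref{lem-6.14-klmms} to send a pair of step functions $(f,g)$ to the step function $t\mapsto f(t)\times_\R g(t)$. This is the ``join of flags'': its cut points are the union of the cut points of $f$ and $g$, and the basepoint conditions match. I would confirm this against the proof in \cite{KLMMS-1}. If it fails on the nose, I would instead use the shuffle (product of simplices) triangulation; only the formula ``take the pointwise product of flags'' matters below.

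With this formula the axioms reduce to identities of subspaces:
\begin{itemize}
\item Associativity becomes $(f(t)\times_\R g(t))\times_\R h(t)$ versus $f(t)\times_\R (g(t)\times_\R h(t))$, which agree under the associator of $\times_\R$.
\item Commutativity becomes the symmetry isomorphism $U\times_\R V\cong V\times_\R U$, which carries $f(t)\times_\R g(t)$ to $g(t)\times_\R f(t)$.
\item Unitality holds because $U\times_\R \R\cong U$ sends $f(t)\times_\R\R$ to $f(t)$.
\end{itemize}
In each case the coherence isomorphisms are exactly the ones of \cref{dip-cartesian-coherent}, and they act on $\ST$ by functoriality. So the squares expressing associativity, commutativity and unitality of $\ST\boxwedge\ST\to\ST$ commute objectwise, hence as natural transformations.

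I expect the main obstacle to be the step-function description of the homeomorphism. The realization of a product of posets is only homeomorphic to the product of realizations after a choice of triangulation, and a naive choice may be associative only up to homotopy. The step-function model avoids this, since the pointwise product of flags is strictly associative and symmetric. If necessary, I would prove directly that this pointwise formula defines a continuous map, using the closed-cover argument of \cref{prop:coalg_comod_continuous} in which cut points are ordered relative to one another. That would sidestep needing the explicit form of \cref{lem-6.14-klmms}.
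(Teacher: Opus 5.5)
Your proposal is correct and follows the paper's route. The paper obtains the multiplication exactly as the inverse of the homeomorphism of \cref{lem-6.14-klmms} followed by the forgetful map $\ST^\times(U\times_\R V)\to\ST(U\times_\R V)$, and it later records the same pointwise formula $(fg)(t)=f(t)\times_\R g(t)$ that you use to reduce the axioms to the coherences of \cref{dip-cartesian-coherent}. Your hedge is unnecessary: pointwise pairing of step functions is precisely the canonical shuffle homeomorphism $|P|\times|Q|\cong|P\times Q|$ for the product of the subobject posets, so the formula holds on the nose and is strictly associative and symmetric.
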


\begin{corollary}\label{cor-lunaE-multandcomod}
    The functor $(\ST, *)_E: \Luna^E \to \topp$ is a commutative monoid under Day convolution. Furthermore, the maps $m^V _{J, k}$ from \cref{prop:coalg_comod_continuous} make $(\ST, *)_E$ into a right $D_1$-comodule over $(*, \Sigma \ST)_E: \Luna^E \to \topp$.
\end{corollary}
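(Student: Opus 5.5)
The statement has two halves, and I would treat them separately. The first half is the monoid structure. I would extend the commutative monoid structure of \cref{cor-extended-PTE-commalg} from $\Dip^E$ to $\Luna^E = \Dip^E \vee \Dip^S$. For this I would use the description of Day convolution monoids as lax symmetric monoidal functors $(\Luna^E,\oplustimes) \to (\topp,\wedge)$.

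The structure maps are defined according to where the two inputs live:
\begin{itemize}
\item On $\Dip^E \times \Dip^E$, take the composite $\ST(U)\wedge\ST(V) \cong \ST^\times(U\times_\R V) \to \ST(U\times_\R V)$. This uses \cref{lem-6.14-klmms} followed by the forgetful map.
\item On a pair with one factor in $\Dip^S\setminus\{0\}$, the source is $\ST(U)\wedge * = *$, so the map is the trivial map.
\item On $\Dip^S\times\Dip^S$, the target is $*$ unless both factors are $0$, and then everything is $S^0$.
\end{itemize}
The unit is the wedge point $(\R,\id_\R)$. Here $\ST(\R)$ is the quotient $|\R|/\varnothing = S^0$, which gives the unit map $S^0\to\ST(\R)$.

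Associativity, commutativity and unitality on $\Dip^E$ follow from \cref{cor-extended-PTE-commalg}. Every other coherence diagram has a source in which some factor is $*$, so it commutes automatically. I would also check naturality of the forgetful map with respect to the coherence isomorphisms of $\times_\R$. This is clear, since both sides send a product flag $U'\times_\R V'$ to the same subspace.

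The second half, the comodule structure, is a restatement of \cref{prop:dehn_multifunctor} for $X=E$. I would only note that the maps $m^V_{J,k}$ are natural in isomorphisms of $\Luna^E$. They are trivial on $\Dip^S$ because $(\ST,*)_E$ is $*$ there.

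Matching the wedge point is the step I expect to be the main obstacle. The identification in $\Luna^E$ of $(\R,\id_\R)$ with $0\in\Dip^S$ must be compatible with both functors. $(\ST,*)_E$ takes value $S^0=\ST(\R)$ at this point, not $*$, so I would verify that the definition "trivial on $\Dip^S\setminus\{0\}$" is consistent here.

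I would also check that the unit interacts correctly with the $\oplus$ action of $\Dip^S$ on $\Dip^E$. Concretely, $\ST(\R)\wedge *\to\ST(\R\oplus W)$ must be the trivial map, and it is, since its source is $*$. The unit isomorphism $\R\times_\R V\cong V$ must then induce the identity on $\ST(V)$ via \cref{lem-6.14-klmms}. I would check this directly on flags: product subspaces of $\R\times_\R V$ are exactly the $\R\times_\R V'$.
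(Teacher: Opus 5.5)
Your proposal is correct and follows the same route as the paper. The monoid structure is inherited from \cref{cor-extended-PTE-commalg} because $(\ST,*)_E$ is trivial on the non-unit objects of $\Dip^S$, and the comodule structure is just \cref{prop:dehn_multifunctor} for $X=E$; your extra bookkeeping at the wedge point is more careful than the paper's one-line argument (one small quibble: for the wedge point to act as the unit, its product with $W \in \Dip^S$ must be $W$ itself rather than $\R \oplus W$, but since the source is $*$ either way this changes nothing).
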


\begin{proof}
    Since $(\ST, *)_E(V) = *$ for $V \in \Dip^S$, the commutative monoid structure follows from \cref{cor-extended-PTE-commalg}. The right $D_1$-comodule structure follows from \cref{prop:dehn_multifunctor}.
\end{proof}

\begin{corollary}\label{cor-ESah-multandcomod}
    The functor $\mathfrak{S}^E$ is a commutative monoid under Day convolution, and in addition a right $D_1$-comodule over $\mathfrak{S}^{\tilde{S}}$.
\end{corollary}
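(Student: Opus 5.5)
The plan is to transport the structure from \cref{cor-lunaE-multandcomod} along the desuspension functor $(\Sigma D, D)_E$. By \cref{def-spectral-sah} we have $\funSah^E = (\Sigma D,D)_E((\ST,*)_E)$ and $\funSah^{\widetilde S} = (\Sigma D,D)_E((*,\Sigma\!\ST)_E)$. By \cref{cor-D-symm-mon-Luna}, $(\Sigma D,D)_E\colon \Fun(\Luna^E,\topp)\to\Fun(\Luna^E,\Sp^O)$ is strong symmetric monoidal for the Day convolution products. A strong symmetric monoidal functor sends commutative monoids to commutative monoids, so the commutative monoid structure on $(\ST,*)_E$ yields one on $\funSah^E$: the multiplication is the composite of the structure isomorphism $\funSah^E\boxwedge\funSah^E\cong(\Sigma D,D)_E((\ST,*)_E\boxwedge(\ST,*)_E)$ with $(\Sigma D,D)_E$ of the multiplication. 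The unit and the associativity and commutativity diagrams are the images of the corresponding diagrams upstairs, combined with the coherence of the monoidal structure maps.

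For the comodule structure, I would compose the multifunctor $F\colon\mathcal{RM}_{D_1}\to\Fun(\Luna^E,\topp)^{\op}$ of \cref{prop:dehn_multifunctor} with the multifunctor induced by $(\Sigma D,D)_E$ on underlying multicategories of the opposite categories. A strong symmetric monoidal functor $G$ induces such a multifunctor: it sends a comultiplication $A\to\bigotimes_I A_i$ to $G(A)\to G(\bigotimes A_i)\cong\bigotimes G(A_i)$, using the inverse of the structure isomorphism. One must also check that $G$ is compatible with the topological enrichment. Concretely, $G$ should commute with the tensoring $L(K)\boxwedge-$ by spaces, so that the parametrized maps $D_1(J)^k_+\boxwedge F(m)\to\cdots$ go to maps $L(D_1(J)^k)\boxwedge\funSah^E\to\cdots$ in spectra. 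This holds because $G$ is given levelwise by smashing with a fixed spectrum, and the unit object $0\in\Dip^S$ is sent to $\Sph^0\wedge(-)$, as noted in the proof of \cref{cor-D-symm-mon-Luna}. The identity axiom and the composition diagram for $\mathcal{RM}_{D_1}$ then follow by applying $G$ to the diagrams already verified in \cref{prop:dehn_multifunctor}, and the $D_1$-coalgebra $\funSah^{\widetilde S}$ obtained this way agrees with the one from \cite{KKMMW1}.

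The main obstacle I expect is the bookkeeping in this enrichment and coherence step. Concretely, one has to show that the inverse structure isomorphisms of $(\Sigma D,D)_E$ commute with the reassociations $\cong$ in the comodule diagram. One also has to check that the mixed isomorphism $\Sph^{-V_0}\wedge\Sph^{-W}\cong\Sph^{-(V_0\oplus W)}$ for $V\in\Dip^E$, $W\in\Dip^S$ is coherent with the Cartesian one $(V\times_\R V')_0\cong V_0\oplus V'_0$. I would reduce both to \cite[Lemma 4.4]{KKMMW1}, since every isomorphism involved is an instance of the canonical isomorphism $F_{A}S^0\wedge F_BS^0\cong F_{A\oplus B}S^0$ for direct sums of inner product spaces.
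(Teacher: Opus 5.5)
Your proposal is correct and follows the paper's argument, which simply transports the commutative monoid and $\mathcal{RM}_{D_1}$-comodule structures of \cref{cor-lunaE-multandcomod} along the strong symmetric monoidal functor $(\Sigma D,D)_E$ of \cref{cor-D-symm-mon-Luna}. Your additional remarks spell out what the paper leaves implicit: using the inverse structure isomorphisms for the coalgebra direction, compatibility with the tensoring $L(K)\boxwedge(-)$, and reducing the coherence of the mixed and Cartesian isomorphisms to \cite[Lemma 4.4]{KKMMW1}.
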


\begin{proof}
    This follows from \cref{cor-lunaE-multandcomod} above and \cref{cor-D-symm-mon-Luna}.
\end{proof}

In order to check that the commutative multiplication and the comodule structure are compatible, we will use a step function description of the multiplication on $(\ST, *)_E: \Luna^E \to \topp$. For elements $U,V \in \Dip^E$, $f \in \Step([0,1],\Sub(U)_{\neq 0})/\sim$, and an element $g \in \Step([0,1],\Sub(V)_{\neq 0})/\sim$, their product $fg \in \Step([0,1],\Sub(U \times_\R V)_{\neq 0})/\sim$ is given pointwise by $(fg)(t) = f(t) \times_\R g(t)$.

\begin{proposition}\label{prop-GE-comodulealg}
    The functor $(\ST,*)_E: \Luna^E \to \topp$ is a right $D_1$-comodule algebra over $(*, \Sigma\!\ST)_E$. 
\end{proposition}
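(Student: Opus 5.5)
By \cref{cor-lunaE-multandcomod}, $(\ST,*)_E$ is already a commutative monoid under Day convolution and a right $D_1$-comodule over $(*,\Sigma\!\ST)_E$. By \cref{def-comodule-algebra}, the only thing left to prove is that each comodule structure map
\[ m_{J,k}\colon D_1(J)^k_+ \boxwedge (\ST,*)_E \to (\ST,*)_E \boxwedge (*,\Sigma\!\ST)_E^{\boxwedge J\setminus\{k\}} \]
is a map of commutative monoids. Here the target carries the Day convolution of the monoid structures: the Cartesian product on $(\ST,*)_E$ and the join product on $(*,\Sigma\!\ST)_E$ coming from the commutative algebra structure on $\Sigma\!\ST_S$ in \cite{KKMMW1}.

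Since the product on $(\ST,*)_E$ is trivial on $\Dip^S$, the unit condition is immediate. The unit lives at $(\R,\id_\R)$, where $\ST$ is $S^0$, and $m$ sends the non-basepoint to the tuple of units. So the content is the multiplicativity square. For $U,W\in\Dip^E$, a fixed $e\in D_1(J)^k$, and step functions $f\in\ST(U)$, $g\in\ST(W)$, we must check
\[ m_e(fg) = m_e(f)\cdot m_e(g). \]
The product on the right multiplies the $\ST(U_0)\wedge\ST(W_0)$ factors Cartesianly and joins the $\Sigma\!\ST$ factors. The two sides land in the same wedge summand because of the identity
\[ (U_i\times_\R W_i)\ominus(U_{i-1}\times_\R W_{i-1}) = (U_i\ominus U_{i-1})\oplus(W_i\ominus W_{i-1}). \]
This identity holds because the kernel of $x_0$ on a fiber product is $U_{i,0}\oplus W_{i,0}$ with the orthogonal sum inner product, so orthogonal complements split.

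We then compare the two sides step function by step function, using the configuration models (\cref{prop-ST-conf}, \cref{prop-SST-conf}) and the pointwise formula $(fg)(t)=f(t)\times_\R g(t)$. First, the set of cut points of $fg$ is the union of the cut points of $f$ and of $g$. Hence $fg$ has a cut point outside $[0,e_k(0)]\cup\bigcup_j e_j((0,1))$ exactly when $f$ or $g$ does, so both sides are the basepoint in the same situations. Otherwise:
\begin{itemize}
\item $V_i$ for $fg$ is $f(e_{\alpha(i)}(1))\times_\R g(e_{\alpha(i)}(1))$.
\item The leftmost component is $(fg)\circ e_k = (f\circ e_k)(g\circ e_k)$, which is exactly the Cartesian product.
\item Each remaining component is $\big((f\circ e_j)\times_\R(g\circ e_j)\big)\ominus(U_{i-1}\times_\R W_{i-1})$, which by the splitting above is pointwise $\big((f\circ e_j)\ominus U_{i-1}\big)\oplus\big((g\circ e_j)\ominus W_{i-1}\big)$.
\end{itemize}
It remains to see that this pointwise direct sum of step functions is the join product on $\Sigma\!\ST$ in the configuration model of \cite{KKMMW1}, which I expect to be the case by the description there. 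The equality must be checked compatibly with the Day convolution identifications and the symmetry isomorphisms used to order the factors. That is bookkeeping with the canonical orthogonal-subspace representatives of \cref{lem:sequence of oplus vs ominus}(ii).

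The main obstacle is making the product structure on the target precise at the level of Day convolution. The target is a wedge over flags in $U\times_\R W$, but the product of two elements naturally lands in flags of product form $U_i\times_\R W_i$. One must check that the multiplication maps on the Day convolution, built from the universal property, agree with these explicit formulas. I would first do this for $|J|=2$ and then argue that the general case follows by the same pointwise reasoning, since all the maps involved are given by the same explicit formulas.
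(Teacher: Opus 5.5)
Your proposal is correct and takes essentially the same route as the paper. The paper likewise reduces the comodule-algebra condition, via the argument of Lemma 4.16 of \cite{KKMMW1}, to the splitting $(U\ominus U')\oplus(V\ominus V')=(U\times_\R V)\ominus(U'\times_\R V')$, and proves it exactly as you do, using that the kernel of the fiber product is the orthogonal sum $U_0\oplus V_0$. Your explicit step-function check (cut points, basepoint conditions, the leftmost Cartesian factor, and the product on $\Sigma\!\ST$ being pointwise direct sum, which is indeed the construction in \cite{KKMMW1}) simply spells out the bookkeeping that the paper delegates to that reference.
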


\begin{proof}
    As in the proof of Lemma 4.16 of \cite{KKMMW1}, verifying this boils down to checking that for every $U' \subseteq U \in \Dip^E$ and $V' \subseteq V \in \Dip^E$,
    $$(U \ominus U') \oplus (V \ominus V') = (U \times_\R V) \ominus (U' \times_\R V')$$
    Considering all of these as subspaces of $U\times_\R V$ (in which $U \ominus U'$ and $V \ominus V'$ are orthogonal to each other by definition), note that $(u,v) \in (U \times_\R V)_0 = U_0 \oplus V_0$ is orthogonal to every $(u',v') \in (U'  \times_\R V')_0 = U'_0 \oplus V'_0$ if and only if $u$ is orthogonal to every $u' \in U'_0$ and $v$ is orthogonal to every $v' \in V'_0$.
\end{proof}

\subsection{The multiplication on spherical scissors congruence}

We now show that the spectral Sah functor $\mathfrak{S}^S$ has a commutative multiplication as well, which is compatible with the $\mathfrak{S}^{\tilde S}$-comodule structure. 

As before, we consider the functor
$$\Sigma\!\PT_S:\Luna^S \to \topp.$$ We now make $\Sigma\!\PT_S$ into a commutative monoid in $\Fun(\Luna^S,\topp)$.
\begin{definition}\label{product-collapse}
    For $U, V$ in $\Luna^S$, define $p:S^U \wedge S^V \to S^{U \oplus V}$ to be the map of quotients that sends $(u,v) \in D(U) \times D(V)$ to the same point $(u,v) \in D(U\oplus V)$ if it lies in the subspace $D(U \oplus V)$, and otherwise sends it to the basepoint $*$. %
\end{definition}
\begin{construction}\label{constr:mult_spherical}
    Let $a:\Sigma\!\PT_S \boxwedge \Sigma\!\PT_S \to \Sigma\!\PT_S$ be the natural transformation with components
    $$a_V:\bigvee_{\substack{U \subseteq V \in \Dip^S}} \Sigma\!\PT(U) \wedge \Sigma\!\PT(V \ominus U) \to \Sigma\!\PT(V)$$
    for $V$ in $\Luna^S$, that sends $((f,v), (g,w))$ to $(f \oplus g, p(v,w) )$ where $f\oplus g$ is the ``pointwise sum'' of $f$ and $g$, and therefore a step function $I \to \Sub(U \oplus V)_{\neq 0}$. 

    For $I:\Luna^S \to \topp$ the unit for Day convolution,
    $$i:I \to \Sigma\!\PT$$ is the natural transformation with 
    $i_0:S^0 \to \Sigma\!\PT(0) = S^0 $ the identity map.  
\end{construction}
\begin{proposition}
    The maps defined in Construction \ref{constr:mult_spherical} make $\Sigma\!\PT_S$ into a commutative monoid. 
\end{proposition}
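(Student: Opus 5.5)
We prove the statement in four steps: the maps $a_V$ are well defined and continuous, they are natural in $\Luna^S$, and the resulting pairing satisfies the unit, associativity and commutativity axioms. Throughout we use the step-function model of \cref{prop-SPT-conf} together with the collapse map $p$ of \cref{product-collapse}.

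\textbf{Well-definedness and continuity.} For $U \subseteq V$, identify $V \ominus U$ with the orthogonal complement of $U$ inside $V$, using \cref{lem:sequence of oplus vs ominus}(ii). Then $f \oplus g$ is literally the pointwise sum of subspaces of $V$, and it is again order-preserving. Its cut points are the union of the cut points of $f$ and of $g$, so changing $f$ or $g$ at finitely many points changes $f\oplus g$ at finitely many points.

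The basepoint relations are respected for three reasons.
\begin{itemize}
\item If $f(t)\neq U$ for all $t<1$, or $g(t) \neq V\ominus U$ for all $t<1$, then $(f\oplus g)(t) \neq V$ for $t<1$, by a dimension count.
\item If $v$ or $w$ is the basepoint, then $p(v,w)$ is the basepoint.
\item The label condition holds: $p(v,w)$ lies in $S^{f(0)\oplus g(0)} = S^{(f\oplus g)(0)}$.
\end{itemize}

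For continuity, restrict to cells $(D(U_0)\times\Delta^p)\times(D(U'_0)\times\Delta^q)$. Subdivide according to the relative order of the cut points of the two step functions, as in the proof of \cref{prop:coalg_comod_continuous}. On each closed piece the map is given by a shuffle of simplicial coordinates together with the continuous map $p$. The pieces agree on overlaps because coincident cut points correspond to face relations.

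\textbf{Naturality.} Naturality under isometries $V\cong V'$ is immediate, since every ingredient (sum of subspaces, orthogonal complement, the disc $D(V)$) is defined isometry-invariantly. Passing through the universal property of Day convolution, this gives $a\colon \Sigma\!\PT_S\boxwedge\Sigma\!\PT_S\to\Sigma\!\PT_S$.

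\textbf{Unit.} $\Sigma\!\PT(0)=S^0$, whose non-basepoint is the constant step function $0$ with label $0$. Summing with it does nothing: $f\oplus 0=f$, and $p(v,0)=v$.

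\textbf{Associativity.} We need $(f\oplus g)\oplus h=f\oplus(g\oplus h)$, which holds pointwise as sums of mutually orthogonal subspaces, using \cref{lem:oplus_ominus}. We also need $p(p(u,v),w)=p(u,p(v,w))$. Both sides equal $(u,v,w)$ if $|u|^2+|v|^2+|w|^2\le 1$ and the basepoint otherwise, because $D(U\oplus V)$ is the set where $|u|^2+|v|^2\le 1$.

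\textbf{Commutativity.} This is the step I expect to need the most care. The symmetry of Day convolution involves the swap isomorphism $U\oplus U'\cong U'\oplus U$, and in the summand-by-subspace indexing it sends the summand for $U\subseteq V$ to the summand for $V\ominus U\subseteq V$. Here $f\oplus g=g\oplus f$ as functions into subspaces of $V$, and $p$ commutes with the swap of coordinates. So the two composites agree once the swap is unwound carefully in the identification of \cref{lem:sequence of oplus vs ominus}.

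The only genuinely topological check is the continuity of $p$ across the boundary sphere: $p$ is continuous because points with $|u|^2+|v|^2\ge1$ lie in the collapsed locus. Everything else is a formal check.
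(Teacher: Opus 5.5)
Your proposal is correct and follows the same approach as the paper. The paper's proof just cites three facts: $\oplus$ on $\Dip^S$ is commutative, $S^0$ is the unit for $\wedge$, and the collapse maps $p$ are unital, associative and commutative. Your unit, associativity and commutativity checks are exactly these, and you additionally verify the well-definedness, continuity and naturality of $a_V$, which the paper leaves implicit.
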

\begin{proof}
    This follows from the fact that $\oplus$ on $\Dip^S$ is commutative, the fact that $S^0$ is the unit for $\wedge$ in $\topp$, and the fact that the maps $p$ from \cref{product-collapse} are unital, associative, and commutative.
\end{proof}

\begin{proposition}
    The functor of multicategories $F:\mathcal{RM}_{D_1} \to \Fun(\Luna^S,\topp)^{\op}$ defined in Construction \ref{constr:d_1-coalg_comod} takes values in $\CMon(\Fun(\Luna^S,\topp))$.
\end{proposition}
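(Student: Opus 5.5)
We have to show that the functor of multicategories $F\colon \mathcal{RM}_{D_1} \to \Fun(\Luna^S,\topp)^{\op}$ lifts through $\CMon(\Fun(\Luna^S,\topp))$. For the coalgebra part $F(a) = \Sigma\!\ST_S$ this is the bialgebra statement of \cite{KKMMW1}. Day convolution of commutative monoids is a commutative monoid, so it remains to check that each comodule structure map
\[ m_{J,k}\colon D_1(J)^k_+ \boxwedge \Sigma\!\PT_S \to \Sigma\!\PT_S \boxwedge \Sigma\!\ST_S^{\boxwedge J\setminus\{k\}} \]
is a map of commutative monoids, for each fixed $e \in D_1(J)^k$. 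Here the source has the monoid structure of \cref{constr:mult_spherical}, and the target has the Day convolution product. Compatibility with the unit is immediate. At $V = 0$ the only configuration is the step function with value $0$ together with the non-basepoint of $S^0$, and $m$ sends it to the unit in every factor.

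The main step is compatibility with multiplication, which we check pointwise using the step-function models of \cref{prop-SPT-conf} and \cref{prop-SST-conf}. Take $(f,v) \in \Sigma\!\PT(U)$ and $(g,w) \in \Sigma\!\PT(W)$ with $U \oplus W = V$, so that their product is $(f\oplus g, p(v,w))$.

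First we compare the basepoint conditions. The cut points of $f \oplus g$ are the union of the cut points of $f$ and of $g$. Hence $m(f\oplus g,p(v,w))$ is the basepoint exactly when $m(f,v)$ or $m(g,w)$ is, which is exactly when their product in the target is the basepoint. The condition that $p(v,w)$ is the basepoint is preserved on both sides, since $m$ does not alter the label $v$.

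Next we compare the nontrivial outputs. The chain produced from $f\oplus g$ has terms $V_i = f(e_{\alpha(i)}(1)) \oplus g(e_{\alpha(i)}(1)) = U_i \oplus W_i$, and the leftmost piece is $(f\oplus g)\circ e_k = (f\circ e_k)\oplus(g\circ e_k)$, still carrying the label $p(v,w)$. So the $\Sigma\!\PT(V_0)$ factor agrees with the product of the $\Sigma\!\PT(U_0)$ and $\Sigma\!\PT(W_0)$ factors. For $i \ge 1$ we need the orthogonal-complement identity
\[ (U_i \oplus W_i) \ominus (U_{i-1} \oplus W_{i-1}) = (U_i \ominus U_{i-1}) \oplus (W_i \ominus W_{i-1}) \]
as subspaces of $V = U \oplus W$, applied pointwise to the step functions. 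This holds because $U$ and $W$ are orthogonal, so a vector of $U\oplus W$ is orthogonal to $U_{i-1}\oplus W_{i-1}$ if and only if each of its components is orthogonal to the corresponding summand. This is the same argument as \cref{prop-GE-comodulealg} and \cite[Lemma 4.16]{KKMMW1}.

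Finally, the reindexing of the target via \cref{lem:sequence of oplus vs ominus} is compatible with Day convolution. The summand indexed by the chain $(V_i)$ receives the product of the summands indexed by $(U_i)$ and $(W_i)$, by (i) and (ii) of that lemma.

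The main obstacle is bookkeeping rather than geometry. One must check that naturality in $\Luna^S$ and the identification of the wedge summands over the two chains match up correctly, and that the label $p(v,w)$ sits in $S^{(f\oplus g)(0)}$ after restricting along $e_k$. The latter holds because $p$ commutes with the inclusions $S^{U_0}\wedge S^{W_0} \to S^{U_0'}\wedge S^{W_0'}$.
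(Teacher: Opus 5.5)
Your proposal is correct and follows the paper's proof. You reduce to checking that each $F(e)$ is a map of commutative monoids, with the coalgebra part supplied by \cite[Lemma 4.16]{KKMMW1}, and you verify the multiplicativity square pointwise on step functions using $(B\ominus A)\oplus(D\ominus C) = (B\oplus D)\ominus(A\oplus C)$. Your explicit treatment of the basepoint conditions, the unit, and the label $p(v,w)$ spells out details that the paper leaves to its figures.
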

\begin{proof}
For the maps
    $$D_1(J)_+ \to \Fun(\Luna^S,\topp)(\Sigma\!\ST_S, \Sigma\!\ST_S^{\boxwedge J})$$
    we already know that they actually land in 
     $\CMon(\Fun(\Luna^S,\topp))(\Sigma\!\ST_S,\Sigma\!\ST_S \boxwedge \Sigma\!\ST_S^{\boxwedge J \setminus \{k \}})$, because of \cite[Lemma 4.16]{KKMMW1}.
    We need to check that the same holds for the maps
    $$D_1(J)^k_+ \to \Fun(\Luna^S,\topp)( \Sigma\!\PT_S,\Sigma\!\PT_S \boxwedge \Sigma\!\ST_S^{\boxwedge J \setminus \{k \}}) $$
    given a finite set $J$ and $k\in J$.
    The proof of this is very similar.
    For $e = (e_j)_{j\in J}$ in $D_1(J)^k$ we need to see that its image $F(e): \Sigma\!\PT _S \to \Sigma\!\PT_S \boxwedge \Sigma\!\ST_S^{\boxwedge J \setminus \{k \}}$ is a morphism of monoids in $ \Fun(\Luna^S,\topp)$. In other words, we need to show that the diagram 
    \begin{center}
        \begin{tikzcd}
            \Sigma\!\PT_S \boxwedge \Sigma\!\PT_S  \arrow[rr, "F(e) \boxwedge F(e)"] \arrow[dd, "a"] & & (\Sigma\!\PT_S\boxwedge \Sigma\!\ST_S^{\boxwedge J \setminus \{k\}} ) \boxwedge (\Sigma\!\PT_S  \boxwedge \Sigma\!\ST_S^{\boxwedge J \setminus \{k\}} )  \arrow[d, "\cong"] \\
             & & \Sigma\!\PT_S  \boxwedge \Sigma\!\PT_S  \boxwedge (\Sigma\!\ST_S\boxwedge\Sigma\!\ST_S)^{\boxwedge J \setminus \{k\}} \arrow[d, "{a \boxwedge (\tilde a)^{\boxwedge J \setminus \{k\} }}"]\\
             \Sigma\!\PT_S  \arrow[rr, "F(e)"] &  & \Sigma\!\PT_S  \boxwedge \Sigma\!\ST_S^{\boxwedge J \setminus \{k\} }
        \end{tikzcd}
    \end{center}
commutes (as well as a diagram involving the units). Here $\tilde a$ refers to the multiplication $\Sigma\!\ST \boxwedge \Sigma\!\ST \to \Sigma\!\ST$ that is defined in \cite[Construction 4.5]{KKMMW1}. That this diagram commutes follows from the fact that for $A\subseteq B$ and $C \subseteq D$ all in $\Dip^S$, we have an isomorphism 
$(B \ominus A) \oplus (D \ominus C)  \cong (B \oplus D) \ominus (A \oplus C).$ See also Figure \ref{figure:proof_picture_1} and \ref{figure:proof_picture_2}.
\end{proof}

\begin{figure}[H]
\begin{center}
    \begin{tikzpicture}[yscale=1.7]

        \pgfmathsetmacro{\yzero}{-1}
        \pgfmathsetmacro{\yone}{-3}
        \pgfmathsetmacro{\ytwo}{-5}
    
        \draw[ultra thick,|-|,OI6] (3.5,0) -- (5,0) node[above=1.5mm]{$\frac{1}{2}$} ;
        \draw[ultra thick,-|,OI6] (5,0) -- (6.5,0) ;
        
        \node at (8.5,0) {$\in {D_1(\{1,2\})^1_+}$};

        \node at (-0.5,\yzero) {$\Bigg($};
        \draw[ultra thick,OI2,|-|] (0,\yzero) --node[above]{$U_0$} (1,\yzero);
        \draw[ultra thick,OI2,-|] (1,\yzero) --node[above,fill=white]{$U_1$} (2,\yzero);
        \draw[ultra thick,OI2,-|] (2,\yzero) --node[above]{$U_2$} (3,\yzero); 
        \node at (4.2,\yzero) {$, \ u\in S^{U_0} \Bigg)$};
        
        \node at (5.5,\yzero) {$\Bigg($};
        \draw[ultra thick,OI2,|-|] (6,\yzero) --node[above]{$W_0$} (7,\yzero); 
        \draw[ultra thick,OI2,-|] (7,\yzero) --node[above]{$W_1$} (8,\yzero); 
        \draw[ultra thick,OI2,-|] (8,\yzero) --node[above]{$W_2$} (9,\yzero); 
        \node at (10.2,\yzero) {$, \ w \in S^{W_0} \Bigg)$};

        \draw[dashed, gray,->] (5.2,\yzero-0.5)--node[right=2mm] {$\mathsmaller{\text{multiplication}}$} (5.2,\yone+0.5);
        
        \node at (2,\yone) {$\Bigg($};
        \draw[ultra thick,OI2,|-|] (2.5,\yone) --node[below=2mm]{$\mathsmaller{U_0\oplus W_0}$} (3.5,\yone);
        \draw[ultra thick,OI2,-|] (3.5,\yone) --node[above=2mm]{$\mathsmaller{U_1\oplus W_1}$} (4.5,\yone);
        \draw[ultra thick,OI2,-|] (4.5,\yone) -- node[below=2mm] {$\textcolor{OI2}{\mathsmaller{U_2 \oplus W_2}}$}(5.5,\yone); 
        \node at (7.3,\yone) {$, \ u+w\in S^{U_0\oplus W_0} \Bigg)$};
        \draw[ultra thick,OI6] (4,\yone+0.15)--(4,\yone-0.15);

        \draw[dashed, gray,->] (5.2,\yone-0.5)--node[right=2mm] {$\mathsmaller{\text{comultiplication}}$} (5.2,\ytwo+0.5);

          \node at (-0.5,\ytwo) {$\Bigg($};
        \draw[ultra thick,OI2,|-|] (0,\ytwo) --node[above=2mm]{$\mathsmaller{U_0\oplus W_0}$} (2,\ytwo);
        \draw[ultra thick,OI2,-|] (2,\ytwo) --node[above=2mm] {$\mathsmaller{U_1\oplus W_0}$} (3,\ytwo);
    
        \node at (5,\ytwo) {$, \ u+w\in S^{U_0\oplus W_0} \Bigg)$};
        
        \draw[ultra thick,OI2,|-|] (7.5,\ytwo) --node[below]{$\mathsmaller{0}$} (8.5,\ytwo); 
        \draw[ultra thick,OI2,-|] (8.5,\ytwo) --node[above=2mm]{$\mathsmaller{(U_2\oplus W_2)\ominus (U_1\oplus W_1)}$} (10.5,\ytwo); 
        
    \end{tikzpicture}
\end{center}
\caption{An illustration of $F(e)\circ a$ for $e  = (e_1,e_2) \in D_1(\{1,2\})_+^1$ on an element of $\Sigma \PT_S(U)\wedge \Sigma \PT(W)$ which includes into $(\Sigma \PT_S \protect\boxwedge\Sigma \PT_S)(V)$ for $V\cong U\oplus W$.}\label{figure:proof_picture_1}
\end{figure}

\newpage
\begin{figure}[H]
\begin{center}
    \begin{tikzpicture}[yscale=1.7]

        \pgfmathsetmacro{\yzero}{-1}
        \pgfmathsetmacro{\yone}{-3}
        \pgfmathsetmacro{\ytwo}{-5}
    
        \draw[ultra thick,|-|,OI6] (3.5,0) -- (5,0) node[above=1.5mm]{$\frac{1}{2}$} ;
        \draw[ultra thick,-|,OI6] (5,0) -- (6.5,0) ;
        
        \node at (8.5,0) {$\in {D_1(\{1,2\})^1_+}$};

        
        \node at (-0.5,\yzero) {$\Bigg($};
        \draw[ultra thick,OI2,|-|] (0,\yzero) --node[above, yshift=3pt]{$U_0$} (1,\yzero);
        \draw[ultra thick,OI2,-|] (1,\yzero) --node[above,fill=white,yshift=3pt]{$U_1$} (2,\yzero);
        \draw[ultra thick,OI2,-|] (2,\yzero) --node[above,yshift=3pt]{$U_2$} (3,\yzero); 
        \node at (4.2,\yzero) {$, \ u\in S^{U_0} \Bigg)$};
        \draw[ultra thick,OI6] (1.5,\yzero-0.15)--(1.5,\yzero+0.15);
        
        \node at (5.5,\yzero) {$\Bigg($};
        \draw[ultra thick,OI2,|-|] (6,\yzero) --node[above,yshift=3pt]{$W_0$} (7,\yzero); 
        \draw[ultra thick,OI2,-|] (7,\yzero) --node[above,yshift=3pt]{$W_1$} (8,\yzero); 
        \draw[ultra thick,OI2,-|] (8,\yzero) --node[above,yshift=3pt]{$W_2$} (9,\yzero); 
        \node at (10.2,\yzero) {$, \ w \in S^{W_0} \Bigg)$};
        \draw[ultra thick,OI6] (7.5,\yzero-0.15)--(7.5,\yzero+0.15);

        \draw[dashed, gray,->] (5.2,\yzero-0.5)--node[right=2mm] {$\mathsmaller{\text{comultiplication}}$} (5.2,\yone+0.5);
        
        \node at (-1,\yone) {$\Bigg($};
        \draw[ultra thick,OI2,|-|] (-0.5,\yone) --node[above]{$\mathsmaller{U_0}$} (0.5,\yone);
        \draw[ultra thick,OI2,-|] (0.5,\yone) --node[above]{$\mathsmaller{U_1}$} (1,\yone);
        \node at (2,\yone) {$, u\in S^{U_0} \Bigg)$};
        
        \draw[ultra thick, OI2, |-|] (3.2,\yone)-- node[above]{$\mathsmaller{0}$} (3.7,\yone);
        \draw[ultra thick, OI2,-|] (3.7,\yone)-- node[below=2mm]{$\mathsmaller{U_2 \ominus U_1}$}(4.7,\yone);

        \node at (5.5,\yone) {$\Bigg($};
        \draw[ultra thick,OI2,|-|] (5.8,\yone) --node[above]{$\mathsmaller{W_0}$} (6.8,\yone);
        \draw[ultra thick,OI2,-|] (6.8,\yone) --node[above]{$\mathsmaller{W_1}$} (7.5,\yone);
        \node at (8.5,\yone) {$, w\in S^{W_0} \Bigg)$};
        
        \draw[ultra thick, OI2, |-|] (9.7,\yone)-- node[above]{$\mathsmaller{0}$} (10.2,\yone);
        \draw[ultra thick, OI2,-|] (10.2,\yone)-- node[below=2mm]{$\mathsmaller{W_2 \ominus W_1}$}(11.2,\yone);

        \draw[dashed, gray,->] (5.2,\yone-0.5)--node[right=2mm] {$\mathsmaller{\text{shuffle and multiplication}}$} (5.2,\ytwo+0.5);

        \node at (-0.5,\ytwo) {$\Bigg($};
        \draw[ultra thick,OI2,|-|] (0,\ytwo) --node[above=2mm]{$\mathsmaller{U_0\oplus W_0}$} (2,\ytwo);
        \draw[ultra thick,OI2,-|] (2,\ytwo) --node[above=2mm] {$\mathsmaller{U_1\oplus W_0}$} (3,\ytwo);
    
        \node at (5,\ytwo) {$, \ u+w\in S^{U_0\oplus W_0} \Bigg)$};
         
        \draw[ultra thick,OI2,|-|] (7.5,\ytwo) --node[below]{$\mathsmaller{0}$} (8.5,\ytwo); 
        \draw[ultra thick,OI2,-|] (8.5,\ytwo) --node[above=1.5mm]{$\mathsmaller{(U_2\ominus U_1)\oplus (W_2\ominus W_1)}$} (10.5,\ytwo); 
        
    \end{tikzpicture}
\end{center}
\caption{An illustration of $(a \protect \boxwedge \tilde a) \circ F(e)\protect \boxwedge F(e)$ on the same element of $\Sigma \PT_S(U)\wedge \Sigma \PT(W)$ as in \Cref{figure:proof_picture_1}. The outcome is identical to the outcome in the figure above since $(U_2\ominus U_1)\oplus (W_2\ominus W_1) \cong (U_2 \oplus W_2)\ominus (U_1\oplus W_1)$.}\label{figure:proof_picture_2}
\end{figure}

\section{Lifting to \texorpdfstring{$\infty$}{infty}-categories}\label{sect:infinity}

Finally we show that the spectrum version of the algebra and coalgebra structures lift to the $\infty$-category of spectra.

\begin{remark}
For the proof of the main result of our previous article \cite{KKMMW1}, we needed to be able to translate point-set models for algebras over operads into the respective $\infty$-categorical notions, which we took care of in \cite{kkmmw0}. Similar translations are needed in this paper, but it turns out that the results of \cite{kkmmw0} suffice: although not stated in this generality, the proof of Theorem 3.5 of loc.\ cit. works verbatim for a simplicial multicategory $\mathcal{O}_{\bullet}$ and hence in particular for the simplicial right module operad $\RM$. The latter can, for instance, be defined as singular simplices of the topological right module multicategory and is thus a fibrant simplicial operad.

This also yields a more general version of Theorem 5.14 of \cite{kkmmw0}, by replacing the first half of its proof by the version for multicategories. In particular, there is a canonical interchange map
 \[ 
    	N^s(\Alg_{\RM}(\Alg_{\Comm}(M^\flat_\bullet)^{\op})^{\op})
			\to 
		\Alg_{N^s(\RM)}(\CAlg(\mathcal{M})^{\op})^{\op} \simeq \Alg_{\mathcal{R}\mathcal{M}}(\CAlg(\mathcal{M})^{\op})^{\op}
	\]
    which preserves the product and coaction in the homotopy category $hM^\flat_\bullet$. Here $\mathcal{R}\mathcal{M}$ is the right module $\infty$-operad as in Section 4.2.1 of \cite{HA} and the rest is the notation from just above Theorem 5.14 of \cite{kkmmw0}.
\end{remark}

This finishes the proof of \Cref{mainthm} for all geometries except for the hyperbolic one. We will take care of the latter next.

As explained in Section 5, we do not expect to have a symmetric monoidal point-set model for $(\Sigma D, D)_H$ and will instead define it within the framework of $\infty$-categories. More explicitly, we want to construct a lax symmetric monoidal functor
$$(\Sigma D, D)_H \colon \Fun(\Luna^H, \Top_*) \to \Fun(\Luna^H,\Spp)$$
whose structure maps $F(X) \boxwedge F(Y) \to F(X \boxwedge Y)$ are isomorphisms whenever $X,Y \in \Dip^S \amalg \Dip^H$. Here $\Spp$ is the $\infty$-category of spectra. 
\begin{construction}
We start with the functor
$$(\Sigma D, D)_H \colon \Fun(\Luna^H, \Top_*) \to \Fun(\Luna^H,(\Sp^O)^{\cof})$$ defined by $G \mapsto (V \mapsto Q\Sigma F(S^V,\mathbb{S}) \wedge G(V))$ on $\Dip^H$ and by $G \mapsto (V \mapsto QF(S^V,\mathbb{S}) \wedge G(V))$ on $\Dip^S$, where $Q$ is the cofibrant replacement functor. All the functors $\Luna^H \to \Top_*$ we care about factor through the inclusion $\Top_*^{\cof} \hookrightarrow \Top_*$, so that we might as well consider the symmetric monoidal category $\Fun(\Luna^H, \Top_*^{\cof})$ of pointwise cofibrant functors instead. This gives us the desired functor 
$$(\Sigma D, D)_H \colon \Fun(\Luna^H, \Top_*^{\cof}) \to \Fun(\Luna^H,\Spp)$$
by composing with the Dwyer-Kan localization $\Fun(\Luna^H, (\Sp^{O})^{\cof}) \to \Fun(\Luna^H, \Spp)$.
\end{construction}
\begin{proposition}
The functor $(\Sigma D, D)_H \colon \Fun(\Luna^H, \Top_*^{\cof}) \to \Fun(\Luna^H,\Spp)$ is lax symmetric monoidal, and the structure maps $F(X) \boxwedge F(Y) \to F(X \boxwedge Y)$ are isomorphisms whenever $X,Y \in \Dip^S \amalg \Dip^H$.
\end{proposition}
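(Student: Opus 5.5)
The plan is to deduce the statement from the point-set lax structure of \cref{cor-D-symm-mon-Luna}, after composing with cofibrant replacement and the localization. Write $\gamma\colon \Fun(\Luna^H,(\Sp^O)^{\cof}) \to \Fun(\Luna^H,\Spp)$ for the Dwyer--Kan localization at pointwise stable equivalences. The Day convolution on $\Fun(\Luna^H,\Sp^O)$ is computed pointwise by wedges, over decompositions in $\Luna^H$, of smash products of values. These preserve stable equivalences between cofibrant objects. Hence $\gamma$ is strong symmetric monoidal for the induced Day convolution on $\Fun(\Luna^H,\Spp)$. Since $\Luna^H$ is an ordinary category whose hom-sets are discrete, I would identify this with the $\infty$-categorical Day convolution, using the standard fact that localizing a symmetric monoidal model category restricted to cofibrant objects gives a symmetric monoidal localization.

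It therefore suffices to exhibit $(\Sigma D,D)_H$ as a lax symmetric monoidal functor into $\Fun(\Luna^H,(\Sp^O)^{\cof})$ before localizing. The structure maps should be stable equivalences in the required cases.

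First I would build the point-set lax structure. On the pair $(\Dip^H,\Dip^S)$ the map is
\[ Q\Sigma F(S^V,\Sph)\wedge G(V) \sma QF(S^W,\Sph)\wedge G'(W) \to Q\Sigma F(S^{V\oplus W},\Sph)\wedge (G\sma G')(V\oplus W). \]
It comes from \cref{cor-D-symm-mon-Luna}, using the equivalence $\Sph^{-W}\simeq F(S^W,\Sph)$ of \cite[Lemma 2.6]{KKMMW1} on cofibrant replacements. On the pair $(\Dip^S,\Dip^S)$ I would use the strong structure of \cref{prop-D-symm-mon-Dip}. Products landing in $*$ get the trivial maps.

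The obstacle is that the cofibrant replacement $Q$ is only lax monoidal up to a zigzag, so there is no strict map $QA\sma QB\to Q(A\sma B)$ in general. To handle this, I would take $Q$ to be a lax symmetric monoidal cofibrant replacement, for instance the functorial replacement built by the small object argument together with a comparison map $QA\sma QB\to Q(A\sma B)$ lifting $QA\sma QB\to A\sma B$, which exists because the source is cofibrant. Alternatively, I would avoid $Q$ on the $\Dip^S$ summands entirely, since $\Sph^{-W}=F_WS^0$ is already cofibrant, and apply $Q$ only on the $\Dip^H$ summands. The remaining coherences then reduce to the associativity and unitality checks already made in \cref{cor-D-symm-mon-Luna}, because all maps are natural in the diagram $G$.

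Finally, after applying $\gamma$ the structure maps become equivalences on the relevant summands. For $X,Y\in\Dip^S\amalg\Dip^H$ with at most one factor in $\Dip^H$, \cref{cor-D-symm-mon-Luna} says the point-set map is a stable equivalence, so it is an isomorphism in $\Spp$. When both factors lie in $\Dip^H$, the Day convolution lands on the trash object $*$, where both sides are contractible (the value at $*$ is $*$). Hence the map is again an equivalence, and the functor is lax symmetric monoidal with the stated invertibility property.
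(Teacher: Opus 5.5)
Your strategy is essentially the paper's: take the point-set lax structure of \cref{cor-D-symm-mon-Luna}, insert a cofibrant replacement, and Dwyer--Kan localize. The one difference is that you compose directly with the target localization $\gamma$. The paper instead descends to $\Fun(\Luna^H,\Spc_*)$ and then precomposes with the source localization from \cite{kkmmw0}. Your variant is harmless and slightly more direct, since the proposition's source is the unlocalized $\Fun(\Luna^H,\Top_*^{\cof})$.

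Write $\Phi=(\Sigma D,D)_H$. The genuine error is in your final case. When $X,Y\in\Dip^H$, the target $\Phi(G\boxwedge G')(*)$ is indeed $*$, but the source $\bigl(\Phi G\boxwedge\Phi G'\bigr)(*)$ is not. For each pair of isomorphism classes $X,Y\in\Dip^H$, it has a wedge summand given by the orbits of $\Phi G(X)\sma\Phi G'(Y)$ under $I(X)\times I(Y)$. That each factor vanishes at $*$ says nothing about their Day convolution at $*$. Concretely, take $G=G'=(\ST,*)_H$ and $X=Y$ the one-dimensional object $(\R,-x^2,\varepsilon)$ of $\Dip^H$, the ambient space of $H^0$. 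Its isometry group is trivial and $\ST(X)=S^0$, so this summand is $\simeq\Sph\sma\Sph\simeq\Sph$, and the structure map to $*$ is not an equivalence.

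So invertibility can only hold, and is only needed, for pairs with at most one factor in $\Dip^H$. That is what \cref{cor-D-symm-mon-Luna} actually provides; the object $*$ is handled there only by the trivial map. It is also all that \cref{cor-SahH-comod} uses, because in \cref{constr:d_1-coalg_comod} the only hyperbolic input is the comodule $(\ST,*)_H$ and every other factor lives on $\Dip^S$.

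The same summands also undermine your justification that $\gamma$ is strong symmetric monoidal. At an object $c\neq *$, the group $\mathrm{Aut}(a)\times\mathrm{Aut}(b)$ acts freely on $\Luna^H(a\oplus b,c)$, because $\mathrm{Aut}(a)\times\mathrm{Aut}(b)\to\mathrm{Aut}(a\oplus b)$ is injective. So there the coend really is a wedge of smash products and is homotopy invariant on pointwise cofibrant diagrams, as you say. At $c=*$, however, $I(X)\times I(Y)$ maps trivially to $\mathrm{Aut}(*)$, so the coend is a strict orbit spectrum for a non-free action. Strict orbits are not invariant under pointwise stable equivalences: the equivalence $\Sigma^\infty_+EI(X)\to\Sph$ becomes $\Sigma^\infty_+BI(X)\to\Sph$. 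Hence the point-set Day convolution does not descend to $\Fun(\Luna^H,\Spp)$ at $*$. The paper's appeal to \cite{kkmmw0} for the source localization meets the same issue.

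You should therefore formulate and prove the monoidal compatibility only for the right-module-shaped operations: at most one input supported on $\Dip^H$, the others supported on $\Dip^S$. There nothing ever lands on the trash object, and your homotopy-invariance argument goes through.

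Finally, choosing a lift $QA\sma QB\to Q(A\sma B)$ separately for each pair does not by itself give natural, equivariant, associative and symmetric structure maps. So that step, which the paper simply asserts, still needs an actual construction.
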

\begin{proof}
By \cref{cor-D-symm-mon-Luna}, the functor
$$(\Sigma D, D)_H: \Fun(\Luna^H, \topp^{\cof}) \to \Fun(\Luna^H, (\Sp^O)^{\cof} )$$
is lax symmetric monoidal with structure maps that are weak equivalences whenever $X,Y \in \Dip^S \amalg \Dip^H$; cofibrant replacement does not change this, since cofibrant replacement is lax symmetric monoidal and replaces each object by a weakly equivalent cofibrant object. Passing to $\infty$-categories, we obtain a lax symmetric monoidal functor of $\infty$-categories
$$(\Sigma D, D)_H: \Fun(\Luna^H, \Spc_*) \to \Fun(\Luna^H, \Spp)$$
where $\Spc_*$ is the $\infty$-category of pointed spaces. Precomposing with the Dwyer--Kan localization $\Fun(\Luna^H, \topp^{\cof}) \to \Fun(\Luna^H, \Spc_*)$, which is strong symmetric monoidal by Proposition 4.6 of \cite{kkmmw0}, we obtain the desired result.
\end{proof}

\begin{corollary}\label{cor-SahH-comod}
    The hyperbolic Sah spectrum $\mathcal{H}$ is a right $\widetilde{\mathcal{S}}$-comodule in the $\infty$-category of spectra.
\end{corollary}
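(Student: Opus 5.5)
The plan is to transport the point-set comodule structure along the lax symmetric monoidal functor $(\Sigma D, D)_H$ and then take homotopy colimits. By \cref{prop:dehn_multifunctor}, the maps $m^V_{J,k}$ assemble into a functor of multicategories $F\colon \mathcal{RM}_{D_1} \to \Fun(\Luna^H,\topp)^{\op}$ with $F(a) = (*,\Sigma\!\ST)_H$ and $F(m) = (\ST,*)_H$. All of these values are pointwise cofibrant, so $F$ factors through $\Fun(\Luna^H,\topp^{\cof})^{\op}$. Following the Remark at the start of this section, we apply the multicategory version of \cite[Theorem 3.5]{kkmmw0} to the singular simplicial version of $\mathcal{RM}_{D_1}$. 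This produces an $\mathcal{RM}$-coalgebra in the $\infty$-category $\Fun(\Luna^H,\Spc_*)$, obtained by passing along the strong symmetric monoidal Dwyer--Kan localization of \cite[Proposition 4.6]{kkmmw0}. Here $\mathcal{RM}$ is the right module $\infty$-operad of \cite[4.2.1]{HA}.

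Next we apply the lax symmetric monoidal functor $(\Sigma D,D)_H$ of the previous proposition. A lax monoidal functor carries algebras to algebras, but carries coalgebras to coalgebras only when it is strong. The previous proposition gives strongness only on the relevant objects, namely the structure maps $G(X)\boxwedge G(Y)\to G(X\boxwedge Y)$ are equivalences when $X,Y \in \Dip^S\amalg\Dip^H$. I would handle this by checking that the Day convolution powers $F(m)\boxwedge F(a)^{\boxwedge n}$ are supported on $\Dip^H\amalg\Dip^S$, and that the trash-can object $*$ only contributes contractible values. Both $(\ST,*)_H$ and $(*,\Sigma\!\ST)_H$ are $*$ on the object $*$, and products landing in $*$ come from pairs in $\Dip^H\times\Dip^H$, where at least one factor is trivial on these functors. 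It follows that the structure maps relevant to $\boxwedge$-powers of $F(a)$ and of $F(m)$ with copies of $F(a)$ are equivalences.

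Restricting to the full symmetric monoidal subcategory of $\Fun(\Luna^H,\Spc_*)$ generated by such functors, $(\Sigma D,D)_H$ becomes strong monoidal there. It therefore sends the $\mathcal{RM}$-coalgebra to an $\mathcal{RM}$-coalgebra in $\Fun(\Luna^H,\Spp)$ with underlying objects $\funSah^H$ and $\funSah^{\widetilde S}$. Finally, $\hocolim = \operatorname{colim}\colon \Fun(\Luna^H,\Spp)\to\Spp$ is strong symmetric monoidal for Day convolution, being left Kan extension along $\Luna^H\to *$. Hence it sends this coalgebra to an $\mathcal{RM}$-coalgebra in $\Spp$ on the pair $(\widetilde{\mathcal S},\mathcal H)$, using $\hocolim\funSah^H=\mathcal H$ and $\hocolim\funSah^{\widetilde S}=\widetilde{\mathcal S}$. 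Restricting along $\mathcal{RM}_{D_1}\to\mathcal{RM}$, which is an equivalence of $\infty$-operads because $D_1\simeq E_1$, gives the right comodule structure.

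The main obstacle is the middle step: transferring coalgebras along a functor that is only partially strong monoidal. The delicate point is that the full subcategory on which $(\Sigma D,D)_H$ is strong must be closed under $\boxwedge$ and contain the unit. The unit is supported on $0\in\Dip^S$, and $\boxwedge$ may mix in the object $*$. I would first verify this closure directly on supports, using that $*$ receives only $\Dip^H\times\Dip^H$ products and that $F(a)$ vanishes on $\Dip^H$. If this closure fails, the fallback is to replace $\Luna^H$ by $\Dip^H\amalg\Dip^S$ with the colax structure forgetting $*$, since $*$ contributes nothing to either functor.
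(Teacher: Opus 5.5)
Your outline is the paper's argument: move the point-set $\mathcal{RM}_{D_1}$-coalgebra of \cref{prop:dehn_multifunctor} to $\infty$-categories, push it along $(\Sigma D,D)_H$ where that functor is strong, and take colimits over $\Luna^H$. The one step that needs correcting is the one you flag yourself. The closure check you propose will fail, so a symmetric monoidal subcategory on which $(\Sigma D,D)_H$ is strong does not exist, and you must use something like your fallback.

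Here is why. Any symmetric monoidal subcategory containing $F(m)=(\ST,*)_H$ also contains $F(m)\boxwedge F(m)$. The lax structure map for $(\Sigma D,D)_H$ on the pair $(F(m),F(m))$ is not an equivalence at the object $*$:
\begin{itemize}
\item the target $(\Sigma D,D)_H(F(m)\boxwedge F(m))(*)$ is $*$ by definition;
\item the source is the homotopy colimit over $(X,Y)\in\Dip^H\times\Dip^H$ of $\Sigma F(S^X,\Sph)\wedge\ST(X)\wedge\Sigma F(S^Y,\Sph)\wedge\ST(Y)$, which is $\mathcal{H}\wedge\mathcal{H}$. The summand $X=Y=H^0$ alone contributes a copy of $\Sph$.
\end{itemize}
Your remark that ``at least one factor is trivial'' holds only for products containing at most one copy of $F(m)$. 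That is exactly what a comodule uses, but it must be encoded by an $\mathcal{RM}$-shaped structure rather than a symmetric monoidal one. Either of the following repairs works.

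(i) Take the pair consisting of the functors supported on $\Dip^S$ and the functors vanishing at $*$. The first is closed under $\boxwedge$ and contains the unit and $F(a)$. The second is closed under the right action of the first and contains $F(m)$. On this pair $(\Sigma D,D)_H$ is strong as a functor of $\mathcal{RM}$-monoidal $\infty$-categories, which is all you need to transport $\mathcal{RM}$-coalgebras.

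(ii) Your fallback. Restriction to $\Dip^H\amalg\Dip^S$ is compatible with $\boxwedge$, because pairs mapping into $\Dip^H\amalg\Dip^S$ never involve $*$ or $\Dip^H\times\Dip^H$. For the induced structure, $(\Sigma D,D)_H$ is strong outright. The colimit over $\Dip^H\amalg\Dip^S$ is then only oplax monoidal: it is $\operatorname{colim}_{\Luna^H}$ composed with extension by zero, and extension by zero is the left adjoint of the strong monoidal restriction. This is enough, since oplax functors carry coalgebras to coalgebras.

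With either repair your argument goes through. It is then more precise than the paper's one-line appeal to ``a subcategory on which $(\Sigma D,D)_H$ is strong'', which leaves exactly this point implicit.
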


\begin{proof}
    By \cref{prop:dehn_multifunctor}, $(\ST, *)_H: \Luna^H \to \topp$ is a $D_1$-comodule over $(*, \Sigma \ST)_H$. Applying $(\Sigma D, D)_H$ to both of these functors gives the spectral Sah functors $\funSah^H$ and $\funSah^{\tilde S}$ respectively. Since these lie in a subcategory on which $(\Sigma D, D)_H$ is a strong symmetric monoidal functor of $\infty$-categories, $\funSah^H$ is thus a comodule over $\funSah^{\tilde S}$. Taking colimits to land in the $\infty$-category of spectra, we obtain the conclusion.
\end{proof}

This finishes the proof of \Cref{mainthm} in the hyperbolic case as well.

\section{The comodule algebra structure on apartments}\label{sec:apartments}

In this section we show that our constructions on the scissors congruence spectra from the last few sections recover on homology the classical Dehn invariant and product maps from \cref{sec:classical_structures}, proving \cref{mainthm_pi0}.

To do this we recall the notion of an apartment in $\Sigma\!\ST(\R^n)$ from \cite{KKMMW1}, and show how to generalize it to an apartment in the polytopal Tits complex following \cite{scissors_thom} and \cite{KLMMS-2}.

\subsection{Apartments in $\ST(E^n)$ and $\ST(H^n)$}
We recall how the space $\PT(X)$ is always a wedge of spheres whose dimension matches the dimension of $X$, with the polytope module $\Pt(X)$ giving an explicit description of which spheres show up. We do this for Euclidean and hyperbolic geometry first, because our conventions for spherical geometry will differ.

\begin{definition}\label{apartment_1}
    Let $X^n$ be a Euclidean or hyperbolic geometry of dimension $n \geq 0$ with ambient vector space $V$. For each $(n+1)$-tuple of linearly independent vectors $(v_0,\ldots,v_n)$ in $X^n \subseteq V$, the corresponding \emph{suspended apartment} is defined as the map of quotients
    \[ 
    	\frac{\Delta^n}{\partial \Delta^n} 
			\to
		\frac{ |0 \subsetneq U \subseteq V| }{ |0 \subsetneq U \subsetneq V| }
            =
        \ST(V)
	\]
    induced by the map of simplicial complexes
    \(
    	\sd(\Delta^{n}) \to |0 \subsetneq U \subseteq V|
	\) 
    that on vertices is given by
    \[ 
    	(\varnothing \subsetneq S \subseteq \{v_0,\ldots,v_n\}) 
			\mapsto 
		\langle S \rangle.
	\]
   
    Here $\langle S \rangle$ denotes the linear span of $S$.
    More generally, for any convex polytope $P \subseteq X^n$, we call a map $P \to |0 \subsetneq U \subseteq V|$ a \emph{suspended apartment-like map} if it sends each face $F \subseteq P$ to the realization of the smaller poset $|0 \subsetneq U \subseteq \langle F \rangle|$. Any suspended apartment-like map induces a map from the quotient
    \[ 
    	\frac{P}{\partial P} 
			\to
		\frac{ |0 \subsetneq U \subseteq V| }{ |0 \subsetneq U \subsetneq V| }
            =
        \ST(V).
	\]
    These definitions are consistent in the sense that the standard apartment map for $P = \Delta^n$ is apartment-like.
\end{definition}

The following form of the Solomon-Tits theorem can be found in \cite[Corollary 2.11]{dupont_book} or \cite[Theorem B]{KLMMS-2}.

\begin{proposition}\label{solomon_tits_w_apts_1}
    If $X^n = E^n$ or $H^n$, the space $\ST(X^n)$ is homotopy equivalent to a wedge sum of $n$-spheres, and choosing one suspended apartment-like map for each generator of $\Pt(X^n)$ gives an isomorphism
    \[ 
    	[\apt]\colon \Pt(X^n) \cong H_n(\ST(X^n),*).
	\]
\end{proposition}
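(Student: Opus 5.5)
I would prove \cref{solomon_tits_w_apts_1} in two stages: first identify the homotopy type of $\ST(X^n)$ combinatorially, then match its top homology with $\Pt(X^n)$ through the apartment classes.

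\textbf{Stage one: the homotopy type.} By definition $\ST(X^n)$ is the cofiber of $|0 \subsetneq U \subsetneq V| \to |0 \subsetneq U \subseteq V|$. The target is a cone, since $V$ is a terminal object. Hence $\ST(X^n)\simeq \Sigma \T(X^n)$, and it suffices to show that the Tits complex $\T(X^n)$ of proper nonempty geometric subspaces is $(n-1)$-dimensional and $(n-2)$-connected.

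The dimension claim is immediate, since maximal chains of proper nonempty subspaces have length $n$: points, lines, and so on up to hyperplanes. For connectivity I would use the standard argument for geometric lattices. The poset of nonempty subspaces of $X^n$ (flats of an affine or hyperbolic geometry) together with a bottom element is a geometric lattice, namely a semimodular atomic lattice whose atoms are points. Its proper part is Cohen--Macaulay and hence a wedge of $(n-1)$-spheres, by Folkman's theorem, or by the explicit shelling/Quillen fiber argument of \cite{dupont_book}. Consequently $\ST(X^n)$ is a wedge of $n$-spheres, and $H_n(\ST(X^n),*)$ is free abelian and is the only nonzero reduced homology group.

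\textbf{Stage two: matching with $\Pt(X^n)$.} Next I define $[\apt]$. Each simplex $[v_0,\ldots,v_n]$ gives a map $\Delta^n/\partial\Delta^n \to \ST(V)$, and I send the simplex to the image of the fundamental class. The relations of $\Pt(X^n)$ must be respected. If $P = \bigcup P_i$ is a subdivision, the apartment-like maps for the $P_i$ glue along interior faces: a common face $F$ lands in $|0\subsetneq U\subseteq\langle F\rangle|$, and this lies in the subcomplex collapsed to the basepoint whenever $\langle F \rangle \neq V$. Moreover, any two apartment-like maps for the same $P$ are homotopic rel boundary, by induction over the faces using contractibility of $|0\subsetneq U\subseteq\langle F\rangle|$ (it has the top element $\langle F\rangle$). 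This gives $[P]=\sum[P_i]$ in homology, and it also shows the independence of choices asserted in the statement.

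To show $[\apt]$ is an isomorphism, I would compute $H_n$ cellularly. Filter $|0\subsetneq U\subseteq V|$ by the dimension of the minimal subspace $U_0$ and use the Lusztig/Quillen description of the top homology of the Tits building: it equals the kernel of the boundary map in the chain complex $\bigoplus_{\text{flags}}\Z$. This kernel is spanned by apartment classes, which handles surjectivity. For injectivity I would construct an inverse $H_n \to \Pt(X^n)$ by a ``link/volume-type'' pairing. Alternatively, I would compare both sides with the homology of the pair $(X^n, \text{union of hyperplanes})$, where $\Pt(X^n)\cong\varinjlim$ over finite hyperplane arrangements $\mathcal A$ of $\bigoplus_{\text{chambers}}\Z$. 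For a finite arrangement, the relative homology $H_n(X^n, \bigcup\mathcal A)$ is free on the bounded chambers, which recovers the polytope presentation with subdivision relations.

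\textbf{Main obstacle.} I expect the main obstacle to be this injectivity and colimit comparison: identifying $H_n(\ST)$ as the colimit over finite arrangements of the top homology of the finite sub-Tits-complexes compatibly with apartments. I would follow \cite[Theorem B]{KLMMS-2} here, and would verify in the hyperbolic case that subspaces meeting $H^n$ behave exactly like affine flats, using the projective (Klein) model.
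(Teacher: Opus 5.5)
The paper gives no proof of this statement. It quotes it as a form of the Solomon--Tits theorem and cites \cite[Corollary 2.11]{dupont_book} and \cite[Theorem B]{KLMMS-2}. You also defer the bijectivity of $[\apt]$ to \cite[Theorem B]{KLMMS-2}, so your proposal rests on the same input as the paper. What you add is a self-contained derivation of the homotopy type, and that part is correct. The geometric subspaces of $E^n$ (resp.\ $H^n$), with $\varnothing$ adjoined, form the lattice of flats of the restriction of the projective matroid on $\mathbb{P}(V)$ to the affine chart (resp.\ the Klein ball). This is a geometric lattice of rank $n+1$. It is infinite, so you need a compactness step before applying Folkman or Bj\"orner: every finite subcomplex of $\T(X^n)$ lies in the full subcomplex of flats spanned by some finite spanning set of points, and that subcomplex is the proper part of a finite geometric lattice.

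In stage two, three corrections are needed.
\begin{enumerate}[(i)]
\item Lusztig, Quillen and Solomon--Tits are statements about buildings, and $\T(E^n)$, $\T(H^n)$ are not buildings. The correct input for ``top cycles are spanned by apartments'' is Bj\"orner's NBC-basis theorem for geometric lattices. There, a basis of atoms is an affinely independent tuple $(v_0,\ldots,v_n)$, and its Boolean-sublattice cycle is exactly the apartment of $[v_0,\ldots,v_n]$.
\item For $[P]=\sum_i[P_i]$ you must orient each $P/\partial P$ using one fixed orientation of $X^n$, not the vertex order. This is the sign ambiguity the paper mentions. You should also refine non-face-to-face subdivisions to a common polyhedral complex before gluing. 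Finally, two apartment-like maps need not agree on $\partial P$, so they are not homotopic rel boundary. What your inductive argument actually gives is a homotopy through apartment-like maps, and that suffices.
\item Your ``alternative'' arrangement argument already finishes the proof, so you need not defer to the citation.
\end{enumerate}

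To carry out (iii), let $\mathcal A$ be a finite arrangement of hyperplanes meeting $X^n$, and let $L(\mathcal A)_{\mathrm{prop}}$ be its poset of nonempty proper flats.
\begin{itemize}
\item The nerve/crosscut theorem gives $|L(\mathcal A)_{\mathrm{prop}}|\simeq\bigcup\mathcal A$, naturally in $\mathcal A$.
\item Hence $\tilde H_{n-1}|L(\mathcal A)_{\mathrm{prop}}|\cong H_n(X^n,\bigcup\mathcal A)$. This group is free on the chambers with compact closure in $X^n$; the other chambers contribute nothing in degree $n$.
\item $\T(X^n)$ is the directed union of these full subcomplexes.
\item Refining the arrangement sends a chamber to the sum of its subchambers.
\item The apartment of a simplex whose facet hyperplanes lie in $\mathcal A$ goes to the sum of the chambers it contains.
\end{itemize}
Passing to the colimit identifies $H_n(\ST(X^n),*)\cong\tilde H_{n-1}(\T(X^n))$ with $\varinjlim_{\mathcal A}\Z\{\text{compact chambers}\}$. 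This is $\Pt(X^n)$ in its polytope presentation, and the identification is compatible with $[\apt]$. This gives surjectivity and injectivity at once, and it also makes the separate Bj\"orner step in (i) unnecessary.
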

The isomorphism in this proposition is only well-defined up to sign. It depends on fixing a choice of orientation for $X^n$, which induces an orientation on all of the spheres $P/\partial P$ for $P \subseteq X^n$.

To analyze the effect of the Dehn invariant, we will have to suspend these apartments one more time. We only do this for the standard apartments on simplices $\Delta^n$, not the more general apartments for any convex polytope $P$.
\begin{definition}\label{apartment_2}
    Let $X^n$ be a Euclidean or hyperbolic geometry of dimension $n \geq 0$ with ambient vector space $V$. For each $(n+1)$-tuple of linearly independent vectors $(v_0,\ldots,v_n)$ in $X^n \subseteq V$, the corresponding \emph{doubly-suspended apartment} is the map of quotients
    \[ 
    	\frac{I^{n+1}}{\partial I^{n+1}} 
			\to 
		\frac{| 0 \subseteq U \subseteq V |}%
		{| 0 \subseteq U \subsetneq V | \cup | 0 \subsetneq U \subseteq V |}
            =
        \Sigma\!\ST(V)
	\]
    induced by the map of posets $\underline{2}^{n+1} \to \{0 \subseteq U \subseteq V\}$ that sends
    \[
    	(\varnothing \subseteq S \subseteq \{v_0,\ldots,v_n\}) 
			\mapsto 
		\langle S \rangle,
	\]
    where again $\langle S \rangle$ denotes the linear span of $S$.
    
    When writing this out in coordinates, we adopt the convention that $(0,...,0)$ corresponds to the terminal element of the poset $\underline{2}^{n+1}$, $(1,...,1)$ corresponds to the initial element, and more generally the vertex corresponding to $S \subseteq \{v_0,\ldots,v_n\}$ is the vertex in which the coordinate $c_i = 0$ when $v_i \in S$ and $c_i = 1$ when $v_i \not\in S$. This convention allows us to illustrate each doubly-suspended apartment by taking each point $(c_0,\ldots,c_n) \in I^{n+1}$, drawing an interval with a cut point at $c_i$ labeled by the vector $v_i$, and then labeling each interval by the span of the vectors $v_i$ to its left, as in \cite[Proposition 7.2]{KKMMW1} and illustrated in \Cref{double_apt_figure}.
\end{definition}
\begin{figure}[h]
\begin{center}
\resizebox{\textwidth}{!}{ 
\begin{tikzpicture}[scale=2]
	\draw[thick] 
		(0,0) node[anchor=north east]{$(0,0)$}
			-- 
		(1,0) node[anchor=north west]{$(1,0)$} 
			-- 
		(1,1) node[anchor=south west]{$(1,1)$} 
			-- 
		(0,1) node[anchor=south east]{$(0,1)$} 
			-- 
		cycle;
        \draw[thick] (0,0) -- (1,1);
        \draw[thick,fill,OI2] 
        	(1/3,2/3) circle (0.02) node[above]{$a$};
        \draw[thick,fill,OI1] 
        	(1/3,1/3) circle (0.02) node[above]{$b$};
        \draw[thick,fill,OI3] 
        	(4/5,1/5) circle (0.02) node[above]{$c$};
	
        \begin{scope}[xshift=2cm,scale=1.5]
        	\draw[ultra thick,OI3]
				(0,0) 
					edge[|-|] 
				(1/5,0) 
					edge[-|] 
				(4/5,0) 
					edge[-|] 
				(1,0);
			\node[scale=0.75] at (1/5,0.1) {$v_1$};
			\node[scale=0.75] at (4/5,0.1) {$v_0$};
			\node[OI3] at (-0.15,0) {$c = $};
			\draw[thick,|->] (1.1,0) -- (1.4,0);
			\draw[ultra thick,OI3,]
				(1.5,0) 
					edge[|-|] 
				(1.7,0)
					edge[-|] 
				(2.3,0)
					edge[-|]
				(2.5,0);
			\node[scale=0.75] at (1.6,0.1) {$0$};
			\node[scale=0.75] at (2,0.1) {$\langle v_1 \rangle$};
			\node[scale=0.75] at (2.4,0.1) 
				{$\langle v_0,v_1 \rangle$};
        \end{scope}

        \begin{scope}[xshift=2cm,scale=1.5,yshift=0.33cm]
        	\draw[ultra thick,OI1]
				(0,0) 
					edge[|-|] 
				(1/3,0) 
					edge[-|]  
				(1,0);
			\node[scale=0.75] at (1/3,0.1) {$v_0,v_1$};
			\node[OI1] at (-0.15,0) {$b = $};
			\draw[thick,|->] (1.1,0) -- (1.4,0);
			\draw[ultra thick,OI1]
				(1.5,0) 
					edge[|-|] 
				({1.5+1/3},0)
					edge[-|] 
				(2.5,0);
			\node[scale=0.75] at (5/3,0.1) {$0$};
			\node[scale=0.75] at (13/6,0.1) {$\langle v_0,v_1 \rangle$};
        \end{scope} 
        
        \begin{scope}[xshift=2cm,scale=1.5,yshift=0.66cm]
        	\draw[ultra thick,OI2]
				(0,0) 
					edge[|-|] 
				(1/3,0) 
					edge[-|] 
				(2/3,0) 
					edge[-|] 
				(1,0);
			\node[scale=0.75] at (1/3,0.1) {$v_0$};
			\node[scale=0.75] at (2/3,0.1) {$v_1$};
			\node[OI2] at (-0.15,0) {$a = $};
			\draw[thick,|->] (1.1,0) -- (1.4,0);
			\draw[ultra thick,OI2,]
				(1.5,0) 
					edge[|-|] 
				({1.5+1/3},0)
					edge[-|] 
				({1.5+2/3},0)
					edge[-|]
				(2.5,0);
			\node[scale=0.75] at (5/3,0.1) {$0$};
			\node[scale=0.75] at (2,0.1) {$\langle v_0 \rangle$};
			\node[scale=0.75] at (7/3,0.1) 
				{$\langle v_0,v_1 \rangle$};
        \end{scope} 
    \end{tikzpicture}
}
\end{center}
\caption{Illustration of a doubly-suspended apartment for $[v_0,v_1]$. The corner $(0,0)$ is sent to the constant configuration on $\langle v_0,v_1 \rangle = V$ and the corner $(1,1)$ is sent to the constant configuration on $\langle \varnothing\rangle = 0$.}\label{double_apt_figure}
\end{figure}

It follows from \cite[Lemma 5.8]{KKMMW1} that the doubly-suspended apartment is indeed the suspension of the suspended apartment maps, and therefore induces an isomorphism
\[ 
    [\apt]\colon \Pt(X^n) \cong H_{n+1}(\Sigma\!\ST(X^n),*).
\]
We would like this to be an $I(X^n)$-equivariant isomorphism, or equivalently an isomorphism of diagrams over $\Dip^X$. This cannot be true as stated because reflections that preserve a polytope $P$ act as the identity on $[P] \in \Pt(X^n)$, but apply a minus sign to the resulting class in $H_{n+1}(\Sigma\!\ST(X^n),*)$. In \cite{KKMMW1} we resolve this issue by forming an equivariant isomorphism out of a different group called $\Ls(V)$, and then passing to $\Pt(V)$ by choosing orientations. Here we resolve the issue by adding a desuspension to $\ST(V)$ itself.

\begin{construction}\label{apartment_oriented}
    We make the doubly suspended apartment map of \cref{apartment_2} respect orientation by viewing $I^{n+1}$ as embedded into $V$, via the map  $I^{n+1} \to V$ that sends each point $(c_0,\ldots,c_n) \in I^{n+1}$ to the linear combination $(1-c_0)v_0 + \cdots + (1-c_n)v_n \in V$. This embedding gives rise to a map $S^V = V \cup \{\infty \}\to \frac{I^{n+1}}{\partial I^{n+1}}$ by sending each point in $I^{n+1}\subseteq V$ to itself and collapsing the rest. Thus we get a map
    \begin{equation}\label{eq:apt_oriented_double}
    S^V = V \cup \{\infty\} \to \frac{I^{n+1}}{\partial I^{n+1}} \to \Sigma\!\ST(V)
    \end{equation}
    This gives an element of $H_0(\Omega^V\Sigma\!\ST(V),*)$ instead of $H_{n+1}(\Sigma\!\ST(V),*)$, where $\Omega^V X = \textup{Map}_*(S^V,X)$.\footnote{We could have taken $\sum_i c_iv_i$ instead of $\sum_i (1-c_i)v_i$, giving a convention that differs from this one by a factor of $(-1)^{\dim V}$, and everything would also be consistent. This convention is geometrically a little more natural because it places the vertex corresponding to $S \subseteq \{v_0,\ldots,v_n\}$ in the span $\langle S \rangle$.}
    
    In the Euclidean case we can also do this to any singly suspended apartment-like map: let $T:V \to V$ be a translation that takes $E^n = x_0^{-1}(1)$ to $V_0 = x_0^{-1}(0)$, then $T(P)$ is embedded in $V_0$, and collapsing everything outside of $T(P)$ gives a map
\begin{equation}\label{eq:apt_oriented_single}
    S^{V_0} \to \frac{P}{\partial P} \to \ST(V).
    \end{equation}
   In the case that $P\cong \Delta^n$ is a simplex, by translating the embedding $T(P) \to V_0$ back to $E^n$ and coning off to zero, we can observe that the suspension of \eqref{eq:apt_oriented_single} agrees up to homotopy with \eqref{eq:apt_oriented_double}. 

    Here we are using the identification $\Sigma S^{V_0} \cong S^{\R \oplus V_0} \cong S^V$ coming from any isomorphism of vector spaces $V \cong \R \oplus V_0$ that is the identity on $V_0$ and the identity after projecting to $\R$; these conditions are enough to get the sign right.
\end{construction}

\begin{proposition}\label{solomon_tits_w_apts_2}
    If $X^n = E^n$ or $H^n$, the oriented doubly suspended apartment maps for each generator of $\Pt(V)$ induce an $I(V)$-equivariant isomorphism
    \[ 
    	[\apt]\colon \Pt(V) \cong \pi_0(\Omega^V\Sigma\!\ST(V)) \cong H_0(\Sph^{-V} \sma \Sigma\!\ST(V)).
	\]
    In other words, they define an isomorphism of functors $\Dip^X \to \Ab$. In the Euclidean case the oriented singly suspended apartment-like maps also give an $I(V)$-equivariant isomorphism
    \[ 
    	[\apt]\colon \Pt(V) \cong \pi_0(\Omega^{V_0}\ST(V)) \cong H_0(\Sph^{-V_0} \sma \ST(V)).
	\]
    These isomorphisms agree along the map $\Omega^{V_0}\ST(V) \to \Omega^V\Sigma\!\ST(V)$ that applies the unit map $Y \to \Omega\Sigma Y$ and identifies $\Sigma S^{V_0} \cong S^V$ as in \cref{apartment_oriented}.
\end{proposition}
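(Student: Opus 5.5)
The plan is to deduce the equivariant statement from the non-equivariant isomorphism $[\apt]\colon \Pt(X^n) \cong H_{n+1}(\Sigma\!\ST(X^n),*)$, which follows from \cref{solomon_tits_w_apts_1} together with \cite[Lemma 5.8]{KKMMW1}. The key point is that the oriented construction differs from the unoriented one exactly by the orientation sign, and this sign is what the desuspension records.

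First, identify the groups. Since $\Sigma\!\ST(V)$ is a wedge of $(n+1)$-spheres and $\dim V = n+1$, the Hurewicz and Freudenthal theorems give $\pi_0(\Omega^V\Sigma\!\ST(V)) \cong [S^V,\Sigma\!\ST(V)]_* \cong \tilde H_{n+1}(\Sigma\!\ST(V))$. Here we use that $\Sigma\!\ST(V)$ is $n$-connected and $n+1 \geq 1$; the case $n=0$ is checked directly, since then the space is $S^1$. This identification depends on a choice of orientation of $V$, via a fundamental class of $S^V$. Similarly $H_0(\Sph^{-V}\sma \Sigma\!\ST(V)) \cong \tilde H_{n+1}(\Sigma\!\ST(V))$ by the Thom isomorphism for the trivial bundle. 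The composite $S^V \to I^{n+1}/\partial I^{n+1} \to \Sigma\!\ST(V)$ of \eqref{eq:apt_oriented_double} is, up to the degree $\pm1$ of the collapse $S^V \to I^{n+1}/\partial I^{n+1}$, the doubly-suspended apartment. So after fixing an orientation of $V$, our map is the known isomorphism twisted by signs. We still have to see that these signs define a well-defined homomorphism on $\Pt(V)$ that does not depend on the orientation choice.

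Next, check well-definedness on generators. Reordering the vertices $v_0,\ldots,v_n$ permutes the coordinates of $I^{n+1}$ and the corresponding vectors simultaneously. The composite $S^V \to \Sigma\!\ST(V)$ is therefore literally unchanged, because the embedding $c \mapsto \sum(1-c_i)v_i$ and the poset map both intertwine the permutation. This is exactly why no sign appears, matching the sign-free relations in $\Pt$.

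For the subdivision relation $[P]=\sum[P_i]$, I would reduce to a single orientation of $V$. In that case the oriented classes are $\varepsilon_P\cdot[\apt_P]$, where $\varepsilon_P$ is the sign of $\det(v_0,\ldots,v_n)$. Compatibility then becomes the known additivity of apartments for consistently oriented simplices, which is the content of \cref{solomon_tits_w_apts_1} once orientations are tracked. Equivariance is now formal. An isometry $A$ sends the oriented apartment of $(v_i)$ to that of $(Av_i)$, because it acts on $S^V$ and on $\Sigma\!\ST(V)$ compatibly and, by \cref{def-desusp-Dip}, acts on $\Omega^V$ by conjugation. No sign appears, since the domain sphere is transported along with the simplex. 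Bijectivity follows from the non-equivariant isomorphism, since we only multiplied generators by units.

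For the Euclidean singly suspended statement, I would argue the same way with $S^{V_0}$ and the translated polytope $T(P)\subseteq V_0$. Independence of the choice of apartment-like map and of the subdivision follows from \cref{solomon_tits_w_apts_1}, and equivariance under translations holds because $T$ conjugates appropriately. For the comparison claim, we use the remark in \cref{apartment_oriented}: the suspension of \eqref{eq:apt_oriented_single} is homotopic to \eqref{eq:apt_oriented_double} for simplices, with the orientation conventions on $\R\oplus V_0\cong V$ chosen to fix the sign. Since simplices generate $\Pt(V)$, this proves the claim. I expect the main obstacle to be the sign bookkeeping in the subdivision relation, namely showing that $\varepsilon_P[\apt_P]$ is additive. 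I would attack it by working in the homology of the pair $(P,\partial P)$ with the orientation induced from $V$, where additivity of fundamental classes under subdivision is standard.
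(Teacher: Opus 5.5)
Your proposal is correct and follows the same route as the paper: you identify the oriented map with the isomorphism of \cref{solomon_tits_w_apts_1} after fixing an orientation of $V$, read off $I(V)$-equivariance directly from the formula in \cref{apartment_oriented}, and check the final compatibility on a single simplex. You are thinner than the paper only in that last step, where you cite the remark in \cref{apartment_oriented}; the paper's proof actually justifies that remark by deforming the identification $S^V \cong \Sigma S^{V_0}$ to a collapse onto the slab $0 \le x_0 \le 1$, so that the suspended single apartment becomes a collapse onto the cone $C(P)$, and then using a straight-line homotopy from the cone embedding to the parallelepiped embedding $c \mapsto \sum_i (1-c_i)v_i$.
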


\begin{proof}
    In each case, the map is still an isomorphism because it differs from the map in \cref{solomon_tits_w_apts_1} by an isomorphism. But now we can check directly from \cref{apartment_oriented} that it is $I(V)$-equivariant. (In the second case we also observe that the choice of translation to $V_0$ has no effect on this map on homology.) For the final compatibility statement we just check on a single simplex.
    
    Up to homotopy, the identification $S^V \to \Sigma S^{V_0}$ can be changed to one that collapses away all points with $x_0 \leq 0$ or $x_0 \geq 1$, and then sends the remaining points of the form $tv$ with $t \in [0,1]$ and $v \in V_0$ to the pair $(t,v) \in [0,1] \times V_0$. (It is enough to observe that they have the same degree.) After this modification, the suspension of \eqref{eq:apt_oriented_single} becomes a collapse onto the cone $C(P) \cong I^{n+1}$ followed by the map $\frac{I^{n+1}}{\partial I^{n+1}} \to \Sigma\!\ST(V)$ from \cref{apartment_2}. The straight-line homotopy between the embedding $C(P) \to V$ and the embedding defined in \cref{apartment_oriented} gives a homotopy between the collapse maps. Therefore suspension of \eqref{eq:apt_oriented_single} is homotopic to \eqref{eq:apt_oriented_double}.
\end{proof}

\subsection{Apartments in $\PT(S^{n-1})$ and $\Sigma\!\PT(S^{n-1})$}
In this case, apartments are more difficult to define explicitly, so we first recall the notion of apartment-like map from \cite{KLMMS-2}.

\begin{definition}\label{apartment_3}
    Let $V$ be an $n$-dimensional inner product space where $n \geq 1$, let $S(V)$ be the unit sphere in $V$, and let $P = [v_1,\dotsc,v_n]$ be a geodesic simplex in $S(V)$. A \textit{suspended apartment-like map} is a map
    $$a \colon P \to \underset{0 \subseteq U \subseteq V}\hocolim \ S(U)$$
    such that for every subset $M \subseteq \lbrace v_1,\dotsc, v_n \rbrace$, the image $a([M])$ is contained in the subspace 
    
    $$\underset{0 \subseteq U \subseteq \langle M \rangle}\hocolim \ (S(U) \cap [M]) 
        =
    \underset{0 \subseteq U \subseteq \langle M \rangle}\hocolim \ (S(U) \cap P).$$
\end{definition}
By \cite[Lem 5.9]{KLMMS-2}, the space of suspended apartment-like maps is weakly contractible, so the choice of a suspended apartment-like map is essentially unique. We next extend this definition to the suspended polytopal Tits complex:
\begin{definition}\label{apartment_4}
    Let $V$ be an $n$-dimensional inner product space where $n \geq 0$. Let $P = [v_1,\dotsc,v_n]$ be a geodesic simplex in $S(V)$ and let $C(P) = \{0\} * P \subseteq D(V)$ be the subspace formed by all scalar multiples of points in $P$ by scalars $t \in [0,1]$. This is a cone on $P$ with cone point at the origin. A \textit{doubly suspended apartment-like map} is a map
    $$a \colon C(P) \to \underset{0 \subseteq U \subseteq V}\hocolim \ D(U)$$
    such that the base of the cone $P \subseteq C(P)$ is sent into the homotopy colimit of the spheres $S(U)$ and for each subset  $M \subseteq \lbrace v_1,\dotsc, v_n \rbrace$, the image $a(C([M]))$ is contained in the subspace 
    $$\underset{0 \subseteq U \subseteq \langle M \rangle}\hocolim \ (D(U) \cap C([M]))
        =
    \underset{0 \subseteq U \subseteq \langle M \rangle}\hocolim \ (D(U) \cap C(P)).$$
    This includes the empty face, where the condition tells us that the cone point $0 \in C(P)$ must go to the unique point in $D(0)$.
\end{definition}

\begin{lemma}
    The space of doubly suspended apartment-like maps is weakly contractible.
\end{lemma}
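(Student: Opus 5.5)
The plan is to imitate the proof of \cite[Lem 5.9]{KLMMS-2} for suspended apartment-like maps, now for the cone: build the map inductively over the faces of the simplex, using at each step that the allowed target is contractible.

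First reformulate. For each subset $M \subseteq \{v_1,\dots,v_n\}$ write
\[ Z_M = \underset{0 \subseteq U \subseteq \langle M \rangle}\hocolim\, (D(U) \cap C([M])), \qquad Y_M = \underset{0 \subseteq U \subseteq \langle M \rangle}\hocolim\, (S(U) \cap [M]). \]
A doubly suspended apartment-like map is then a compatible family of maps $C([M]) \to Z_M$ sending $[M]$ into $Y_M$. The cones $C([M])$, together with their bases $[M]$, give a regular CW structure on the pair $(C(P),P)$, filtered by $|M|$. For $M=\varnothing$ the condition forces the cone point to the unique point of $D(0)$, so there is exactly one choice.

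The key input is that each $Z_M$ is contractible. The poset $\{0 \subseteq U \subseteq \langle M\rangle\}$ has terminal object $\langle M\rangle$. So $Z_M$ receives a deformation retraction onto the value at the terminal object, $D(\langle M\rangle)\cap C([M]) = C([M])$, which is a convex cone and hence contractible. In practice one uses the standard equivalence $\hocolim_{\mathcal P} F \simeq F(\text{terminal})$, applied to the restricted diagram. Similarly, by \cite[Lem 5.9]{KLMMS-2} (or the same terminal-object argument, since $[M]$ is a spherical simplex inside an open hemisphere), $Y_M$ is contractible.

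Now run the extension argument. For each $M$ in order of increasing size, the map is already determined on $\partial C([M]) = [M] \cup C(\partial[M])$, except on the base $[M]$ itself. We first extend over $[M]$ relative to $\partial[M]$ into $Y_M$; this is exactly the suspended apartment-like step, and the space of such extensions is contractible. We then extend over the cone $C([M])$ relative to its boundary sphere into $Z_M$, and again the space of extensions is contractible because $Z_M$ is contractible.

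With each step in hand, the space of apartment-like maps is an iterated limit (a tower of restriction maps) of maps whose fibers are spaces of extensions of a map from a sphere to a disk into a contractible space. These fibers are weakly contractible, and the restriction maps are Serre fibrations because $(D^k,S^{k-1})$ is a relative CW pair, so the total space is weakly contractible.

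I expect the main obstacle to be verifying that the subspace inclusions inside the homotopy colimit behave well. One must check that the faces of $Z_M$ for $M' \subsetneq M$ sit inside $Z_M$ compatibly, which follows from $\langle M' \rangle \subseteq \langle M\rangle$ and $C([M'])\subseteq C([M])$. One must also check that the restriction maps are fibrations in the relevant (compactly generated) topology. If the fibration property is awkward, I would instead prove directly that every map $S^k \to$ (space of apartment-like maps) extends over $D^{k+1}$, by running the same face-by-face extension with parameters, using contractibility of $Z_M$ and $Y_M$.
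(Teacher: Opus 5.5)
Your proposal is correct and follows the paper's route. The paper's proof just says that the argument of \cite[Lem 5.9]{KLMMS-2} carries over, and your face-by-face extension over the cells $[M]$ and $C([M])$ into the sub-homotopy-colimits $Y_M$ and $Z_M$ is that argument adapted to the cone; these targets are contractible because $\{0 \subseteq U \subseteq \langle M\rangle\}$ has terminal object $\langle M \rangle$, and you correctly checked that the base condition together with the face conditions forces $a([M]) \subseteq Y_M$.
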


\begin{proof}
    This is the same as in \cite[Lem 5.9]{KLMMS-2}.
\end{proof}

Every doubly-suspended apartment-like map induces a map of quotients
\[ 
    \Sigma(P/\partial P) \cong \frac{C(P)}{\partial C(P)} 
		\to 
	\frac{ \underset{0 \subseteq U \subseteq V}\hocolim \ D(U) }{ \left( \underset{0 \subseteq U \subsetneq V}\hocolim \ D(U) \right) \cup \left( \underset{0 \subseteq U \subseteq V}\hocolim \ S(U) \right) }
        \cong
     \frac{ \underset{0 \subseteq U \subseteq V}\bhocolim \ D(U)/S(U) }{ \underset{0 \subseteq U \subsetneq V}\bhocolim \ D(U)/S(U) }.
\]
This last term is our definition of $\Sigma\!\PT(V)$ from \cref{spt_simplify_spherical}. Notice that the formula for the doubly-suspended apartment makes sense even when $n = 0$, where the only polytope is $P = \varnothing$, $C(P) = \{0\}$, $C(P)/\partial C(P) = S^0$, and the map to $\Sigma\!\PT(\R^0) = S^0$ is the identity map $S^0 \to S^0$.

By \cref{spt_simplify_spherical}, this model of $\Sigma\!\PT(V)$ has a contractible subcomplex given by the hocolim of all the origins in the discs $D(U)$, and the quotient by this complex is homeomorphic to the suspension of $\PT(V)$:
\begin{equation}
    \Sigma\!\!\left(\frac{ \underset{0 \subseteq U \subseteq V}\hocolim \ S(U) }{ \underset{0 \subseteq U \subsetneq V}\hocolim \ S(U) }\right)
        \cong
     \frac{ \underset{0 \subseteq U \subseteq V}\bhocolim \ D(U)/S(U) }{ \Bigl( \underset{0 \subseteq U \subsetneq V}\bhocolim \ D(U)/S(U)\Bigr) \cup \Bigl(\underset{0 \subseteq U \subsetneq V}\hocolim \ \{0\}\Bigr)}
     \overset{\sim}\leftarrow
     \frac{ \underset{0 \subseteq U \subseteq V}\bhocolim \ D(U)/S(U) }{ \underset{0 \subseteq U \subsetneq V}\bhocolim \ D(U)/S(U) }
\end{equation}

\begin{lemma}\label{single_to_double_apt_maps}
    Let $n \geq 1$. For every suspended apartment-like map 
    \[
        a\colon P \to \underset{0 \subseteq U \subseteq V}\hocolim \ S(U),
    \]
    the map 
    \[
        C(a)\colon C(P) \to \underset{0 \subseteq U \subseteq V}\hocolim \ D(U)
    \]
    obtained by coning off to $0$ is a doubly suspended apartment-like map, and the resulting maps from $\Sigma(P/\partial P)$ to the equivalent spaces from above agree up to homotopy.
\end{lemma}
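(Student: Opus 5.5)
The plan has two parts. First I check directly that the cone $C(a)$ satisfies the conditions of \cref{apartment_4}. Then I compare the two induced maps out of $\Sigma(P/\partial P)$ by following both through the chain of identifications in \cref{spt_simplify_spherical}.

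\emph{Step 1: defining $C(a)$.} I make the coning precise in the Bousfield--Kan model. A point of $\hocolim_{0 \subseteq U \subseteq V} S(U)$ is a class $[(x,(U_0 \subsetneq \cdots \subsetneq U_p),t)]$ with $x \in S(U_0)$ and $t \in \Delta^p$. For $s \in [0,1]$, I set $C(a)(s\cdot y) = [(s\,x,(U_0\subsetneq\cdots\subsetneq U_p),t)]$, where $[(x,\ldots)] = a(y)$ and $s\,x \in D(U_0)$. This is compatible with the face relations, because those relations only delete subspaces or apply inclusions, and both commute with scalar multiplication. At $s=0$ the point $0 \in D(U_0)$ is identified, via the face relations and the inclusion $D(0) \to D(U_0)$, with the point of $\hocolim D(U)$ coming from $D(0)$. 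So $C(a)$ is well defined on $C(P) = (P\times[0,1])/(P\times\{0\})$ and is continuous.

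\emph{Step 2: the apartment-like conditions.} The base $P = \{s=1\}$ lands in $\hocolim S(U)$ by construction. Take a face $M \subseteq \{v_1,\ldots,v_n\}$ with $M$ nonempty. Since $a([M])$ lies in $\hocolim_{U\subseteq\langle M\rangle} (S(U)\cap[M])$, scaling shows that $C(a)(C([M]))$ lies in $\hocolim_{U\subseteq\langle M\rangle}(D(U)\cap C([M]))$. Here I use that $t\cdot(S(U)\cap[M]) \subseteq D(U)\cap C([M])$. For $M=\varnothing$ the condition says the cone point goes to $D(0)$, which Step 1 established.

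\emph{Step 3: comparing the two maps up to homotopy.} This is the step I expect to be the main obstacle, mostly because of the bookkeeping of identifications. The first map is $\Sigma(a/\partial a)$, followed by the homeomorphism onto the middle quotient in the last display, which collapses the contractible subcomplex of origins. The second map is $C(a)$, followed by the quotient map from $\Sigma\!\PT(V)$ to that same middle space. The plan is to show these agree on the nose once we pass to the middle space, and then to use that the collapse $\overset{\sim}\leftarrow$ is an equivalence.

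Concretely, the identification $\frac{S(U)\times I}{S(U)\times\partial I}\cong S^U/\{0,\infty\}$ from the proof of \cref{spt_simplify_spherical} sends $(x,s)$ to $s\,x$. Under it, the suspension coordinate of $\Sigma(P/\partial P)\cong C(P)/\partial C(P)$ matches the radial coordinate $s$. Both endpoints $s=0$ and $s=1$ are collapsed on each side: on the $\Sigma\!\PT$ side, via the origins subcomplex and the spheres $S(U)$. The faces $\partial P$ go to the basepoint on both sides, because their images lie in terms with $U \neq V$. So composing $C(a)$ with the collapse gives exactly $\Sigma(a/\partial a)$ under that homeomorphism. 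One should check that the orientation and sign conventions for the suspension coordinate match the radial coordinate; if they differ by a flip, I would absorb it into the chosen identification $\Sigma(P/\partial P)\cong C(P)/\partial C(P)$. Finally, any two doubly suspended apartment-like maps are homotopic because their space is weakly contractible, so the statement holds for any choice.
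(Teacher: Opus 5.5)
Your overall strategy, comparing the two recipes directly in the middle space where the origins are collapsed, is the same as the paper's one-line proof. However, Step~1 contains a genuine error, and both Step~2 (for $M=\varnothing$) and the ``on the nose'' claim of Step~3 depend on it.

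In the Bousfield--Kan model, the point $0\in D(U_0)$ over a chain $U_0\subsetneq\cdots\subsetneq U_p$ is \emph{not} identified with the point $0\in D(0)$. By the face relations for the chain $0\subsetneq U_0$, these are the two distinct endpoints of the $1$-simplex $\{0\}\times\Delta^1$. This is why $\hocolim_{0\subseteq U\subseteq V}\{0\}$ is the nerve of the poset, which is contractible but not a point: you are treating the homotopy colimit as if it were a colimit.

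Consequently, your formula $C(a)(s\cdot y)=[(s\,x,U_\bullet,t)]$ sends $P\times\{0\}$ to the composite of $a$ with the projection onto this nerve, and that composite is not constant. For example, when $n=2$ the points $(v_1,0)$ and $(v_2,0)$ go to $(0\in D(\langle v_1\rangle),(\langle v_1\rangle))$ and $(0\in D(\langle v_2\rangle),(\langle v_2\rangle))$. So for $n\geq 2$ the formula does not descend to $C(P)$. Even for $n=1$, where $P$ is a point, the cone point lands at $(0\in D(\R),(\R))$ rather than at the point of $D(0)$, so the empty-face condition of \cref{apartment_4} fails.

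The fix is to cone off to the point $0\in D(0)$, using the initial object $0$ of the indexing poset as well as the radial direction. For $s\in[1/2,1]$, send $(y,s)$ to $[((2s-1)x,U_\bullet,t)]$. For $s\in[0,1/2]$, send it to $[(0\in D(0),\,0\subsetneq U_0\subsetneq\cdots\subsetneq U_p,\,(1-2s,2s\,t))]$; this is a strict chain because $U_0\neq 0$, since $S(0)=\varnothing$.

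One checks the following:
\begin{itemize}
\item the map respects the face relations;
\item the two stages agree at $s=1/2$, because the $0$th face relation applies the inclusion $D(0)\to D(U_0)$;
\item the map is constant at $s=0$;
\item all chains still lie in $\langle M\rangle$ with labels in $D(U)\cap C([M])$.
\end{itemize}
So your Step~2 goes through. In Step~3, the second stage lands in the subcomplex of origins, which is collapsed in the middle space. After the collapse, the corrected $C(a)$ equals $\Sigma(a/\partial a)$ precomposed with the reparametrization $s\mapsto\max(0,2s-1)$ of the suspension coordinate. This gives agreement up to homotopy, which is exactly what the lemma asserts, rather than agreement on the nose.
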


\begin{proof}
    We simply check that both recipes applied to the same suspended apartment-like map $a\colon P \to \underset{0 \subseteq U \subseteq V}\hocolim \ S(U)$ produce the same map from $C(P)$ to the numerator of the space in the middle, $\underset{0 \subseteq U \subseteq V}\bhocolim \ D(U)/S(U)$.
\end{proof}

\begin{proposition}\label{solomon_tits_w_apts_3}
    For any inner product space $V$ of dimension $n \geq 0$, the space $\Sigma\!\PT(V)$ is homotopy equivalent to a wedge sum of $n$-spheres, and the doubly suspended apartment-like maps for each generator of $\Pt(V)$ induce an isomorphism
    \[ 
    	[\apt]\colon \Pt(V) \cong H_n(\Sigma\!\PT(V),*).
	\]
\end{proposition}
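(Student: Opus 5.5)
The plan is to reduce to the singly suspended statement for $\PT(V)$ from \cite{KLMMS-2}, and then transport it across the equivalence of \cref{spt_simplify_spherical}, using \cref{single_to_double_apt_maps} to compare the two kinds of apartment maps. The case $n = 0$ is handled directly: $\Pt(\R^0)$ has the single generator given by the empty simplex, and $\Sigma\!\PT(\R^0) = S^0$. As observed just before \cref{single_to_double_apt_maps}, the doubly suspended apartment of the empty simplex is the identity $S^0 \to S^0$. So $[\apt]$ sends the generator to the generator of $H_0(S^0,*) \cong \Z$, which is an isomorphism.

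For $n \geq 1$ I will first invoke the spherical Solomon--Tits theorem with apartments, as in \cite[Theorem B]{KLMMS-2}. It says that $\PT(S^{n-1})$ is a wedge of $(n-1)$-spheres, and that choosing a suspended apartment-like map $a\colon P \to \hocolim_{0 \subseteq U \subseteq V} S(U)$ for each generator $P$ of $\Pt(V)$ yields an isomorphism $\Pt(V) \cong H_{n-1}(\PT(V),*)$. These maps pass to maps $P/\partial P \to \PT(V)$. The lemma \cite[Lem 5.9]{KLMMS-2} (contractibility of the space of choices) makes the homology class independent of the chosen apartment-like map. Suspending gives a wedge of $n$-spheres for $\Sigma\!\PT(V)$ together with an isomorphism $\Pt(V) \cong H_n(\Sigma\PT(V),*)$ given by the suspended classes $\Sigma(P/\partial P) \to \Sigma\PT(V)$.

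Next I transport this along the zig-zag of homotopy equivalences displayed before \cref{single_to_double_apt_maps}. On one side is the suspension of $\PT(V)$, identified with the quotient of $\bhocolim D(U)/S(U)$ by the contractible subcomplex of origins. On the other side is our model of $\Sigma\!\PT(V)$, which maps to it by a quotient that is a homotopy equivalence. By \cref{single_to_double_apt_maps}, the cone $C(a)$ of a suspended apartment-like map is a doubly suspended apartment-like map. Moreover, its induced map $\Sigma(P/\partial P) \to \Sigma\!\PT(V)$, composed with the collapse, agrees up to homotopy with $\Sigma$ applied to the singly suspended apartment. Since the space of doubly suspended apartment-like maps is weakly contractible, any other choice gives the same homology class. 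Hence the doubly suspended map $[\apt]$ equals the suspended isomorphism composed with the isomorphism induced by the zig-zag, and is therefore an isomorphism.

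The main obstacle is the bookkeeping in this identification. I need to check that the equivalence of \cref{spt_simplify_spherical}, including the identification $\frac{S(U)\times I}{S(U)\times\partial I} \cong S^U/\{0,\infty\}$, matches the cone coordinate of $C(P)$ with the suspension coordinate. Failing that, it should at least match up to a global sign, which does not affect whether $[\apt]$ is an isomorphism. The first thing I will try is to follow a single point $tp \in C(P)$ through both recipes; this should show that the radial coordinate in $D(U)$ is exactly the suspension coordinate, so the two maps agree on the nose after the collapse.
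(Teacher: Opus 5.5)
Your proposal is correct and follows the paper's proof. The paper likewise checks $n=0$ directly via the identity $S^0 \to S^0$, and for $n \geq 1$ quotes the Solomon--Tits theorem with apartments for $\PT(V)$ from \cite{KLMMS-2}; it then uses \cref{single_to_double_apt_maps} to see that after one more suspension the generators are represented by the doubly suspended apartment-like maps. The bookkeeping you flag, matching the cone coordinate with the suspension coordinate up to an irrelevant sign, is exactly what that lemma packages.
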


\begin{proof}
    When $n = 0$ this follows directly from the fact that $\Pt(0) \cong \Z$ and the unique doubly-suspended apartment-like map induces the identity $S^0 \to S^0$. When $n \geq 1$, by \cite[Thm 5.14]{KLMMS-2}, $\PT(V)$ is a wedge of $(n-1)$-spheres and the singly suspended apartment-like maps induce an isomorphism $\Pt(V) \cong H_{n-1}(\PT(V),*)$. By \cref{single_to_double_apt_maps} when we suspend again the generators are therefore given by the doubly suspended apartment maps.
\end{proof}

\begin{construction}\label{apartment_5}
    We make an explicit choice of doubly suspended apartment-like map for $\Sigma\!\PT(V)$ for the element $[v_1,\ldots,v_n] \in \Pt(V)$. Again we think of $P$ as a simplex and $C(P)$ as a cube, with the convention that $(0,...,0)$ is the barycenter of $P$ and $(1,...,1)$ is the cone point. So each vertex of the cube corresponds to a set $S \subseteq \{v_1,\ldots,v_n\}$, but the coordinates of the vertex are 0 for each element of $S$ and 1 for each element of $S^c$. The faces where one coordinate is 0 are in $P$, and the faces in which one coordinate is 1 together form the cone on the boundary, $C(\partial P)$.
    
    A map $f\colon I^n \to \underset{0 \subseteq U \subseteq V}\hocolim \ D(U)$ is apartment-like if each face where $c_i = 0$ goes to the homotopy colimit of the boundary spheres
    \[ \underset{0 \subseteq U \subseteq V}\hocolim \ S(U) \cap P, \]
    and each face where $c_i = 1$ goes to the subspace
    \[ \underset{0 \subseteq U \subseteq \langle v_1, \ldots, \hat{v_i}, \ldots, v_n\rangle}\hocolim \ D(U \cap \langle v_1, \ldots, \hat{v_i}, \ldots, v_n\rangle) \cap C(P). \]
    In other words, the subspace of $D(U)$ that is the cone on the $i$th face of $P$.

    We define our map
    \[ f\colon I^n \to \underset{0 \subseteq U \subseteq V}\hocolim \ D(U) \]
    by defining its two coordinates separately, one of which is a map $f_1\colon I^n \to |0 \subseteq U \subseteq V|$ taking each point $t \in I^n$ to a configuration of cut points and subspaces of $V$, the other of which is a map $f_2\colon I^n \to D(V)$ such that $f_2(x)$ always lies in $D(U)$ where $U$ is the subspace at the left-hand side of the configuration $f_1(x)$.

    To define $f_1$, we start with the map $I^n \to |0 \subseteq U \subseteq V|$ from \cref{apartment_2} that creates a configuration of cut points in which each subspace is the span of the vectors $v_i$ to its left. Compose this with the map $I^n \to I^n$, $c_i \mapsto \max(0,(3c_i-1)/2)$, which in each coordinate direction collapses the sub-interval $[0,1/3]$ to 0 and expands the sub-interval $[1/3,1]$ to fill the whole interval $[0,1]$. Let $f_1\colon I^n \to I^n \to |0 \subseteq U \subseteq V|$ denote the composition. Visually, it has the same formula as \cref{apartment_2} except that everything to the left of 1/3 is collapsed to the left-hand side of the interval first.
    
    To define $f_2$, we pick any $R > 0$ such that $D(V)$ is contained in the parallelepiped of all vectors of the form $\sum_i c_i v_i$, $|c_i| \leq R$. Pick any homeomorphism $\phi\colon [0,1] \to [0,R]$ with $\phi(0) = R$ and $\phi(1) = 0$, then define $\psi\colon [0,1] \to [0,R]$ by the rule $\psi(t) = \phi(3t)$ when $t \leq 1/3$ and $\psi(t) = 0$ when $t \geq 1/3$. So $\psi$ goes down from $R$ to 0 over the interval $[0,1/3]$ and then stays at 0.
    
    Define the map $I^n \to V$ by $(c_1,\ldots,c_n) \mapsto \sum_i \psi(c_i)v_i$. Compose this with $V \to D(V)$ by sending each point in $V$ back to its nearest point in $D(V)$, so that everything outside $D(V)$ is sent to $S(V)$, and call the composition $f_2\colon I^n \to D(V)$. Visually, it is sending $I^n$ to the cube of all linear combinations of $\{v_1,\ldots,v_n\}$ with coefficients between 0 and $R$, and stretching each coordinate direction. When a coordinate is zero we go to the boundary of $D(V)$, and when a coordinate is 1/3 or greater, the corresponding vector $v_i$ does not appear in the linear combination, so we land in the span of the remaining vectors. This guarantees that $f_2(x)$ always lands in the correct subspace of $V$ specified by $f_1(x)$, as required.

    Altogether, the map $f$ starts with a point in $I^n$, represented as in \cref{apartment_2} by a configuration of cut points $c_i$ labeled by the vectors $v_i$. If we let $S$ denote the set of coordinates that are $\leq 1/3$, then $f$ is defined by giving a configuration $f_1(x) \in |\langle S \rangle \subseteq U \subseteq V|$ given by rescaling the remaining points in $S^c$ so that they fill up the interval $[0,1]$, and a point $f_2(x) \in D(\langle S \rangle)$. Each of these rules is continuous on each sub-cube $[0,1/3]^S \times [1/3,1]^{S^c}$, and therefore $f$ is continuous on the whole cube. The map $f$ is also apartment-like because when any coordinate is 0 the map $f_2$ lands in the boundary $S(V)$, and when the $i$th coordinate is 1 the maps $f_1$ and $f_2$ land in the span of $\{v_1, \ldots, \hat{v_i}, \ldots, v_n\}$.
\end{construction}

\begin{figure}[h]
\begin{center}
\resizebox{\textwidth}{!}{
\begin{tikzpicture}[scale=2]
	\draw[thick] 
		(0,0) node[anchor=north east]{$(0,0)$}
			-- 
		(1,0) node[anchor=north west]{$(1,0)$} 
			-- 
		(1,1) node[anchor=south west]{$(1,1)$} 
			-- 
		(0,1) node[anchor=south east]{$(0,1)$} 
			-- 
		cycle;
        \draw[thick] (1/3,1/3) -- (1,1);
        \draw[thick] (0,1/3) -- (1,1/3);
        \draw[thick] (1/3,0) -- (1/3,1);
        \draw[thick,fill,OI2] 
        	(1/6,2/3) circle (0.02) node[above]{$a$};
        \draw[thick,fill,OI1] 
        	(1/6,1/6) circle (0.02) node[above]{$b$};
        \draw[thick,fill,OI3] 
        	(5/6,1/2) circle (0.02) node[above]{$c$};
	
        \begin{scope}[xshift=4cm,scale=1.5]
			\node[OI3] at (-0.15,0) {$c$};
			\draw[thick,|->] (0,0) -- (0.3,0);
            \draw[thick,OI3] (.6,0) circle (0.2);
            \draw[thick,fill,OI3] (.6,0) circle (0.02);
            \node at (.9,0) {,};
			\draw[ultra thick,OI3,]
				(1.0,0) 
					edge[|-|] 
				(1.25,0)
					edge[-|] 
				(1.75,0)
					edge[-|]
				(2.0,0);
			\node[scale=0.75] at (1.1,0.1) {$0$};
			\node[scale=0.75] at (1.5,0.1) {$\langle v_1 \rangle$};
			\node[scale=0.75] at (1.875,0.1) 
				{$\langle v_1,v_2 \rangle$};
        \end{scope}

        \begin{scope}[xshift=3cm,scale=1.5,yshift=0.33cm]
			\node[OI1] at (-0.15,0) {$b$};
			\draw[thick,|->] (0,0) -- (0.3,0);
            \draw[thick,OI1] (.6,0) circle (0.2);
            \draw[thick,fill,OI1] (.7,.1) circle (0.02);
            \node at (.9,0) {,};
			\draw[ultra thick,OI1]
				(1.0,0) 
					edge[|-|]
				(2.0,0);
			\node[scale=0.75] at (1.5,0.1) {$\langle v_1,v_2 \rangle$};
        \end{scope} 
        
        \begin{scope}[xshift=2cm,scale=1.5,yshift=0.66cm]
			\node[OI2] at (-0.15,0) {$a$};
			\draw[thick,|->] (0,0) -- (0.3,0);
            \draw[thick,OI2] (.6,0) circle (0.2);
            \draw[thick,fill,OI2] (.7,0) circle (0.02);
            \node at (.9,0) {,};
			\draw[ultra thick,OI2,]
				(1.0,0) 
					edge[|-|] 
				(1.5,0)
					edge[-|]
				(2.0,0);
			\node[scale=0.75] at (1.25,0.1) {$\langle v_1 \rangle$};
			\node[scale=0.75] at (1.75,0.1) 
				{$\langle v_1,v_2 \rangle$};
        \end{scope} 
    \end{tikzpicture}
}
\end{center}
\caption{The doubly-suspended apartment-like map for $[v_1,v_2]$ from \cref{apartment_5}.}\label{double_apt_pt_figure}
\end{figure}

\begin{construction}\label{apartment_oriented_2}
    We make the doubly suspended apartment-like maps of \cref{apartment_4} respect orientation by collapsing $S^V$ onto the cone $C(P) \subseteq V$ modulo its boundary:
    \begin{equation}\label{eq:apt_oriented_pt}
    \begin{tikzcd}
    S^V \rar & \displaystyle \frac{C(P)}{\partial C(P)} \rar{f} & \Sigma\!\PT(V).
    \end{tikzcd}
    \end{equation}
    This gives an element of $H_0(\Omega^V\Sigma\!\PT(V),*)$, where $\Omega^V X = \textup{Map}_*(S^V,X)$.
\end{construction}

\begin{proposition}\label{solomon_tits_w_apts_4}
    For any inner product space $V$ of dimension $n \geq 0$, the oriented doubly suspended apartment-like maps for each generator of $\Pt(V)$ induce an $O(V)$-equivariant isomorphism
    \[ 
    	[\apt]\colon \Pt(V) \cong \pi_0(\Omega^V\Sigma^\infty\Sigma\!\PT(V)) \cong H_0(\Omega^V\Sigma^\infty\Sigma\!\PT(V),*) \cong H_0(\Sph^{-V} \sma \Sigma\!\PT(V)).
	\]
\end{proposition}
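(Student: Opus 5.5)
The plan follows the Euclidean/hyperbolic argument of \cref{solomon_tits_w_apts_2}: show the map is an isomorphism by comparing it with the unoriented isomorphism of \cref{solomon_tits_w_apts_3}, then check equivariance directly from the oriented construction.

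\emph{Isomorphism.} First, $\Sigma\!\PT(V)$ is a wedge of $n$-spheres by \cref{solomon_tits_w_apts_3}, so it is $(n-1)$-connected. Since $S^V$ has dimension $n$, the Hurewicz and Freudenthal theorems give
\[
\pi_0(\Omega^V\Sigma^\infty\Sigma\!\PT(V)) \cong \widetilde H_n(\Sigma\!\PT(V)).
\]
The identification with $H_0(\Sph^{-V} \sma \Sigma\!\PT(V))$ comes from $\Sph^{-V}\sma(-) \simeq \Omega^V\Sigma^\infty(-)$, as in \cite[Lemma 2.6]{KKMMW1}. The class of \eqref{eq:apt_oriented_pt} maps under this chain to the image of the fundamental class of $S^V$. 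The collapse $S^V \to C(P)/\partial C(P)$ is a homeomorphism onto its image away from the collapsed set, so it has degree $\pm1$. Hence on each generator $[P]$ the oriented map differs from the map $[\apt]$ of \cref{solomon_tits_w_apts_3} by a sign $\pm1$ depending only on $P$. So it is still an isomorphism, because it is the composite of that isomorphism with a diagonal matrix of units. One point needs care: the assignment must be well defined on $\Pt(V)$, meaning additive under subdivision. I would get this from the fact that for a subdivision $P=\bigcup P_i$, the collapse $S^V \to C(P)/\partial C(P)$ is homotopic to the sum of the collapses onto the $C(P_i)/\partial C(P_i)$. These signs are all $+1$ because every cone $C(P_i)$ carries the orientation induced from $V$, and the restricted apartment-like maps agree by contractibility of the space of apartment-like maps.

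\emph{Equivariance.} Let $A\in O(V)$. Then $A$ sends $C(P)$ to $C(AP)$, and sends an apartment-like map for $P$ to an apartment-like map for $AP$. By the weak contractibility lemma for doubly suspended apartment-like maps, this agrees up to homotopy with the chosen map for $AP$. The action on $\Omega^V$ is by precomposition with $A^{-1}$. Collapsing $S^V$ onto $C(AP)$ equals $A\circ(\text{collapse onto }C(P))\circ A^{-1}$ as maps $S^V\to C(AP)/\partial C(AP)$. Therefore $A\cdot[\apt(P)] = [\apt(AP)]$ with no sign, even when $\det A=-1$, because the orientation of $S^V$ is transported together with the simplex. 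This is exactly the point that fails for the unoriented version.

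The main obstacle is the case $n=0$ and the well-definedness under subdivision. For $n=0$ everything is $S^0$ and the statement is immediate. For subdivision, I expect to invoke the additivity of the apartment classes already established in \cref{solomon_tits_w_apts_3}, together with the sign comparison above.
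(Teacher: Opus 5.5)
Your proposal is correct and follows the same approach as the paper: compare with the unoriented isomorphism of \cref{solomon_tits_w_apts_3} via the orientation-dependent identification $\widetilde H_n(\Sigma\!\PT(V)) \cong \pi_0(\Omega^V\Sigma^\infty\Sigma\!\PT(V))$, then check $O(V)$-equivariance directly from the collapse construction in \cref{apartment_oriented_2}. One refinement: once $V$ is oriented all your signs are $+1$, so the oriented map is literally that orientation isomorphism composed with the map of \cref{solomon_tits_w_apts_3}, and well-definedness under subdivision is simply inherited. You therefore need neither the ``diagonal matrix of units'' phrasing, which does not make sense on the non-free group $\Pt(V)$, nor the pinch-map sketch, whose restricted maps on $C(P_i)$ need not be apartment-like for $P_i$.
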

\begin{proof}
    The map is an isomorphism because it differs from the map in \cref{solomon_tits_w_apts_3} by an isomorphism, and we can check directly from \cref{apartment_oriented_2} that it is $O(V)$-equivariant.
\end{proof}

\subsection{Checking the comodule structure on apartments}

Now we check that our topological Dehn invariant defined on the spaces $\PT(X)$ agrees on homology with the classical Dehn invariant defined on the groups $\Pt(X)$.

\begin{proposition}
\label{coproduct_pi0_EH}
    If $X^n = E^n$ or $H^n$ with ambient vector space $V$ of dimension $(n+1)$, the comodule structure fits into the commuting diagram
    
\begin{adjustbox}{width=\textwidth}
\begin{tikzcd}[column sep=4em]
        \Pt(V) 
        	\ar[d,"\textup{Dehn (\cref{pt_dehn})}"'] 
			\ar{r}{[\apt]} 
			& 
		H_0(\Sigma^{-V}\Sigma\!\ST(V)) 
			\ar[d,"\textup{\cref{constr:d_1-coalg_comod_suspended}}"] 
			\\
        \displaystyle 
        	\bigoplus_{0 \neq U \subseteq V} 
				\Pt(U) \otimes 
				\tPt(V \ominus U) 
			\ar{r}{[\apt] \otimes [\apt]} 
			& 
		\displaystyle 
			\bigoplus_{0 \neq U \subseteq V} 
				H_0(\Sigma^{-U}\Sigma\!\ST(U)) \otimes 
				H_0(\Sigma^{-(V \ominus U)}
					\Sigma\!\ST(V \ominus U)).
    \end{tikzcd}
\end{adjustbox}
    
\end{proposition}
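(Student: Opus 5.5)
The plan is to check commutativity on a single generator $[v_0,\ldots,v_n]$ of $\Pt(V)$, at the level of based maps out of $S^V$. Applying the comodule map of \cref{constr:d_1-coalg_comod_suspended} for a fixed $e=(e_1,e_2)\in D_1(\{1,2\})^1$ to the oriented doubly-suspended apartment \eqref{eq:apt_oriented_double} gives a map $S^V \to I^{n+1}/\partial I^{n+1} \to \bigvee_{0\neq U\subseteq V} \Sigma\!\ST(U)\wedge\Sigma\!\ST(V\ominus U)$. The claim is that this composite is homotopic to a wedge over the nonempty subsets $S\subseteq\{v_0,\ldots,v_n\}$ of smash products of oriented doubly-suspended apartments for $[S]$ in $\Sigma\!\ST(\langle S\rangle)$ and for $[\pr_{V\ominus\langle S\rangle}S^c]$ in $\Sigma\!\ST(V\ominus\langle S\rangle)$. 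By \cref{solomon_tits_w_apts_2} and the corresponding statement for $\tPt$ from \cite{KKMMW1}, this is exactly $([\apt]\otimes[\apt])\circ\delta$ on $[v_0,\ldots,v_n]$.

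The first step is to take the point $(c_0,\ldots,c_n)\in I^{n+1}$, drawn as a configuration of cut points labeled by the $v_i$ as in \cref{apartment_2}, and analyze which points survive $\Sigma m$. A point survives only if every cut point lies in $e_1((0,1))\cup e_2((0,1))$, some cut point lies in $e_1$, and the zero subspace appears to the right of $e_1(0)$. Grouping the surviving region by $S=\{v_i: c_i\in e_1((0,1))\}$ (which must be nonempty, matching the convention $V_0\neq 0$), the region is an open box $e_1((0,1))^S\times e_2((0,1))^{S^c}$. On this box the first output is the apartment configuration for $S$, rescaled by $e_1^{-1}$, in $\Sigma\!\ST(\langle S\rangle)$. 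The second output is the configuration whose subspaces are $\langle S\cup T\rangle\ominus\langle S\rangle=\langle \pr_{V\ominus\langle S\rangle}T\rangle$ for $T\subseteq S^c$, that is, the apartment for the projected simplex. The boundary of each box goes to the basepoint. So on $H_{n+1}$ the map collapses the cube onto these boxes, and each box is the product of two doubly-suspended apartments. This is the same combinatorics as the coproduct check in \cite[Proposition 7.2]{KKMMW1}, and I would follow that argument closely.

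The second step, which I expect to be the main obstacle, is orientation and sign bookkeeping in the $S^V$-desuspended model, together with an $I(V)$-equivariance check. The oriented apartment embeds $I^{n+1}\subseteq V$ via $\sum(1-c_i)v_i$. Restricting this embedding to a box gives the product of the embeddings for $[S]$ into $\langle S\rangle$ and for $S^c$ into $V$. The latter has to be compared with the embedding of the projected vectors into $V\ominus\langle S\rangle$, using the canonical splitting $V\cong\langle S\rangle\oplus(V\ominus\langle S\rangle)$ of \cref{sum_difference_inverses_surprise_surprise}. I would compare them via the straight-line homotopy from the linear map $\sum_{i\in S^c} a_iv_i\mapsto\sum a_i v_i$ to its composite with the shear that projects away $\langle S\rangle$. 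This shear is unipotent, so it preserves orientation, and the homotopy stays in the collapse-compatible region since it fixes the $\langle S\rangle$-coordinates. Because the orientation is carried by the $S^V$ coordinate rather than by the cube ordering, no signs appear, consistent with the sign-free formula in \cref{pt_dehn}. The Euclidean case needs care with $V_0$ versus $V$, but here the $\langle S\rangle$ factor keeps its $x_0$-direction and the complement lies in $V_0$, so the same argument applies.

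Finally, the homology class is independent of $e$ because $D_1(\{1,2\})^1$ is connected. Summing over $S$ then gives commutativity of the square.
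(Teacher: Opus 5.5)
Your proposal is correct and follows essentially the same route as the paper. You restrict to the $2^{n+1}-1$ open boxes indexed by nonempty $S$ and identify the map on each box as the smash of the apartments for $[S]$ and $[\pr_{V\ominus\langle S\rangle}S^c]$. You then match degrees by straight-line deforming each $v_i\in S^c$ to its projection; your unipotent shear is exactly the paper's isotopy of bases. The only differences are that you work with a general $e\in D_1(\{1,2\})^1$ and invoke connectivity, where the paper just takes the midpoint cut, and that the $I(V)$-equivariance check you mention is not actually needed for commutativity of the square.
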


\begin{proof}
    As in \cite[Proposition 7.2]{KKMMW1}, we select the point in $D_1(2)$ corresponding to the point that cuts the interval exactly in half: the pair of intervals $[0,1/2]$ and $[1/2,1]$ in $[0,1]$. We describe the action of the comultiplication for this point on the oriented doubly suspended apartment
    \[ S^V \to \frac{I^{n+1}}{\partial I^{n+1}} \to \Sigma\!\ST(V). \]
    from \eqref{eq:apt_oriented_double} for a single fixed element $[v_0,\ldots,v_n] \in \Pt(V)$. Composing this with the map $\Sigma m$ from \cref{constr:d_1-coalg_comod_suspended} gives a map
    \[ S^V \to \bigvee_{\substack{0 \neq U\subseteq V \\ \textup{ in }\Dip^X }} \Sigma\!\ST(U) \wedge  \Sigma\!\ST(V\ominus U). \]
    As in the proof of \cite[Proposition 7.2]{KKMMW1}, this map is trivial except in the interior of $I^{n+1} \subseteq S^V$, where it can be described using another configuration space model: the coordinates of $I^{n+1}$ describe $(n+1)$ cut points in the interval $[0,1]$, each of which is labeled by the corresponding vector $v_i$. The intervals between these cut points are labeled by the span of all of the $v_i$ to their left.

    Since the coproduct collapses everything to $*$ if there is a cut point at $1/2 \in [0,1]$, the map is trivial except on the interior of $2^{n+1}$ sub-cubes $[0,1/2]^S \times [1/2,1]^{S^c} \subseteq I^{n+1}$, one for each subset $S \subseteq \{v_0,\ldots,v_n\}$. Our map takes each of these sub-cubes to $\Sigma\!\ST(U) \wedge \Sigma\!\ST(V\ominus U)$ where $U = \langle S \rangle$, by the product of the doubly-suspended apartment map for the simplex $[S]$ and the doubly suspended apartment map for the simplex $[\pr_{V \ominus U} S^c]$. (On the sub-cube $[1/2,1]^{n+1}$ where $S = \varnothing$, the map vanishes by the convention in \cref{constr:d_1-coalg_comod_suspended}.)
    In summary, we have the commuting diagram
    \[ 
    \begin{tikzcd}[arrows=rightarrow, column sep=8em]
        \frac{I^{n+1}}{\partial I^{n+1}} 
        	\ar{d}{\textup{collapse}} 
			\ar{r}{\apt[v_0,v_1,\ldots,v_n]} 
			& 
		\Sigma\!\ST(V) 
			\ar{d}{\textup{\cref{constr:d_1-coalg_comod_suspended}}} 
			\\
        \bigvee_{\varnothing \subsetneq 
        		 S \subseteq 
		 		 \{v_0,\ldots,v_n\}
				} 
        \frac{[0,1/2]^S}{\partial([0,1/2]^S)} \sma 
		\frac{[1/2,1]^{S^c}}{\partial([1/2,1]^{S^c})} 
			\ar{r}{\apt[S] \sma 
				   \apt[\pr_{V \ominus \langle S \rangle} S^c]
				  } 
			& 
		\bigvee_{0 \subsetneq U \subseteq V} 
			\Sigma\!\ST(U) \sma \Sigma\!\ST(V \ominus U),
    \end{tikzcd}
    \]
    where the vertical map on the left-hand side collapses the cube $I^{n+1}$ down to each of these $(2^{n+1}-1)$ sub-cubes.

    To conclude that the diagram on homology commutes, the last step is to show that the collapse map
    \[ S^V \to \frac{I^{n+1}}{\partial I^{n+1}} \to \frac{I^S}{\partial(I^S)} \sma 
		\frac{I^{S^c}}{\partial(I^{S^c})} \]
    has the same degree as the smash product of the collapse maps
    \[ S^V \cong S^U \sma S^{V \ominus U} \to \frac{I^S}{\partial(I^S)} \sma 
		\frac{I^{S^c}}{\partial(I^{S^c})} \]
    for each fixed subset $S$. (The two maps are not identical!) To see this, we define a homotopy between them by rescaling the first one so that instead of a collapse it gives a homeomorphism $\frac{I^{n+1}}{\partial I^{n+1}} \cong \frac{I^S}{\partial(I^S)} \sma 
	\frac{I^{S^c}}{\partial(I^{S^c})}$. Then both maps are collapses from $S^V$ onto cubes spanned by different bases in $V$, one of which is $S \cup S^c$, the other of which is $S \cup pr_{V \ominus \langle S \rangle} S^c$. We deform from one basis to another by moving each $v_i \in S^c$ along a straight line to its projection $\pr_{V \ominus \langle S \rangle} v_i$. Doing this for each $v_i$ simultaneously, we get a one-parameter family of bases for $V$. (They remain linearly independent since the ones in $U$ stay fixed and the ones not in $U$ have fixed projections to $V \ominus U$.) Therefore we get an isotopy of embeddings $I^S \times I^{S^c} \to V$, which shows that the two collapse maps have the same degree.
\end{proof}

\begin{proposition}\label{coproduct_pi0_S}
    If $X = S^{n-1}$ with ambient vector space $V$ of dimension $n$, the comodule structure fits into the commuting diagram
    
\begin{adjustbox}{width=\textwidth}
\begin{tikzcd}[column sep=4em]
        \Pt(V) 
        	\ar[d,"\textup{Dehn (\cref{pt_dehn})}"'] 
			\ar{r}{[\apt]} 
			& 
		H_0(\Sigma^{-V}\Sigma\!\PT(V)) 
			\ar[d,"\textup{\eqref{eq:comodule_one_point_in_D1J_S}}"] 
			\\
        \displaystyle 
        	\bigoplus_{0 \subseteq U \subseteq V} 
				\Pt(U) \otimes 
				\tPt(V \ominus U) 
			\ar{r}{[\apt] \otimes [\apt]} 
			& 
		\displaystyle 
			\bigoplus_{0 \subseteq U \subseteq V} 
				H_0(\Sigma^{-U}\Sigma\!\PT(U)) \otimes 
				H_0(\Sigma^{-(V \ominus U)}
					\Sigma\!\ST(V \ominus U)).
    \end{tikzcd}
\end{adjustbox}
\end{proposition}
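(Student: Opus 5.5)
We follow the proof of \cref{coproduct_pi0_EH}, replacing the doubly suspended apartments of \cref{apartment_2} by the explicit doubly suspended apartment-like maps of \cref{apartment_5}, oriented as in \cref{apartment_oriented_2}. As there, we fix the point of $D_1(\{1,2\})^1$ given by the two intervals $[0,1/2]$ and $[1/2,1]$. By \cref{solomon_tits_w_apts_4}, and the analogous statement for $\Sigma\!\ST$ from \cite{KKMMW1}, it suffices to check the diagram on a single generator $[v_1,\ldots,v_n] \in \Pt(V)$. Concretely, we compose
\[ S^V \to C(P)/\partial C(P) \xrightarrow{f} \Sigma\!\PT(V) \xrightarrow{m} \bigvee_{0 \subseteq U \subseteq V} \Sigma\!\PT(U) \wedge \Sigma\!\ST(V \ominus U) \]
and identify the result up to homotopy with the wedge over subsets $S \subseteq \{v_1,\ldots,v_n\}$ of the smash products of oriented apartments for $[S]$ and for $[\pr_{V\ominus\langle S\rangle} S^c]$.

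\textbf{Step 1: decompose the cube.} In the cube coordinates $(c_1,\ldots,c_n)$ of \cref{apartment_5}, the configuration $f_1(c)$ has cut point for $v_i$ at $\max(0,(3c_i-1)/2)$. The map $m$ sends a point to the basepoint whenever a cut point sits exactly at $1/2$, that is, whenever $c_i = 2/3$. So the composite is supported on the interiors of the $2^n$ sub-cubes $[0,2/3]^S \times [2/3,1]^{S^c}$. Unlike the Euclidean and hyperbolic case, the sub-cube with $S = \varnothing$ is \emph{not} killed: here $U = 0$ is allowed, matching the $U = 0$ term of the spherical Dehn invariant in \cref{pt_dehn}. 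Cut points at $0$ are allowed, since cut points to the left of the leftmost interval are permitted in \cref{constr:d_1-coalg_comod}.

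\textbf{Step 2: identify each sub-cube.} On $[0,2/3]^S \times [2/3,1]^{S^c}$ we have $V_0 = f(1/2) = \langle S \rangle$.
\begin{itemize}
\item The first factor is $(f_1 \circ e_1, f_2)$. Since $f_2$ only involves coordinates $c_i \le 1/3$, all with $i \in S$, it lies in $D(\langle S \rangle)$. After rescaling $[0,2/3] \cong [0,1]$ in the $S$-coordinates, this factor is exactly the apartment-like map of \cref{apartment_5} for $[S]$, up to a reparametrization: the collapse point $1/3$ becomes $1/2$, which is still apartment-like. By weak contractibility of apartment-like maps it is homotopic to the standard one.
\item The second factor is $(f\circ e_2)\ominus\langle S\rangle$. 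On the $S^c$ coordinates rescaled to $[0,1]$, this is the doubly suspended Tits apartment of \cref{apartment_2}/\cite{KKMMW1} for the projected vectors $\pr_{V\ominus\langle S\rangle} v_i$, $i \in S^c$. This uses $\langle S, T\rangle \ominus \langle S\rangle = \langle \pr T\rangle$.
\end{itemize}
Continuity and the matching on faces are already handled by \cref{prop:coalg_comod_continuous}.

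\textbf{Step 3: compare orientations (main obstacle).} We must show that the collapse $S^V \to C(P)/\partial C(P)$, followed by the collapse onto a sub-cube, has the same degree as the smash product
\[ S^V \cong S^{\langle S\rangle} \wedge S^{V\ominus\langle S\rangle} \]
of the two separate oriented collapses. Here the embedding of the cube into $V$ is the nonlinear one built from $f_2$ together with the radial structure of $C(P)$, rather than the linear one used in the Euclidean case.

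The plan is as follows. First, homotope the collapse onto $C(P)$ to the collapse onto the parallelepiped $\{\sum t_iv_i : t_i \in [0,R]\}$ via $f_2$; both are degree-one onto the same orientation class. Then apply the same straight-line isotopy as in \cref{coproduct_pi0_EH}, moving each $v_i$ with $i \in S^c$ to $\pr_{V\ominus\langle S\rangle} v_i$. This keeps the family of bases linearly independent, and hence gives an isotopy of embedded cubes.

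The delicate points are that the coordinate directions with $c_i \in [2/3,1]$ correspond to the cone direction rather than the $f_2$ direction, and that the sign conventions ($\sum(1-c_i)v_i$ versus $\psi$) must agree. We would check both by computing the local degree at one interior point of each sub-cube. The case $n = 0$ is trivial, since every map involved is the identity of $S^0$.
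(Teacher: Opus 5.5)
Your proposal is correct and follows the paper's proof essentially step for step. It uses the same point of $D_1(\{1,2\})^1$ cutting the interval in half, the same decomposition at $c_i = 2/3$ into $2^n$ sub-cubes (with the $S=\varnothing$ cube surviving because $U=0$ is allowed), the same identification of each piece as the map of \cref{apartment_5} with $1/2$ in place of $1/3$ smashed with the standard $\Sigma\!\ST$ apartment for $\pr_{V\ominus\langle S\rangle}S^c$, and the same straight-line isotopy of bases for the degree comparison.

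One simplification for Step 3: $f_2$ plays no role in the orientation collapse $S^V \to C(P)/\partial C(P)$. Its sign depends only on the orientation of the identification $I^n \cong C(P)$, which is the same as that of $c \mapsto \sum_i (1-c_i)v_i$. So the ``delicate points'' you list do not arise, and the Euclidean argument applies verbatim.
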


\begin{proof}
    We use the explicit doubly-suspended apartment map from \cref{apartment_5}. We select the point in $D_1(2)$ corresponding to the point that cuts the interval exactly in half, and consider the comultiplication for this point. The resulting map
    \[ I^n \to \bigvee_{\substack{0 \subseteq U\subseteq V \\ \textup{ in }\Dip^S }} \Sigma\!\PT(U) \wedge \Sigma\!\ST(V\ominus U) \]
    vanishes except on the interior of $2^n$ sub-cubes, in which each coordinate $c_i$ is $\leq 2/3$ or $\geq 2/3$, because this is exactly where a cut point becomes $1/2$ after the rescaling operation. Each of these cubes goes to a smash product $\Sigma\!\PT(\langle S \rangle) \wedge \Sigma\!\ST(V\ominus \langle S \rangle)$ where $S$ is the set of coordinates that are $\leq 2/3$. The map on this sub-cube, after rescaling the sub-cube to have side length 1, is the product of the apartment map from \cref{apartment_5} (with 1/2 in the place of 1/3) for the set $S$, and the standard apartment map from \cref{apartment_2} for $S^c$ in $\Sigma\!\ST(V\ominus \langle S \rangle)$.

    This is almost enough to show that the square commutes. It remains to check that the composite of collapse maps
    \[ S^V \to \frac{C(P)}{\partial C(P)} \cong \frac{I^n}{\partial I^n} \to \frac{I^S}{\partial(I^S)} \sma 
		\frac{I^{S^c}}{\partial(I^{S^c})} \]
    has the same degree as the smash product of the collapse maps
    \[ S^V \cong S^U \sma S^{V \ominus U} \to \frac{I^S}{\partial(I^S)} \sma 
		\frac{I^{S^c}}{\partial(I^{S^c})}. \]
    The argument for this last step is the same as in the proof of \cref{coproduct_pi0_EH}.
\end{proof}

\subsection{Checking the algebra structure on apartments}

Next we check that our topological algebra structures defined on the spaces $\PT(X)$ agree on homology with the classical product structures defined on the groups $\Pt(X)$.

\begin{proposition}
\label{product_pi0_E}
    For two Euclidean geometries with ambient vector spaces $U$ and $V$, the algebra structure fits into the commuting diagram
    
\begin{tikzcd}[column sep=4em]
        \Pt(U) \otimes \Pt(V) 
        	\ar[d,"\textup{product (\cref{e_product})}"'] 
			\ar{r}{[\apt] \otimes [\apt]} 
			& 
		H_0(\Sigma^{-U_0}\ST(U)) \otimes H_0(\Sigma^{-V_0}\ST(V))
			\ar[d,"\textup{\cref{cor-extended-PTE-commalg}}"] 
			\\
        \Pt(U \times_\R V)
			\ar{r}{[\apt]} 
			& 
				H_0(\Sigma^{-(U_0 \oplus V_0)}
					\ST(U \times_\R V)).
    \end{tikzcd}
    
\end{proposition}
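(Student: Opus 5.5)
It suffices to check commutativity on generators $[P]\otimes[Q]$ where $P=[u_0,\ldots,u_m]\subseteq E^m$ and $Q=[v_0,\ldots,v_n]\subseteq E^n$ are simplices. The plan is to represent each apartment class with the oriented singly suspended apartment-like maps of \cref{apartment_oriented}, namely
\[ S^{U_0}\to P/\partial P\to \ST(U), \qquad S^{V_0}\to Q/\partial Q\to \ST(V). \]
By \cref{solomon_tits_w_apts_2}, these give the classes $[\apt](P)$ and $[\apt](Q)$. Their smash product is a map $S^{U_0}\sma S^{V_0}\cong S^{U_0\oplus V_0}\to \ST(U)\sma\ST(V)$. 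Composing with the homeomorphism of \cref{lem-6.14-klmms} and the forgetful map $\ST^\times(U\times_\R V)\to \ST(U\times_\R V)$ gives the right-then-down composite in the square.

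The key geometric observation is that this composite is itself the oriented apartment-like map for the product polytope. After translating, $P\times Q$ is a convex polytope in $E^{m+n}$, and $(P/\partial P)\sma(Q/\partial Q)\cong (P\times Q)/\partial(P\times Q)$. The faces of $P\times Q$ are exactly the products $F\times G$ of faces, with spans $\langle F\rangle\times_\R\langle G\rangle$. I would verify directly from the step-function description of the product, $(fg)(t)=f(t)\times_\R g(t)$, that the product of two apartment-like maps sends the face $F\times G$ into $|0\subsetneq W\subseteq \langle F\rangle\times_\R\langle G\rangle|$. This is immediate, since the pointwise product of subspaces of $\langle F\rangle$ and $\langle G\rangle$ lies in their fiber product. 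Hence the composite is a suspended apartment-like map for $P\times Q$ in the sense of \cref{apartment_1}. The collapse $S^{U_0\oplus V_0}\to (P\times Q)/\partial(P\times Q)$ is also exactly the collapse onto the translated $T(P)\times T(Q)\subseteq U_0\oplus V_0=(U\times_\R V)_0$, which is the collapse of \cref{apartment_oriented} for $P\times Q$.

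By \cref{solomon_tits_w_apts_1}, any apartment-like map for a polytope represents the same class as subdividing it into simplices and summing their apartments. Together with the oriented version in \cref{solomon_tits_w_apts_2}, this identifies the composite with $[\apt](P\times Q)=\sum_{\textup{paths}}[\apt][(u_{a_i},v_{b_i})_i]$. By \cref{e_product}, that sum is the down-then-right composite.

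The main obstacle is the orientation and sign bookkeeping. There are two points to settle.
\begin{enumerate}
\item The identification $S^{U_0}\sma S^{V_0}\cong S^{U_0\oplus V_0}$, used in the monoidal structure of $\Sigma D$ (\cref{prop-D-symm-mon-Dip}), must match the orientation on $P\times Q$ used by the apartment isomorphism.
\item The statement of \cref{solomon_tits_w_apts_1} must hold for general apartment-like maps in the oriented setting, compatibly with subdivision.
\end{enumerate}
For point (2), I would argue that the oriented apartment class of a polytope is additive under subdivision. On the subdivision's $n$-cells, the collapse $S^{W_0}\to P/\partial P$ factors through the wedge of collapses onto the simplices, and each simplex's restricted map is apartment-like. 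Contractibility of the space of apartment-like maps (as in \cite[Lem 5.9]{KLMMS-2}) then identifies each restricted map with the standard apartment map up to homotopy. Point (1) holds by construction, because the collapse onto a product of regions is the smash of the collapses.
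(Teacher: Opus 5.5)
Your proposal is correct and follows the paper's proof. The paper makes the same two observations: the faces of $P\times Q$ are the products $F\times F'$ with $\langle F\times F'\rangle=\langle F\rangle\times_\R\langle F'\rangle$, so the product of apartment-like maps is apartment-like for $P\times Q$; and the oriented collapse $S^{U_0\oplus V_0}\to (P\times Q)/\partial(P\times Q)$ is the smash of the two collapses. To pass to the lattice-path sum, the paper just invokes \cref{solomon_tits_w_apts_1} and \cref{solomon_tits_w_apts_2}, which are already stated for apartment-like maps on arbitrary convex-polytope generators.

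Your side argument for point (2) is not needed, but it is wrong as written. Restricting an apartment-like map on $P\times Q$ to a simplex $\Delta_i$ of the subdivision does not give an apartment-like map for $\Delta_i$. It need not even send $\partial\Delta_i$ into $|0\subsetneq C\subsetneq U\times_\R V|$. Already for $m=n=1$, the product of the two standard apartments sends the center of the square $P\times Q$ to $U\times_\R V$, and that center lies on the diagonal shared by the two lattice-path triangles.

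The argument should run the other way. Glue the standard apartments of the lattice-path simplices $\Delta_i$ into one map on $P\times Q$. This map is apartment-like, because each face of $P\times Q$ is a union of simplices of the triangulation whose spans lie in that face's span. It also sends every $\partial\Delta_i$ into $|0\subsetneq C\subsetneq U\times_\R V|$, so it factors through the pinch map onto $\bigvee_i\Delta_i/\partial\Delta_i$. Contractibility of the space of apartment-like maps then identifies it up to homotopy with your product map. Composing with the oriented collapse gives $\sum_i[\apt](\Delta_i)$ with no signs.
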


\begin{proof}
    For any two convex polytopes $P \subseteq E^m$ and $Q \subseteq E^n$, the product $P \times Q$ is a convex polytope in which every face is of the form $F \times F'$ for faces $F \subseteq P$ and $F' \subseteq Q$, and we have the equality $\langle F \times F' \rangle = \langle F \rangle \times_{\mathbb{R}} \langle F' \rangle$.
    
    From this we can deduce that for any two suspended apartment-like maps $P \to |0 \subsetneq A \subseteq U|$ and $Q \to |0 \subsetneq B \subseteq V|$ from \cref{apartment_1}, the product map $P \times Q \to |0 \subsetneq C \subseteq U \times_{\mathbb{R}} V|$ is also apartment-like: every face $F \times F'$ goes to the subspace $|0 \subsetneq C \subseteq \langle F \times F' \rangle|$ as required.

    Quotienting out by the boundary gives a commuting diagram of spaces of the form
    \[ \begin{tikzcd}[column sep=4em]
        \frac{P}{\partial P} \sma \frac{Q}{\partial Q}
        	\ar[d,"\cong"'] 
			\ar{r}{\apt \sma \apt} 
			& 
		\ST(U) \sma \ST(V)
			\ar[d,"\textup{\cref{cor-extended-PTE-commalg}}"] 
			\\
        \frac{P \times Q}{\partial(P \times Q)}
			\ar{r}{\apt} 
			& 
		\ST(U \times_\R V)
    \end{tikzcd} \]
    where the horizontal maps give the apartment classes for $P$, $Q$, and $P \times Q$. To handle the orientations introduced in \cref{apartment_oriented}, we observe that the collapse map $S^{U_0 \oplus V_0} \to \frac{P \times Q}{\partial(P \times Q)}$ is the smash product of the collapse maps $S^{U_0} \to \frac{P}{\partial P}$ and $S^{V_0} \to \frac{Q}{\partial Q}$.
\end{proof}

\begin{proposition}\label{product_pi0_S}
    For two spherical geometries with ambient vector spaces $U$ and $V$, the algebra structure fits into the commuting diagram
    
\begin{tikzcd}[column sep=4em]
        \Pt(U) \otimes \Pt(V) 
        	\ar[d,"\textup{product (\cref{s_product})}"'] 
			\ar{r}{[\apt] \otimes [\apt]} 
			& 
		H_0(\Sigma^{-U}\Sigma\!\PT(U)) \otimes H_0(\Sigma^{-V}\Sigma\!\PT(V))
			\ar[d,"\textup{\cref{constr:mult_spherical}}"] 
			\\
        \Pt(U \oplus V)
			\ar{r}{[\apt]} 
			& 
				H_0(\Sigma^{-(U \oplus V)}
					\Sigma\!\PT(U \oplus V)).
    \end{tikzcd}
    
\end{proposition}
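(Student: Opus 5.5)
The plan mirrors the proof of \cref{product_pi0_E}, carried out at the level of doubly suspended apartment-like maps and the cone model of $\Sigma\!\PT$. Fix generators $P = [u_1,\ldots,u_m] \in \Pt(U)$ and $Q = [v_1,\ldots,v_n] \in \Pt(V)$. Their join is $P * Q = [u_1,\ldots,u_m,v_1,\ldots,v_n] \in \Pt(U \oplus V)$. On cones, the map $C(P) \times C(Q) \to D(U) \times D(V)$ followed by the collapse $p$ of \cref{product-collapse} lands in $D(U\oplus V)$. Its image is the part of $C(P*Q)$ lying inside the disc. Indeed, every point of $C(P*Q)$ is uniquely a sum $x+y$ with $x \in C(P)$ and $y \in C(Q)$, after scaling. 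So the sum map $C(P) \times C(Q) \to U \oplus V$ is a homeomorphism onto the cone spanned by all vertices, up to radial rescaling.

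The first step is to show that if $a\colon C(P) \to \hocolim_{A \subseteq U} D(A)$ and $b\colon C(Q) \to \hocolim_{B \subseteq V} D(B)$ are doubly suspended apartment-like maps, then the product $(x,y) \mapsto a_V(a(x), b(y))$ from \cref{constr:mult_spherical} is apartment-like for $P * Q$. This uses the pointwise sum of step functions and the collapse $p$ on disc coordinates, precomposed with a homeomorphism $C(P)\times C(Q) \cong C(P*Q)$ that respects faces. The key observation is that faces of $P * Q$ are joins $[M] * [N]$ with $\langle M \cup N\rangle = \langle M \rangle \oplus \langle N \rangle$. So a point of $C([M]) \times C([N])$ goes to configurations whose subspaces lie in $\langle M\rangle \oplus \langle N\rangle$, with disc coordinate in $D(A\oplus B) \cap C([M]*[N])$.

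The base condition needs care. If $x \in P$, then $a(x)$ has disc coordinate in $S(A)$, so the pair $(a(x),b(y))$ lies outside the open unit disc of $A \oplus B$ and collapses to the basepoint. This means the product map does not literally send the base of the cone into the sphere part. It does send the whole boundary $\partial(C(P)\times C(Q))$ to the basepoint of $\Sigma\!\PT(U\oplus V)$. I would therefore argue directly on quotients instead of verifying the apartment-like condition pointwise.

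The argument on quotients runs as follows. Both the product map and any apartment map for $P*Q$ are maps $S^{U\oplus V} \to \Sigma\!\PT(U\oplus V)$ obtained by collapsing onto a cell and then mapping in. By \cref{solomon_tits_w_apts_3} the target is a wedge of spheres, and classes are detected by apartment generators. So it suffices to deform the product map, through maps that respect the face filtration, to an apartment-like map. The deformation reparametrizes the radial coordinate so that the region where $|x+y| \ge 1$ is pushed to the base. This should follow from the weak contractibility of the space of apartment-like maps, as in \cite[Lem 5.9]{KLMMS-2}, once we have one honest apartment-like map for $P*Q$ built from $a$ and $b$. For that I would use the explicit model of \cref{apartment_5}, where $f_1$ for $P*Q$ with coordinates in $I^{m+n}$ is visibly the pointwise sum of the $f_1$'s for $P$ and $Q$. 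For $f_2$, the sum $\sum \psi(c_i)u_i + \sum\psi(d_j)v_j$ followed by the nearest-point retraction differs from $p(f_2^P, f_2^Q)$ only by a straight-line homotopy that preserves the faces.

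Finally, the orientations from \cref{apartment_oriented_2} must match. The collapse $S^{U\oplus V} \to C(P*Q)/\partial$ has to agree up to homotopy with the smash of the collapses $S^U \to C(P)/\partial$ and $S^V\to C(Q)/\partial$ under $S^U\wedge S^V\cong S^{U\oplus V}$. This holds because the cone on $P*Q$ is the product cube spanned by the basis $\{u_i\}\cup\{v_j\}$, so the two collapses coincide after rescaling. I expect the main obstacle to be the disc-collapse bookkeeping in the previous step: showing that $p$ combined with the retraction in $f_2$ is homotopic, rel the face conditions, to the apartment map for $P*Q$.
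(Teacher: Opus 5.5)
Your proposal is correct, and once you reach the explicit model of \cref{apartment_5} it is the paper's argument: the product of the two explicit apartment maps is the apartment map of $[u_1,\ldots,u_m,v_1,\ldots,v_n]$, followed by an easy orientation check, since the basis of $U\oplus V$ is the union of the two bases with no projection needed. You anticipate a homotopy in the $f_2$ coordinate and detour through contractibility of apartment-like maps (which does not directly apply, because the product map is only defined after quotienting), but neither is needed: with a common choice of $R$ and $\phi$ the two maps agree exactly in $S^{U\oplus V}$, because if $|x+y|\le 1$ both give $x+y$ and otherwise both give the basepoint.
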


\begin{proof}
    We make the same choice of apartment map as in \cref{coproduct_pi0_S} for each of the tuples $[u_1,\ldots,u_m]$ and $[v_1,\ldots,v_n]$, and observe that if we compose two of them with the direct sum functor $\Sigma\!\PT(U) \sma \Sigma\!\PT(V) \to \Sigma\!\PT(U \oplus V)$, we get exactly the same map as the apartment map for $[u_1,\ldots,u_m,v_1,\ldots,v_n]$. Then we compare orientations from \cref{apartment_oriented_2} and see that both routes give the same orientation; this is much easier than in the preceding proofs.
\end{proof}

All together, this finishes the proof of \cref{mainthm_pi0}.

\section{Two generalizations}

There are several ways to generalize the algebraic structures we constructed in this paper, and it would be difficult to give an exhaustive treatment. Two generalizations in particular seem worth mentioning though.

\subsection{Passing to a subfield of $\R$}
The first generalization is fairly simple: we could let $k \subseteq \R$ be any subfield and consider scissors congruence of polytopes whose vertices can be rescaled in the ambient vector space to points with coordinates in the ambient vector space lying in the subfield $k$, equivalently those cut out by hyperplanes defined by equations in $k$.

To be more precise, we let $X^n_k \subseteq X^n$ denote the subset of each geometry consisting only of those points with coordinates in $k$ in the ambient vector space $k^n$ or $k \oplus k^n$. Let $I(X^n;k) \leq I(X^n)$ be the subgroup of isometries that are defined over $k$ (so as a matrix all of the coefficients are in $k$). This leads to a modified definition of $\ST(X^n_k)$ and $\PT(X^n_k)$, where we only allow subspaces that came from linear subspaces of $k^n$ or $k \oplus k^n$. In the case of $\PT(S^n_k)$, each space in the diagram is still the entire sphere $S(U)$, not the subspace of those points defined over $k$. We also let $\Dip^X_k \subseteq \Dip^X$ be the wide subcategory consisting of those morphisms which, as linear maps on the ambient vector space, are defined over the field $k$. 

\begin{definition}\label{spectral_sah_module_k}
    For each type of geometry (Euclidean, hyperbolic, or spherical) we define the corresponding scissors congruence spectrum over $k$ by the wedge sum
    \[ \mathcal{E}_k = \bigvee_{n \geq 0} (EI(E^n;k)_+ \wedge_{I(E^n;k)} (\Sph^{-(\R \oplus \R^n)} \wedge \Sigma\!\ST(E^n_k))), \]
    \[ \mathcal{H}_k = \bigvee_{n \geq 0} (EI(H^n;k)_+ \wedge_{I(H^n;k)} (\Sph^{-(\R \oplus \R^n)} \wedge \Sigma\!\ST(H^n_k))), \]
    \[ \mathcal{S}_k = \bigvee_{n \geq 0} (EO(n;k)_+ \wedge_{O(n;k)} (\Sph^{-\R^n} \wedge \Sigma\!\PT(k^n))), \]
    \[ \widetilde{\mathcal{S}}_k = \bigvee_{n \geq 0} (EO(n;k)_+ \wedge_{O(n;k)} (\Sph^{-\R^n} \wedge \Sigma\!\ST(k^n))). \]
    By the same proof as in \cite{scissors_thom}, see also \cite{KLMMS-2} and \cite{KLMMS-3}, each of these agrees with the direct sum of scissors congruence $K$-theory spectra of the appropriate geometry in all dimensions:
    \[ \mathcal{E}_k \simeq \bigvee_{n \geq 0} K(\Pol{E^n_k}{I(E^n;k)}), \quad \mathcal{H}_k \simeq \bigvee_{n \geq 0} K(\Pol{H^n_k}{I(H^n;k)}), \quad \mathcal{S}_k \simeq \bigvee_{n \geq 0} K(\Pol{S^{n-1}_k}{O(n;k)}), \quad \widetilde{\mathcal{S}}_k \simeq \bigvee_{n \geq 0} \widetilde{K}(\Pol{S^{n-1}_k}{O(n;k)}), \]
\end{definition}

The proofs in this paper give a version of the main theorem in this case:

\begin{theorem}\label{mainthm_k}
    In the $\infty$-category of spectra,
\begin{itemize}
    \item $\widetilde{\mathcal{S}}_k$ is a Hopf algebra,
    \item $\mathcal{E}_k$ and $\mathcal{S}_k$ are comodule algebras over $\widetilde{\mathcal{S}}_k$, and
    \item $\mathcal{H}_k$ is a comodule over $\widetilde{\mathcal{S}}_k$.
\end{itemize}
    On $\pi_0$, these operations are given by the generalized Dehn invariants and the Cartesian and join products, just as before.
\end{theorem}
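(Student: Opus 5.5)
The plan is to rerun the entire construction with $\Dip^X$ replaced by the wide subcategory $\Dip^X_k$, and to check at each step that nothing used more than the morphisms in the indexing category. Define $\Luna^X_k$ as before, with $\Dip^X_k$ in place of $\Dip^X$. The direct sum, Cartesian product and direct difference functors restrict to it. Indeed, if $U\subseteq V$ is a subspace defined over $k$, then its orthogonal complement $V\ominus U$ is again defined over $k$, because it is cut out by linear equations with coefficients in $k$. Likewise, the canonical isomorphisms of \cref{sum_difference_inverses_surprise_surprise} and \cref{lem:oplus_ominus} are given by matrices over $k$. We restrict the functors $\ST$, $\Sigma\!\ST$ and $\Sigma\!\PT$ to subspaces defined over $k$. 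The step-function models of \cref{prop-SST-conf}–\cref{prop-SPT-conf} hold verbatim, with $\Sub(V)$ replaced by the poset of $k$-rational subspaces. For $\Sigma\!\PT$, the discs $D(U)$ remain the full real discs, but $U$ ranges only over $k$-subspaces.

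Next, \cref{constr:d_1-coalg_comod}, \cref{constr:d_1-coalg_comod_suspended} and \cref{constr:mult_spherical} go through with the same formulas. The pieces $f_0=f\circ e_k$ and $f_i=(f\circ e_{\alpha(i)})\ominus V_{i-1}$ only involve $k$-subspaces and their $k$-rational complements. The product $f(t)\times_\R g(t)$ of two $k$-subspaces is again a $k$-subspace. \cref{lem-6.14-klmms} also restricts, since the $k$-subspaces of the form $U'\times_\R V'$ are exactly the products of $k$-subspaces. Continuity (\cref{prop:coalg_comod_continuous}) is a local statement on simplices and is unchanged. The multifunctor axioms (\cref{prop:dehn_multifunctor}) and the comodule-algebra compatibility reduce to the same identities $(C\ominus A)\ominus(B\ominus A)=C\ominus B$ and $(B\ominus A)\oplus(D\ominus C)=(B\oplus D)\ominus(A\oplus C)$, which hold as equalities of subspaces. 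Thus $\Sigma\!\ST$ remains a $(\Comm,D_1)$-bialgebra as in \cite{KKMMW1}, and the comodules are comodule algebras in the cases $X=E,S$.

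The desuspension functors $D$ and $\Sigma D$ are defined objectwise using the real vector space $V$. Restricting them along $\Dip^X_k\subseteq\Dip^X$ preserves the strong and lax monoidality of \cref{cor-D-symm-mon-Luna}, because Day convolution over the subcategory uses the same objects with fewer isomorphisms. Since each skeleton of $\Dip^X_k$ has automorphism groups $I(X^n;k)$, the homotopy colimits are the spectra $\mathcal E_k,\mathcal H_k,\mathcal S_k,\widetilde{\mathcal S}_k$ of \cref{spectral_sah_module_k}. The $\infty$-categorical lift of \cref{sect:infinity}, including the hyperbolic lax-monoidal argument, is purely formal and applies unchanged. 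This proves the first three claims.

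The remaining and main step is the identification on $\pi_0$. It requires a Solomon–Tits isomorphism $\Pt(X^n_k)\cong H_*(\Sigma\!\ST(X^n_k))$, and the corresponding statement for $\Sigma\!\PT(k^n)$, realized by apartment maps with $k$-rational vertices. I would cite \cite{KLMMS-2,KLMMS-3} for these statements over a subfield, and otherwise rerun the proof of \cref{solomon_tits_w_apts_1} and \cref{solomon_tits_w_apts_3}. The points to check are that the spherical polytopal Tits complex with full spheres $S(U)$ but $k$-rational $U$ still has homology $\Pt(S^{n-1}_k)$, and that all faces of a $k$-rational simplex span $k$-subspaces. Given this, the apartment computations in \cref{coproduct_pi0_EH}, \cref{coproduct_pi0_S}, \cref{product_pi0_E} and \cref{product_pi0_S} transfer directly. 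The projections $\pr_{V\ominus\langle S\rangle}S^c$ are $k$-rational, and the degree arguments take place in the underlying real vector spaces, so they are unaffected.
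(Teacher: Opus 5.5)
Your route is the paper's own: its entire proof of \cref{mainthm_k} is the remark that the earlier arguments carry over to $\Dip^X_k$, and you spell that remark out. There is, however, one step that is not formal, it fails for a general subfield, and neither your outline nor that remark handles it.

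You assert two things. First, that $\ominus$ restricts to $\Dip^X_k$: the complement of a $k$-subspace is a $k$-subspace, with the inherited $k$-valued form. Second, that the skeleton of $\Dip^X_k$ has automorphism groups $I(X^n;k)$, i.e.\ consists of the standard forms on $k^n$, one for each $n$. These two claims are compatible only if every $k$-subspace of a standard space is $k$-isometric to a standard space. That is false for $k=\Q$:
\begin{itemize}
\item The line $U=\langle(1,1)\rangle\subseteq\Q^2$ carries the form $\langle 2\rangle$ and contains no rational unit vector. So no element of $O(2;\Q)$ carries $\Q\oplus 0$ onto $U$. Hence the Day convolution $(\Sigma\!\ST\boxwedge\Sigma\!\ST)(\Q^2)$ over the standard skeleton has no summand to receive the configurations with $f(1/2)=U$ in \cref{constr:d_1-coalg_comod}.
\item The problem is also visible on $\pi_0$. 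Take a rational tetrahedron with an edge in direction $(1,1,0)$. Its dihedral angle lies in $\tPt$ of the plane $\Q^3\ominus\langle(1,1,0)\rangle$, whose form $\langle 2,1\rangle$ has determinant $2$ modulo squares. This plane is therefore neither isometric nor similar over $\Q$ to $\langle 1,1\rangle$. That term of the generalized Dehn invariant has no home in $\pi_0(\mathcal E_\Q\wedge\widetilde{\mathcal S}_\Q)$ for the spectra of \cref{spectral_sah_module_k}.
\end{itemize}

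To close the gap you have to choose one of two fixes.
\begin{itemize}
\item Equip the objects of $\Dip^X_k$ with a $k$-structure, and let subobjects and complements inherit it. Then your verifications go through verbatim. But the skeleton is now indexed by $k$-isometry classes of forms $q$, with automorphism groups $O(q;k)$ or the corresponding $k$-isometry groups. So the homotopy colimits are wedges over these classes rather than over $n$, and that enlarged statement is what you would actually prove. You cannot afterwards cut down to the standard summands. A standard space has non-standard $k$-subspaces, so the coproduct does not preserve them. Discarding the non-standard terms throws away Dehn-invariant terms such as the one above.
\item Impose a hypothesis on $k$ under which Gram--Schmidt runs over $k$. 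For $S$ and $E$ it suffices that every sum of squares in $k$ is a square; then every $k$-subspace of a standard spherical or Euclidean space has a $k$-orthonormal basis. For $H$ you need every positive element of $k$ to be a square. The reason is that the spacelike line spanned by $(\tfrac{x-1}{2},\tfrac{x+1}{2})$ in $k^{1,1}$ has form $\langle x\rangle$, and it is the complement of a timelike line.
\end{itemize}

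With either fix, the rest of your outline is fine. This covers continuity, the two subspace identities, monoidality of $D$ and $\Sigma D$, the $\infty$-categorical lift, and the Solomon--Tits input with full spheres $S(U)$.
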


\subsection{Restricting the isometry group}\label{sec:restrict_isometry_group}

The other generalization concerns restricting the isometry group. For any symmetric monoidal subcategory $J \subseteq \Dip^X$, we could take the homotopy colimit of any of the scissors congruence functors along $J$ to get a type of scissors congruence spectrum where the isometry group has been restricted.

For instance, let $\Dip^{\{1\}} \subseteq \Dip^S$ be a subcategory constructed by choosing an orthonormal basis for each vector space $V$, and only allowing those isometries that preserve the choice of basis. Equivalently, $\Dip^{\{1\}}$ has just one vector space of each dimension, and no nontrivial automorphisms.

We may also choose $\Dip^T \subseteq \Dip^E$ by selecting an orthonormal basis for just the kernel $V_0$ of each of the maps $x_0\colon V \to \R$, and then restricting the morphisms to those that preserve the orthonormal basis on $V_0$. In effect, this means that we only take the translation subgroup of the isometry group $\R^n \leq I(E^n)$.

\begin{definition}\label{spectral_sah_module_translation}
    Define functors on these categories $\funSah^{\{1\}}$ and $\funSah^T$ by desuspending the functors $(*,\Sigma\!\ST)_E$ and $(\ST,*)_E$, as in \cref{def-spectral-sah}. Let $\widetilde{\mathcal{S}}_{\{1\}}$ and $\mathcal{T}$ denote the homotopy colimit of the results along $\Dip^{\{1\}}$ and $\Dip^T$, respectively. The homotopy colimit along $\Dip^{\{1\}}$ just takes a wedge sum of the spectra in the diagram for each dimension $n$, giving
    \[ \widetilde{\mathcal{S}}_{\{1\}} = \bigvee_{n \geq 0} \Sph^{-n} \wedge \Sigma\!\ST(\R^n). \]
    The homotopy colimit along $\Dip^T$ gives
    \[ \mathcal{T} = \bigvee_{n \geq 0} E\R^n_+ \wedge_{\R^n} (\Sph^{-\R^n} \sma \ST(E^n)). \]
    By \cite{scissors_thom}, $\mathcal{T}$ is the direct sum of all of the translational scissors congruence $K$-theory spectra, while $\widetilde{\mathcal{S}}_{\{1\}}$ is the direct sum of all the no-moving reduced spherical scissors congruence spectra:
    \[ \mathcal{T} \simeq \bigvee_{n \geq 0} K(\Pol{E^n}{\R^n}), \quad \widetilde{\mathcal{S}}_{\{ 1 \}} \simeq \bigvee_{n \geq 0} \widetilde{K}(\Pol{S^{n-1}}{1}). \]
\end{definition}

    The formula for $\mathcal T$ can be simplified by the following lemma, which uses the same argument as in \cref{switch_which_one_is_suspended} and \cite{scissors_thom}:
    \begin{lemma}\label{translation_action_trivial}
        The spectrum with $\R^n$-action $\Sph^{-\R^n}$ is equivalent to $\Sph^{-n}$ with trivial $\R^n$-action.
    \end{lemma}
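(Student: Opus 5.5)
The plan is to copy the argument of \cref{switch_which_one_is_suspended}, with the translation group $\R^n \leq I(E^n)$ in place of the full isometry group. Here the $\R^n$ acting on $\Sph^{-\R^n}$ is the translation subgroup of $I(E^n)$. It acts on the ambient vector space $V = \R \oplus \R^n$ by shears $(x_0, v) \mapsto (x_0, v + x_0 a)$. So the spectrum in question is really $F(S^V,\Sph)$, or its equivalent model $\Sph^{-V}$, with the induced action by precomposition with $A^{-1}$. Restricted to the kernel $V_0 = \R^n$, these shears act as the identity. So it suffices to produce an $\R^n$-equivariant equivalence $S^V \simeq S^{\R \oplus V_0}$ where the target carries the trivial action. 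Dualizing then gives $\Sph^{-V} \simeq \Sph^{-(\R\oplus V_0)} \cong \Sph^{-n-1}$ with trivial action. Up to the suspension convention ($\Sph^{-V_0}$ versus $\Sigma F(S^V,\Sph)$, handled as in \cref{def-desusp-Dip}), this is the equivalence with $\Sph^{-n}$ that we want.

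First, I would form the zigzag of $\R^n$-equivariant maps
\[ S^{\R \oplus V_0} \xleftarrow{\simeq} E^n_+ \sma S^{\R\oplus V_0} \xrightarrow{\cong} E^n_+ \sma S^V \xrightarrow{\simeq} S^V, \]
with $\R^n$ acting on $E^n$ by translation. The middle homeomorphism is $(x,t,v) \mapsto (x, tx+v)$ as in the proof of \cref{switch_which_one_is_suspended}. Equivariance comes from $A(tx+v) = tAx + v$ when $A$ is a translation, since $A$ fixes $v \in V_0$. Hence the trivial action on the $\R \oplus V_0$ factor corresponds exactly to the shear action on $V$. Second, the outer maps collapse $E^n$ to a point. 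They are equivariant and are nonequivariant equivalences because $E^n$ is contractible.

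Since we work with discrete groups and Borel-type homotopy orbits $E\R^n_+ \wedge_{\R^n} (-)$, underlying equivalences that are equivariant are the right notion. Applying $F(-,\Sph)$, or the $F_{(-)}S^0$ model together with \cite[Lemma 2.6]{KKMMW1}, turns the zigzag into the claimed equivalence.

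The step I expect to be the main obstacle is pinning down which model of $\Sph^{-\R^n}$ is intended. If $\Sph^{-\R^n}$ means $F_{V_0}S^0$, then the translations act trivially on $V_0$ on the nose, and the lemma is immediate. If instead it denotes the dual of $S^V$ shifted by one, the zigzag above is genuinely needed, and one must check that dualizing a zigzag of equivariant equivalences stays within equivalences in the Borel sense. I would handle this by using cofibrant models, exactly as in \cref{cor-D-symm-mon-Luna}.
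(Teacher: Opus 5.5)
Your proposal is correct and is essentially the paper's argument. You smash with a contractible space on which the translations act freely, untwist by a shearing homeomorphism, collapse, and pass to duals; the paper signals exactly this by saying the lemma uses the argument of \cref{switch_which_one_is_suspended}. The only difference is in the sphere you untwist: the paper's written proof works directly with $S^{\R^n}$, with $\R^n$ acting on itself by translation, using $\R^n_+$ and $(u,v)\mapsto(u,u+v)$, so it lands on $\Sph^{-n}$ with no suspension shift. You instead run the zigzag on the linear model $S^V$, where you should use $F(S^V,\Sph)$ rather than $F_VS^0$ since shears are not isometries, and then shift by one.
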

    
    \begin{proof}
    We form a commuting diagram of spaces with $\R^n$-action
    \[ \begin{tikzcd}
        S^n & S^{\R^n} \\
        \R^n_+ \sma S^n \uar{\simeq} \rar[<->]{\simeq} & \R^n_+ \sma S^{\R^n} \uar{\simeq}
    \end{tikzcd} \]
    The bottom map has the formula $(u,v) \mapsto (u,u+v)$ for $u,v\in \R^n$. This shows that the two top spheres are equivalent as spaces with $\R^n$-action, and therefore their tensor inverses $\Sph^{-n}$ and $\Sph^{-\R^n}$ are equivalent as spectra with $\R^n$-action.
    \end{proof}

    So up to equivalence, the translation group acts trivially on the desuspension $\Sph^{-\R^n}$, giving the simpler formula
    \[ \mathcal{T} = 
    \bigvee_{n \geq 0} \Sph^{-n} \sma (E\R^n_+ \wedge_{\R^n} \ST(E^n)). \]

\begin{proposition}\label{comult_on_translational}
    The no-moving reduced spherical scissors congruence algebra $\widetilde{\mathcal{S}}_{\{1\}}$ is a (non-commutative) algebra in spectra with a comultiplication map of the form
    \[ \Sph^{-\R^n} \wedge \Sigma\!\ST(\R^n) \to \bigvee_{U \subseteq \R^n} \Sph^{-U} \sma \Sigma\!\ST(U) \sma \Sph^{-(\R^n \ominus U)} \sma \Sigma\!\ST(\R^n \ominus U). \]
    The translational scissors congruence algebra $\mathcal{T}$ is a (non-commutative) algebra in spectra with a coaction map of the form
    \begin{eqnarray*}
    \Sph^{-n} \sma \ST(E^n)_{h\R^n} \to \bigvee_{U \subseteq \R^n} \Sph^{-U} \sma \ST(U)_{hU} \sma \Sph^{-(\R^n \ominus U)} \sma \Sigma\!\ST(\R^n \ominus U).
    \end{eqnarray*}
\end{proposition}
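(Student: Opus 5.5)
The plan is to run the constructions of the earlier sections again, but now along the subcategories $\Dip^{\{1\}} \subseteq \Dip^S$ and $\Dip^T \subseteq \Dip^E$. The first step is to note that these subcategories are closed under the operations we use. Each subcategory should be taken to be a (non-symmetric) monoidal subcategory of the corresponding $\Luna$ category, with the direct difference $\ominus$ landing inside it. The key point is this: once orthonormal bases are fixed, the symmetry isomorphism $V \oplus W \cong W \oplus V$ is no longer a morphism, since it does not preserve the chosen ordered bases. That is exactly why commutativity is lost. So we work with Day convolution over a monoidal category, not a symmetric monoidal one. For $\Dip^{\{1\}}$ we take one space $\R^n$ per dimension, and $U \ominus U'$ for subspaces is transported to $\R^{\dim}$ using the chosen bases. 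The coaction is indexed over all subspaces $U \subseteq \R^n$ exactly as in \eqref{eq:comodule_one_point_in_D1J_EH}, so no restriction of morphisms enters the wedge. For $\Dip^T$ we need only the orthonormal basis on $V_0$, and the Cartesian product $V \times_\R W$ has kernel $V_0 \oplus W_0$ with the concatenated basis.

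The second step restricts the point-set structures. The $D_1$-coalgebra structure on $\Sigma\!\ST$ from \cite[Lemma 4.14]{KKMMW1}, the coaction maps $m^V_{J,k}$ of \cref{prop:coalg_comod_continuous}, and the products of \cref{constr:mult_spherical} (the $\Sigma\!\ST$ version from \cite{KKMMW1}) and \cref{cor-extended-PTE-commalg} are all natural transformations. They are defined objectwise by step-function formulas that only use sums and differences of subspaces. Restricting along the inclusions therefore gives a $D_1$-coalgebra structure and a unital associative multiplication on $\Sigma\!\ST|_{\Dip^{\{1\}}}$, and the same comodule structure on $\ST|_{\Dip^T}$. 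The compatibility diagrams of \cref{prop:dehn_multifunctor} and \cref{prop-GE-comodulealg} commute as before, because they hold pointwise. Next, the desuspension functors $D$ and $\Sigma D$ of \cref{prop-D-symm-mon-Dip} restrict to strong monoidal functors on these subcategories, since \cite[Lemma 4.4]{KKMMW1} only uses the isomorphisms $\Sph^{-V}\sma\Sph^{-W}\cong\Sph^{-(V\oplus W)}$. We then pass to $\infty$-categories as in \cref{sect:infinity}, replacing $\Comm$ by $\Assoc$ in the interchange map. Taking homotopy colimits is strong monoidal for Day convolution. Along $\Dip^{\{1\}}$ the homotopy colimit is a wedge, and along $\Dip^T$ it gives homotopy orbits by $\R^n$. \Cref{translation_action_trivial} then identifies $\Sph^{-\R^n}$ with $\Sph^{-n}$ and pulls it out of the homotopy orbits, producing the displayed coaction with $\ST(U)_{hU}$ in the first factor.

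The main obstacle I expect is the monoidal bookkeeping in the translational case. We need the homotopy colimit over $\Dip^T$ to decompose correctly as a wedge over $U \subseteq \R^n$ with automorphism group $U$ (translations along the subspace). We also need the complementary factor $\Sigma\!\ST(\R^n\ominus U)$ to acquire no orbits, since $\Dip^{\{1\}}$ has no automorphisms. I would check this by computing the Day convolution $\ST\boxwedge\Sigma\!\ST$ on $\Dip^T\times\Dip^{\{1\}}$ at an object $V$. Its homotopy colimit is indexed by $\pi_0$ of the comma category of decompositions. That set is the set of subspaces $U$ with $\Aut = U \times 1$, so it matches the target after \cref{lem:sequence of oplus vs ominus}. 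A second point to verify is the claim that the algebras are genuinely non-commutative. It suffices to say we only produce $E_1$-structures, so commutativity is not asserted.
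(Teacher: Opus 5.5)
There is a genuine gap in your second step, where you claim that restricting along the inclusions gives a $D_1$-coalgebra structure on $\Sigma\!\ST|_{\Dip^{\{1\}}}$ and a comodule structure on $\ST|_{\Dip^T}$.

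Restriction $i^*$ along a monoidal inclusion is only \emph{lax} monoidal for Day convolution. There is a canonical map $i^*F \boxwedge_{\Dip^T} i^*G \to i^*(F \boxwedge_{\Dip^E} G)$, and nothing in the other direction. That is enough to restrict the multiplication; it is exactly the map the paper composes with. It is not enough for the comultiplications. The maps $m^V_{J,k}$ of \cref{constr:d_1-coalg_comod} land in $i^*$ of the big Day convolution, and they do not factor through the Day convolution over the subcategory.

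Your final verification miscounts this. In $\Dip^T \times \Dip^{\{1\}}$ the decompositions $A \oplus B \cong V$ must preserve the chosen bases, so only translates of one coordinate subspace per dimension occur. After homotopy orbits the restricted Day convolution is $\bigvee_{k+l=n} \ST(E^k)_{h\R^k} \sma \Sigma\!\ST(\R^l)$ (desuspensions suppressed). That has one summand per $k$, not one per linear subspace $U \subseteq \R^n$.

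Consistently with this, a genuine comodule for the restricted Day convolution would give, after homotopy colimits, a coaction $\mathcal{T} \to \mathcal{T} \sma \widetilde{\mathcal{S}}_{\{1\}}$. That is not the map in the statement, and the paper remarks after the proof that what one gets is not a comodule in this sense. Similarly, ``transporting $U \ominus U'$ to $\R^{\dim}$ using the chosen bases'' has no canonical meaning for an arbitrary subspace $U$.

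The fix, which is the paper's route, is not to change the target.
\begin{enumerate}[(i)]
    \item Take the coaction transformations of \cref{constr:d_1-coalg_comod} and the coalgebra maps of \cite{KKMMW1} with their original targets, the Day convolution over $\Luna^E$ or $\Luna^S$.
    \item Restrict these maps to $\Dip^T$ or $\Dip^{\{1\}}$, desuspend, and take homotopy colimits.
    \item At $E^n$ the target is a wedge over all affine subspaces $A \subseteq E^n$. Translations permute these summands; the orbits are indexed by linear subspaces $U \subseteq \R^n$, with stabilizer $U$. So the homotopy orbits are $\bigvee_U (\Sph^{-U} \sma \ST(U) \sma \Sph^{-(\R^n \ominus U)} \sma \Sigma\!\ST(\R^n \ominus U))_{hU}$.
    \item The group $U$ acts trivially on the last two factors, and by the argument of \cref{translation_action_trivial} equivalently trivially on $\Sph^{-U}$. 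Hence the orbits pass onto $\ST(U)$.
    \item For $\widetilde{\mathcal{S}}_{\{1\}}$ there are no automorphisms, and the wedge over all $U$ appears directly.
\end{enumerate}
No $(\Assoc, D_1)$-bialgebra or $\infty$-categorical interchange argument is needed. The statement only asks for an algebra together with a map of this form.
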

\begin{proof}
    The algebra structure on $\mathcal{T}$ is obtained by composing the algebra structure on $\funSah^E$ from \cref{cor-ESah-multandcomod} with the canonical map of Day convolutions
    \[ \funSah^E \boxwedge_{\Dip^T} \funSah^E \to \funSah^E \boxwedge_{\Dip^E} \funSah^E. \]
    This satisfies the associativity and unitality conditions, but not a commutativity condition. In fact $\Dip^T$ is only a monoidal subcategory of $\Dip^E$, not a symmetric monoidal subcategory, because the symmetry isomorphism $E^m \times E^n \cong E^n \times E^m$ is not a translation. Taking the homotopy colimit along $\Dip^T$ as before gives an algebra structure on $\mathcal{T}$. The algebra structure on $\widetilde{\mathcal{S}}_{\{1\}}$ is constructed the same way.

    The comultiplication and coaction maps are obtained by taking a homotopy colimit along $\Dip^T$ and $\Dip^{\{1\}}$ of the comultiplication maps from \cref{constr:d_1-coalg_comod}. Note that before when we took the homotopy colimit along all of $\Dip^E$, the wedge sums over all subspaces $U \subseteq E^n$ were identified, giving a single smash product. But when we take the colimit only along the translations $\Dip^T$, every affine subspace is identified with all of its translates, but that still leaves one equivalence class of subspaces for every \emph{linear} subspace $U \subseteq \R^n$. So we get a wedge of smash products of homotopy orbits
    \[ \bigvee_{U \subseteq \R^n} \left(\Sph^{-U} \sma \ST(U) \sma \Sph^{-(\R^n \ominus U)} \sma \Sigma\!\ST(\R^n \ominus U)\right)_{hU} \]
    The action of the vector space $U$ on the smash product is trivial on $\Sph^{-(\R^n \ominus U)} \sma \Sigma\!\ST(\R^n \ominus U)$. It is equivalent to a trivial action on $\Sph^{-U}$ by the same argument as in \cref{translation_action_trivial}.
    Therefore the homotopy orbits pass to the $\ST(U)$ term, the only term where the action is nontrivial.
    
    The case of $\widetilde{\mathcal{S}}_{\{1\}}$ is similar except that there is no automorphism group to mod out by.
\end{proof}
One can deduce compatibilities for this comultiplication map from the same compatibilities in $\Fun(\Dip^E,\Sp)$, for instance the following coassociativity statement (with the desuspensions removed for readability):
\[ \begin{tikzcd}
    \displaystyle
    \ST(E^n)_{h\R^n} \rar \dar &
    \displaystyle
    \bigvee_{U_0 \subseteq \R^n} \ST(U_0)_{hU_0} \sma \Sigma\!\ST(\R^n \ominus U_0) \dar \\
    \displaystyle
    \bigvee_{U_1 \subseteq \R^n} \ST(U_1)_{hU_1} \sma \Sigma\!\ST(\R^n \ominus U_1) \rar &
    \displaystyle
    \bigvee_{U_0 \subseteq U_1 \subseteq \R^n} \ST(U_0)_{hU_0} \sma \Sigma\!\ST(U_1 \ominus U_0) \sma \Sigma\!\ST(\R^n \ominus U_1).
\end{tikzcd} \]
Although this is very useful, this is not exactly a comodule structure because it involves \emph{wedge sums} of smash products and not just smash products. In other words, the ``tensor product'' being taken at each vertex of this diagram is not the smash product of spectra, but rather a more complicated operation that remembers that the spectrum came from some diagram on $\Dip^E$.

\subsection{The higher Hadwiger invariant}
As an application, we show how to recover the classical Hadwiger invariant from \cite{hadwiger_invts,jessen_thorup,sah_79} and give a generalization to the higher translational scissors congruence groups. Continuing to work in Euclidean geometry, for each $0 \leq k \leq n$ and each string of vector subspaces $V^k \subsetneq V^{k+1} \subsetneq \cdots \subsetneq V^{n-1} \subsetneq V^n = \R^n$, we may apply the comultiplication map from \cref{comult_on_translational} a total of $(n-k)$ times, picking off just the codimension-one subspace $V^{n-1}$ the first time, then just $V^{n-2}$ the second time, and so on. This gives a string of maps of spectra (with desuspensions suppressed)
\begin{eqnarray*}
    \ST(E^n)_{h\R^n} &\to & \ST(V^{n-1})_{hV^{n-1}} \sma \Sigma\!\ST(\R^n \ominus V^{n-1}) \\
    &\to & \ST(V^{n-2})_{hV^{n-2}} \sma \Sigma\!\ST(V^{n-1} \ominus V^{n-2}) \sma \Sigma\!\ST(\R^n \ominus V^{n-1}) \\
    &\to & \cdots \\
    &\to & \ST(V^{k})_{hV^{k}} \sma \Sigma\!\ST(V^{k+1} \ominus V^{k}) \sma \cdots \sma \Sigma\!\ST(\R^n \ominus V^{n-1}).
\end{eqnarray*}
We may then compose these with a map of spectra on the first factor
\[ \Sph^{-V^k} \sma \ST(V^k) \to H\R \]
that truncates and maps $\pi_0 \cong \Pt(V^k)$ forward to $\R$ by taking every $k$-dimensional Euclidean polytope in $V^k$ to its $k$-dimensional volume. This respects the action of the translation group $V^k$, which acts trivially on the target $\R$, giving a map of homotopy orbit spectra
\[ \Sph^{-V^k} \sma \ST(V^k)_{hV^k} \to H\R_{hV^k}. \]
(In fact, this last map is exactly the volume trace from \cite{bgmmz}, by \cite[Thm 1.3]{bgmmz}.) Choosing an orientation for each of the quotients $V^{i+1} \ominus V^i$, we simplify down to a map of spectra
\[ \Sigma^{-n} \ST(E^n)_{h\R^n} \to H\R_{hV^k} \sma (\Sph^{-1} \sma \ST(\R^1))^{\sma (n-k)} \simeq H\R_{hV^k}. \]
Altogether, this gives us a map of spectra of the form
    \begin{equation}\label{eq:hadwiger_higher}
    \Sigma^{-n} \ST(E^n)_{h\R^n} \to \bigvee_{0 \leq k \leq n} \left( \bigvee_{V^k \subsetneq \cdots \subsetneq V^{n-1} \subsetneq V^n = \R^n} H\R_{hV^k} \right)
    \end{equation}
that we call the \emph{higher Hadwiger invariant}.

More conceptually, we can think of the higher Hadwiger invariant as the generalized Dehn invariant of the form
\[ K(\Pol{E^n}{\R^n}) \to \bigvee_{0 \leq k \leq n} \left( \bigvee_{V^k \subsetneq \cdots \subsetneq V^{n-1} \subsetneq V^n = \R^n} K(\Pol{V^k}{V^k}) \sma \widetilde K(\Pol{S^0}{1})^{\sma (n-k)} \right) \]
composed with choices of equivalences $\widetilde{K}(\Pol{S^0}{1}) \simeq \Sph$ and the volume trace from \cite{bgmmz}.
\begin{proposition}
    The higher Hadwiger invariant on $\pi_0$ gives the classical Hadwiger invariant from \cite{hadwiger_invts,sah_79}.
\end{proposition}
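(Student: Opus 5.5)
We compute the effect on $\pi_0$ of each stage of \eqref{eq:hadwiger_higher} separately, by reducing each stage to a result already proven in \cref{sec:apartments}. On $\pi_0$ the source is $\pi_0(\Sigma^{-n}\ST(E^n)_{h\R^n}) \cong \Pt(E^n)_{\R^n}$. The identification uses the oriented singly suspended apartment isomorphism of \cref{solomon_tits_w_apts_2}, together with the fact that $\pi_0$ of homotopy orbits of a connective spectrum is the coinvariants of $\pi_0$. The target summand is $\pi_0(H\R_{hV^k}) \cong \R$, one for each flag $V^k \subsetneq \cdots \subsetneq V^n$.

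\textbf{Step 1 (each coaction step).} Each single application of the translational comultiplication of \cref{comult_on_translational} along a codimension-one subspace $V^{i-1}\subsetneq V^i$ is computed on $\pi_0$ by \cref{coproduct_pi0_EH}. That proposition identifies the spectral coaction with the Dehn invariant of \cref{pt_dehn}, and this identification is compatible with passing to $\Dip^T$-homotopy colimits, since the comultiplication there is just the restriction of the comultiplication on $\Dip^E$.

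\textbf{Step 2 (isolating the terms).} When we project to the summand indexed by the linear subspace $V^{i-1}$, the Dehn invariant of a polytope $P$ becomes $\sum_F [F]\otimes[\text{link of }F]$. The sum runs over the facets $F$ of $P$ parallel to $V^{i-1}$, that is, those with $\langle F\rangle$ a translate of $V^{i-1}$. The link $[\pr_{V^i\ominus V^{i-1}} S^c]\in\tPt(\R^1)$ is a point of $S^0$, namely the inward or outward normal direction of $F$. After choosing an orientation of $V^i\ominus V^{i-1}$, the identification $\tPt(S^0)\cong\Z\cong\pi_0(\Sph^{-1}\sma\Sigma\!\ST(\R^1))$ turns this link into the sign $\pm1$: it records whether $P$ lies on the positive or negative side of $F$.

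\textbf{Step 3 (iterating and applying volume).} Iterating $n-k$ times, and using coassociativity from \cref{prop:dehn_multifunctor}, gives a sum over chains of faces $P\supset F^{n-1}\supset\cdots\supset F^k$ with $F^i$ parallel to $V^i$, each term weighted by the product of signs. Applying the volume map $\Pt(V^k)_{V^k}\to\R$ then yields $\sum \pm\operatorname{vol}_k(F^k)$. This is exactly Hadwiger's formula, as presented in \cite{sah_79}.

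The main obstacle is the sign bookkeeping in Steps 2 and 3. We must check that the orientation conventions of \cref{apartment_oriented} (the choice $\sum(1-c_i)v_i$, and the identification $\Sigma S^{V_0}\cong S^V$), together with the chosen equivalences $\widetilde K(\Pol{S^0}{1})\simeq\Sph$, produce precisely the sign $\epsilon(F^{i-1},F^i)$ used classically. We would first verify this on a single interval in $E^1$. There the Hadwiger invariant of $[a,b]$ relative to $V^0=0$ is $1-1=0$ up to the convention on endpoints. We would then argue that the general case factors as a smash product of one-dimensional cases, using the degree computation at the end of the proof of \cref{coproduct_pi0_EH}. Any global sign discrepancy can be absorbed into the choice of orientations of the quotients $V^{i+1}\ominus V^i$, since the classical invariant is itself defined only after such a choice.
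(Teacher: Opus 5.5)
Your proposal is correct and follows essentially the same route as the paper. You compute each coaction step on $\pi_0$ via \cref{coproduct_pi0_EH} as the translation-coinvariants of a codimension-one Dehn invariant, note that $\tPt$ of a line is $\Z$ so the link only records which side of the facet the polytope lies on, and then compose, with residual conventions absorbed into the orientation choices for the quotients $V^{i+1}\ominus V^i$. The sign bookkeeping you flag as the main obstacle is already handled by the oriented apartments in \cref{coproduct_pi0_EH}, so no separate $E^1$ check or smash-product factorization is needed, and coassociativity is not used, since the invariant is defined as a sequential composite.
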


\begin{proof}
    The classical Hadwiger invariant is the map
    \begin{equation}\label{eq:hadwiger_classical}
        K_0(\Pol{E^n}{\R^n}) \to \bigoplus_{0 \leq k \leq n} \left( \bigoplus_{V^k \subsetneq \cdots \subsetneq V^{n-1} \subsetneq V^n = \R^n} \R \right)
    \end{equation}
    that takes each $n$-dimensional polytope $P$, identifies each string of faces $P^k \subsetneq \cdots \subsetneq P^{n-1} \subsetneq P$ of consecutive dimensions from $k$ to $n$, and takes the $k$-dimensional volume of $P^k$ as an element of $\R$. This is placed in the summand corresponding to the vector subspaces parallel to the affine spans of the faces $P^k$ through $P^{n-1}$. Finally, a sign is added to each of these terms by fixing any orientation for each of the quotients $V^{i+1} \ominus V^i$, and adding a sign of $\pm 1$ according to which side of $V^i$ each face $P^{i+1}$ lies on, relative to $P^i$. These signs are multiplied together for each value of $i$, giving a single sign of $\pm 1$ for each string of faces $P^k \subsetneq \cdots \subsetneq P^{n-1} \subsetneq P$.

    Our invariant can be calculated on $\pi_0$ by \cref{coproduct_pi0_EH}, which identifies it with a string of maps of abelian groups
\begin{eqnarray*}
    \Pt(E^n)_{h\R^n} &\to & \Pt(V^{n-1})_{V^{n-1}} \otimes \tPt(\R^n \ominus V^{n-1}) \\
    &\to & \Pt(V^{n-2})_{V^{n-2}} \otimes \tPt(V^{n-1} \ominus V^{n-2}) \otimes \tPt(\R^n \ominus V^{n-1}) \\
    &\to & \cdots \\
    &\to & \Pt(V^{k})_{V^{k}} \otimes \tPt(V^{k+1} \ominus V^{k}) \otimes \cdots \otimes \tPt(\R^n \ominus V^{n-1}) \\
    &\to & \R \otimes \Z \otimes \cdots \otimes \Z \\
    &\cong & \R.
\end{eqnarray*}
    Each map in this string is obtained by taking the $\R^m$-coinvariants (for some $m$) of a Dehn invariant as defined in \cref{pt_dehn}. Since the codimension is always one, the complementary polytope group $\tPt(V \ominus U)$ is always $\Z$, represented by either of the two points in the sphere $S(V \ominus U) \cong S^0$, which are negatives of each other. In this case the Dehn invariant simply selects a codimension-one face and remembers which side of the face the original polytope was on. Composing these together gives the same description as the classical Hadwiger invariant, for some choice of orientation convention for each of the groups $\tPt(V \ominus U)$.
\end{proof}

The classical Hadwiger invariant is known to be a complete invariant for translational scissors congruence. In other words, the map \eqref{eq:hadwiger_classical} is injective. This was proven by Hadwiger in dimensions up to three \cite{hadwiger_proof_of_invts} and by Jessen and Thorup in the general case \cite{jessen_thorup}. On the other hand, this higher version incorporates the higher version of the volume, and therefore by a result of Lemann \cite{lemann} the higher Hadwiger invariant is injective for one-dimensional Euclidean geometry $E^1$. It is therefore natural to conjecture:
\begin{conjecture}
    The higher Hadwiger invariant \eqref{eq:hadwiger_higher} is injective on the higher $K$-groups for every dimension $n$.
\end{conjecture}

Of course, these are not the only coproducts that we get on translational scissors congruence from \cref{comult_on_translational}, they are just the simplest ones. The other coproducts involve $\tPt(V \ominus U)$ when the codimension of $U$ in $V$ is larger than 1, which is a more complicated group.



\bibliographystyle{alpha}
\bibliography{references}

\end{document}